\documentclass{amsart}
\usepackage{amssymb}
\usepackage{tikz}
\usepackage{placeins}
\usepackage[hyphens]{url}
\usetikzlibrary{arrows.meta,positioning}
\usepackage[colorlinks=true,linkcolor=blue,citecolor=blue,urlcolor=blue]{hyperref}
\hypersetup{
  pdftitle={The locally quaternion group is a countable B-group},
  pdfauthor={Alex J. Sutherland},
  pdfsubject={MSC2020: Primary 20B07; Secondary 20B15, 05C25, 20E22},
  pdfkeywords={B-group, primitive permutation group, Pr\"ufer group, generalized quaternion group, Cayley graph, orbital graph}}
\newcommand{\Qg}{Q}                       \newcommand{\Dic}{\operatorname{Dic}}
\newcommand{\Cay}{\operatorname{Cay}}
\newcommand{\Aut}{\operatorname{Aut}}
\newcommand{\Sym}{\operatorname{Sym}}
\newcommand{\Alt}{\operatorname{Alt}}

\newcommand{\Fix}{\operatorname{Fix}}
\newcommand{\GL}{\operatorname{GL}}
\newcommand{\Ann}{\operatorname{Ann}}
\newcommand{\Ii}{\mathcal{I}}             \newcommand{\Ea}{\mathcal{E}_A}           \newcommand{\F}{\mathbb{F}}
\newcommand{\Z}{\mathbb{Z}}
\newcommand{\one}{\mathbf{1}}             \newcommand{\pst}[1]{P_{(#1)}}            \newcommand{\sst}[1]{P_{\{#1\}}}          \newcommand{\symd}{\mathbin{\triangle}}
\newcommand{\Opt}{O_2}

\newtheorem{theorem}{Theorem}[section]
\newtheorem{lemma}[theorem]{Lemma}
\newtheorem{proposition}[theorem]{Proposition}
\newtheorem{corollary}[theorem]{Corollary}
\theoremstyle{definition}
\newtheorem{definition}[theorem]{Definition}
\newtheorem{remark}[theorem]{Remark}
\numberwithin{equation}{section}
\begin{document}

\title{The locally quaternion group is a countable $B$-group}

\author{Alex J. Sutherland}
\address{Oregon State University, Corvallis, Oregon, USA}
\email{suthalex@oregonstate.edu}

\subjclass[2020]{Primary 20B07; Secondary 20B15, 05C25, 20E22}
\keywords{$B$-group, primitive permutation group, Pr\"ufer group, generalized quaternion group, Cayley graph, orbital graph}

\begin{abstract}
A group $G$ is a $B$-group if every primitive permutation group containing a
regular subgroup isomorphic to $G$ is doubly transitive. We prove that the
locally quaternion group $\Dic(C_{2^\infty},c)$, the extension of the Pr\"ufer
$2$-group by an element squaring to its involution and inverting it, is a
$B$-group. This gives an example of a countably infinite $B$-group.
This answers Problem~13.30 of the Kourovka Notebook. The proof analyses the automorphism
group of an orbital Cayley graph through the central involution. In the main
case two half-flip automorphisms generate, with their conjugates, an elementary
abelian normal subgroup whose orbits form a nontrivial block system; in the
remaining case, where the pointwise stabilizers of the two cosets of the Pr\"ufer
group have order two, the conjugates of the central translation generate an
elementary abelian normal subgroup, which would be regular if the automorphism group were
primitive; an affine lemma for the Pr\"ufer group excludes this. The proof does not depend on computer verification: its two
finite case analyses are written out in full, and programs that re-check them
are provided as ancillary files.
\end{abstract}

\maketitle

\section{Introduction}

A group $G$ is a \emph{$B$-group} if every primitive permutation group that
contains a regular subgroup isomorphic to $G$ is doubly transitive
\cite[\S0]{CameronJohnson1987}. The notion is due to Wielandt, who defined it for finite groups
\cite[Definition~25.1]{WielandtFPG}. We prove that the locally quaternion group $\Qg$, the extension of the Pr\"ufer
$2$-group by an element squaring to its involution and inverting it, is a
$B$-group (Theorem~\ref{thm:main}); in particular, countably infinite
$B$-groups exist.

Let $A=C_{2^\infty}$ be the Pr\"ufer $2$-group and $c$ its unique involution.
The \emph{locally quaternion group} is
\[
\Qg=\Dic(A,c)=\langle A,b \mid b^2=c,\ b^{-1}ab=a^{-1}\ (a\in A)\rangle .
\]

\begin{theorem}\label{thm:main}
Every primitive permutation group containing a regular subgroup isomorphic to
$\Qg$ is doubly transitive. Thus $\Qg$ is a countably infinite $B$-group.
\end{theorem}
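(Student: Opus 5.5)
We would follow the strategy outlined in the abstract. Let $G\le\Sym(\Omega)$ be primitive with a regular subgroup isomorphic to $\Qg$, and identify $\Omega$ with $\Qg$ so that $\Qg$ acts by right translation. Let $G_1$ be the stabilizer of the identity. Suppose for contradiction that $G$ is not doubly transitive. Then $G_1$ has a nontrivial orbit $\Delta\ne\Qg\setminus\{1\}$, and this orbit is a union of elements of $\Qg$. The associated orbital graph $\Gamma=\Cay(\Qg,S)$ has $G\le\Aut(\Gamma)$, so $\Aut(\Gamma)$ is primitive. Here $S=S^{-1}$ after replacing the orbital by its symmetrization (we can do this since $G$ is primitive and infinite, so the orbital graph is connected). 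Moreover $S\ne\emptyset$ and $S\ne\Qg\setminus\{1\}$.

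The key observation is that the central involution $c$ is fixed by every automorphism of $\Qg$, and right translation by $c$ commutes with all left translations. The plan is to choose the orbital through $c$: take $\Delta$ to be the $G_1$-orbit of $c$. If that orbit is all of $\Qg\setminus\{1\}$, then $G$ is doubly transitive and we are done. Otherwise we get a Cayley graph $\Gamma$ with $c\in S$ and with $S$ invariant under $G_1\cap(\text{normalizer})$. We then analyse how $S$ meets the two cosets $A$ and $Ab$. Since $S$ is closed under inversion, and every element of $Ab$ has order $4$ with square $c$, the structure of $S\cap Ab$ and $S\cap A$ is constrained.

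\emph{Main case.} Here we construct "half-flip" automorphisms of $\Gamma$: permutations that act as a translation (say by $c$) on one coset of $A$ and trivially on the other. We verify they preserve $S$-adjacency by a finite case analysis over the patterns of $S$ with respect to $\{x,xc\}$-pairs. Conjugating these half-flips by $\Aut(\Gamma)$ generates a normal subgroup $E$. Because the half-flips are commuting involutions with supports aligned to the partition into $\{x,xc\}$-pairs, $E$ is elementary abelian. Its orbits give a nontrivial $\Aut(\Gamma)$-invariant partition: nontrivial because $E\ne1$, and proper because the orbits lie inside the pairs or inside unions constrained by the cosets. This contradicts primitivity.

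\emph{Remaining case.} Here the pointwise stabilizers in $\Aut(\Gamma)$ of the cosets $A$ and $Ab$ have order $2$. We then show that the conjugates of the central translation $\rho_c$ generate an elementary abelian normal subgroup $N$. The commutation of these conjugates comes from the order-$2$ stabilizer structure: each conjugate is an involution determined by a fixed-point-free pairing compatible with the coset structure. In a primitive group a nontrivial abelian normal subgroup is transitive, hence regular. So $N$ would be a regular elementary abelian $2$-group, $\Omega$ becomes an $\F_2$-vector space, and $G\le N\rtimes\GL$ is affine, with $G_1$ acting irreducibly on $N$. We then invoke the affine lemma for the Prüfer group: a group $N\rtimes H$ containing a regular copy of $\Qg$ (and thus of $A=C_{2^\infty}$, which is divisible and so cannot embed densely in an elementary abelian setting) cannot be primitive unless it is doubly transitive. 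Concretely, the image of $A$ acting on $N$ would have to be unipotent-like and fix a nonzero vector or a proper subspace, giving a $G_1$-invariant proper subspace, a contradiction.

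The hardest step is the main case: defining the half-flips precisely and checking that they are automorphisms for every admissible $S$. We expect to proceed by classifying the local pattern of $S$ on each pair $\{x,xc\}$ and on $A$ versus $Ab$, and to treat the finitely many patterns one by one.
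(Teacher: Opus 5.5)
Your outline follows the architecture of the paper: the orbital graph through $(1,c)$, half-flips, an elementary abelian normal subgroup, and an affine contradiction in the order-two case. But the steps that carry the proof are asserted rather than proved, and the step you call hardest is immediate.

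\textbf{The half-flips are easy.} Write $S=X\cup Tb$ with $X,T\subseteq A$. Inverse-closure of $S$ and $(db)^{-1}=cdb$ give $cT=T$. The map fixing $A$ and multiplying $Ab$ by $c$ is then an automorphism by a three-line check of $E(a,d)=E(ab,db)=\one_X(ad^{-1})$ and $E(a,db)=\one_T(ad)$. No case analysis is needed, and since $S$ is infinite there are no ``finitely many patterns'' to treat.

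\textbf{The commutation is the gap.} You write that $E$ is elementary abelian ``because the half-flips are commuting involutions with supports aligned to the $\{x,xc\}$-pairs.'' The generators of $E$ are the conjugates $t^g$, $g\in\Aut\Gamma$. Each $t^g$ moves points only within the pairs of the matching $\rho^g$, not within the $\rho$-pairs, and nothing in the construction makes $\rho$ and $\rho^g$ commute, let alone $t$ and $t^g$. Proving exactly this is almost the whole paper.
\begin{enumerate}
\item It first needs $U=X\symd cX$ to be infinite. If $U$ is finite, Lemma~\ref{lem:Ufinite} gives a finitary automorphism and Wielandt's theorem contradicts primitivity.
\item Infinitude of $U$ yields the rigidity $\pst{A}=\{\beta_h:h\in H\}\cong H=\Pi(T)$. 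This is also what defines your two cases, $|H|\ge4$ versus $|H|=2$, which your proposal never grounds.
\item For $|H|\ge4$ the paper needs supported square roots of half-involutions, the odd-order obstruction, and exclusion of infinite orbits via pointwise limits. It then uses the two-mask lemma, the bound $(xy)^4=1$ on $\Ii$, exclusion of the mixed eight-point component, the normal form, and synchronization.
\item For $|H|=2$ it needs distinct cross-neighbourhoods, the robust-pair theorem on interleaved $0$--$1$ sequences, $(xy)^4=1$, diagonal alignment, and the translation root $R(d)$.
\end{enumerate}
Your phrase ``the commutation comes from the order-$2$ stabilizer structure'' has no argument behind it. Once commutation is granted, your block step is fine: the $E$-orbit of $1$ lies in $\Fix(t)=A$ and contains $c=v(1)$.

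\textbf{The affine step is not correct as stated.} When $V=\langle\Ii\rangle$ is regular and normal, $R(A)$ is not in the stabilizer $P_1$. It acts on $V$ by affine maps, not linear ones, so a vector or subspace fixed by $A$ is not $P_1$-invariant. It therefore gives no contradiction with irreducibility.

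The paper's contradiction comes from a different fact: $R(A)$ has exactly two orbits, $A$ and $Ab$. Lemma~\ref{lem:affine} shows that an affine action of $C_{2^\infty}$ on an infinite $\F_2$-space cannot have finitely many orbits. Its proof homogenizes the action and lets the divisible unit group of $\F_2[A]$ permute the orbits. It then uses the structure $\F_2[\theta]/(\theta^e)$ of the group algebras of the finite cyclic subgroups of $A$.
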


Being a $B$-group does not survive enlarging $A$ by a countably infinite abelian factor: for every
countably infinite abelian group $\mathcal{K}$, the group $\Dic(A\times \mathcal{K},c)\cong \mathcal{K}\rtimes\Qg$, defined by the
presentation above with $A\times \mathcal{K}$ in place of $A$, is not a $B$-group
(Corollary~\ref{cor:dic-product}).
The main result also shows that the square-root-set obstruction of Cameron and Johnson
\cite[Theorem~3.1]{CameronJohnson1987} does not separate $B$-groups from
non-$B$-groups, even among groups of the same shape: $\Qg$ and
$\Dic(C_{2^\infty}\times C_{3^\infty},c)$ both meet the covering condition satisfied by every countably infinite $B$-group, each being covered by two translates
of the square-root set of its central involution $c$, as Cameron observed for the infinite
dicyclic group \cite[\S2.3]{Cameron2007Aspects}, \cite{Cameron2012blog}, \cite[\S6]{Cameron2026},
yet $\Qg$ is a $B$-group (Theorem~\ref{thm:main}) and the second group is not
(Corollary~\ref{cor:dic-product}); Cameron and Johnson had already noted non-$B$-groups that
meet this covering condition \cite[\S4]{CameronJohnson1987}. 

To our knowledge, the locally quaternion group is the first countably infinite $B$-group,
answering a question asked by K.~W.~Johnson in Oxford in 1981--82 \cite[\S6]{Cameron2026},
\cite[p.~231]{CameronJohnson1987} and listed by Cameron as Problem~13.30 of the Kourovka
Notebook \cite{Kourovka}. Every countably infinite $B$-group is covered by finitely many translates of finitely many
non-principal square-root sets $\sqrt{g}=\{x:x^2=g\}$, $g\ne1$: Cameron and Johnson showed that
a countably infinite group with no such covering acts regularly on the random graph
\cite[Theorem~3.1]{CameronJohnson1987}, and since the automorphism group of the random graph is
primitive but not doubly transitive \cite[\S6]{Cameron2026}, such a group is not a $B$-group
\cite[\S3]{CameronJohnson1987}. For $\Qg=A\cup Ab$, where $A$ is the Pr\"ufer $2$-group and $b$ the adjoined element, every
element of the coset $Ab$ squares to $c$, since $(ab)^2=a(ba)b=aa^{-1}b^2=c$
(Section~\ref{s:setup}); so $Ab\subseteq\sqrt{c}$ and $\Qg=\sqrt{c}\cup\sqrt{c}\,b^{-1}$ is covered by two translates of $\sqrt{c}$. The
group $\Dic(C_{2^\infty}\times C_{3^\infty},c)$ is treated in the same way in Section~\ref{s:boundary}, where $\Dic(D,c)$ is defined for an abelian group $D$ with involution $c$.
For $\Qg$, the proof combines rigidity of the half-stabilizers produced by the half-flips, pointwise limits in the permutation topology that produce supported automorphisms, and the exclusion of finite marked permutation models.
Section~\ref{ss:history} records the finite case, the earlier accounts of the question and the
earlier uses of the half-flips.

Theorem~\ref{thm:main} can be stated equivalently in terms of $2$-closures. For a
permutation group $G$ on a set $\Omega$, its \emph{$2$-closure} $G^{(2)}$ is the largest
subgroup of $\Sym(\Omega)$ with the same orbits on $\Omega\times\Omega$, and $G$ is
\emph{$2$-closed} if $G^{(2)}=G$ (see \cite[\S5, Definition~5.3, Theorem~5.4(ii) and Definition~5.21]{Wielandt1969OSU},
where Wielandt defines $k$-closures and $k$-closed groups for actions on finite or infinite
sets). We write $R(\Qg)$ for the right regular representation
of $\Qg$ (Section~\ref{ss:conv}).

\begin{corollary}\label{cor:twoclosure}
If $R(\Qg)\le G\le\Sym(\Qg)$ with $G$ primitive and $2$-closed, then $G=\Sym(\Qg)$.
\end{corollary}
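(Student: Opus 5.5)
The plan is to deduce the corollary directly from Theorem~\ref{thm:main}, using two standard facts about $2$-closed groups. First, a $2$-closed group is determined by its orbitals: $G^{(2)}$ is the full group of permutations of $\Qg$ that preserve every orbit of $G$ on $\Qg\times\Qg$. Second, a doubly transitive group has exactly two orbitals, the diagonal and its complement.

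Suppose $R(\Qg)\le G\le\Sym(\Qg)$ with $G$ primitive and $2$-closed. Then $G$ is a primitive permutation group on $\Qg$ containing the regular subgroup $R(\Qg)\cong\Qg$. Theorem~\ref{thm:main} therefore applies, and $G$ is doubly transitive on $\Qg$. So the orbits of $G$ on $\Qg\times\Qg$ are exactly the diagonal $\{(x,x)\}$ and the off-diagonal set $\{(x,y):x\ne y\}$.

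Every permutation of $\Qg$ preserves both of these sets, and hence both orbitals. Since $G=G^{(2)}$ and $G^{(2)}$ is the largest subgroup of $\Sym(\Qg)$ with the same orbits on $\Qg\times\Qg$, we get $\Sym(\Qg)\le G^{(2)}=G$. Hence $G=\Sym(\Qg)$.

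The only point needing care is the infinite setting. The characterization "$G^{(2)}$ is the stabilizer of all orbitals" must be valid for infinite $\Omega$. This is exactly the framework of \cite[Definition~5.3, Theorem~5.4(ii)]{Wielandt1969OSU} cited in the paper. There, the $2$-closure is defined as the set of permutations $g$ such that for each pair $(x,y)$ some element of $G$ agrees with $g$ on $\{x,y\}$. This definition does not require $G^{(2)}$ to be closed in any topology, so no topological subtlety arises. Double transitivity then gives this local agreement for every permutation, since any two injective maps on two points are matched by an element of $G$.

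I expect no real obstacle: the substance lies entirely in Theorem~\ref{thm:main}, and the corollary is a formal reformulation.
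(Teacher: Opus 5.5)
Your proposal is correct and is essentially the paper's proof: Theorem~\ref{thm:main} makes $G$ doubly transitive, so its orbitals are the diagonal and its complement, which every permutation preserves, giving $\Sym(\Qg)=G^{(2)}=G$. Your remark that Wielandt's definition of $2$-closure applies on infinite sets is a harmless addition; the paper's proof relies on the same citation.
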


\begin{proof}
By Theorem~\ref{thm:main}, $G$ is doubly transitive, so its orbits on $\Qg\times\Qg$ are
the diagonal and its complement, which every permutation preserves; hence
$G=G^{(2)}=\Sym(\Qg)$.
\end{proof}

Conversely, Corollary~\ref{cor:twoclosure} implies Theorem~\ref{thm:main}: if every
primitive $2$-closed overgroup of $R(\Qg)$ is $\Sym(\Qg)$, then for any primitive
overgroup $G$ of $R(\Qg)$, the group $G^{(2)}\supseteq G$ is primitive and $2$-closed, so
$G^{(2)}=\Sym(\Qg)$ and $G$ has a single orbit on ordered pairs of distinct points.

\subsection{Idea of the proof}

Suppose a primitive group contains a regular copy of $\Qg$ but is not doubly
transitive. Its orbital graph through the pair consisting of the identity and
the central involution $c$ (the graph whose edges are the images of that pair
under the group) is a Cayley graph of $\Qg$ whose full automorphism
group $P$ is again primitive. Its connection set has the form $X\cup Tb$ with
$X,T$ subsets of $A$, and the inverse-closure of the connection set forces
$cT=T$.

\begin{itemize}
\item \emph{Half-flips.} Because $cT=T$, the permutation $t$ that fixes the
  coset $A$ pointwise and multiplies the coset $Ab$ by $c$ is a graph
  automorphism, and so is its partner $v$, which fixes $Ab$ pointwise and
  multiplies $A$ by $c$. Their product is the central translation $\rho$ by
  $c$. We call $A$ and $Ab$ the two \emph{sides}.
\item \emph{Two invariants of the graph.} The period subgroup
  $H=\Pi(T)=\{h\in A:hT=T\}$ of $T$ is a finite cyclic subgroup of $A$ containing $c$. The set $U=X\symd cX$ is
  infinite: otherwise the graph has a nonidentity automorphism of finite
  support, and Wielandt's theorem on finitary permutations contradicts
  primitivity.
  The subgroup $H$ selects the branch: the pointwise stabilizer of a side is cyclic of order $|H|$, so it
  contains a square root of the half-flip fixing that side exactly when $|H|\ge4$ (Corollary~\ref{cor:H}).
  The set $U$ consists of the $u\in A$ for which every vertex $d\in A$ is adjacent to exactly one of $ud$
  and $cud$; that it is infinite gives the rigid sides below, and the synchronization lemma
  (Lemma~\ref{lem:sync}) uses it again.
\item \emph{Rigid sides.} Because $U$ is infinite, the pointwise stabilizer of
  the side $A$ in $P$ consists exactly of the translations of $Ab$ by elements of
  $H$. It is cyclic and its unique involution is $t$. The same holds for every
  conjugate of $A$ under $P$.
\item \emph{The case $|H|\ge4$: the fourth-power bound.} A supported square
  root of $t$ (an automorphism that fixes the side $A$ pointwise and squares to
  $t$) exists, and so does one of every conjugate of $t$, including $v$
  (Corollary~\ref{cor:H}(3)). With it one shows that a product of a conjugate of $t$ and
  an involution of $P$ has finite $2$-power order whenever it has finite order;
  that no point of the fixed side has an infinite orbit under such a product (a
  limit argument in the permutation topology); and, by a second limit argument,
  that all such orbits through the fixed side have $2$-power length. The
  two-mask lemma on two perfect matchings (Lemma~\ref{lem:two-mask}) then leads to the
  fourth-power bound $(xy)^4=1$ for all conjugates $x,y$ of $\rho$
  (Theorem~\ref{thm:fourth-power}).
\item \emph{The case $|H|\ge4$: normal form and the block.} A finite analysis of
  the components of two such matchings, a supported-root enclosure argument and
  the exclusion of one remaining eight-point configuration give a normal form
  for every
  pair; a synchronization lemma (Lemma~\ref{lem:sync}), which uses again that $U$ is
  infinite, then shows that all conjugates of $t$ commute.
  The group $\langle t^P\rangle$ is then elementary abelian and normal, and its
  orbit $\langle t^P\rangle(1)$ contains $1$ and $c$ and lies in $A$. It is a nontrivial block,
  contradicting primitivity.
\item \emph{The case $|H|=2$.} Here the pointwise stabilizers of the sides have
  order two, and distinct pairs $\{z,cz\}$ have distinct neighbourhoods across
  the sides. The robust-pair theorem
  (Theorem~\ref{thm:twelve-words}), on two interleaved $0$--$1$ sequences, gives
  $(xy)^4=1$ for all conjugates $x,y$ of $\rho$ (Theorem~\ref{thm:rs4}); a
  diagonal alignment lemma (Lemma~\ref{lem:diag}), a translation square root (Proposition~\ref{prop:mixed8two}) and a pair-preserving criterion (Lemma~\ref{lem:transfer}) then make
  all conjugates of $\rho$ commute (Theorem~\ref{thm:commute2}).
  The conjugates of $\rho$ generate an
  elementary abelian normal
  subgroup (Theorem~\ref{thm:commute2}), which would be regular if $P$ were primitive, so that $P$ would act
  affinely; the affine lemma for the Pr\"ufer group (Lemma~\ref{lem:affine}) excludes this.
  The opening of Section~\ref{s:two} explains how the robust-pair theorem and the square root of $\rho$ are used.
\end{itemize}

Both branches follow the same pattern: a square root (a supported square root of each conjugate of $t$, or $R(d)$ with $d\in A$ of order four), the bound $(xy)^4=1$, the exclusion of one exceptional configuration, an elementary abelian normal subgroup, and a contradiction with primitivity.

\subsection{Structure of the paper}

Section~\ref{s:setup} sets up the Cayley graph and reduces the theorem to the
case $U$ infinite. Section~\ref{s:H} proves the exact half-stabilizer
statement~(H). Sections~\ref{s:big} and~\ref{s:sync} treat $|H|\ge4$, and
Section~\ref{s:two} treats $|H|=2$. Section~\ref{s:proof} assembles the proof.
Section~\ref{s:boundary}, which does not use Theorem~\ref{thm:main}, extends the
construction of Cameron and Johnson \cite[Proposition~4.1]{CameronJohnson1987} from direct
to semidirect products (Proposition~\ref{prop:product-action}); by
Corollary~\ref{cor:dic-product}, enlarging the abelian part of $\Qg$ by any countably
infinite abelian factor destroys the property, so $\Qg$ sits on the boundary that this
construction draws. The dependencies are shown in Figure~\ref{fig:deps}.

\begin{figure}[ht]
\centering
\begin{tikzpicture}[
  every node/.style={draw, rounded corners, font=\footnotesize, align=center, inner sep=2pt, minimum height=5mm},
  comp/.style={double},
  >={Stealth[length=2mm]}, x=1pt, y=1pt]
\node (red) at (123.3,0.0) {Reduction, $U$ infinite\\(Prop.~\ref{prop:reduction}, Lem.~\ref{lem:Ufinite})};
\node (H) at (123.3,-36.0) {(H) exact half-stabilizer\\(Lem.~\ref{lem:H}, Cor.~\ref{cor:H})};
\node (SR) at (139.8,-72.0) {odd-order obstruction\\(Thm.~\ref{thm:odd-order})};
\node (NR) at (47.3,-72.0) {finite orbits\\(Thm.~\ref{thm:no-rays})};
\node (F) at (139.8,-108.0) {$F$ finite $2$-group\\(Prop.~\ref{prop:two-core})};
\node (OP) at (47.3,-108.0) {odd-part elim.\\(Thm.~\ref{thm:oddprimary})};
\node (TM) at (47.3,-144.0) {two-mask lemma\\(Lem.~\ref{lem:two-mask})};
\node (ID) at (24.6,-180.0) {$(xy)^4=1$\\(Thm.~\ref{thm:fourth-power})};
\node (TY) at (139.8,-180.0) {component types\\(Lem.~\ref{lem:word-fixed-set}--Thm.~\ref{thm:types})};
\node (EN) at (139.8,-216.0) {enclosure, no mixed 8\\(Thms.~\ref{thm:enclosure}, \ref{thm:mixed8})};
\node (NF) at (93.5,-252.0) {normal form\\(Cor.~\ref{cor:normal-form})};
\node (SY) at (93.5,-288.0) {synchronization\\(Lem.~\ref{lem:sync}--Thm.~\ref{thm:pairclosure})};
\node (BL) at (93.5,-324.0) {block $\langle t^P\rangle(1)$\\(Cor.~\ref{cor:block})};
\node (RW) at (262.5,-72.0) {distinct cross-neighbourhoods\\(Lem.~\ref{lem:cross-nbhd})};
\node (WI) at (262.5,-108.0) {robust-pair theorem, period two\\(Thm.~\ref{thm:twelve-words}, Cor.~\ref{cor:period2})};
\node (R4) at (262.5,-144.0) {$(xy)^4=1$\\(Thm.~\ref{thm:rs4})};
\node (AL) at (262.5,-180.0) {alignment, square root\\(Lem.~\ref{lem:diag}, Prop.~\ref{prop:mixed8two})};
\node (CM) at (262.5,-216.0) {transfer, $[x,y]=1$\\(Lem.~\ref{lem:transfer}, Thm.~\ref{thm:commute2})};
\node (AF) at (262.5,-252.0) {affine lemma\\(Lem.~\ref{lem:affine}, Thm.~\ref{thm:two})};
\node (MT) at (123.3,-360.0) {Theorem~\ref{thm:main}};
\node[draw=none] at (47.3,-36.0) {$|H|\ge4$};
\node[draw=none] at (262.5,-36.0) {$|H|=2$};
\draw[->] (H) -- (SR);
\draw[->] (H) -- (NR);
\draw[->] (H) -- (RW);
\draw[->] (SR) -- (F);
\draw[->] (SR) -- (OP);
\draw[->] (NR) -- (OP);
\draw[->] (OP) -- (TM);
\draw[->] (TM) -- (ID);
\draw[->] (F) -- (TY);
\draw[->] (ID) -- (TY);
\draw[->] (TY) -- (EN);
\draw[->] (EN) -- (NF);
\draw[->] (ID) -- (NF);
\draw[->] (TM) -- (NF);
\draw[->] (NF) -- (SY);
\draw[->] (SY) -- (BL);
\draw[->] (ID.south) |- (BL.west);
\draw[->] (RW) -- (WI);
\draw[->] (WI) -- (R4);
\draw[->] (R4) -- (AL);
\draw[->] (AL) -- (CM);
\draw[->] (CM) -- (AF);
\draw[->] (BL) -- (MT);
\draw[->] (AF) -- (MT);
\draw[->] (red.east) -- ++(171.5,0) |- (MT.east);
\end{tikzpicture}
\caption{Logical dependencies. An arrow means ``is used in the proof of''.}
\label{fig:deps}
\end{figure}
\FloatBarrier

\subsection{Historical remarks}\label{ss:history}

For finite groups, $B$-groups go back to Burnside and Schur, as Wielandt
records in the section of his book that collects the families of finite
$B$-groups known in 1964 \cite[\S25]{WielandtFPG}. The finite study led to the theory of Schur rings \cite[\S0]{CameronJohnson1987},
\cite[Chapter~IV]{WielandtFPG}; for example, Wielandt showed that every abelian
group of composite order with at least one cyclic Sylow subgroup is a
$B$-group \cite[Theorem~25.4]{WielandtFPG}, see also \cite{PonomarenkoRyabov2024};
and, after the classification of finite simple groups, Liebeck, Praeger and
Saxl studied the finite primitive groups with a regular subgroup
\cite{LPS2010}. Wielandt records, crediting Scott (1957), that every finite
dicyclic group, with defining relations $x^{2m}=1$, $y^2=x^m$,
$y^{-1}xy=x^{-1}$, is a $B$-group \cite[Theorem~25.6(ii)]{WielandtFPG}. The group $\Qg$ is the union of its finite subgroups $\langle A_n,b\rangle$, of order
$2^{n+1}$, for $n\ge1$, where $A_n$ is the subgroup of order $2^n$ of $A$
(Section~\ref{s:setup}).
Cameron and Johnson proved that no countably infinite abelian group is a
$B$-group \cite[Corollary~4.2]{CameronJohnson1987}, and that no direct product
of two countably infinite groups is one \cite[Proposition~4.1]{CameronJohnson1987}. By the first of these results, being a $B$-group does not pass to unions of chains of
subgroups: the Pr\"ufer $2$-group is the union of its cyclic subgroups of order $2^n$,
$n\ge2$, which are $B$-groups \cite[Theorem~25.2]{WielandtFPG}, but it is not a
$B$-group. 

Wildon records the early history of finite $B$-groups in a survey that includes the
errors in Burnside's proofs \cite[\S6]{Wildon2020}. 

Cameron and Johnson credit G.~Higman with the construction of their Section~1
\cite[p.~231]{CameronJohnson1987}; in Cameron's later accounts Higman's own result has a
stronger hypothesis than their square-root-set theorem, whose condition is a weaker form of
Higman's \cite[\S2.1]{Cameron2007Aspects}, \cite[\S6]{Cameron2026}, and concludes only that
the group is not a $B$-group \cite[\S2.1]{Cameron2007Aspects}. Cameron and Johnson noted that the group
$\langle a,b\mid b^4=1,\ b^{-1}ab=a^{-1}\rangle$ and its direct square are both covered by finitely many
translates of finitely many non-principal square-root sets, which places them outside the hypothesis of
their Theorem~3.1, and that the direct square is not a $B$-group, by their
Proposition~4.1; they wrote that they did not know whether the group itself is a
$B$-group \cite[\S4]{CameronJohnson1987}.\footnote{The 1987 text prints the relation as
$a^{-1}ba=b^{-1}$; with that relation every square-root set is finite, so we follow the form
used in \cite[\S2.3]{Cameron2007Aspects}.} 

Problem~13.30, among the problems from the 13th Issue (1995), reads: ``A group $G$
is called a $B$-group if every primitive permutation group which contains
the regular representation of $G$ is doubly transitive. Are there any
countable $B$-groups?'' \cite{Kourovka}. Countable here means countably infinite, as in Cameron's account of Johnson's
question \cite[\S6]{Cameron2026}. Problem~13.30 is listed without a solution in the 21st edition of the Kourovka
Notebook (2026) \cite{Kourovka}. 

The infinite dicyclic group of Cameron's later accounts is
$\langle x,y\mid y^4=1,\ y^{-1}xy=x^{-1}\rangle$ \cite{Cameron2012blog},
\cite[\S6]{Cameron2026}. Cameron asked in 2007 whether this group, presented there as
$\langle a,b : b^4=1,\ b^{-1}ab=a^{-1}\rangle$, is a $B$-group
\cite[Problem~2.5]{Cameron2007Aspects}. In 2012 he asked ``Is the infinite generalised dicyclic group a $B$-group?'' without defining that group
\cite{Cameron2012blog}. 

The half-flip $t$ is the map $\psi$ of Leemann and de la Salle
\cite[\S1]{LeemannDeLaSalle2022} for $\Qg=\langle A,b\rangle$. For finite groups a map of this kind occurs earlier: Watkins
\cite[proof of Theorem~3]{Watkins1971} uses a nonidentity automorphism $\phi$
with $\phi(x)\in\{x,x^{-1}\}$ for every $x$, which fixes an abelian subgroup of
index two pointwise and inverts every element of the other coset, as $t$ does
with $A$ and $Ab$ since $(db)^{-1}=cdb$ for $d\in A$. He deduces that the finite non-abelian groups with such an automorphism, which his
Theorem~3 characterizes, are in his Class~II \cite[Corollary~3A]{Watkins1971}, a result
he credits also to L.~A.~Nowitz, and that the dicyclic groups, including the generalized
quaternion groups, are in Class~II \cite[Corollary~3B]{Watkins1971}; so none of these
groups acts regularly as the full automorphism group of a graph
\cite[Theorem~1]{Watkins1971}. 

\subsection{Conventions}\label{ss:conv}

Permutations act on the left and products compose from right to left:
$(fg)(x)=f(g(x))$. For permutations $g,h$ we write $g^h=h^{-1}gh$ and
$[g,h]=g^{-1}h^{-1}gh$. For a group $G$ acting on a set and a subset $C$,
$G_{(C)}$ is the pointwise stabilizer and $G_{\{C\}}$ the setwise stabilizer;
$\Fix(g)$ is the set of fixed points of $g$; for an involution $g$, a
\emph{$g$-pair} is a two-point orbit $\{p,g(p)\}$ of $g$. For a subset $Y$ of an
abelian group $D$, its \emph{period subgroup} is $\Pi(Y)=\{h\in D:hY=Y\}$; for
a function $f$ on $D$, $\Pi(f)=\{h\in D:f(hz)=f(z)\text{ for all }z\in D\}$.
For $g\in\Qg$, $R(g)$ is the right
translation $x\mapsto xg$; since products compose from right to left, $R(g)R(h)=R(hg)$ for
$g,h\in\Qg$. The right regular representation is
$R(\Qg)=\{R(g):g\in\Qg\}$. Cayley graphs use right-Cayley adjacency: for an
inverse-closed $S\subseteq\Qg\setminus\{1\}$, the graph $\Cay(\Qg,S)$ has an
edge between $p$ and $q$ if and only if $pq^{-1}\in S$; every $R(g)$ is an
automorphism.

Notation used in more than one section is collected in Appendix~\ref{app:notation},
together with references to its definition. All notation is introduced in the main text as needed.

\section{Setup and reduction}\label{s:setup}

Throughout, $A=C_{2^\infty}$, $c$ is its unique involution,
$\Qg=\langle A,b\mid b^2=c,\ b^{-1}ab=a^{-1}\ (a\in A)\rangle$ and $B=Ab$;
the cosets $A$ and $B$ are the two \emph{sides} of $\Qg$. Every element of $\Qg$ is $a$ or $ab$ for a unique $a\in A$.
Existence follows from the relations below. For uniqueness, $A\times\{0,1\}$ with the product
$(a,i)(a',j)=(a\,a'^{(-1)^i}c^{ij},\,i+j\bmod2)$ is a group (associativity is an identity in the abelian
group $A$, as $c^2=1$), and $a\mapsto(a,0)$, $b\mapsto(1,1)$ respects the defining relations; it sends $a$
to $(a,0)$ and $ab$ to $(a,1)$, so these elements of $\Qg$ are pairwise distinct. Since $b^2=c$ is
central and $b^{-1}ab=a^{-1}$, we have $ba=a^{-1}b$ for $a\in A$ and
$(db)^{-1}=cdb$ for $d\in A$. Write $A=\bigcup_{n\ge0}A_n$, where $A_n$ is cyclic of order $2^n$ and $A_n<A_{n+1}$; as is standard,
$A_n$ is the only subgroup of $A$ of order $2^n$. For $n\ge1$, the set $A_n\cup A_nb$ is a subgroup
of $\Qg$, since $b$ normalizes $A_n$ and $b^2=c\in A_n$. It has order $2^{n+1}$, and these subgroups
exhaust $\Qg$. We record the properties of $A$ used below.

\begin{lemma}[Properties of the Pr\"ufer group]\label{lem:prufer}
\begin{enumerate}
\item Every proper subgroup of $A$ is finite and cyclic. Every nontrivial subgroup of $A$
  contains $c$, and every infinite subset of $A$ generates $A$.
\item $A$ has no nontrivial finite image.
\item If $D$ is a finite subgroup of $A$, then $A/D$ is isomorphic to $A$, so
  (1) and (2) hold for $A/D$.
\end{enumerate}
\end{lemma}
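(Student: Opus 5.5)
We fix a concrete model: $A=\bigcup_{n\ge0}A_n$ with $A_n$ cyclic of order $2^n$ generated by $a_n$, and $a_{n+1}^2=a_n$. Equivalently, $A$ is the group of complex $2^n$-th roots of unity for all $n$, with $c=-1$. We also use the fact recalled above that $A_n$ is the unique subgroup of order $2^n$. The whole proof rests on one observation: $A$ is divisible by $2$, since every element of $A_n$ is a square of an element of $A_{n+1}$, and every element has $2$-power order.

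For (1), let $K\le A$. If $K$ contains elements of unbounded order, then it contains generators of $A_n$ for infinitely many $n$. Each such generator generates all of $A_n$, and the $A_n$ form a chain, so $K=A$. Otherwise the orders of elements of $K$ are bounded by some $2^n$. Then $K\subseteq A_n$, because the elements of order dividing $2^n$ are exactly the roots of $x^{2^n}=1$, which form $A_n$. Hence $K$ is a subgroup of a finite cyclic group, so it is finite cyclic and equal to some $A_m$. Thus every proper subgroup is some $A_m$. If $K$ is nontrivial then $m\ge1$, and $c\in A_1\le A_m$. The same bounded/unbounded dichotomy handles an infinite subset $Y$. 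An infinite subset cannot lie in a finite $A_n$, so $Y$ has elements of unbounded order. By the first step $\langle Y\rangle=A$, since $\langle Y\rangle$ is infinite and therefore not proper.

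For (2), suppose $N\le A$ with $A/N$ finite and nontrivial. Then $N$ is proper, so $N=A_m$ by (1), and $A/N$ would be finite. But $A$ is infinite and $A_m$ is finite, a contradiction. An alternative route: a quotient of a divisible group is divisible, and a finite nontrivial $2$-group is not $2$-divisible. Either argument suffices; the first uses only (1).

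For (3), a finite subgroup $D$ equals some $A_m$. The map $x\mapsto x^{2^m}$ is an endomorphism of the abelian group $A$. It is surjective by $2$-divisibility, applied $m$ times, and its kernel is exactly $A_m$. Hence $A/D\cong A$. Since (1) and (2) are isomorphism-invariant statements about $A$, they transfer to $A/D$.

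The only step needing any care is the bounded-order case in (1), namely identifying the $2^n$-torsion with $A_n$. That relies on the uniqueness of the order-$2^n$ subgroup, which the text takes as standard, or on counting roots in the cyclic model. Everything else is routine.
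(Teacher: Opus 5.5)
Your proposal is correct and follows the paper's proof. The same bounded-versus-unbounded order dichotomy gives (1), and the same power map $x\mapsto x^{2^m}$, surjective by divisibility with kernel $D=A_m$, gives (3); for (2) your primary argument derives it from (1) (a proper finite-index subgroup $N=A_m$ would make $|A|=|A_m|\,|A/A_m|$ finite), whereas the paper cites divisibility of $A$, which is your alternative route, and either works.
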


\begin{proof}Let $K\le A$. If the orders of its elements are bounded, $K$ lies in some $A_n$ and equals some $A_j$; otherwise $K$ contains every $A_n$, hence $K=A$; this gives (1). Part (2) holds because $A$ is
divisible, and (3) because the power map with kernel $D$ identifies $A/D$ with $A$.
\end{proof}

\subsection*{The graph}
Let $X,T\subseteq A$ and $S=X\cup Tb$. Write, with the period subgroup $\Pi$ of
Section~\ref{ss:conv},
\[
H=\Pi(T)=\{h\in A:hT=T\},\qquad U=X\symd cX .
\]

\begin{lemma}[Adjacency]\label{lem:adj}
Assume $X=X^{-1}\subseteq A\setminus\{1\}$ and $cT=T$. Then $S=S^{-1}$, and in
$\Gamma=\Cay(\Qg,S)$, for $a,d\in A$, writing $E(p,q)=1$ if $p\sim q$ and $0$
otherwise,
\begin{equation}\label{eq:adj}
E(a,d)=E(ab,db)=\one_X(ad^{-1}),\qquad E(a,db)=\one_T(ad).
\end{equation}
\end{lemma}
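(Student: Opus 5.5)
The plan is a direct computation from the multiplication rules recorded earlier in Section~\ref{s:setup}: $ba=a^{-1}b$ for $a\in A$, $b^2=c$ central, and $(db)^{-1}=cdb$ for $d\in A$. The argument has two parts: first inverse-closure of $S$, then the three adjacency formulas, each obtained by computing $pq^{-1}$ and reading off whether it lies in $X$ or in $Tb$.

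For $S=S^{-1}$, handle the two pieces of $S$ separately. The set $X$ is inverse-closed by hypothesis. For an element $db\in Tb$, its inverse is $cdb$, and $cd\in T$ because $cT=T$; hence $(Tb)^{-1}=cTb=Tb$. Also $1\notin S$: we have $1\notin X$ by hypothesis, and $Tb\subseteq Ab$ does not contain $1$ by uniqueness of the normal form $a$ or $ab$. So $\Gamma=\Cay(\Qg,S)$ is a well-defined simple undirected graph.

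For the adjacency formulas, recall that $p\sim q$ if and only if $pq^{-1}\in S$, and that an element of $A$ lies in $S$ exactly when it lies in $X$, while $ab$ lies in $S$ exactly when $a\in T$; both facts use uniqueness of normal forms. The three cases are as follows.
\begin{enumerate}
\item Same side $A$: here $ad^{-1}\in A$, so $E(a,d)=\one_X(ad^{-1})$.
\item Same side $B$: $(ab)(db)^{-1}=ab\,c\,db=c\,a\,(bd)b=c\,a\,d^{-1}b^2=ad^{-1}$, again in $A$, which gives $E(ab,db)=\one_X(ad^{-1})$.
\item Across the sides: $a(db)^{-1}=a\,c\,d\,b=(cad)b$, so $E(a,db)=\one_T(cad)=\one_T(ad)$, the last step by $cT=T$.
\end{enumerate}

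Nothing here is a real obstacle. The only care needed is sign and order bookkeeping in $\Qg$ (using $bd=d^{-1}b$ and centrality of $c$), together with the observation that the cross formula needs $cT=T$ to absorb the factor $c$. Symmetry of $E$ is automatic from $S=S^{-1}$, so $E(db,a)$ needs no separate treatment.
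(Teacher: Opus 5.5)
Your proposal is correct and follows essentially the same route as the paper: inverse-closure of $Tb$ comes from $(db)^{-1}=cdb$ together with $cT=T$, and the formulas come from computing $(ab)(db)^{-1}=ad^{-1}$ and $a(db)^{-1}=(acd)b$, with $cT=T$ absorbing the factor $c$. Your additional check that $1\notin S$ is not part of this lemma's proof in the paper, but it is correct and is recorded there later (Lemma~\ref{lem:halves-props}(1)).
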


\begin{proof}
By $(db)^{-1}=cdb$, $(Tb)^{-1}=(cT)b=Tb$, so $S=S^{-1}$; and \eqref{eq:adj} follows from
$(ab)(db)^{-1}=ad^{-1}$ and $a(db)^{-1}=acd\,b$, with $acd\in T$ if and only if $ad\in cT=T$.
\end{proof}

For a vertex $p$, the set of its neighbours on the side not containing $p$ is its
\emph{cross-neighbourhood}.

Put $P=\Aut\Gamma$, $\rho=R(c)$ and $\Ii=\rho^P$. Define permutations $t,v$ of
$\Qg$, the \emph{half-flips}, by
\begin{equation}\label{eq:tv}
t(a)=a,\quad t(ab)=cab,\qquad v(a)=ca,\quad v(ab)=ab\qquad(a\in A).
\end{equation}

\begin{lemma}[Half-flips]\label{lem:halfflips}
Assume $X=X^{-1}\subseteq A\setminus\{1\}$ and $cT=T$. Then $t,v\in P$ and
\begin{equation}\label{eq:tvprops}
t^2=v^2=1,\quad [t,v]=1,\quad tv=\rho,\quad
\Fix(t)=A,\quad \Fix(v)=B.
\end{equation}
Both sides $A$ and $B$ are nonempty. The right translation $R(b)$ conjugates
$t$ to $v$ and centralizes $\rho$.
\end{lemma}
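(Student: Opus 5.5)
The plan is to check everything directly from the adjacency formulas \eqref{eq:adj} of Lemma~\ref{lem:adj}. The algebraic identities in \eqref{eq:tvprops} are immediate from \eqref{eq:tv}. Since $c^2=1$, both $t$ and $v$ are involutions. Both act by multiplication by $c$ or by the identity on each side, and $c$ is central, so they commute. The product $tv$ sends $a\mapsto ca$ and $ab\mapsto cab$. Because $c$ is central, $xc=cx$, so this product equals $R(c)=\rho$. For the fixed-point sets, $cab\ne ab$ and $ca\ne a$ since $c\ne1$; this gives $\Fix(t)=A$ and $\Fix(v)=B$. The sides are nonempty because $1\in A$ and $b\in B$.

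To show $t\in P$, I will check that $t$ preserves $E$ on each type of pair. For pairs within $A$ there is nothing to check. For pairs within $B$, $E(cab,cdb)=\one_X(ca(cd)^{-1})=\one_X(ad^{-1})=E(ab,db)$. For cross pairs, $E(a,cdb)=\one_T(acd)=\one_T(ad)=E(a,db)$, using $cT=T$. Cross pairs with the $B$-vertex first follow by symmetry of adjacency.

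Rather than checking $v$ separately, I will get $v\in P$ from the conjugation claim. The map $R(b)$ lies in $P$ because every right translation is an automorphism. Hence $t^{R(b)}\in P$, and it suffices to show that $t^{R(b)}=v$. The computation needs care with the right-to-left convention, $t^{R(b)}=R(b)^{-1}tR(b)=R(b^{-1})\,t\,R(b)$.
\begin{itemize}
\item For $a\in A$: $R(b)$ sends $a$ to $ab\in B$, $t$ sends this to $cab$, and $R(b^{-1})$ gives $ca$. This matches $v(a)=ca$.
\item For $ab$: $R(b)$ gives $ab^2=ac\in A$, which $t$ fixes, and $R(b^{-1})$ returns $ab$. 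This matches $v(ab)=ab$.
\end{itemize}
So $t^{R(b)}=v$. Finally, $R(b)$ centralizes $\rho=R(c)$, since $R(b)R(c)=R(cb)=R(bc)=R(c)R(b)$, with $c$ central.

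There is no real obstacle here. The only points needing attention are the direction of conjugation and the multiplication rule in $B$. The latter is handled by noting that left multiplication by the central element $c$ commutes with everything, so no explicit use of $ba=a^{-1}b$ is needed.
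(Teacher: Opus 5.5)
Your proof is correct and follows the paper's approach: the identities in \eqref{eq:tvprops} are read off from \eqref{eq:tv}, and $t\in P$ is checked on the three kinds of pairs in \eqref{eq:adj} exactly as the paper does. The only differences are in how $v\in P$ and the conjugation are obtained. You get $v\in P$ from $v=t^{R(b)}$ with $R(b)\in P$ instead of repeating the three adjacency checks. You also compute $R(b)^{-1}tR(b)$ directly, whereas the paper computes $R(b)\,t\,R(b)^{-1}=v$ and then uses $R(b)^2=\rho$ to get both senses; your form $v=t^{R(b)}$ is exactly the one used later in Lemma~\ref{lem:halves-props}(3).
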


\begin{proof}
The relations in \eqref{eq:tvprops} hold by \eqref{eq:tv}, since $\rho(x)=xc=cx$.
To see that the half-flips are automorphisms we check the three kinds of pairs
in \eqref{eq:adj}. The map $t$ preserves adjacency: it fixes $A$ pointwise;
$E(cab,cdb)=\one_X(ad^{-1})=E(ab,db)$ because $c$ is central; and
$E(a,cdb)=\one_T(acd)=\one_T(ad)=E(a,db)$ because $cT=T$. The same three
computations apply to $v$. For the last assertion, since $c$ is central,
$R(b)$ commutes with $\rho$, and evaluation on the two sides gives the
conjugation: for $z\in A$ we have $R(b)^{-1}(z)=zb^{-1}=czb$, $t(czb)=zb$ and $R(b)(zb)=zb^2=cz$;
and $R(b)^{-1}(zb)=z$, $t(z)=z$, $R(b)(z)=zb$. Hence $R(b)\,t\,R(b)^{-1}=v$. Since
$R(b)^2=R(b^2)=R(c)=\rho$ and $\rho=tv$ commutes with $v$, also
$t^{R(b)}=R(b)^{-1}tR(b)=R(b)^{-2}vR(b)^2=\rho^{-1}v\rho=v$, so $R(b)$ conjugates $t$ to $v$ in either sense.
\end{proof}

For a permutation group $G$ on a set $\Omega$, an \emph{orbital} is a $G$-orbit
on $\Omega\times\Omega$. It is \emph{self-paired} if it contains $(\beta,\alpha)$
whenever it contains $(\alpha,\beta)$, and then its \emph{undirected orbital
graph} has the edges $\{\alpha,\beta\}$ with $(\alpha,\beta)$ in the orbital.

\begin{proposition}[The orbital graph]\label{prop:reduction}
Let $G\le\Sym(\Qg)$ be primitive, not doubly transitive, and contain $R(\Qg)$.
Let $\Gamma$ be the undirected orbital graph of $G$ containing the pair $(1,c)$.
Then $\Gamma=\Cay(\Qg,X\cup Tb)$ where
\[
X=X^{-1}\subseteq A\setminus\{1\},\quad c\in X,\quad cT=T,\quad
\varnothing\ne T\ne A ,
\]
$\Gamma$ is nonempty and noncomplete, $\Gamma$ and its complement are connected,
$P=\Aut\Gamma$ contains $G$ and is primitive, and $H=\Pi(T)$ is a proper,
finite, cyclic subgroup of $A$ containing $c$.
\end{proposition}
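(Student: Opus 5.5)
We would begin with standard facts about orbital graphs and then read off the shape of the connection set. Since $G\ge R(\Qg)$ is transitive, every orbital containing a pair $(x,y)$ with $x\ne y$ is invariant under $R(\Qg)$. Its undirected orbital graph is therefore a Cayley graph $\Cay(\Qg,S)$ with $S=\{g: (g,1)\text{ or }(1,g)\text{ lies in the orbital}\}$. We need the orbital of $(1,c)$ to be self-paired. Because $c$ is central, $R(c)$ is a permutation in $G$ swapping $1$ and $c$, so $(c,1)=R(c)(1,c)$ lies in the same orbital. Hence $\Gamma=\Cay(\Qg,S)$ with $S=S^{-1}$ and $1\notin S$. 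Since $G$ is transitive but not doubly transitive, the nondiagonal pairs split into at least two orbitals, so $\Gamma$ is nonempty and noncomplete. Clearly $G\le P=\Aut\Gamma$, so $P$ is transitive, and $P$ is primitive because every block system of $P$ is one of $G$. Primitivity of $G$ also gives connectedness of $\Gamma$ and of its complement. The connected components of an invariant graph form blocks, and so do those of the complement (which is an invariant graph containing at least one edge, since $\Gamma$ is noncomplete). These blocks are not singletons when edges exist, so each graph is connected. For an infinite set this still holds: the component of a vertex is a block, and it is nontrivial but not the whole set unless the graph is connected.

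Next we decompose $S=X\cup Tb$ with $X=S\cap A$ and $T=\{a\in A: ab\in S\}$. We have $c\in S$, so $c\in X$. Inverse-closure gives $X=X^{-1}$, and $1\notin X$. Since $(ab)^{-1}=cab$, inverse-closure of the part $Tb$ gives $cT=T$. Lemma~\ref{lem:adj} then applies.

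For $T\ne\varnothing$, we argue that if $T=\varnothing$ then no edges join $A$ to $B$, so $\Gamma$ is disconnected, a contradiction. For $T\ne A$, we argue that if $T=A$ then every vertex of $A$ is adjacent to every vertex of $B$. Then the complement has no edges across the sides, so the complement is disconnected, again a contradiction.

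Finally we treat $H=\Pi(T)$. It is a subgroup of $A$, and $c\in H$ because $cT=T$. If $H=A$, then $T$ is a union of cosets of $A$, hence $T\in\{\varnothing,A\}$, which is excluded. So $H$ is proper, and by Lemma~\ref{lem:prufer}(1) it is finite and cyclic. The main (mild) obstacle is only to make the connectivity-from-primitivity step rigorous in the infinite setting, as discussed above; everything else is bookkeeping.
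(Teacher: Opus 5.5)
Your proposal is correct and follows essentially the same route as the paper. It gets self-pairedness from $R(c)$ and the Cayley structure from $R(\Qg)\le G$. It then obtains connectivity of $\Gamma$ and of its complement from primitivity: the components form a $G$-invariant partition, and no part is a singleton because a vertex-transitive graph with an edge has no isolated vertex. Finally it derives $\varnothing\ne T\ne A$ and the properness of $H=\Pi(T)$. The only difference is cosmetic: you symmetrize $S$ before proving self-pairedness, whereas the paper proves self-pairedness first and takes $S=\{g:g\sim 1\}$.
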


\begin{proof}
The orbital through $(1,c)$ is self-paired, because $R(c)\in G$ maps $(1,c)$ to
$(c,1)$. So its undirected orbital graph $\Gamma$ is defined; it is nonempty,
and it is noncomplete because $G$ is not doubly transitive. Since
$R(\Qg)\le G$, the graph $\Gamma$ is a Cayley graph $\Cay(\Qg,S)$ with
$S=\{g:g\sim1\}$ inverse-closed and $1\notin S$, and $c\in S$. Indeed, $G$ maps the orbital to itself,
so every element of $G$, and its inverse, preserves adjacency in $\Gamma$; thus $G\le\Aut\Gamma=P$, and in
particular every $R(g)$ is an automorphism of $\Gamma$. Applying $R(q^{-1})$ shows that $p\sim q$ if and
only if $pq^{-1}\sim1$, that is, $\Gamma=\Cay(\Qg,S)$. Applying $R(g^{-1})$ to the edge $\{g,1\}$ gives
the edge $\{1,g^{-1}\}$, so $S=S^{-1}$; $1\notin S$ because $\Gamma$ has no loops; and $c\in S$ because $\{c,1\}$ is an edge. Write
$S=X\cup Tb$ with $X,T\subseteq A$. Since $A$ is a subgroup and $B=Ab$ is closed under inversion,
$X=S\cap A$ satisfies $X=X^{-1}\subseteq A\setminus\{1\}$ and $c\in X$; and inverse
closure of $Tb$ gives $cT=T$, because $(Tb)^{-1}=\{cdb:d\in T\}=(cT)b$, as
$(db)^{-1}=cdb$ for $d\in A$.

The connected
components of $\Gamma$, and those of its complement, form $G$-invariant
partitions; the complement has no isolated vertex since $\Gamma$ is noncomplete
and vertex-transitive. By primitivity both graphs are connected. In particular,
if $T=\varnothing$ there is no edge between $A$ and $B$ in $\Gamma$, and if $T=A$
there is none in the complement; hence $\varnothing\ne T\ne A$. Consequently
$H=\Pi(T)$ is a proper subgroup of $A$: it is the stabilizer of $T$ under translation by $A$, hence a
subgroup, and if it were $A$, then $T$, being nonempty, would contain some $d$, and $hd\in T$ for every
$h\in A$ would give $T=A$. So $H$ is finite and cyclic
(Lemma~\ref{lem:prufer}(1)), and it contains $c$ because $cT=T$.

The group $P$
contains $G$, hence is primitive.
\end{proof}

A permutation is \emph{finitary} if it moves only finitely many points, and
$\Alt(\Omega)$ is the group of even finitary permutations of a set $\Omega$. We
use Wielandt's theorem on finitary permutations in the following form.

\begin{theorem}[Wielandt; see {\cite[Theorem~29.2]{Neumann2023}}; see also {\cite[Theorem~3.3D]{dixon_mortimer_1996}} and {\cite[Theorem~6.1]{cameron_1999}}]\label{thm:wielandt}
Let $G$ be primitive on an infinite set $\Omega$. If $G$ contains a nonidentity
permutation of finite support, then $G$ contains the finitary alternating group
$\Alt(\Omega)$.
\end{theorem}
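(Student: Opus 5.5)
The plan is to pass to the normal subgroup $F=G\cap\mathrm{FSym}(\Omega)$ of finitary elements of $G$, produce a $3$-cycle in $F$, and then use primitivity to spread $3$-cycles over all of $\Omega$. Since conjugation preserves support size, $F$ is normal in $G$, and it is nontrivial by hypothesis. The orbits of a normal subgroup of $G$ form a $G$-invariant partition, so by primitivity $F$ is transitive; since $\Omega$ is infinite and every element of $F$ has finite support, $F$ moves every point.

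\textbf{Step 1: a $3$-cycle.} Choose $g\in G\setminus\{1\}$ of minimal support size $m$, with support $\Delta$. Since $\Delta$ is finite, has at least two points and $\Omega$ is infinite, $\Delta$ is not a block. So some $h\in G$ has $h\Delta\cap\Delta\ne\varnothing$ and $h\Delta\ne\Delta$; among all such $h$, choose one with $k=|\Delta\cap h\Delta|$ minimal. The commutator $[g,g^{h}]$ is supported in $\Delta\cup h\Delta$ and acts trivially off the overlap and its images. When $k$ is small relative to $m$, it should be a nonidentity element whose support has size at most about $3k<m$, contradicting minimality unless $m\le 3$; when $k$ is large, I would replace $h$ by products of such elements to force a smaller positive overlap, again using that $\Delta$ is not a block. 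In the end $m\le3$, so $G$ contains a transposition or a $3$-cycle; in the transposition case, a product of two overlapping transpositions gives a $3$-cycle. This is the step I expect to be the main obstacle: the bookkeeping needed to show the commutator is nontrivial and genuinely has smaller support is the delicate part. If it does not close cleanly, the fallback is the classical Jordan-set (Marggraf) route. Work inside the finite group generated by $g$ and boundedly many conjugates, which acts primitively on a suitable finite union of their supports, and apply the finite Jordan theorem that a primitive group with a short cycle contains the alternating group.

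\textbf{Step 2: spreading.} Define a graph on $\Omega$ by joining $x\ne y$ if some $3$-cycle in $G$ moves both. This graph is $G$-invariant, since conjugates of $3$-cycles are $3$-cycles, and by Step 1 it has edges. Its connected components therefore form a $G$-invariant partition with a nontrivial part, so by primitivity the graph is connected. Now take any finite set $\Sigma\subseteq\Omega$. Connectedness gives finitely many $3$-cycles in $G$ whose supports cover $\Sigma$ and whose overlap graph is connected. By the standard lemma, such $3$-cycles generate the alternating group on the union $\Sigma'$ of their supports; the lemma is proved by induction, adding one $3$-cycle that meets the current union and using that $\Alt(n)$ together with a $3$-cycle meeting it in one or two points generates $\Alt(n+1)$ or $\Alt(n+2)$. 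Every even finitary permutation is supported in some such $\Sigma'$, so $\Alt(\Omega)\le G$.
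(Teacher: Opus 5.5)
The paper does not prove this statement; it quotes Wielandt's theorem from \cite{Neumann2023}, \cite{dixon_mortimer_1996} and \cite{cameron_1999}. Judged as a self-contained argument, your passage to the finitary normal subgroup and your Step~2 are correct and standard. The relation ``some $3$-cycle of $G$ moves both points'' is $G$-invariant, so by primitivity its graph is connected, and $3$-cycles whose supports have a connected overlap graph generate the alternating group on the union of their supports. The genuine gap is Step~1. Producing a $3$-cycle is the entire content of Wielandt's theorem, and neither your main route nor your fallback establishes it.

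The commutator argument does not close, for two reasons. First, $[g,g^h]$ may be the identity. For example, $(1\,2)(3\,4)(5\,6)(7\,8)$ and $(1\,2)(9\,10)(11\,12)(13\,14)$ have the same cycle type and supports meeting in $k=2<m/3$ points, yet they commute. Nontriviality for \emph{some} $h$ can be rescued: if $g$ commuted with all its conjugates, $\langle g^G\rangle$ would be a nontrivial abelian normal subgroup of a primitive group, hence regular, and no group of finitary permutations of an infinite set is regular. But the $h$ obtained this way gives no control over $k$. Second, nothing forces $3k<m$, and your repair for large $k$ is circular. You already minimized $k$ over all $h\in G$, so products of such elements are just other elements of $G$ and cannot produce a smaller positive overlap.

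What would suffice is an $h$ with $|\Delta\cap h\Delta|=1$. Indeed, if the supports of $a$ and $b$ meet in exactly one point $\alpha$, then $a^{-1}b^{-1}ab$ is the $3$-cycle $(\alpha\ \ a^{-1}\alpha\ \ b^{-1}\alpha)$. But primitivity only tells you that $\Delta$ is not a block, and getting from there to a one-point overlap is exactly the difficulty.

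Your fallback hides the same gap. You assume, without argument, that some finite group generated by boundedly many conjugates of $g$ is primitive on a finite union of supports. You would in fact need such pieces of degree large compared with $|\Delta|$. You then invoke Jordan's theorem, which requires a cycle of prime length $p\le n-3$, whereas $g$ is an arbitrary element of small support. A complete proof needs a real mechanism at this point. One option is finite Jordan sets (finite $\Lambda$ with $G_{(\Omega\setminus\Lambda)}$ transitive on $\Lambda$), which are closed under unions of overlapping members and can be enlarged using primitivity. This has to be combined with a finite theorem that handles arbitrary elements of small support, such as Jordan's theorem on the minimal degree of finite primitive groups not containing the alternating group, or Marggraf's theorem.
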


\begin{lemma}[The case $U$ finite]\label{lem:Ufinite}
Let $\Gamma=\Cay(\Qg,X\cup Tb)$ with $X=X^{-1}\subseteq A\setminus\{1\}$,
$c\in X$ and $cT=T$, and suppose that $U=X\symd cX$ is finite. Let $L$ be the
subgroup generated by $U$. The permutation $\eta$ that multiplies by $c$ on $Lb$
and fixes $\Qg\setminus Lb$ is a nonidentity automorphism of $\Gamma$ with finite
support. Consequently, if $\Gamma$ is nonempty and noncomplete, then $\Aut\Gamma$
is not primitive.
\end{lemma}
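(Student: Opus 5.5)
The plan is to verify directly, from the adjacency formulas \eqref{eq:adj} of Lemma~\ref{lem:adj}, that $\eta$ preserves every value $E(p,q)$, and then to combine Theorem~\ref{thm:wielandt} with the high transitivity of $\Alt(\Qg)$.

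First, $\eta$ is a nonidentity permutation of finite support. Since $U$ is finite, $L=\langle U\rangle$ is a finite subgroup of $A$, hence equal to some $A_n$ by Lemma~\ref{lem:prufer}(1).

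\begin{itemize}
\item If $U\ne\varnothing$, then $c\in L$ by Lemma~\ref{lem:prufer}(1). So $cLb=Lb$, and $\eta$ is an involution of the finite set $Lb$.
\item If $U=\varnothing$, then $L=1$. In that case $\eta$ should be read as the transposition exchanging $b$ and $cb$, i.e.\ as multiplication by $c$ on the $\eta$-stable set $Lb\cup cLb$.
\end{itemize}

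In both cases the support is $Lb\cup cLb$, which is finite, and $\eta\ne1$ because $c\ne1$. I will work with this stable set $M=Lb\cup cLb=(L\cup cL)b$ throughout. Note that $L\cup cL=\langle L,c\rangle$ is again a subgroup, and $U\subseteq L'$ where $L'=\langle L,c\rangle$.

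Next I check adjacency in three kinds of pairs, using that $c$ is central.

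\begin{itemize}
\item \emph{Pairs inside $A$.} Both vertices are fixed by $\eta$, so there is nothing to check.
\item \emph{Cross pairs $\{a,db\}$.} If $db\notin M$, the pair is fixed. If $db\in M$, then $E(a,cdb)=\one_T(acd)=\one_T(ad)=E(a,db)$ because $cT=T$.
\item \emph{Pairs $\{ab,db\}$ inside $B$.} If both or neither of $ab,db$ lie in $M$, we get $E(cab,cdb)=\one_X(ad^{-1})=E(ab,db)$, or the pair is fixed. The key case is $a\in L'$, $d\notin L'$. Then $u=ad^{-1}\notin L'$ because $L'$ is a subgroup, so $u\notin U$. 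By the definition $U=X\symd cX$, this means $u\in X$ if and only if $cu\in X$. Hence $E(cab,db)=\one_X(cu)=\one_X(u)=E(ab,db)$.
\end{itemize}

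So $\eta$ preserves adjacency. Since $\eta^2=1$, its inverse does too, and $\eta\in\Aut\Gamma$.

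For the consequence, suppose $\Aut\Gamma$ is primitive. Since $\Qg$ is infinite, Theorem~\ref{thm:wielandt} applied to $\eta$ gives $\Alt(\Qg)\le\Aut\Gamma$. The group $\Alt(\Qg)$ is $k$-transitive for every $k$ on an infinite set, in particular doubly transitive. A graph invariant under a doubly transitive group has either all pairs of distinct vertices as edges or none, so $\Gamma$ would be complete or empty, contrary to hypothesis.

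The only delicate point is the mixed case inside $B$: there one needs the subgroup $L'$ containing $U$ to be closed under the relevant products, so that the difference $ad^{-1}$ lands outside $U$. Once that closure is arranged, the remaining checks are routine.
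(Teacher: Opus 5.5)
Your proof is correct and follows the paper's argument: the same check of \eqref{eq:adj} on cross pairs (via $cT=T$), on pairs inside the moved coset (via centrality of $c$) and on mixed pairs in $B$ (whose difference lies outside a subgroup containing $U$), followed by Wielandt's theorem and the double transitivity of $\Alt(\Qg)$. The case $U=\varnothing$ and the auxiliary group $L'=\langle L,c\rangle$ are unnecessary. Since $c\in X$ and $1\notin X$, we have $c\in X\setminus cX\subseteq U$, so $c\in L$, $L'=L$ and $M=Lb$. This also makes the statement's $\eta$ well defined without any reinterpretation, and it is how the paper opens its proof.
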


\begin{proof}
Since $c\in X$ and $1\notin X$, we have $c\in X\setminus cX\subseteq U$.
The set $U$ is finite, so $U\subseteq A_n$ for some $n$ by the definition of $A$
(Section~\ref{s:setup}), and $L\le A_n$. So
$L$ is a finite subgroup of $A$ containing $c$, and $\eta$ is a nonidentity
permutation with support $Lb$. Across the two sides, $\eta$ preserves
adjacency because $cT=T$ (by \eqref{eq:adj}). Adjacency within $Lb$ is preserved
because $\eta$ acts there as multiplication by the central element $c$. For
$p\in Lb$ and $q\in B\setminus Lb$, their difference in the sense of
\eqref{eq:adj} lies outside $L$, hence outside $U$, so its membership in $X$ is
unchanged by multiplication by $c$.

If $\Aut\Gamma$ were primitive, then by
Theorem~\ref{thm:wielandt} it would contain $\Alt(\Qg)$, which is doubly transitive, as $\Qg$ is
infinite \cite[Exercise~2.1.5]{dixon_mortimer_1996}. Since $\Gamma$ is nonempty it has an edge $\{\alpha,\beta\}$, and
since it is noncomplete it has distinct nonadjacent $\gamma,\delta$; an element of
$\Alt(\Qg)\le\Aut\Gamma$ sending $(\alpha,\beta)$ to $(\gamma,\delta)$ would carry an edge to a
non-edge, a contradiction.
\end{proof}

\begin{definition}[Standing hypothesis]\label{def:standing}
We say that \emph{\textup{(S)} holds} if $\Gamma=\Cay(\Qg,X\cup Tb)$ with
\[
X=X^{-1}\subseteq A\setminus\{1\},\quad c\in X,\quad cT=T,\quad
\varnothing\ne T\ne A,
\]
and $U=X\symd cX$ is infinite. Then $H=\Pi(T)$ is a proper, finite, cyclic subgroup of $A$ containing $c$
(as in the proof of Proposition~\ref{prop:reduction}: $\varnothing\ne T\ne A$ makes $H$ proper, so
Lemma~\ref{lem:prufer}(1) applies, and $c\in H$ since $cT=T$), and
we write $P=\Aut\Gamma$, $\rho=R(c)$, $\Ii=\rho^P$ and $t,v$ as in
\eqref{eq:tv}. The branch $|H|\ge4$ means $H=A_\nu$ with $\nu\ge2$; the branch
$|H|=2$ means $H=\langle c\rangle$.
\end{definition}

\begin{remark}
Requiring $U$ to be infinite in \textup{(S)} loses nothing: if $U$ is finite and $\Gamma$ is nonempty and
noncomplete, then $\Aut\Gamma$ is not primitive by Lemma~\ref{lem:Ufinite}.
\end{remark}

\section{The exact half-stabilizer}\label{s:H}

The \emph{half-stabilizer} is $\pst{A}$, the pointwise stabilizer of the side
fixed by the half-flip $t$. Statement (H) determines it exactly.

\begin{lemma}[Half-stabilizer rigidity]\label{lem:H}
Assume \textup{(S)}. For $h\in H$ define $\beta_h(a)=a$ and $\beta_h(ab)=hab$
$(a\in A)$. Then
\begin{equation}\label{eq:H}
\pst{A}=\{\beta_h:h\in H\}\cong H .
\end{equation}
In particular $\pst{A}$ is cyclic, and its unique nonidentity involution is
$t=\beta_c$.
\end{lemma}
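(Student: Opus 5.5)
The plan is to prove the two inclusions separately. The reverse inclusion is a direct check against \eqref{eq:adj}. The forward inclusion first pins down each $g\in\pst{A}$ up to a function $A\to H$ using cross-neighbourhoods, and then shows that this function is constant using that $U$ is infinite.

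\emph{Each $\beta_h$ lies in $P$.} We check the three adjacency types of Lemma~\ref{lem:adj}, as in the proof of Lemma~\ref{lem:halfflips}. The map $\beta_h$ fixes $A$ pointwise. Within $B$ we have $E(hab,hdb)=\one_X(ad^{-1})=E(ab,db)$, since $A$ is abelian. Across the sides, $E(a,hdb)=\one_T(had)=\one_T(ad)=E(a,db)$, since $hT=T$. The map $h\mapsto\beta_h$ is an injective homomorphism $H\to\pst{A}$. Since $H$ is finite cyclic containing $c$ (standing hypothesis (S)), the last sentence of the lemma follows once equality is proved: $\beta_c=t$ by \eqref{eq:tv}, and $\beta_c$ is the unique involution of $\pst{A}$.

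\emph{Reduction to a function $d\mapsto h_d$.} Let $g\in\pst{A}$. Then $g$ preserves $B$ and fixes every vertex of $A$, so it preserves cross-neighbourhoods of vertices of $B$. By \eqref{eq:adj}, the cross-neighbourhood of $db$ is $\{a:ad\in T\}=Td^{-1}$. Hence, writing $g(db)=d'b$, we get $Td^{-1}=Td'^{-1}$, so $d'd^{-1}\in\Pi(T)=H$. Thus $g(db)=h_d\,db$ for a function $d\mapsto h_d\in H$. Adjacency inside $B$ then says
\[
\one_X\bigl(h_ah_d^{-1}\,ad^{-1}\bigr)=\one_X(ad^{-1})\qquad(a,d\in A).
\]
Composing with $\beta_{h_1}^{-1}$, we may assume $h_1=1$, and it remains to show that $h_d=1$ for all $d$.

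\emph{Constancy, the main obstacle.} The fibres $C_k=\{d:h_d=k\}$, $k\in H$, partition $A$ into finitely many pieces, and the display says that $X$ is invariant under multiplication by $k k'^{-1}$ on every difference $ad^{-1}$ with $a\in C_k$, $d\in C_{k'}$. I would first handle the case where some power of $g$, or some element of $\langle g,\beta_H\rangle$, gives a function taking only the values $1$ and $c$. In that case the permutation agrees with $t$ on $\{db:h_d=c\}$ and is the identity elsewhere. The display then says that no difference $ad^{-1}$ with $a\in C_1$, $d\in C_c$ lies in $U$. Since every nontrivial subgroup of $H$ contains $c$ (Lemma~\ref{lem:prufer}(1)), I expect to reach this case by passing to the element of $H$ of order two inside the image of the fibre differences. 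So the key claim is: if $A=C_1\sqcup C_c$ with both parts nonempty and $C_1C_c^{-1}\cap U=\varnothing$, then $U$ is finite, which contradicts (S).

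To attack this claim I would compare the subgroup generated by $U$ with the partition. Each $u\in U$ satisfies $ud\in C_{h_d}$ for all $d$, so translation by $U$ preserves the partition. Since $U$ is infinite, it generates $A$ by Lemma~\ref{lem:prufer}(1). Therefore translation by all of $A$ preserves the partition, and transitivity forces one part to be empty. I expect the real difficulty to be the reduction from general $k\in H$ to the value $c$: a single $u$ with $\one_X(ku)\ne\one_X(u)$ need not lie in $U$. There I would use that $X\symd kX$ is also infinite, since $k$ has a power equal to $c$ and $X\symd cX\subseteq\bigcup_{j}k^j(X\symd kX)$. I would then run the same generation argument with $X\symd kX$ in place of $U$, taking $k$ of minimal order among the nontrivial differences $h_ah_d^{-1}$.
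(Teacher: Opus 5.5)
\textbf{There is a genuine gap in the constancy step for $|H|\ge4$.}

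Several parts of your proposal are correct:
\begin{itemize}
\item the check that each $\beta_h$ lies in $\pst{A}$;
\item the reduction to $g(db)=h_d\,db$ with $h_d\in H$;
\item the argument in the two-valued case, which is the period-subgroup argument the paper later uses in Lemma~\ref{lem:sync}.
\end{itemize}
When $H=\langle c\rangle$, every $h_d$ lies in $\{1,c\}$ automatically, so your argument proves the lemma outright in that branch. The gap is the case $|H|\ge4$, which is the heart of the lemma, and neither proposed reduction works.

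\emph{Powers of $g$.} These are not governed by powers of $h$: $g^2(db)=h_{h_dd}\,h_d\,db$, which need not equal $h_d^2\,db$. The function of $g^j$ is $d\mapsto h_d^j$ when $h$ is constant on the cosets $Hd$, that is, when $g$ acts on every fibre $Hdb$ by one translation. That is exactly what has not been shown. A priori $g$ permutes each fibre arbitrarily. Conjugating by $\beta_k$ merely translates $h$ along $H$, so $\langle g,\beta_H\rangle$ offers no evident element with values in $\{1,c\}$.

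\emph{The $X\symd kX$ route.} For $u$ with $\one_X(ku)\ne\one_X(u)$, the display with $a=ud$ gives only $h_{ud}h_d^{-1}\ne k$, not $h_{ud}=h_d$. So you do not get $X\symd kX\subseteq\Pi(h)$. Minimality of the order of $k$ says nothing about the other differences, which may have equal or larger order.

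\textbf{The missing idea is global: first prove $\pst{A}$ finite, then use that $A$ has no nontrivial finite image.}

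\emph{Finiteness comes from linking fibres.}
\begin{itemize}
\item If $Hw\cap U\ne\varnothing$, then $Z=\{k\in H:kw\in X\}$ is not $c$-invariant.
\item Every nontrivial subgroup of $H$ contains $c$, so the period of $Z$ in $H$ is trivial.
\item Hence the columns of the adjacency matrix between $Hdb$ and $Hwdb$ are pairwise distinct. An element of $\pst{A}$ fixing one of these fibres pointwise therefore fixes the other.
\item The image of $U$ in $A/H$ is infinite and inverse-closed, so it generates $A/H$. Thus an element fixing $Hb$ pointwise fixes all of $B$ and is the identity.
\item So restriction to $Hb$ is injective, and $|\pst{A}|\le|H|!$.
\end{itemize}

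\emph{Constancy then follows in your own notation.} Conjugation by $R(a)$ sends the function $h$ of $g$ to $d\mapsto h_{da^{-1}}$. Since $\pst{A}$ is finite, the translation orbit of $h$ is finite. So $\Pi(h)$ has finite index in $A$, and Lemma~\ref{lem:prufer}(2) gives $\Pi(h)=A$. Hence $h$ is constant and $g=\beta_{h_1}$. The paper phrases this step as $\pst{A}$ centralizing $R(A)$.
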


\begin{proof}
The proof has three steps: the maps of the statement are automorphisms, the
half-stabilizer is finite, and each of its elements is one of these maps.
Equation \eqref{eq:adj} shows that each $\beta_h$ preserves the graph: within
$B$ it is multiplication by $h\in A$, and $\one_T(ahd)=\one_T(ad)$ because
$h\in\Pi(T)$; it is a bijection, with inverse $\beta_{h^{-1}}$, and fixes $A$, so it lies in $\pst{A}$.
To see what an element fixing the side can do on the other side,
we use cross-neighbourhoods, which such an element preserves. The
cross-neighbourhood of $ub$ in $A$ is $u^{-1}T$.
Consequently the classes of vertices of $B$ with equal such neighbourhoods are
precisely
\begin{equation}\label{eq:fibres}
\mathcal{F}_{\bar u}=Hub,\qquad \bar u\in A/H .
\end{equation}
Every element of $K=\pst{A}$ preserves each of these finite fibres.

Next we show that the fibres are linked rigidly, so that one fibre controls the
others. Between $Hub$ and $Hu'b$, the adjacency matrix has $(h_1,h_2)$-entry
\begin{equation}\label{eq:fibrematrix}
\one_X(h_1h_2^{-1}uu'^{-1}),\qquad h_1,h_2\in H .
\end{equation}
Suppose $Huu'^{-1}\cap U\ne\varnothing$. The corresponding subset
$Z=\{h\in H:huu'^{-1}\in X\}$ is not invariant under multiplication by $c$. Its
period subgroup in $H$ must therefore be trivial, since every nontrivial
subgroup of $H$ contains $c$ (Lemma~\ref{lem:prufer}(1)). As a result the columns of \eqref{eq:fibrematrix}
are pairwise distinct.

An element of $K$ fixing the first fibre pointwise consequently fixes the
second one pointwise. To reach every fibre we use the infinitude in the standing
hypothesis once more. The graph on $A/H$ linking distinct cosets when their
difference coset meets $U$ is connected: its set of connection directions, the image of $U$
in $A/H$, is inverse-closed, as $X=X^{-1}$ gives $U^{-1}=U$, and infinite, as $U$ is infinite and $H$ is
finite; and every infinite subset of $A/H$ generates that
Pr\"ufer group (Lemma~\ref{lem:prufer}(1) and~(3), with $D=H$). An element of $K$ fixing $Hb$ pointwise fixes every fibre $Hub$ pointwise: a path from $H$ to $Hu$ in
this connected graph steps only between linked cosets, and the first sentence of this paragraph carries
pointwise fixing along each step. Such an element therefore fixes $B$ pointwise; it also fixes $A$, so
it is the identity. Restriction to the single fibre $Hb$
is therefore injective, so $K$ is finite.

Finiteness forces every element to commute with the translations by the
Pr\"ufer group, and this determines it. The group $R(A)$ normalizes $K$, since each $R(a)$, $a\in A$, lies in $P$ and maps $A$ onto itself. Its conjugation homomorphism into the finite
group $\Aut(K)$ is trivial (Lemma~\ref{lem:prufer}(2)), so $K$ centralizes
$R(A)$. Let $k\in K$ and $k(b)=hb$; preservation of $Hb$ gives $h\in H$. Since
$ub=R(u^{-1})(b)$, for every $u\in A$,
\[
k(ub)=R(u^{-1})(k(b))=hbu^{-1}=hub .
\]
The permutation also fixes $A$, hence $k=\beta_h$. This proves the equality in \eqref{eq:H}.

Since $\beta_h\beta_{h'}=\beta_{hh'}$ and $\beta_h(b)=hb$, the map $h\mapsto\beta_h$ identifies $H$
with $\pst{A}$. Hence this stabilizer is cyclic, with unique involution $\beta_c=t$.
\end{proof}

The unique involution $t=\beta_c$ of $\pst{A}$ (Lemma~\ref{lem:H}) and the half-flip $v$
split $\rho$: by Lemma~\ref{lem:halves-props} below, $\rho=tv$, the
two factors are commuting involutions, and they fix the complementary sides $A$ and
$B$ pointwise. To use this splitting for an arbitrary matching $x\in\Ii$, the definition
below conjugates the whole triple $(t,v,\rho)$ by a single element of $P$, rather than
$t$ and $v$ separately; this keeps each half attached to the matching it comes from and
carries $A$ and $B$ to the two sides of the pair.

\begin{definition}[Halves]\label{def:halves}
Assume \textup{(S)}. We call each $x\in\Ii$ a \emph{full
matching}. A
\emph{complementary pair} for $x\in\Ii$ is a pair $(t_x,v_x)$ obtained by
simultaneously conjugating $(t,v,\rho)$: $(t_x,v_x,x)=(t^g,v^g,\rho^g)$ for some
$g\in P$. Its members are the \emph{halves} of $x$. Once the pair is fixed,
$t_x$ is the \emph{chosen half} and $v_x$ its \emph{complementary half} (and
$t_x$ is the complementary half of $v_x$); $\Fix(t_x)$ and $\Fix(v_x)$ are the
two \emph{sides} of the pair. A \emph{half-involution} is a $P$-conjugate of $t$.
\end{definition}

\begin{lemma}[Properties of halves]\label{lem:halves-props}
Assume \textup{(S)}. In \textup{(4)}--\textup{(6)}, let $x\in\Ii$, and let $(t_x,v_x)$ be a
complementary pair for $x$, with $(t_x,v_x,x)=(t^g,v^g,\rho^g)$ for some $g\in P$.
\begin{enumerate}
\item The hypotheses of Lemma~\ref{lem:halfflips} hold, so $t,v\in P$ and \eqref{eq:tvprops}
  holds. The set $S=X\cup Tb$ is inverse-closed and $1\notin S$, so every right translation of
  $\Qg$ lies in $P$; in particular $R(b)\in P$.
\item Every $x\in\Ii$ is a fixed-point-free involution.
\item The half-flip $v$ is a half-involution: $v=t^{R(b)}$ with $R(b)\in P$.
\item $t_x$ and $v_x$ are involutions in $P$; in particular $t_x,v_x\ne1$.
\item $x=t_xv_x$ and $[t_x,v_x]=1$; $\Fix(t_x)=g^{-1}(A)$ and $\Fix(v_x)=g^{-1}(B)$, which are
  complementary; and $t_x(p),v_x(p)\in\{p,x(p)\}$ for every vertex $p$, that is, each half sends
  every vertex either to itself or to its $x$-partner.
\item $(v_x,t_x)$ is also a complementary pair for $x$:
  \[
  (v_x,t_x,x)=(t^{R(b)g},v^{R(b)g},\rho^{R(b)g})\qquad\text{with }R(b)g\in P.
  \]
  In particular $t_x$ and $v_x$ are both half-involutions.
\end{enumerate}
\end{lemma}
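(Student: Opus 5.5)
Most of the six items follow by transporting the relations of Lemma~\ref{lem:halfflips} along conjugation by a single element of $P$. I would take them in order.

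For (1), (S) contains the hypotheses $X=X^{-1}\subseteq A\setminus\{1\}$ and $cT=T$, so Lemma~\ref{lem:adj} and Lemma~\ref{lem:halfflips} apply. They give $S=S^{-1}$, $t,v\in P$ and \eqref{eq:tvprops}. The condition $1\notin S$ holds because $1\notin X$ and $Tb\subseteq B$. For right translations, $pq^{-1}$ is unchanged when $p,q$ are replaced by $pg,qg$. Hence every $R(g)$ preserves adjacency, and since it is a bijection with inverse $R(g^{-1})$, it lies in $P$.

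For (2), $\rho=R(c)$ satisfies $\rho^2=R(c^2)=1$, $\rho\ne1$, and $xc\ne x$ for every $x$. Conjugation by $g\in P$ preserves being an involution and preserves having no fixed points, since $\Fix(\rho^g)=g^{-1}(\Fix\rho)$. Item (3) is the last assertion of Lemma~\ref{lem:halfflips}, namely $t^{R(b)}=v$, combined with $R(b)\in P$ from (1).

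For (4) and (5), fix $g$ with $(t_x,v_x,x)=(t^g,v^g,\rho^g)$. Then $t_x$ and $v_x$ lie in $P$ because $g,t,v\in P$. They are involutions because $t^2=v^2=1$ and $t,v\ne1$; the latter holds since $\Fix(t)=A\ne\Qg$ and $\Fix(v)=B\ne\Qg$, both sides being nonempty. Conjugating $tv=\rho$ and $[t,v]=1$ gives $x=t_xv_x$ and $[t_x,v_x]=1$. With the convention $g^h=h^{-1}gh$ we have $\Fix(t^g)=g^{-1}(\Fix t)=g^{-1}(A)$, and similarly $\Fix(v_x)=g^{-1}(B)$. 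These two sets are complementary because $A$ and $B$ partition $\Qg$ and $g^{-1}$ is a bijection.

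For the last claim of (5), first check the unconjugated case: $t(p)$ and $v(p)$ lie in $\{p,\rho(p)\}$ by \eqref{eq:tv}, since $cab=(ab)c$ and $ca=ac$. Now put $q=g(p)$. Then $t_x(p)=g^{-1}t(q)\in\{g^{-1}(q),g^{-1}\rho(q)\}=\{p,x(p)\}$, and the same argument applies to $v_x$.

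For (6), put $h=R(b)g\in P$. Lemma~\ref{lem:halfflips} gives $t^{R(b)}=v$ and $\rho^{R(b)}=\rho$. It remains to show $v^{R(b)}=t$. From $v=t^{R(b)}$ and $R(b)^2=\rho$ we get $v^{R(b)}=t^{R(b)^2}=t^\rho$, and $t^\rho=t$ because $t$ commutes with $\rho=tv$. Since $(y^{k})^{l}=y^{kl}$ under the convention $y^k=k^{-1}yk$, it follows that $(t^{h},v^{h},\rho^{h})=(v^g,t^g,\rho^g)=(v_x,t_x,x)$. Consequently $v_x=t^h$ and $t_x=t^g$ are both $P$-conjugates of $t$, that is, half-involutions.

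The only place needing care is the order of composition in the conjugation convention, in (5) and (6). The identities $(y^k)^l=y^{kl}$ and $\Fix(y^k)=k^{-1}(\Fix y)$ must be checked against right-to-left composition. There is no real mathematical obstacle.
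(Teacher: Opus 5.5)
Your proof is correct and follows the paper's route. Items (1)--(5) come from transporting the facts of Lemma~\ref{lem:halfflips} (together with $R(\Qg)\le P$) along conjugation by the single element $g$, and (6) rests on the same computation $v^{R(b)}=t^{R(b)^2}=t^{\rho}=t$ combined with $(y^k)^l=y^{kl}$; your explicit checks of the conjugation conventions, such as $\Fix(y^k)=k^{-1}(\Fix y)$, are ones the paper leaves implicit.
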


\begin{proof}
(1),~(3) Hypothesis \textup{(S)} contains the hypotheses of Lemmas~\ref{lem:adj} and~\ref{lem:halfflips}, which
give $S=S^{-1}$, $t,v\in P$, \eqref{eq:tvprops} and $v=t^{R(b)}$, and $1\notin S$ as $X\subseteq A\setminus\{1\}$
and $Tb\subseteq B$; so every right translation, $R(b)$ among them, lies in $P$ (Section~\ref{ss:conv}).
(2),~(4),~(5) Conjugation by $g\in P$ carries $\rho$, $t$, $v$ to $x$, $t_x$, $v_x$ (for~(2), $x\in\Ii=\rho^P$ is
$\rho^g$ for some $g\in P$), so these items are the conjugates of the following facts: $\rho=R(c)$ is a
fixed-point-free involution in $P$; $t$ and $v$ are involutions in $P$ satisfying \eqref{eq:tvprops},
nontrivial as their fixed sets $A$ and $B$ partition $\Qg$ and are both nonempty; and $t$, $v$ agree with
$\rho$ off $A$, $B$ respectively, by \eqref{eq:tv}.
(6) As $R(b)^2=R(c)=\rho=tv$ commutes with $t$, $v^{R(b)}=t^{R(b)^2}=t$, and $\rho^{R(b)}=\rho$ by
Lemma~\ref{lem:halfflips}; so $(t,v,\rho)^{R(b)g}=(v,t,\rho)^g=(v_x,t_x,x)$ with $R(b)g\in P$, and $t_x=t^g$ and
$v_x=t^{R(b)g}$ are half-involutions.
\end{proof}

By Lemma~\ref{lem:halves-props}(2) we identify each full matching $x$ with the perfect matching
formed by its $x$-pairs.

\begin{corollary}[(H) for every half]\label{cor:H}
Assume \textup{(S)}. Let $t'$ be a half-involution with $C=\Fix(t')$ and
complementary half $v'$.
\begin{enumerate}
\item $\pst{C}$ and $\pst{C^\complement}$ are cyclic of order $|H|$, with unique
  involutions $t'$ and $v'$ respectively. Every nonidentity element of
  $\pst{C}$ has fixed set exactly $C$, and every nonidentity element of
  $\pst{C^\complement}$ has fixed set exactly $C^\complement$.
\item $C_P(t')=\sst{C}=C_P(v')$. A permutation in $P$ preserving $C$ setwise
  normalizes $\pst{C}$ and each of its subgroups.
\item If $|H|=2$, then $\pst{C}$ has no element of order four. If $|H|\ge4$, there are
  $\zeta\in\pst{C}$ of order four with $\zeta^2=t'$ (a
  \emph{supported square root} of $t'$; an element of $\pst{C}$ some power of
  which is $t'$ is a \emph{supported root} of $t'$), and more generally
  compatible elements $\zeta_j\in\pst{C}$ with $\zeta_1=t'$, $|\zeta_j|=2^j$ $(1\le j\le\nu)$ and
  $\zeta_j^2=\zeta_{j-1}$ $(2\le j\le\nu)$, where $|H|=2^\nu$.
\end{enumerate}
\end{corollary}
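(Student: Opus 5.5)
The plan is to transport everything from the standard half-involution $t$ by conjugation. Write $t'=t^g$ with $g\in P$. By Lemma~\ref{lem:halves-props}(6) we may choose the conjugating element so that the triple is $(t,v,\rho)^g=(t',v',x)$, where $v'$ is the complementary half; then $C=\Fix(t')=g^{-1}(A)$ and $C^\complement=g^{-1}(B)$ by Lemma~\ref{lem:halves-props}(5). Conjugation by $g$ is an isomorphism of $P$ carrying $\pst{A}$ onto $\pst{C}$, $\pst{B}$ onto $\pst{C^\complement}$, $\sst{A}$ onto $\sst{C}$, and centralizers to centralizers. So it suffices to prove (1)--(3) for the pair $(t,v)$ with $C=A$.

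For (1) with $C=A$, Lemma~\ref{lem:H} gives $\pst{A}=\{\beta_h:h\in H\}\cong H$, which is cyclic of order $|H|$ with involution $t$. For $h\ne1$ the map $\beta_h$ moves every $ab$ to $hab\ne ab$, so its fixed set is exactly $A$. For $\pst{B}$, conjugate by $R(b)\in P$. It swaps $A$ and $B$ (it sends $a\mapsto ab$ and $ab\mapsto ca$), so $\pst{B}=\pst{A}^{R(b)^{\pm1}}$ is cyclic of order $|H|$. Its unique involution is the conjugate of $t$, namely $v$ (Lemma~\ref{lem:halfflips}). Fixed sets transport as well, giving exactly $B$.

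For (2), let $p\in P$ centralize $t$. Then $p$ preserves $\Fix(t)=A$, so $C_P(t)\le\sst{A}$. Conversely, if $p$ preserves $A$ setwise, it normalizes $\pst{A}$. The characteristic subgroups of a finite cyclic group include all its subgroups, so $p$ normalizes each subgroup of $\pst{A}$, and it fixes the unique involution $t$. Hence $\sst{A}=C_P(t)$. The same holds for $v$, since preserving $A$ is equivalent to preserving $B$. This also gives the normalizing assertion.

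For (3), note that $\pst{A}\cong H=A_\nu$ is cyclic of order $2^\nu$. If $\nu=1$ there is no element of order four. If $\nu\ge2$, fix a generator $h_\nu$ of $A_\nu$ and set $\zeta_j=\beta_{h_\nu^{2^{\nu-j}}}$. Then $|\zeta_j|=2^j$ and $\zeta_j^2=\zeta_{j-1}$. Also $\zeta_1=\beta_c=t$, since $h_\nu^{2^{\nu-1}}$ is the involution $c$. Conjugating by $g$ gives the statement for $t'$.

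I expect no real obstacle. The only point needing care is bookkeeping of the conjugation convention $g^h=h^{-1}gh$ versus the fixed sets $g^{-1}(A)$, so that $\pst{C}$ is matched with $t'$ rather than with $v'$; Lemma~\ref{lem:halves-props}(5) settles this.
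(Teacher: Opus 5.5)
Your proposal is correct and follows essentially the paper's own proof: you establish (1) for $t$ from Lemma~\ref{lem:H}, pass to $B=\Fix(v)$ by conjugating with $R(b)$, and transport to $t'$ by an element $g\in P$ with $(t',v')=(t^g,v^g)$. The only cosmetic differences are that you also reduce (2) and (3) to the case $t'=t$ and write the chain $\zeta_j=\beta_{h_\nu^{2^{\nu-j}}}$ explicitly. The paper instead argues (2) and (3) directly for $t'$, using the cyclicity established in (1).
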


\begin{proof}
For $t'=t$:
Lemma~\ref{lem:H} gives $\pst{A}$, and each $\beta_h$ with $h\ne1$
moves every point $ub$ of $B$, because $hub\ne ub$. Conjugating by $R(b)$
(Lemma~\ref{lem:halfflips}) gives the statements for $B=\Fix(v)$, and
conjugation by $g\in P$ transports them to $t^g$. For (2): an element of $P$ centralizing $t'$
preserves $\Fix(t')=C$; conversely, an element preserving $C$ normalizes the
cyclic group $\pst{C}$, hence each of its characteristic subgroups, and fixes
its unique involution $t'$. The complementary half has the same setwise
stabilizer, hence the same centralizer. Part (3) holds because $\pst{C}$ is
cyclic of order $|H|$, by~(1): for $|H|=2$ it has no element of order four,
and for $|H|=2^\nu\ge4$ it contains a chain of roots of its involution.
\end{proof}

Sections~\ref{s:big}--\ref{s:two} prove the following.
\begin{center}
\emph{Under \textup{(S)}, the full automorphism group $P=\Aut\Gamma$ is imprimitive.}
\end{center}
Hypothesis \textup{(S)} does not assume that $P$ is primitive. The two cases are the branches of
Definition~\ref{def:standing}: for $|H|\ge4$ the goal is the block of Corollary~\ref{cor:block}, and for
$|H|=2$ it is Theorem~\ref{thm:two}. Section~\ref{s:proof} derives Theorem~\ref{thm:main} from it with
Proposition~\ref{prop:reduction} and Lemma~\ref{lem:Ufinite}.

\section{The branch \texorpdfstring{$|H|\ge4$}{|H|>=4}: finite interactions}\label{s:big}

Throughout this section (S) holds and $|H|\ge4$, that is, $H=A_\nu$ with
$\nu\ge2$; these are the hypotheses under which supported square roots of half-involutions
exist (Corollary~\ref{cor:H}(3)). In this section and the next, for named full matchings $x,y\in\Ii$, $(t,v)$ is a
complementary pair for $x$ and $(s,w)$ a complementary pair for $y$, that is, $(t,v)=(t_x,v_x)$ and
$(s,w)=(t_y,v_y)$ in the notation of Definition~\ref{def:halves}; for $x=\rho$ the pair $(t,v)$ is
the pair of half-flips \eqref{eq:tv}.

\emph{Standing facts.} The following facts are proved before they are used, and proofs in this section
and the next cite them by their tags. Here $x,y\in\Ii$ have complementary pairs $(t_x,v_x)$ and
$(t_y,v_y)$, and $t'$ is a half-involution with $C=\Fix(t')$ and complementary half $v'$.
\begin{itemize}
\item $t_x$ and $v_x$ are involutions in $P$ and half-involutions, and $(v_x,t_x)$ is
  also a complementary pair for $x$; $x=t_xv_x$ and $[t_x,v_x]=1$; $\Fix(t_x)$ and $\Fix(v_x)$ are
  complementary; and each half sends every vertex to itself or to its $x$-partner
  (Lemma~\ref{lem:halves-props}(4)--(6)).
\item $x$ is a fixed-point-free involution; every right translation of $\Qg$ lies in
  $P$, in particular $\rho$, so that the $P$-conjugacy class $\Ii=\rho^P$ lies in $P$
  (Definition~\ref{def:standing}, Lemma~\ref{lem:halves-props}(1),(2)).
\item $\pst{C}$ and $\pst{C^\complement}$ are cyclic of order $|H|=2^\nu$, with unique
  involutions $t'$ and $v'$; every nonidentity element of $\pst{C}$ has fixed set exactly $C$, and every
  nonidentity element of $\pst{C^\complement}$ has fixed set exactly $C^\complement$; and $t'$ has a supported square root
  (Definition~\ref{def:standing}, Corollary~\ref{cor:H}(1),(3)).
\end{itemize}

Let $L$ be a group of permutations generated by named elements
$g_1,\dots,g_k$. Its orbits are also called its \emph{components}, and the
\emph{marked type} of a component is the component together with the
restrictions of $g_1,\dots,g_k$ to it, up to a bijection carrying each
restriction to the corresponding one. 

\begin{lemma}[Finite marked actions]
\label{lem:finite-marked-actions}
Let $L=\langle g_1,\dots,g_k\rangle\le\Sym(\Omega)$ with $k<\infty$, and suppose there is an integer $N$ with
$|L\omega|\le N$ for every $\omega\in\Omega$. Then finitely many marked types of orbits occur, and restriction to
one representative $\Omega_j$ of each occurring type gives an injective homomorphism
$L\to\prod_{j=1}^{s}\Sym(\Omega_j)$. In particular $L$ is finite.
\end{lemma}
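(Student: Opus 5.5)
The plan is to count marked types directly, and then to restrict the group to one representative orbit of each type.

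\emph{Finitely many types.} A marked type of an orbit $O$ is determined, up to isomorphism, by the size $m=|O|\le N$ and the $k$-tuple of restrictions $g_i|_O$. Choosing a bijection $O\to\{1,\dots,m\}$ turns this tuple into a $k$-tuple in $\Sym(m)^k$, and two orbits have the same marked type exactly when their tuples are simultaneously conjugate in $\Sym(m)$. Since $m\le N$ and $\Sym(m)^k$ is finite for each $m$, there are at most $\sum_{m=1}^{N}(m!)^k<\infty$ possible marked types. In particular only finitely many types, say $s$ of them, actually occur. Fix a representative orbit $\Omega_1,\dots,\Omega_s$ of each.

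\emph{The homomorphism.} Each orbit $O$ is $L$-invariant, so restriction $\ell\mapsto\ell|_O$ is a homomorphism $L\to\Sym(O)$. Combining the representatives gives $\Phi\colon L\to\prod_j\Sym(\Omega_j)$.

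For injectivity, let $\ell\in\ker\Phi$ and write $\ell$ as a word $w(g_1,\dots,g_k)$ in the generators and their inverses. Let $O$ be any orbit, of type $j$, and let $\phi\colon O\to\Omega_j$ be a bijection carrying each $g_i|_O$ to $g_i|_{\Omega_j}$. Then $\phi$ also carries inverses to inverses and products to products. Hence
\[
\phi\circ w(g_1|_O,\dots,g_k|_O)=w(g_1|_{\Omega_j},\dots,g_k|_{\Omega_j})\circ\phi,
\]
that is, $\phi\,\ell|_O\,\phi^{-1}=\ell|_{\Omega_j}$. Since $\ell|_{\Omega_j}=1$, also $\ell|_O=1$. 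The orbits partition $\Omega$, so $\ell=1$ and $\Phi$ is injective. Finally, $L$ embeds in a finite product of symmetric groups of degree at most $N$, so $L$ is finite, with $|L|\le\prod_j|\Omega_j|!$.

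The only point needing care is that a marked isomorphism intertwines every element of $L$, not just the generators. This holds because $\ell|_O$ is determined by the word $w$ evaluated on the restricted generators, and that evaluation commutes with conjugation by $\phi$. Apart from this the argument is routine.
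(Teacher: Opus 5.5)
Your proposal is correct and follows essentially the same route as the paper: you bound the number of marked types through $k$-tuples in $\Sym(m)^k$ with $m\le N$, restrict to one representative orbit of each type, and get injectivity because a marked bijection intertwines every word in the generators. Your explicit check that this intertwining extends from the generators to all of $L$ is the same point the paper's proof makes more briefly.
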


\begin{proof}
On an $n$-point set a marked action is an ordered $k$-tuple in $\Sym(n)^k$, so there are finitely many marked
types with at most $N$ points. Restriction to a union of orbits is a homomorphism. If a word
$w(g_1,\dots,g_k)$ is the identity on each representative, a bijection between components of the same marked
type that carries each restriction to the corresponding one shows that $w$ is the identity on every component,
hence on $\Omega$. So restriction is injective, and its target is finite.
\end{proof}

The following lemma is standard, as is the homogenization of an affine action used in the proof of Lemma~\ref{lem:affine}; it finds three-cycles in such groups; it is used in Lemma~\ref{lem:order3} and
Theorem~\ref{thm:no-rays}.

\begin{lemma}[Alternating-factor lemma]\label{lem:altfactor}
Let $\Omega$ be a finite set with $|\Omega|\ge5$, let $G$ be a finite group, and let $J\le\Sym(\Omega)\times G$ be a
subgroup whose projection to $\Sym(\Omega)$ is surjective. If a prime divides $|\Omega|!$ but not $|G|$, then
$\Alt(\Omega)\times1\le J$.
\end{lemma}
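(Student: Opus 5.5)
The plan is to work with the kernel of the projection to $G$. Write $\pi_1\colon J\to\Sym(\Omega)$ and $\pi_2\colon J\to G$ for the two projections, and put $N=\ker\pi_2=J\cap(\Sym(\Omega)\times 1)$. Identify $N$ with a subgroup $N_1\le\Sym(\Omega)$. Since $N$ is normal in $J$ and $\pi_1$ is surjective, $N_1$ is normal in $\Sym(\Omega)$. The goal is to show that $N_1\ge\Alt(\Omega)$; this gives $\Alt(\Omega)\times1\le N\le J$.

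First, $N_1$ is nontrivial. We have $|J|=|N|\cdot|\pi_2(J)|$, and $|\pi_2(J)|$ divides $|G|$. On the other hand $|\Sym(\Omega)|=|\Omega|!$ divides $|J|$, because $\pi_1$ is onto. Let $p$ be the given prime, which divides $|\Omega|!$ but not $|G|$. Then $p$ divides $|J|$ but not $|\pi_2(J)|$, so $p$ divides $|N|$, and in particular $N\ne1$.

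Second, I use the standard fact that for $|\Omega|\ge5$ the normal subgroups of $\Sym(\Omega)$ are exactly $1$, $\Alt(\Omega)$ and $\Sym(\Omega)$. This follows from the simplicity of $\Alt(\Omega)$ together with the fact that $\Sym(\Omega)$ has trivial centre. Since $N_1$ is a nontrivial normal subgroup, it contains $\Alt(\Omega)$. Hence $\Alt(\Omega)\times1\le J$.

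No step is a real obstacle. The only thing to check is that the divisibility argument is stated with $|\pi_2(J)|$, which is a divisor of $|G|$, rather than with $|G|$ itself. The argument uses only that $\Omega$ and $G$ are finite, as the statement assumes.
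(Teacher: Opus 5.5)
Your proof is correct and follows the same route as the paper: the kernel of the projection to $G$ yields a normal subgroup of $\Sym(\Omega)$, divisibility shows that the prime divides its order, and the classification of normal subgroups of $\Sym(\Omega)$ for $|\Omega|\ge5$ gives $\Alt(\Omega)$. The only difference is in the counting step. The paper shows that $|\Sym(\Omega):K|$ divides $|G|$ via the induced surjection $J/(K\times1)\to\Sym(\Omega)/K$, whereas you compare $|J|=|N|\,|\pi_2(J)|$ with $|\Omega|!\mid|J|$; the two are equivalent.
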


\begin{proof}
Let $K=\{k\in\Sym(\Omega):(k,1)\in J\}$, so that $K\times1$ is the kernel of the projection of $J$ to $G$.
Conjugation by an element of $J$ acts on $K\times1$ through the first coordinate, and every element of
$\Sym(\Omega)$ is the first coordinate of an element of $J$; so $K$ is normal in $\Sym(\Omega)$. The projection
of $J$ onto $\Sym(\Omega)$ maps $K\times1$ onto $K$, so it induces a surjection of $J/(K\times1)$ onto
$\Sym(\Omega)/K$; and $J/(K\times1)$ is isomorphic to the projection of $J$ to $G$, a subgroup of $G$. Hence
$|\Sym(\Omega):K|$ divides $|G|$, and the prime of the hypothesis, which divides $|\Sym(\Omega)|=|\Omega|!$ but
not $|G|$, divides $|K|$; in particular $K\ne1$. The only normal subgroups of a symmetric group of degree at
least five are the trivial group, the alternating group and the whole group \cite[3.2.3]{Robinson1996}, so
$\Alt(\Omega)\le K$, that is, $\Alt(\Omega)\times1\le J$.
\end{proof}

\subsection{Supported square roots and the odd-order obstruction}\label{ss:odd-order}

In this subsection $t$ denotes a half-involution with fixed set $C$, and $v$
its complementary half-involution; by Corollary~\ref{cor:H} everything proved
for the original pair applies to them.

\begin{lemma}[Odd cycles through the fixed set]\label{lem:oddrot}
Assume \textup{(S)} and $|H|\ge4$, and let $t$ be a half-involution with fixed
set $C$ and complementary half $v$. Let $z\in P$ be an involution such that
$r=tz$ has odd finite order.
Then $r^3=1$. Moreover, if a point $\alpha$ of $C$ lies on a nontrivial $r$-cycle,
then $v(\alpha)$ is fixed by $r$ \textup{(}equivalently, by $z$\textup{)}.
\end{lemma}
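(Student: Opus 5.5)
The plan is to work inside the finite dihedral group generated by $t$ and $z$, and then bring in a supported square root of $t$ to force very small orbits. Write $m$ for the odd order of $r=tz$. Then $D=\langle t,z\rangle$ is dihedral of order $2m$, with $r$ as its rotation. Every $D$-orbit $O$ is finite, and on it $r$ acts as a rotation of $O$ and $t$, $z$ as reflections.

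\emph{Step 1 (orbit structure).} On an orbit $O$ of odd size $k>1$, each reflection fixes exactly one point. So $O\cap C=\{\alpha\}$ is a single point, and the remaining $k-1$ points lie in $C^\complement$. There $t$ agrees with the full matching $x=tv$, by Lemma~\ref{lem:halves-props}(5). In particular $v(\alpha)=x(\alpha)$ lies outside $O$, because $O\setminus\{\alpha\}$ is closed under $t=x$ while $x(\alpha)\neq\alpha$. Orbits of even size are excluded, since $r$ restricted to $O$ has order dividing $m$. The second assertion of the lemma therefore says that the $D$-orbit of $v(\alpha)$ is the single point $\{v(\alpha)\}$. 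Since $v(\alpha)\notin C$, this is equivalent to $z(v(\alpha))=v(\alpha)$, and that equation is what I will aim for.

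\emph{Step 2 (the square root).} By Corollary~\ref{cor:H}(3) choose $\zeta\in\pst{C}$ with $\zeta^2=t$. By Corollary~\ref{cor:H}(1), $\zeta$ moves every point of $C^\complement$. Hence its orbits on $C^\complement$ are $4$-cycles $\{p,\zeta p,x p,x\zeta p\}$, each a union of two $x$-pairs. Put $z'=\zeta z\zeta^{-1}\in P$. Then $tz'=\zeta r\zeta^{-1}$ again has order $m$, because $\zeta$ commutes with $t$. The orbits of $\langle t,z'\rangle$ are the $\zeta$-images of the $D$-orbits. These images still meet $C$ in the same points, since $\zeta$ fixes $C$, but they are shifted by a quarter turn on $C^\complement$. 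The plan is to compare $z$ and $z'$ on the orbit $O$ through $\alpha$ and on its image $\zeta(O)$.

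Both $O\setminus\{\alpha\}$ and $\zeta(O)\setminus\{\alpha\}$ are unions of $x$-pairs. Moreover $\zeta$ maps each $x$-pair to a different $x$-pair. I would then consider the product $zz'$ and the group $\langle t,z,\zeta\rangle$. The aim is to show that its orbits are bounded, so that Lemma~\ref{lem:finite-marked-actions} makes this group finite. A parity or order count on the rotations $r$ and $\zeta r\zeta^{-1}$, both of odd order and both fixing the single point $\alpha\in C$ of their orbits up to the reflection, should then force $k-1=2$. That gives $k\le 3$ on every nontrivial orbit, hence $r^3=1$.

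\emph{Step 3 (the partner point).} Once $|O|=3$, write $O=\{\alpha,p,x(p)\}$. Here $z$ fixes one point of $O$ and swaps the other two. Suppose $z$ moved $v(\alpha)=x(\alpha)$. Then $x(\alpha)$ would lie in a second nontrivial $3$-point $D$-orbit, whose unique point of $C$ is some $\alpha'\ne\alpha$. Applying Step 2 to the configuration $O\cup\{x(\alpha)\}$ and its $\zeta$-image, which contains the $4$-cycle of $\zeta$ through $x(\alpha)$, should produce a $D$- or $\langle t,z'\rangle$-orbit of even length or of length $5$, contradicting Step 2. So $z(v(\alpha))=v(\alpha)$, and then $r(v(\alpha))=t z(v(\alpha))=v(\alpha)$ as well; conversely, $r(v(\alpha))=v(\alpha)$ gives $z(v(\alpha))=t^{-1}(v(\alpha))$, which is $v(\alpha)$ exactly when $v(\alpha)\in C$. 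Since $v(\alpha)\notin C$, I expect the parenthetical equivalence to need more than this one-line computation, presumably an argument that $t$ also fixes $v(\alpha)$, and I would check this point carefully.

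The main obstacle is Step 2: extracting the bound $k\le3$ from the interaction of the odd rotation with the $4$-cycles of $\zeta$. I would first try to show directly that $z$ and $z'$ agree or commute on $O\cup\zeta(O)$. Failing that, I would use the cyclic structure of $\pst{C}$, and the fact that every nonidentity element of it has fixed set exactly $C$, to rule out a rotation of length $\ge5$.
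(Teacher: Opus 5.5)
\textbf{There is a genuine gap: Step~2 contains no argument, and Step~1 contains a factual error.}

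\emph{The error in Step~1.} It hides the structure the proof runs on. Since $[t,v]=1$, the involution $v$ preserves $\Fix(t)=C$. Since $\Fix(v)=C^\complement$, it moves every point of $C$. So no $x$-pair crosses between $C$ and $C^\complement$, and $\beta=v(\alpha)$ lies in $C$ (you correctly showed $\beta\notin O$). Thus $\beta$ is the unique point of $C$ on a second $\langle t,z\rangle$-orbit $O'$. The half $v$ acts on $O\cup O'$ as the transposition $(\alpha\ \beta)$ and trivially on every orbit missing $C$. The parenthetical equivalence is then immediate from $z=tr$ and $t\beta=\beta$. With $\beta\notin C$, as you assume, the two conditions in the parenthesis would be mutually exclusive, and your reformulation (that the orbit of $\beta$ is a point) would be impossible.

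Your exclusion of even orbits is also unjustified. A $\langle t,z\rangle$-orbit may be a union of two $r$-cycles interchanged by $t$; see the pair on which $t=z$, or the alternating hexagon, in the proof of Lemma~\ref{lem:order3}. Such orbits miss $C$, so bounding the orbits through $C$ does not by itself give $r^3=1$. You also need that $r^3$ then fixes $C$ pointwise, hence lies in the $2$-group $\pst{C}$, while having odd order.

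\emph{The gap in Step~2.} As you suspect, this is the main problem. ``A parity or order count \dots\ should force $k-1=2$'' names no element and no contradiction, and nothing about $z$ versus $\zeta z\zeta^{-1}$ is shown to bound $k$. The supported root is not needed at all.

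The missing idea, which is how the paper argues, is to build elements of $\pst{C}$ from the conjugates $v_j=r^jvr^{-j}$ and compare them with $t$, the unique involution of the cyclic group $\pst{C}$. Each $v_j$ acts on $O\cup O'$ as $(r^j\alpha\ \ r^j\beta)$ and trivially on orbits missing $C$.
\begin{itemize}
\item Suppose $|O|,|O'|>1$. Then $W_1=(v_1v_{-1})^3$ equals $(r\alpha\ \ r\beta)(r^{-1}\alpha\ \ r^{-1}\beta)$ on $O\cup O'$. It is trivial on pairs of orbits one of which is a point, since there $v_1v_{-1}$ is trivial or a $3$-cycle. So $W_1$ is an involution in $\pst{C}$ with $W_1(r\alpha)=r\beta\ne r^{-1}\alpha=t(r\alpha)$, a contradiction. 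This gives the second assertion.
\item Now take a pair consisting of an $r$-fixed point and an $\ell$-cycle through $\beta$. The word $W_2=(v_1v_2v_1)(v_{-1}v_{-2}v_{-1})$ is trivial there for $\ell\in\{1,3\}$. For $\ell\ge5$ it equals $(r\beta\ \ r^2\beta)(r^{-1}\beta\ \ r^{-2}\beta)\ne t$, which excludes $\ell\ge5$ in the same way.
\end{itemize}
So every $r$-cycle through $C$ has length $1$ or $3$, and the rigidity step above gives $r^3=1$.
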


\begin{proof}
The proof bounds the odd cycles through the fixed set by building automorphisms
that fix the fixed set pointwise and comparing them with Corollary~\ref{cor:H}(1). An orbit of
$\langle t,z\rangle$ meeting $C$ is an odd $r$-cycle with a unique $t$-fixed
point. Indeed, since $trt=r^{-1}$, $t$ permutes the $r$-cycles. An $r$-cycle through a point of $C$
is $t$-invariant, so it is the whole $\langle t,z\rangle$-orbit. The half $t$ acts on it as a
reflection of a cycle of odd length (the length divides the odd order of $r$), which fixes exactly
one point. The complementary half $v$ pairs this fixed point $\alpha$ with the
fixed point $\beta$ of a different odd cycle: $v$ commutes with $t$
(Lemma~\ref{lem:halves-props}(5)), so $\beta=v(\alpha)$ is fixed by $t$, that is, $\beta\in C$;
$\beta\ne\alpha$ because $\Fix(v)=C^\complement$; and each $\langle t,z\rangle$-orbit
meeting $C$ meets it in exactly one point, so $\beta$ lies on a different odd cycle. Write their
lengths $\ell,\ell'$. Set
$v_j=r^jvr^{-j}$; these conjugates are the letters of the words below. Since $\Fix(v)=C^\complement$ and each
$\langle t,z\rangle$-orbit meeting $C$ meets it in exactly one point, $v$ acts on the union of two paired
cycles as the transposition $(\alpha\ \beta)$ and fixes every $\langle t,z\rangle$-orbit disjoint from $C$
pointwise; as $r$ preserves each $r$-cycle, $v_j$ acts on a paired component as
$(r^j\alpha\ \ r^j\beta)$ and trivially on every orbit disjoint from $C$.

The first word rules out two nontrivial paired cycles. The global word
$W_1=(v_1v_{-1})^3$ fixes $C$ pointwise; on a paired component it is computed as follows. If
$\ell=\ell'=1$, then $v_1=v_{-1}=(\alpha\ \beta)$ and $W_1$ is the identity. If exactly one of $\ell,\ell'$
is one, then $v_1$ and $v_{-1}$ share exactly one point, so $v_1v_{-1}$ is a three-cycle and $W_1$ is the
identity. If $\ell,\ell'>1$, then $v_1$ and $v_{-1}$ are disjoint and
$W_1=(r\alpha\ \ r\beta)(r^{-1}\alpha\ \ r^{-1}\beta)$, a nontrivial double transposition exchanging points
of the two cycles. Globally it is therefore an involution,
but its restriction there is neither the identity nor $t$, contradicting the
uniqueness of the involution in the cyclic group $\pst{C}$. So one of $\ell,\ell'$ is one, which is
the second assertion. Since $t$ fixes $\beta$, $r(\beta)=t(z(\beta))$ equals $\beta$ if and only if
$z(\beta)=\beta$, which is its parenthetical form.

The second word excludes an $r$-fixed point paired with a long cycle. The word
$W_2=(v_1v_2v_1)(v_{-1}v_{-2}v_{-1})$ also fixes $C$. On a component
consisting of an $r$-fixed point and an $\ell$-cycle through $\beta$ it is
\[
(r\beta,\ r^2\beta)(r^{-1}\beta,\ r^{-2}\beta).
\]
It is the identity for $\ell=1,3$ and is a double transposition differing from $t$
for odd $\ell\ge5$. The same contradiction excludes $\ell\ge5$. Consequently $r^3$
fixes $C$ pointwise. It has odd order and lies in the $2$-group $\pst{C}$, so $r^3=1$.
\end{proof}

\begin{lemma}[No product of order three]\label{lem:order3}
Assume \textup{(S)} and $|H|\ge4$, and let $t$ be a half-involution. There is
no involution $z\in P$ with $|tz|=3$.
\end{lemma}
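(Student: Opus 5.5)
Suppose for contradiction that $z\in P$ is an involution with $r=tz$ of order three, and let $C=\Fix(t)$, with $v$ the complementary half. Lemma~\ref{lem:oddrot} gives the structure on orbits meeting $C$. Each $\langle t,z\rangle$-orbit meeting $C$ is an $r$-cycle of length $1$ or $3$ with a unique point of $C$. The half $v$ pairs such orbits; in each pair at least one of the two cycles is an $r$-fixed point. The remaining $\langle t,z\rangle$-orbits are disjoint from $C$, and on them $t$ is trivial. So $z=t\cdot r$ acts there as $r$, which is an involution of order dividing three, hence trivial. Thus $r$ is the identity off the orbits meeting $C$. Since $r\ne1$, there is at least one paired component consisting of an $r$-fixed point $\alpha\in C$ and a $3$-cycle through $\beta=v(\alpha)\in C$.

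The plan is to pass to a finite configuration and use Lemma~\ref{lem:altfactor} to produce an element of $\pst{C}$ of odd order, or more generally something incompatible with $\pst{C}$ being a cyclic $2$-group. The natural group is $L=\langle t,v,z\rangle$. On each paired component (at most $4$ points: $\alpha$ and the three points $\beta,r\beta,r^2\beta$) all orbits of $L$ have bounded size. Lemma~\ref{lem:finite-marked-actions} therefore makes $L$ finite, and $L$ embeds in the product of its actions on finitely many marked types.

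On a four-point component $\{\alpha,\beta,r\beta,r^2\beta\}$, the elements $r$ and $v=(\alpha\ \beta)$ generate $\Sym(4)$ acting on it. Also, $t$ fixes $\alpha,\beta$ and swaps $r\beta,r^2\beta$, so $t$ is given by the generators. Since $|\Omega|=4<5$, Lemma~\ref{lem:altfactor} does not apply directly, so I would instead work with explicit words. The component types are: fixed points of $L$ (from orbits disjoint from $C$, where everything is trivial except $v$, which may swap points there? No: $v$ fixes $C^\complement$ pointwise only off the moved part). This bookkeeping is needed, and I would first list all marked types, which are the pair of $r$-fixed points and the $1+3$ components.

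The key step is to find a word $W$ in $v$ and the conjugates $v_j=r^jvr^{-j}$ which fixes $C$ pointwise. For such a $W$ to fix $C$ pointwise, it should be trivial on the $1+1$ components. On a $1+3$ component it should restrict to a $3$-cycle on $\{\beta,r\beta,r^2\beta\}$, or to some element of odd order. It should fix $\alpha$ and $\beta$, and it must still fix $C$. Note that $C$ meets such a component only in $\alpha,\beta$, since the other two points are swapped by $t$. A candidate is $W=v_1v_2$, the product of the transpositions $(\alpha\ r\beta)$ and $(\alpha\ r^2\beta)$ on such a component. That product is a $3$-cycle moving $\alpha$, so I would instead try commutator-type words, or conjugate a $3$-cycle of $\Alt(\{\beta,r\beta,r^2\beta,\alpha\})$ so that it fixes $\alpha$ and $\beta$. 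An element fixing $\alpha$ and $\beta$ in $\Sym(4)$ is at most the transposition $(r\beta\ r^2\beta)$, which equals $t$ there. So I expect instead to use a word that is a double transposition on the component while fixing $C$; this fails because the fixed points $\alpha,\beta$ are forced.

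Hence the main obstacle is finding the contradiction. My fallback is an element of $\pst{C}$ whose restriction differs from every power of a supported square root $\zeta$ of $t$. Specifically, I would take the supported square root $\zeta$ (Corollary~\ref{cor:H}(3)) and consider $z'=\zeta z\zeta^{-1}$ or $r^\zeta$, compared with $r$ via $t\zeta$-words. Since $\zeta$ fixes $C$ and squares to $t$, the product $(tz)$ versus $(\zeta z\zeta^{-1})$ produces a second order-three configuration. Their commutator words then give a nonidentity element of $\pst{C}$ that moves $r\beta$ to a point outside $\{r\beta,r^2\beta\}$, or that has order $3$. Either outcome contradicts Corollary~\ref{cor:H}(1), since every element of $\pst{C}$ has order a power of $2$ and fixed set exactly $C$. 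I expect this $\zeta$-twisting step to be the real content of the proof.
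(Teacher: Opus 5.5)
There is a genuine gap: the proposal stops where the proof begins. You correctly see that $\langle t,z,v\rangle$ gives no contradiction on a four-point component $\{\alpha,\beta,r\beta,r^2\beta\}$, since the only elements of $\Sym(4)$ fixing the two points $\alpha,\beta$ of $C$ are $1$ and $t$. You also see that a supported square root $\zeta$ of $t$ must enter. But the ``$\zeta$-twisting step'' is never carried out. Part of the contradiction you expect from it is not a contradiction at all: an element of $\pst{C}$ may move $r\beta$ outside $\{r\beta,r^2\beta\}$, and $\zeta$ itself does so, because it pairs distinct $t$-two-cycles. What contradicts Corollary~\ref{cor:H}(1) is an element of $\pst{C}$ of odd order.

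The paper obtains such an element as follows.
\begin{itemize}
\item Because $\zeta$ fixes $C$ pointwise and $\zeta^2=t$, it joins the $t$-two-cycles in pairs. Each $\langle t,z,v\rangle$-component contains at most one of them.
\item Hence $\langle t,z,v,\zeta\rangle$ has orbit types of sizes $8$, $6$, $4$ and $2$: two four-point types joined, a four-point type joined to a pair on which $t=z$, two such pairs joined, and a $v$-pair. So the group is finite by Lemma~\ref{lem:finite-marked-actions}.
\item The eight-point image is $S_8$, by Jordan's prime-cycle theorem applied to the $5$-cycle $z\zeta z\zeta^{-1}$ after excluding blocks, and then the odd element $\zeta$. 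The six-point image is $S_6$, generated by the transpositions $(i\,4)$.
\item Since a four-point component exists, one of these two types occurs. Lemma~\ref{lem:altfactor}, with the prime $7$ or $3$ respectively, then yields a $3$-cycle on three $t$-moved points of that type which is trivial on every other type. This element fixes $C$ pointwise and has order three, which is impossible in the $2$-group $\pst{C}$.
\end{itemize}

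Your bookkeeping off $C$ is also wrong. Since $\Fix(t)=C$, the involution $t$ is fixed-point-free on the $\langle t,z\rangle$-orbits disjoint from $C$, not trivial there. Those orbits are either pairs on which $t=z$ or alternating hexagons on which $\langle t,z\rangle$ acts as a regular $S_3$. So $r$ need not be the identity off the orbits meeting $C$. The hexagons have to be excluded separately. The paper does this with the word $[(vz)^3,t]$: it is trivial on every other type, fixes $C$ pointwise, and has order three on a hexagon. The pairs with $t=z$ cannot be dropped either, because they are what produce the six-point type once $\zeta$ is adjoined.
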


\begin{proof}
Suppose $|tz|=3$. Let $C=\Fix(t)$ and
$v$ its complementary half. Every vertex is moved by exactly one of $t,v$, and $(tz)^3=1$ leaves four kinds of
$\langle t,z\rangle$-orbit: a pair on which $t=z$, a path $t=(1\,2)$,
$z=(2\,3)$ whose end $3$ lies in $C$, an alternating hexagon, and fixed
points. A path's end $3$ is $v$-paired with a point of $C$ which, by
Lemma~\ref{lem:oddrot}, is fixed by $z$; so two paths cannot be joined by a
$v$-edge. The marked types of $L=\langle t,z,v\rangle$ are therefore of sizes
$4,2,2,6$: On a four-point component its generators can be
labelled
\[
t=(1\,2),\qquad z=(2\,3),\qquad v=(3\,4).
\]
The two-point types are $(t,z,v)=(1,1,(1\,2))$ and $((1\,2),(1\,2),1)$. A
six-point regular $S_3$ component is excluded by the word $[(vz)^3,t]$: it has
order three there, is the identity on the other types, and fixes $C$ pointwise,
while $\pst{C}$ is a $2$-group. There is at least one four-point component
because $t\ne z$.

Let $\zeta$ be a supported square root of $t$, so $\zeta^2=t$; as $\zeta\in\pst{C}$
is not the identity, $\Fix(\zeta)=C$. It pairs the two-cycles of $t$. Since every
remaining $L$-component contains at most one $t$-two-cycle, adjoining $\zeta$ gives
components of sizes $8,6,4,2$, with the following restrictions (unlisted points
fixed):
\[
\begin{array}{c|llll}
\text{size} & t & z & v & \zeta\\\hline
8 & (1\,2)(5\,6) & (2\,3)(6\,7) & (3\,4)(7\,8) & (1\,5\,2\,6)\\
6 & (1\,2)(5\,6) & (2\,3)(5\,6) & (3\,4) & (1\,5\,2\,6)\\
4 & (1\,2)(3\,4) & (1\,2)(3\,4) & 1 & (1\,3\,2\,4)\\
2 & 1 & 1 & (1\,2) & 1
\end{array}
\]
The two possible root orientations are isomorphic as marked actions: interchange
identical four-point components, or swap the two vertices in a two-point
component. So there are four marked types. The group $\langle t,z,v,\zeta\rangle$ has four generators, and its orbits have at most eight points by the list above. It is therefore finite by Lemma~\ref{lem:finite-marked-actions}; this finiteness is what the
group-theoretic arguments below require.

The eight-point image is $S_8$. It is transitive, contains the five-cycle
$z\,\zeta\,z\,\zeta^{-1}=(1\,6\,2\,3\,7)$, and cannot have a block system with blocks of
size $2$ or
$4$: a five-cycle would have to fix all blocks and act trivially inside them.
Jordan's prime-cycle theorem \cite[Theorem~13.9]{WielandtFPG}, \cite[Theorem~1.1]{Jones2014} (a primitive group of degree $n$
containing a cycle of prime length $p\le n-3$ contains the alternating group)
gives $\Alt(\{1,\dots,8\})$, and the odd generator $\zeta$ gives $S_8$. The six-point image is
$S_6$: $v^z=(2\,4)$, and $(2\,4)^{\zeta^k}$ for $k=1,2,3$ is $(5\,4)$, $(1\,4)$,
$(6\,4)$; together with $v=(3\,4)$ these are all transpositions $(i\,4)$, which
generate $S_6$. The other images are $C_4$ and $C_2$.

By Lemma~\ref{lem:finite-marked-actions}, restriction to one component of each occurring marked type embeds
$\langle t,z,v,\zeta\rangle$ in the product of its images on those components; let $J$ be that image. If an eight-point component occurs, then
$J\le S_8\times G$, where $S_8$ is the image on the eight-point type and $G$ is the product of the images
on the other types, of order dividing $6!\cdot4\cdot2$ and so prime to $7$. The alternating-factor lemma
(Lemma~\ref{lem:altfactor}), with $\Omega=\{1,\dots,8\}$ and the prime $7$, gives
$\Alt(\{1,\dots,8\})\times1\le J$. A three-cycle on three of $\{1,2,5,6\}$ that is the identity on
all other types therefore lies in $\langle t,z,v,\zeta\rangle$; it fixes $C$
pointwise in every component, contrary to $\pst{C}$ being a $2$-group. If there
is no eight-point type, a six-point type occurs, because the $t$-two-cycle of a
four-point component is paired by $\zeta$ with a $t$-two-cycle of another
component. Then $J\le S_6\times G$, where $G$, the product of the images $C_4$ and $C_2$ on the other
types, has order dividing $8$, and the alternating-factor lemma with $\Omega=\{1,\dots,6\}$ and the
prime $3$ gives $\Alt(\{1,\dots,6\})\times1\le J$; so a three-cycle on three of $\{1,2,5,6\}$ that is the
identity on all other types gives the same contradiction.
\end{proof}

\begin{theorem}[Odd-order obstruction]\label{thm:odd-order}
Assume \textup{(S)} and $|H|\ge4$. A finite-order product of a half-involution
and an arbitrary involution of $P$ has $2$-power order.
\end{theorem}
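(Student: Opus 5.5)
We reduce to the odd case already handled. Let $t$ be a half-involution with fixed set $C$ and complementary half $v$, let $z\in P$ be an involution, and suppose $r=tz$ has finite order $n=2^km$ with $m$ odd. We aim to show $m=1$. The natural move is to pass to a power of $r$ that is again a product of $t$ with an involution and has odd order. The relation $trt=r^{-1}$ means that $\langle t,z\rangle$ is dihedral of order $2n$. Put $z'=t\,r^{2^k}$. Then $tz'=r^{2^k}$ has odd order $m$. Also $z'$ is an involution (or the identity), since every element of the coset $t\langle r\rangle$ of a dihedral group is an involution: $(tr^j)^2=tr^jtr^j=r^{-j}r^j=1$. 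It lies in $P$ because $t,z\in P$.

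If $m>1$ then $z'\ne t$ and $z'\ne1$, since $tz'=r^{2^k}\ne1$, and $z'=1$ would give $tz'=t$ of order two, not odd. Now apply Lemma~\ref{lem:oddrot} to the pair $(t,z')$. The product $tz'$ has odd finite order, so $(tz')^3=1$. Hence $m$ divides $3$, so $m=3$ and $|tz'|=3$. This contradicts Lemma~\ref{lem:order3}, which excludes an involution $z'\in P$ with $|tz'|=3$. Therefore $m=1$, and $n$ is a power of $2$.

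The only points to check carefully are, first, that $z'$ is a genuine involution of $P$. This holds because the reflections of the dihedral group $\langle t,z\rangle$ are exactly the elements $tr^j$. Second, the degenerate case $m=1$ must be excluded from the contradiction, which is immediate. I expect no real obstacle here: the substantive work was done in Lemmas~\ref{lem:oddrot} and~\ref{lem:order3}, and this theorem is the bookkeeping that transfers them from odd order to the odd part of an arbitrary finite order via the dihedral structure.
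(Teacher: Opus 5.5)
Your proposal is correct and is essentially the paper's own proof: the paper likewise replaces $z$ by the reflection $(tz)^{2^e}t$ of the dihedral group $\langle t,z\rangle$ (you use $t(tz)^{2^k}$, which works equally well), so that its product with $t$ has the odd order $m$. It then concludes with Lemma~\ref{lem:oddrot} ($m\in\{1,3\}$) and Lemma~\ref{lem:order3} (excluding $m=3$), exactly as you do.
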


\begin{proof}
Let $t$ be a half-involution, $z\in P$ an involution,
and $|tz|=2^ed$ with $d$ odd. Replace $z$ by $(tz)^{2^e}t$, an involution whose product with $t$ has
order $d$. Lemma~\ref{lem:oddrot} gives $d\in\{1,3\}$, and
Lemma~\ref{lem:order3} excludes $d=3$.
\end{proof}

\subsection{Finite subgroups and full matchings}\label{ss:finite}

We use the finite Baer--Suzuki theorem \cite[6.7.6]{kurzweil_stellmacher_2004}, \cite[Lemma~7]{Suzuki1965} in the
following form, and only inside finite groups: \emph{if an involution $z$ of a
finite group $J$ generates a $2$-group with every conjugate of itself, then
$z\in\Opt(J)$.} Here $\Opt(J)$ is the \emph{$2$-core} of $J$, its largest normal
$2$-subgroup.

\begin{definition}[Two-matchings graph]\label{def:twomatch}
For $x,y\in\Ii$, the \emph{two-matchings graph} of $x$ and $y$ is the multigraph on $\Qg$ whose
edges are the $x$-pairs $\{p,x(p)\}$ and the $y$-pairs $\{p,y(p)\}$, $p\in\Qg$; a pair that is both an
$x$-pair and a $y$-pair gives two parallel edges, one in each matching (a \emph{doubled edge}). Its
components are the \emph{two-matching components} of $x$ and $y$; a path or cycle in it is
\emph{alternating} if consecutive edges belong to different matchings.
A component that is a doubled edge, a finite alternating cycle or a doubly infinite alternating path is
\emph{labelled} by $e_i,o_i$ with $x(e_i)=o_i$, $y(o_i)=e_{i+1}$, where $i\in\Z/m\Z$ on a
cycle of $2m$ vertices \textup{(}a doubled edge being the case $m=1$\textup{)} and $i\in\Z$ on a
doubly infinite path \textup{(}origin and orientation arbitrary\textup{)}, so that
\[
x(e_i)=o_i,\quad x(o_i)=e_i;\qquad y(o_i)=e_{i+1},\quad y(e_i)=o_{i-1};
\]
on a component of size $2m$ we also write $e_i=2i$ and $o_i=2i+1$. The matching edges
\[
R_i=\{e_i,o_i\},\qquad B_i=\{o_i,e_{i+1}\}
\]
are the \emph{red} edges, of $x$, and the \emph{blue} edges, of $y$. For complementary pairs
$(t_x,v_x)$ for $x$ and $(t_y,v_y)$ for $y$ \textup{(}Definition~\ref{def:halves}\textup{)}, the
\emph{red mask} $M=(M_i)$ and the \emph{blue mask} $N=(N_i)$, the masks of $t_x$ and $t_y$, are the
binary sequences with $M_i=1$ exactly when $t_x$ exchanges $e_i,o_i$ and $N_i=1$ exactly when $t_y$
exchanges $o_i,e_{i+1}$. A mask is \emph{constant} if all its entries are equal and
\emph{alternating} if any two cyclically consecutive entries differ; a component
is \emph{mixed} if one of its masks is alternating and the other constant.
\end{definition}

Figure~\ref{fig:twomatch} shows an eight-vertex alternating cycle with the labelling and the masks used from Section~\ref{ss:two-masks} on. Under \textup{(S)} no mixed eight-point component occurs: Theorem~\ref{thm:mixed8} excludes it when $|H|\ge4$, and Proposition~\ref{prop:mixed8two} when $H=\langle c\rangle$.

\begin{figure}[!htb]
\centering
\begin{tikzpicture}[x=1cm,y=1cm,font=\footnotesize,
  vtx/.style={circle,draw,line width=0.5pt,minimum size=4.6mm,inner sep=0pt,font=\scriptsize},
  fixt/.style={fill=blue,text=white,draw=blue},   fixv/.style={fill=white,text=black,draw=black}, redmv/.style={line width=2.2pt,black},          redfx/.style={line width=0.5pt,black},          bluedash/.style={line width=0.8pt,black,dash pattern=on 3pt off 2pt}, lab/.style={inner sep=1pt}]
\draw[redmv] (-0.620,1.497) -- (0.620,1.497);
\draw[redfx] (1.497,0.620) -- (1.497,-0.620);
\draw[redmv] (0.620,-1.497) -- (-0.620,-1.497);
\draw[redfx] (-1.497,-0.620) -- (-1.497,0.620);
\draw[bluedash] (0.620,1.497) -- (1.497,0.620);
\draw[bluedash] (1.497,-0.620) -- (0.620,-1.497);
\draw[bluedash] (-0.620,-1.497) -- (-1.497,-0.620);
\draw[bluedash] (-1.497,0.620) -- (-0.620,1.497);
\node[vtx,fixv] at (-0.620,1.497) {0};
\node[lab] at (-0.781,1.885) {$e_{0}$};
\node[vtx,fixv] at (0.620,1.497) {1};
\node[lab] at (0.781,1.885) {$o_{0}$};
\node[vtx,fixt] at (1.497,0.620) {2};
\node[lab] at (1.885,0.781) {$e_{1}$};
\node[vtx,fixt] at (1.497,-0.620) {3};
\node[lab] at (1.885,-0.781) {$o_{1}$};
\node[vtx,fixv] at (0.620,-1.497) {4};
\node[lab] at (0.781,-1.885) {$e_{2}$};
\node[vtx,fixv] at (-0.620,-1.497) {5};
\node[lab] at (-0.781,-1.885) {$o_{2}$};
\node[vtx,fixt] at (-1.497,-0.620) {6};
\node[lab] at (-1.885,-0.781) {$e_{3}$};
\node[vtx,fixt] at (-1.497,0.620) {7};
\node[lab] at (-1.885,0.781) {$o_{3}$};
\node[lab] at (0.000,1.817) {$R_{0}$};
\node[lab] at (0.000,1.037) {$M_{0}{=}1$};
\node[lab] at (1.817,0.000) {$R_{1}$};
\node[lab] at (1.037,0.000) {$M_{1}{=}0$};
\node[lab] at (0.000,-1.817) {$R_{2}$};
\node[lab] at (0.000,-1.037) {$M_{2}{=}1$};
\node[lab] at (-1.817,-0.000) {$R_{3}$};
\node[lab] at (-1.037,-0.000) {$M_{3}{=}0$};
\node[lab] at (1.285,1.285) {$B_{0}$};
\node[lab] at (0.733,0.733) {$N_{0}{=}0$};
\node[lab] at (1.285,-1.285) {$B_{1}$};
\node[lab] at (0.733,-0.733) {$N_{1}{=}0$};
\node[lab] at (-1.285,-1.285) {$B_{2}$};
\node[lab] at (-0.733,-0.733) {$N_{2}{=}0$};
\node[lab] at (-1.285,1.285) {$B_{3}$};
\node[lab] at (-0.733,0.733) {$N_{3}{=}0$};
\draw[redmv] (2.10,0.85) -- ++(0.5,0);
\node[anchor=west,inner sep=1pt,font=\scriptsize,align=left] at (2.75,0.85) {$M_i{=}1$: moved by $t_x$};
\draw[redfx] (2.10,0.35) -- ++(0.5,0);
\node[anchor=west,inner sep=1pt,font=\scriptsize,align=left] at (2.75,0.35) {$M_i{=}0$: moved by $v_x$};
\draw[bluedash] (2.10,-0.25) -- ++(0.5,0);
\node[anchor=west,inner sep=1pt,font=\scriptsize,align=left] at (2.75,-0.25) {$N_i{=}0$: moved by $v_y$,\\fixed by $t_y$};
\node[vtx,fixt] at (2.35,-1.05) {};
\node[anchor=west,inner sep=1pt,font=\scriptsize,align=left] at (2.75,-1.05) {$\Fix(t_x)$};
\node[vtx,fixv] at (2.35,-1.55) {};
\node[anchor=west,inner sep=1pt,font=\scriptsize,align=left] at (2.75,-1.55) {$\Fix(v_x)$};
\end{tikzpicture}
\caption{A mixed eight-vertex configuration satisfying Definition~\ref{def:twomatch}: an alternating cycle of
  the perfect matchings $x$ and $y$, labelled
  $e_i=2i$, $o_i=2i+1$ ($i\in\Z/4\Z$). The red edges $R_i$, of $x$, are solid and the blue edges $B_i$,
  of $y$, dashed. Drawn: $M=(1,0,1,0)$, alternating, so $t_x$ moves
  $R_0,R_2$ (heavy) and its complementary half $v_x$ moves $R_1,R_3$; filled vertices form
  $\Fix(t_x)$ on the component. $N=(0,0,0,0)$, constant, so $t_y$ is the identity here and $v_y=y$.}
\label{fig:twomatch}
\end{figure}

\begin{lemma}[Component structure]\label{lem:components}
Assume \textup{(S)}, and let $x,y\in\Ii$.
\begin{enumerate}
\item Each two-matching component of $x$ and $y$ is a doubled edge (an $x$-pair that is also a
  $y$-pair), a finite alternating even cycle, or a doubly infinite alternating path.
\item Every half of $x$ or of $y$ preserves each component.
\item Label a component as in Definition~\ref{def:twomatch}. Then $xy(e_i)=e_{i-1}$ and $xy(o_i)=o_{i+1}$; so $xy$ preserves the
  component and acts on it with order $m$ on a cycle of $2m$ vertices and with infinite order on a
  doubly infinite path. In particular, if $xy$ has finite order, every component is finite and $m$
  divides $|xy|$.
\end{enumerate}
\end{lemma}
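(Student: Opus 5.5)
The plan is to use only that $x,y$ are fixed-point-free involutions (Lemma~\ref{lem:halves-props}(2)) and that each half of $x$ sends every vertex to itself or to its $x$-partner (Lemma~\ref{lem:halves-props}(5)). The proof is elementary graph theory on the union of two perfect matchings.

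For (1), I will argue by degrees. Every vertex $p$ is incident to exactly one $x$-edge $\{p,x(p)\}$ and exactly one $y$-edge $\{p,y(p)\}$, because $x$ and $y$ are fixed-point-free involutions. So the multigraph is $2$-regular, with each vertex meeting one red and one blue edge.

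\begin{itemize}
\item If $x(p)=y(p)$, the component of $p$ is the doubled edge $\{p,x(p)\}$.
\item Otherwise, walking from $p$ alternately along red and blue edges gives a walk in which consecutive edges belong to different matchings. By $2$-regularity this walk never branches.
\item If the walk returns to $p$, the component is a finite cycle. Its length is even because its edges alternate in colour.
\item If the walk never returns, walking in both directions gives a doubly infinite alternating path.
\end{itemize}

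For (2), let $h$ be a half of $x$. By Lemma~\ref{lem:halves-props}(5), $h(p)\in\{p,x(p)\}$ for every $p$, and $x(p)$ is joined to $p$ by a red edge. Hence $h(p)$ lies in the component of $p$, so $h$ maps each component into itself. The same holds for $h^{-1}=h$, so $h$ preserves each component. The argument for halves of $y$ is the same, using $y$-edges.

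For (3), I will compute directly from the labelling identities of Definition~\ref{def:twomatch}:
\[
xy(e_i)=x(o_{i-1})=e_{i-1},\qquad xy(o_i)=x(e_{i+1})=o_{i+1}.
\]
On a cycle with $2m$ vertices, $xy$ therefore acts as two disjoint $m$-cycles, one on the $e_i$ and one on the $o_i$, so it has order $m$ there. The doubled edge is the case $m=1$, where $xy$ is the identity. On a doubly infinite path, $xy$ acts as a shift on $\Z$, which has infinite order.

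Each component is preserved by $xy$. Hence if $xy$ has finite order, no component can be an infinite path, so every component is finite, and the order $m$ of the restriction divides $|xy|$.

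I expect no real obstacle. The only care needed is consistency of the labelling on a doubled edge ($m=1$, where $e_{i+1}=e_i$) and the fact that a walk that does not return to $p$ is infinite in both directions.
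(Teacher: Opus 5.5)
Your proposal is correct and follows the paper's proof essentially step by step. Part (1) uses $2$-regularity, with one edge from each matching at every vertex; part (2) uses the fact that each half sends a vertex to itself or to its partner (Lemma~\ref{lem:halves-props}(5)); and part (3) is the direct computation $xy(e_i)=e_{i-1}$, $xy(o_i)=o_{i+1}$ from the labelling, followed by the same finite-order argument.
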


\begin{proof}
(1) Each vertex lies on exactly one edge of each matching, so every vertex has degree two with its two
edges from different matchings; a connected graph of this kind is a doubled edge, a finite cycle,
necessarily even, or a doubly infinite path. (2) Every half of $x$ or of $y$ sends a vertex to itself
or to its $x$-partner (respectively $y$-partner), by Lemma~\ref{lem:halves-props}(5). (3) In the
labelling, $xy(e_i)=x(o_{i-1})=e_{i-1}$ and $xy(o_i)=x(e_{i+1})=o_{i+1}$, so $xy$ moves each vertex two
steps along the component, forward on one of the two classes of alternate vertices and backward on
the other. On a cycle of $2m$ vertices each class has $m$ points and is permuted cyclically, so the
order is $m$ (for $m=1$ it is the identity on the doubled edge). On a doubly infinite path
$(xy)^n(e_0)=e_{-n}\ne e_0$ for every $n\ge1$, so the order is infinite. If $xy$ has finite order,
$(xy)^{|xy|}=1$ restricts to the identity on each component, so no component is a doubly infinite path
and $m$ divides $|xy|$.
\end{proof}

\begin{remark}
The two-matching components are the orbits of $\langle x,y\rangle$. On each component $\Theta$ the induced group
$\langle x|_\Theta,y|_\Theta\rangle$ is regular: of order $2$ on a doubled edge, dihedral of order $2m$ on a cycle of
$2m$ vertices with $m\ge2$, and infinite dihedral on a doubly infinite path. The restriction of $\langle x,y\rangle$
to one component need not be faithful.
Figure~\ref{fig:twomatch} shows such an orbit with $m=4$.
\end{remark}
\FloatBarrier

\begin{proposition}[Finite subgroups and the $2$-core]\label{prop:two-core}
Assume \textup{(S)} and $|H|\ge4$.
\begin{enumerate}
\item If $J\le P$ is finite and contains a half-involution $t$,
  then $t\in\Opt(J)$.
\item For every finite subgroup $L$ of $P$, $\Ii\cap L\subseteq\Opt(L)$.
\item If $x,y\in\Ii$ and $xy$ has finite order, then
  $F=\langle t_x,v_x,t_y,v_y\rangle$ is a finite $2$-group, and $|xy|$ is a
  power of two.
\end{enumerate}
\end{proposition}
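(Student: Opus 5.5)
For (1), apply the finite Baer--Suzuki theorem in the form stated before Definition~\ref{def:twomatch}: it suffices that $t$ generates a $2$-group with each $J$-conjugate $t^j$, $j\in J$. Since $J$ is finite, $\langle t,t^j\rangle$ is a finite dihedral group, so its order is twice the order of $tt^j$. Now $t^j$ is an involution of $P$, and $tt^j$ has finite order because $J$ is finite. By the odd-order obstruction (Theorem~\ref{thm:odd-order}), $|tt^j|$ is a power of two. Hence $\langle t,t^j\rangle$ is a $2$-group, and Baer--Suzuki gives $t\in\Opt(J)$.

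For (2), let $x\in\Ii\cap L$. The difficulty is that a half of $x$ need not lie in $L$, so (1) does not apply directly. Take a complementary pair $(t_x,v_x)$ for $x$. Both halves commute with $x$, and I would like them to normalize $L$, but in general they do not. Instead I would argue directly, as in (1). For $l\in L$, the group $\langle x,x^l\rangle$ is finite dihedral, and $|xx^l|$ is finite. I then need $|xx^l|$ to be a power of two. Write $y=x^l$. By Lemma~\ref{lem:components}(3), every $\langle x,y\rangle$-component is finite, with $m\mid|xy|$. The key observation is that $t_xy$ restricted to a component is controlled by $t_x$ and $y$. Since $t_x$ preserves each component (Lemma~\ref{lem:components}(2)), $\langle t_x,y\rangle$ preserves these finite components. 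If the component sizes are bounded, $t_xy$ has finite order, and Theorem~\ref{thm:odd-order} applied to $t_x$ and the involution $y$ gives that $|t_xy|$ is a power of two; likewise $|v_xy|$ is a power of two. It remains to get the same conclusion for $xy=t_xv_xy$. The cleanest route is to prove (3) first and derive (2) from it: $|xx^l|$ is a power of two for all $l\in L$, so Baer--Suzuki in $L$ gives $x\in\Opt(L)$.

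So the core is (3). Assume $|xy|=n<\infty$. Then all components are finite cycles with $m\mid n$, hence have size at most $2n$, and they are preserved by all four halves. So $F=\langle t_x,v_x,t_y,v_y\rangle$ has orbits of size at most $2n$, and Lemma~\ref{lem:finite-marked-actions} gives that $F$ is finite. Next, $F$ is a finite subgroup of $P$ generated by half-involutions (Lemma~\ref{lem:halves-props}(6)). By (1), each generator lies in $\Opt(F)$, so $F=\Opt(F)$ is a $2$-group. Since $xy\in F$, its order is a power of two. Then (2) follows as above, because $\langle x,x^l\rangle\le L$ is finite, so $xx^l$ has finite order, and (3) makes that order a power of two.

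The main obstacle I expect is (3)'s finiteness claim, namely that each half preserves the finite components. This is Lemma~\ref{lem:components}(2), so it should go through. The other point needing care is checking that in (1) the conjugate $t^j$ is an involution of $P$, which Theorem~\ref{thm:odd-order} requires; this holds because $J\le P$.
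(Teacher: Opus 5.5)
Your proof is correct. Parts (1) and (3) are essentially the paper's arguments. Part (2) goes by a genuinely different route.

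\textbf{Your route for (2).} You reorder the proof as (1) $\Rightarrow$ (3) $\Rightarrow$ (2). This is legitimate, since (3) uses only (1). For $l\in L$ you note $x^l\in\Ii$, because $\Ii=\rho^P$ and $L\le P$. Finiteness of $L$ makes $xx^l$ of finite order, so (3) makes that order a power of two. Then each dihedral group $\langle x,x^l\rangle$ is a $2$-group, and a second application of Baer--Suzuki, now to the involution $x$ in $L$, gives $x\in\Opt(L)$.

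\textbf{The paper's route for (2).} The paper keeps (2) independent of (3). It adjoins the two halves to $L$ to form $\widehat L=\langle L,t_x,v_x\rangle$. Each half sends a vertex to itself or to its $x$-partner, and $x\in L$, so the halves preserve every $L$-orbit. Hence $\widehat L$ has orbits of size at most $|L|$ and is finite by Lemma~\ref{lem:finite-marked-actions}. Part (1) then puts $t_x,v_x$, and so $x=t_xv_x$, into $\Opt(\widehat L)$. Finally, $\Opt(\widehat L)\cap L$ is a normal $2$-subgroup of $L$, so $x\in\Opt(L)$.

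\textbf{Comparison.}
\begin{itemize}
\item Your version makes (2) a formal corollary of (3). The price is that it invokes the two-matching component analysis (Lemma~\ref{lem:components}) and Lemma~\ref{lem:finite-marked-actions} once for every pair $(x,x^l)$.
\item The paper's version uses only the local property of halves and a single finiteness argument. It also yields the slightly stronger fact that the halves themselves lie in the $2$-core of the enlarged finite group.
\end{itemize}

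The exploratory aside in your (2) about $t_xy$ and $v_xy$ is never used and should be dropped from a write-up.
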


\begin{proof}
(1) A half-involution is the chosen half of a full matching (Definition~\ref{def:halves}), so $t$
and each $t^g$ with $g\in J$ are involutions in $P$. For $g\in J$ the
product $tt^g$ has finite order, so by Theorem~\ref{thm:odd-order} the dihedral group
$\langle t,t^g\rangle$ is a $2$-group. The finite Baer--Suzuki theorem then gives
$t\in\Opt(J)$.

(2) If $L$
is finite and contains a full matching $x\in\Ii$, adjoin its two halves. They preserve every $L$-orbit because each sends a vertex either to
itself or its $x$-partner, and $x\in L$. Since $L$ also preserves its own orbits, the orbits of the
enlarged group are exactly the $L$-orbits. Put $\widehat L=\langle L,t_x,v_x\rangle$. It is finitely generated, since $L$ is finite and two generators have been adjoined. Its orbits are the $L$-orbits and have at most $|L|$ points. Hence $\widehat L$
is finite by Lemma~\ref{lem:finite-marked-actions}. The group $\widehat L$ is a finite subgroup of $P$, and
both halves belong to its $2$-core by (1). Their product $x=t_xv_x$
therefore lies in the $2$-core of $\widehat L$, and in $L$. The intersection of that $2$-core with
$L$ is normal in $L$, since the $2$-core is normal in $\widehat L$, which contains $L$; and it is a
$2$-group, as a subgroup of one. So it lies in $\Opt(L)$, the largest normal $2$-subgroup of $L$, and
$x\in\Opt(L)$.

(3) If $xy$
has finite order, the components of the two-matchings graph are
finite with uniformly bounded sizes: by Lemma~\ref{lem:components}(3), each is a cycle of $2m$
vertices with $m$ dividing $|xy|$, so every component has at most $2|xy|$ vertices. The four
halves $t_x,v_x,t_y,v_y$ preserve these components (Lemma~\ref{lem:components}(2)), so every $F$-orbit lies
in one of them and has at most $2|xy|$ vertices. As $F$ has four generators and every $F$-orbit has at most $2|xy|$ points, $F$ is finite by Lemma~\ref{lem:finite-marked-actions}, and $F\le P$. It is
generated by four half-involutions, each in $\Opt(F)$ by (1), so $F$ is a
$2$-group. In particular $|xy|$ is a power of two, since $xy=t_xv_xt_yv_y\in F$.
\end{proof}

\subsection{Finite orbits through the fixed set}\label{ss:no-rays}

The group $P$ carries the topology of pointwise convergence on the discrete
countable set $\Qg$.

\begin{lemma}[Closedness]\label{lem:closed}
If $g_j\in P$ and, for every vertex $p$, the sequence $g_j(p)$ is eventually
constant with value $g(p)$, and $g$ is a bijection, then $g\in P$.
\end{lemma}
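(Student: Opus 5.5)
To prove Lemma~\ref{lem:closed}, we show directly that $g$ preserves adjacency and non-adjacency in $\Gamma$. Since $g$ is assumed to be a bijection of $\Qg$, this is all that membership in $P=\Aut\Gamma$ requires. No group-theoretic input is needed: the argument uses only that adjacency is a property of a pair of vertices, and that each vertex is eventually fixed in its image.

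Fix two vertices $p,q\in\Qg$. By hypothesis there are indices $j_p$ and $j_q$ such that $g_j(p)=g(p)$ for $j\ge j_p$ and $g_j(q)=g(q)$ for $j\ge j_q$. Choose $j\ge\max(j_p,j_q)$. Then $g_j(p)=g(p)$ and $g_j(q)=g(q)$ hold simultaneously. Since $g_j\in P$ is an automorphism, $p\sim q$ if and only if $g_j(p)\sim g_j(q)$. This is the same as $g(p)\sim g(q)$. Hence $E(p,q)=E(g(p),g(q))$ for all $p,q$, so $g$ preserves both edges and non-edges.

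The inverse $g^{-1}$ then also preserves adjacency: apply the identity just proved to the pair $g^{-1}(p),g^{-1}(q)$. So $g$ is a graph automorphism, and $g\in P$.

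There is no real obstacle here. The only point that needs care is that $g$ is a bijection, which is exactly why the statement assumes it. Pointwise limits of permutations need not be surjective, and an injective adjacency-preserving non-surjective map would not be an automorphism. Injectivity alone would in fact follow automatically, because distinct $p,q$ have distinct images under every $g_j$, and hence under $g$ by taking $j$ large. Surjectivity, however, must be supplied by hypothesis. In the later applications (the limit arguments of Theorem~\ref{thm:no-rays}) that bijectivity is what must be checked separately.
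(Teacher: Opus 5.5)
Your proof is correct and is essentially the paper's argument: for each pair $p,q$ you pass to an index at which both $g_j(p)$ and $g_j(q)$ have stabilized, and then $p\sim q$ if and only if $g(p)\sim g(q)$, because $g_j\in P$. Your side remarks are also right: injectivity of $g$ is automatic, and surjectivity is exactly what the bijection hypothesis supplies.
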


\begin{proof}
Adjacency is tested on two vertices at a time, so eventual constancy at each
vertex suffices. For any two vertices $p,q$, once both $g_j(p)$ and $g_j(q)$
have stabilized,
$g(p)\sim g(q)$ if and only if $g_j(p)\sim g_j(q)$, if and only if $p\sim q$.
\end{proof}

Lemma~\ref{lem:closed} is the case of a graph of the fact that the automorphism group of a
first-order structure is closed in the topology of pointwise convergence
\cite[Theorem~2.8(b)]{Cameron2007Aspects}. 

\begin{theorem}[Finite orbits through the fixed set]\label{thm:no-rays}
Assume \textup{(S)} and $|H|\ge4$. For every half-involution $t$, every
involution $z\in P$ and every $\alpha\in C=\Fix(t)$, the orbit of $\alpha$ under
$r=tz$ is finite.
\end{theorem}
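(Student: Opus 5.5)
We argue by contradiction: suppose $\alpha\in C$ has an infinite $r$-orbit. Since $trt=r^{-1}$, the half $t$ reverses $r$-orbits, so the $\langle t,z\rangle$-orbit of $\alpha$ is the $r$-orbit of $\alpha$, a doubly infinite path on which $t$ and $z$ act as alternating reflections. The reflection $t$ of an infinite dihedral orbit fixes at most two points. It fixes $\alpha$, and $\alpha$ has a single $t$-fixed neighbour-type end, so we may write the orbit as $\alpha_k=r^k\alpha$ with $t(\alpha_k)=\alpha_{-k}$. In particular $\alpha$ is the only point of $C$ on this orbit. The complementary half $v$ commutes with $t$, moves every point of $C$, and sends $\alpha$ to $\beta=v(\alpha)\in C$, which lies on a different $\langle t,z\rangle$-orbit.

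The plan is to manufacture, as a pointwise limit, a nonidentity element of $\pst{C}$ whose fixed set is strictly larger than $C$, or which has odd order. Either outcome contradicts Corollary~\ref{cor:H}(1). Following the pattern of Lemma~\ref{lem:oddrot}, set $v_j=r^jvr^{-j}\in P$. Each $v_j$ fixes $C^\complement$ setwise and acts on points near $\alpha$ by exchanging $\alpha_j$ with $r^jv(\alpha)$.

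Commutator-type words such as $W_n=(v_nv_{-n})^2$ or $v_nv_{-n}v_n v_{-n}$ fix $C$ pointwise only if $r^{\pm n}$ carries $C$-points appropriately, which fails in general. So the words must instead be taken inside the group generated by $t$, $z$, $v$ and a supported square root $\zeta$ of $t$, which exists by Corollary~\ref{cor:H}(3). The role of $\zeta$ is to give elements of $\pst{C}$ that can be conjugated by powers of $r$. Consider $g_n=r^{n}\zeta r^{-n}$ and $g_{-n}$. Define $h_n=g_n g_{-n}^{\,t}$-type products, chosen so that $h_n$ centralizes $t$ on the orbit. Along a subsequence the $h_n$ converge pointwise, because each vertex has only the finitely many candidate images $\{p,\ x$-partner$,\dots\}$ permitted by the halves. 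Lemma~\ref{lem:closed} then puts the limit $g$ in $P$, after checking that $g$ is a bijection by treating inverses in the same way.

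The key points to verify are these. First, the limit $g$ fixes $C$ pointwise, since far from $\alpha$ along the ray the conjugated roots act on points that escape to infinity. Second, $g$ is nontrivial near $\alpha_{\pm1}$. Third, $g$ either fixes some point outside $C$ while being nonidentity, or has order not a power of two. This contradicts the description of $\pst{C}$ as cyclic of order $|H|$ with every nonidentity element having fixed set exactly $C$.

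Countability of $\Qg$ lets us diagonalize to obtain the pointwise convergent subsequence. The main obstacle is this diagonalization: showing that the images $h_n(p)$ range over a uniformly finite set for each fixed $p$, and that the limit is not forced to be the identity or $t$. The finiteness step I would first try via the fact that all letters preserve the $\langle t,z,v,\zeta\rangle$-structure locally. The nontriviality step I would handle by explicitly computing $h_n$ on $\alpha_{\pm1}$ and $v(\alpha)$. This would also use Theorem~\ref{thm:odd-order}, applied to the finite-order products that arise, to rule out odd cycles at the limit.
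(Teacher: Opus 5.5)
Your outline has a genuine gap. The two claims that carry the argument have no working mechanism behind them. The first is that suitable words converge to an element of $P$ fixing one side pointwise; the second is that the limit is nontrivial of forbidden order.

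\textbf{The limit step.} The compactness claim is false as stated. The maps $r^n\zeta r^{-n}$ are not halves of any fixed matching, so a vertex $p$ has no finite set of candidate images. As $n$ varies, $r^n\zeta r^{-n}p$ can run through infinitely many points, so neither a convergent subsequence nor bijectivity of a limit is available. You also have no control at the points of $C$ with finite $r$-orbit, which may be almost all of $C$. The paper uses eventual constancy instead. With $n_j=j!$, the power $r^{n_j}$ eventually fixes every point of finite orbit. A point $\alpha\in C$ of infinite orbit is the only point of $C$ on that orbit, so $r^{\pm n_j}\alpha\notin C$.

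\textbf{The root is on the wrong side.} For $\zeta\in\pst{C}$ with $\zeta^2=t$, Corollary~\ref{cor:H}(1) says $\zeta$ moves every point of $C^\complement$. Hence $g_n(\alpha)=r^n\zeta(r^{-n}\alpha)\ne\alpha$ for every $n\ne0$, so your claim that the limit fixes $C$ pointwise fails at $\alpha$ itself. Taken literally, your $h_n$ collapses. Since $t\zeta t=\zeta$ and $tr^{-n}t=r^n$, one gets $g_{-n}^{t}=g_n$. Then $h_n=g_n^2=r^ntr^{-n}=r^{2n}t$, which sends $\alpha$ to $r^{2n}\alpha$ and has no pointwise limit. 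The paper takes $\zeta\in\pst{C^\complement}$ with $\zeta^2=v$. This $\zeta$ fixes the shifted copies $r^{\pm n_j}\alpha$. So for each $\zeta$-orbit $O\subseteq C$, the finite set $O\cup r^{-n_j}(O\cap C_\infty)\cup r^{n_j}(O\cap C_\infty)$ is invariant under $\zeta$, $\tau_j=r^{-n_j}\zeta r^{n_j}$ and $t$. Limits of words then land in $\pst{C^\complement}$ by Lemma~\ref{lem:closed}.

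\textbf{The nontriviality step.} Its key ingredients are missing.
\begin{itemize}
\item The paper first needs a preliminary limit, of $(vv_j)^3$ with $v_j=r^{-n_j}vr^{n_j}$. It shows that no $v$-pair has both endpoints of infinite orbit.
\item Only then do the $\zeta$-orbits in $C$ fall into three finite local types, of degrees $4$, $6$ and $8$, together with a dummy parity model off $C$. These orbits are four-cycles whose opposite points are $v$-pairs.
\item A single word acts on all $\zeta$-orbits at once. To get an element of order three, you need a word that is a three-cycle on one type and the identity on every other type and on the dummy model. The paper obtains it by showing that the degree-$6$ and degree-$8$ images are $S_6$ and $S_8$. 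It then applies the alternating-factor lemma (Lemma~\ref{lem:altfactor}), first to exclude the degree-$8$ type and then the degree-$6$ type.
\item The contradiction is simply that $\pst{C^\complement}$ is a cyclic $2$-group.
\end{itemize}
Theorem~\ref{thm:odd-order} plays no role here. Computing $h_n$ at $\alpha_{\pm1}$ and $v(\alpha)$ cannot replace this decoupling across types.

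A smaller slip: $r^jvr^{-j}$ fixes $r^j(C^\complement)$ pointwise, but it need not preserve $C^\complement$.
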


The powers $r^{j!}$ eventually fix each point with finite $r$-orbit and carry each point of $C$ with infinite orbit out of $C$, so suitable words in $t$, the complementary half $v$ of $t$, a square root $\zeta$ of $v$ and their conjugates by $r^{j!}$ have pointwise limits in $\pst{C^\complement}$ (Lemma~\ref{lem:closed}); a point of $C$ with infinite orbit would make one such limit an element of order three in this cyclic $2$-group.

\begin{proof}
We show that no point of the fixed set has an infinite orbit. Let $v$ be the
complementary half of $t$, so that $\Fix(v)=C^\complement$, and let
\[
C_\infty=\{\alpha\in C:\langle r\rangle\alpha\ \text{is infinite}\}.
\]
Since $trt=r^{-1}$, an infinite $r$-orbit contains at most one point of $C$.
Indeed, if $\alpha$ and $r^j\alpha$ are fixed by $t$, then $r^{2j}\alpha=\alpha$,
so $j=0$. Thus $r^iC_\infty$ and $r^jC_\infty$ are disjoint for $i\ne j$, a
fact used at the end of Step~1.

\emph{Step 1: no $v$-pair has two infinite endpoints.} The idea is to build, as
a pointwise limit, an automorphism that acts only on such pairs. Put $n_j=j!$ for $j\ge1$, so that the $n_j$ are distinct,
$v_j=r^{-n_j}vr^{n_j}$ and $g_j=(vv_j)^3$. The permutation $v_j$ moves a vertex
$p$ exactly when $r^{n_j}p\in C$. Fix a $v$-pair $\{\alpha,\beta\}$ in $C$, and
take $j$ so large that $r^{n_j}$ fixes every endpoint of this pair whose
$r$-orbit is finite. There are three cases:
\begin{itemize}
\item both endpoints have finite orbits: $v_j=v$ on the pair, and the cube is
  the identity there;
\item exactly one endpoint, say $\beta$, has an infinite orbit: $v$ and $v_j$
  act on $\{\alpha,\beta,r^{-n_j}\beta\}$ as $(\alpha\,\beta)$ and
  $(\alpha\ r^{-n_j}\beta)$; they share exactly the finite endpoint, so their
  product has order three and the cube is the identity;
\item both endpoints have infinite orbits: the old transposition
  $(\alpha\,\beta)$ and its shifted copy $(r^{-n_j}\alpha\ r^{-n_j}\beta)$ are
  disjoint, so the cube is their product.
\end{itemize}
Different $v$-pairs give disjoint configurations. For every fixed vertex
$p\notin C$, both $v$ and $v_j$ fix $p$ for all sufficiently large $j$: $v$
fixes $C^\complement$, and $v_j$ moves $p$ only if $r^{n_j}p\in C$. If the $r$-orbit of
$p$ is finite, this fails as soon as its length divides $n_j$ (factorial
divisibility). If it is infinite, it holds for at most one $j$, because that
orbit contains at most one point of $C$. Combining the two observations, $g_j$ converges pointwise, eventually per
point, to the permutation $g$
which equals $v$ on exactly those $v$-pairs with both endpoints in $C_\infty$,
and is the identity elsewhere. This limit is a bijection, and the
approximants lie in $P$: $t,v\in P$ and $z\in P$, so $r=tz\in P$
and $v_j,g_j\in P$. So the limit
lies in $P$ by Lemma~\ref{lem:closed}. It fixes $C^\complement$ pointwise, so it
lies in $\pst{C^\complement}$. If it is nonidentity, its fixed set is exactly $C^\complement$;
then it moves every point of $C$, every $v$-pair in $C$ has both endpoints in
$C_\infty$, and $C_\infty=C$. But $C_\infty=C$ would make $r^{-1}C$, $C$ and
$r^{-2}C$ pairwise disjoint. The nonidentity permutation $r^{-1}vr\in P$ has
support $r^{-1}C$, fixes $C$ pointwise, and also fixes the nonempty subset
$r^{-2}C$ of $C^\complement$. That is impossible:
every nonidentity element of $\pst{C}$ has fixed set exactly $C$. Therefore
$g=1$, and
\[
C_\infty\cap v(C_\infty)=\varnothing .
\]
This disjointness restricts the local types in Step~2.

\emph{Step 2: a supported root leaves three local types.} To exploit Step~1 we
use a supported square root of the complementary half, whose orbits group the
fixed set into four-cycles. By Lemma~\ref{lem:halves-props}(6), $(v,t)$ is also a complementary
pair, so $v$ is a half-involution with $\Fix(v)=C^\complement$ and complementary half $t$, and
Corollary~\ref{cor:H}(3) applies to it with $C^\complement$ in place of $C$: choose
$\zeta\in\pst{C^\complement}$ with $\zeta^2=v$ and $|\zeta|=4$. Since $\zeta$ fixes $C^\complement$ pointwise it preserves $C$, and for
$p\in C$, $\zeta^2p=vp\ne p$; as $\zeta^4=1$, the $\zeta$-orbit of $p$ has length four, and its opposite points
$p,\zeta^2p$ form a $v$-pair. So the orbits of $\zeta$ in $C$ are four-cycles. By Step~1, on such an orbit $O$ the set
$C_\infty$ meets $O$ in either zero points, one point, or two adjacent points:
two opposite points would be a $v$-pair, and larger subsets also contain such a
pair.

Put $\tau_j=r^{-n_j}\zeta r^{n_j}$.
Fix one $\zeta$-orbit $O$, labelled $0,1,2,3$ (after a
rotation) so that $\zeta=(0\,1\,2\,3)$, and let $O_\infty=O\cap C_\infty$. For all
sufficiently large $j$ (depending on $O$),
\[
O\cup r^{-n_j}O_\infty\cup r^{n_j}O_\infty
\]
is invariant under $\zeta,\tau_j,t$, and the restrictions are the following exact
finite models.
{\small
\[
\begin{array}{llll}
\text{Type 0: } O_\infty=\varnothing,\ \text{degree }4: & \zeta=(0\,1\,2\,3), & \tau=(0\,1\,2\,3), & t=1.\\
\text{Type 1: } O_\infty=\{0\},\ \text{degree }6: & \zeta=(0\,1\,2\,3), & \tau=(4\,1\,2\,3), & t=(4\,5).\\
\text{Type 2: } O_\infty=\{0,1\},\ \text{degree }8: & \zeta=(0\,1\,2\,3), & \tau=(4\,5\,2\,3), & t=(4\,6)(5\,7).
\end{array}
\]}
Figure~\ref{fig:type2} shows the Type~2 model. In every type the old vertices are $0,1,2,3$. The other vertices are shifted
copies: in Type~1, $4=r^{-n_j}0$ and $5=r^{n_j}0$; in Type~2, $4=r^{-n_j}0$,
$5=r^{-n_j}1$, $6=r^{n_j}0$, $7=r^{n_j}1$. Finite-orbit old vertices are
fixed by $r^{n_j}$; infinite-orbit old vertices never return to $C$. Each
$\alpha\in O_\infty$ lies in an infinite $r$-orbit that meets $C$ only in $\alpha$. So the points
$r^{\pm n_j}\alpha$ ($n_j\ne0$) lie outside $C$, are fixed by $\zeta\in\pst{C^\complement}$, and are distinct from
$O$. Also $r^{-n_j}\alpha\ne r^{n_j}\alpha$, because the orbit is infinite. In Type~2 the points
$0,1\in C_\infty$ lie in different $r$-orbits, because an infinite orbit contains at most one point
of $C$. So $r^{\pm n_j}0$ and $r^{\pm n_j}1$ are four distinct points, and the set has exactly
$4+2|O_\infty|$ elements.

\begin{figure}[!htb]
\centering
\providecolor{figaccent}{RGB}{0,70,160}\begin{tikzpicture}[x=1cm,y=1cm,font=\normalsize,
  every node/.style={inner sep=0pt,outer sep=0pt},
  vopen/.style={circle,draw=black,fill=white,line width=0.6pt,minimum size=5.4mm,inner sep=0pt},
  vfill/.style={circle,draw=figaccent,fill=figaccent,text=white,line width=0.6pt,minimum size=5.4mm,inner sep=0pt},
  head/.style={-{Stealth[length=2.4mm,width=2.0mm]}},
  zarr/.style={head,line width=0.9pt},
  tarr/.style={head,line width=0.9pt,dash pattern=on 4pt off 2.5pt},
  tarc/.style={{Stealth[length=2.4mm,width=2.0mm]}-{Stealth[length=2.4mm,width=2.0mm]},line width=1.0pt,dash pattern=on 7pt off 2pt on 1.2pt off 2pt},
  rline/.style={head,line width=0.5pt,dotted},
  cband/.style={line width=0.5pt,black!55}]
\shade[top color=white,bottom color=black!14] (-1.55,0) rectangle (1.55,2.00);
\shade[top color=black!14,bottom color=white] (-1.55,-2.00) rectangle (1.55,0);
\draw[cband] (-1.55,-2.00) -- (-1.55,2.00) (1.55,-2.00) -- (1.55,2.00);
\node at (0,0) {$C$};
\node at (-2.6,0) {$C^{\complement}$};
\node at (2.6,0) {$C^{\complement}$};
\node[vfill] (v0) at (-0.80,1.40) {$0$};
\node[vfill] (v1) at (-0.80,-1.40) {$1$};
\node[vopen] (v2) at (0.80,-0.60) {$2$};
\node[vopen] (v3) at (0.80,0.60) {$3$};
\node[vopen] (v4) at (-3.60,1.40) {$4$};
\node[vopen] (v5) at (-3.60,-1.40) {$5$};
\node[vopen] (v6) at (3.60,1.40) {$6$};
\node[vopen] (v7) at (3.60,-1.40) {$7$};
\draw[dotted,line width=0.5pt] (v4) -- (-2.75,1.40);
\node at (-2.40,1.40) {$\cdots$};
\draw[rline] (-2.05,1.40) -- (v0);
\node at (-2.40,1.78) {$r^{n_j}$};
\draw[dotted,line width=0.5pt] (v0) -- (2.05,1.40);
\node at (2.40,1.40) {$\cdots$};
\draw[rline] (2.75,1.40) -- (v6);
\node at (2.40,1.78) {$r^{n_j}$};
\draw[dotted,line width=0.5pt] (-4.05,1.40) -- (v4);
\draw[dotted,line width=0.5pt] (v6) -- (4.05,1.40);
\node at (-4.40,1.40) {$\cdots$};
\node at (4.40,1.40) {$\cdots$};
\draw[dotted,line width=0.5pt] (v5) -- (-2.75,-1.40);
\node at (-2.40,-1.40) {$\cdots$};
\draw[rline] (-2.05,-1.40) -- (v1);
\node at (-2.40,-1.78) {$r^{n_j}$};
\draw[dotted,line width=0.5pt] (v1) -- (2.05,-1.40);
\node at (2.40,-1.40) {$\cdots$};
\draw[rline] (2.75,-1.40) -- (v7);
\node at (2.40,-1.78) {$r^{n_j}$};
\draw[dotted,line width=0.5pt] (-4.05,-1.40) -- (v5);
\draw[dotted,line width=0.5pt] (v7) -- (4.05,-1.40);
\node at (-4.40,-1.40) {$\cdots$};
\node at (4.40,-1.40) {$\cdots$};
\draw[zarr] (v0) -- (v1);
\draw[zarr] (v1) -- (v2);
\draw[zarr] ([xshift=-1.0mm]v2.north) -- ([xshift=-1.0mm]v3.south);
\draw[zarr] (v3) -- (v0);
\draw[tarr] ([xshift=1.0mm]v2.north) -- ([xshift=1.0mm]v3.south);
\draw[tarr] (v3) -- (v4.-70);
\draw[tarr] (v4) -- (v5);
\draw[tarr] (v5.70) -- (v2);
\draw[tarc] (v4) to[bend left=35] (v6);
\draw[tarc] (v5) to[bend right=35] (v7);
\end{tikzpicture}
\caption{The Type~2 model in the proof of Theorem~\ref{thm:no-rays}, for one sufficiently large $j$. The $\zeta$-orbit
  $O=\{0,1,2,3\}$ lies in the fixed set $C$ of $t$ (shaded), with $\zeta=(0\,1\,2\,3)$; its opposite points form
  $v$-pairs, and the filled points $0,1$ have infinite $r$-orbits (dotted), which meet $C$ only there. Their
  shifted copies are $4=r^{-n_j}0$, $5=r^{-n_j}1$, $6=r^{n_j}0$, $7=r^{n_j}1$. Solid arrows: $\zeta$, which
  fixes $4,\dots,7$. Dashed arrows: $\tau_j=r^{-n_j}\zeta r^{n_j}$, here $(4\,5\,2\,3)$, which is $\zeta$ with
  $0,1$ replaced by $4,5$. Dash-dotted double-headed arcs: $t=(4\,6)(5\,7)$. Step~5 shows that no $\zeta$-orbit
  is of this type.}
\label{fig:type2}
\end{figure}

Vertices outside the fixed set are handled by a single parity. For a fixed vertex
outside $C$, both $\zeta$ and $\tau_j$ eventually fix that vertex
and its $t$-partner. A word in $\zeta,\tau_j,t$ therefore eventually acts there as its
total $t$-exponent modulo two. Record this parity by a dummy degree-two model
$\zeta=\tau=1$, $t=(0\,1)$.

\emph{Step 3: the word-limit principle.} This step converts computations in the
finite models into automorphisms that fix the complementary side pointwise. Let
$W(\zeta,\tau,t)$ be any fixed word
which, in each of the Types~0, 1, 2, fixes the new vertices pointwise, and is
the identity in the dummy model. Then $W(\zeta,\tau_j,t)$ converges pointwise to a
permutation in $\pst{C^\complement}$. More precisely, for each vertex $p$ there is
$j_0(p)$ such that for $j\ge j_0(p)$ every letter of $W$ acts on the trajectory
of $p$ as in the model of the orbit containing $p$ (or in the dummy model);
$j_0(p)$ depends on $p$, and no uniform bound on finite $r$-orbit sizes is
needed, since each fixed old four-cycle has only finitely many finite-orbit
endpoints. The restriction of the limit to each old four-cycle is the
restriction of $W$ in that type. The limit is a bijection because $W$ preserves
the old four-point set in each model. Outside $C$ the dummy parity makes its
eventual action trivial. Since $\zeta,\tau_j,t\in P$ (with $r,\zeta\in P$, so $\tau_j=r^{-n_j}\zeta r^{n_j}\in P$),
every word $W(\zeta,\tau_j,t)$ lies in $P$; by Lemma~\ref{lem:closed} the limit lies in $P$, hence
in $\pst{C^\complement}$.

\FloatBarrier
\emph{Step 4: the finite images.} The contradiction will come from three-cycles,
so we need the images to be full symmetric groups. The image of the Type~1 model
is $S_6$. The
first two generators act transitively on $\{0,1,2,3,4\}$ and fix $5$.
Conjugates of $t=(4\,5)$ therefore include every $(i\,5)$, and these generate
$S_6$.

For Type~2 a transposition is produced explicitly. A
direct right-to-left multiplication gives
\[
t\,\tau^{-1}\,t\,\tau^{-1}\,t\,\zeta\,\tau\,t\,\zeta^{-1}\,\tau\,\zeta=(0\,1).
\]
Conjugating by $\zeta$ yields transpositions connecting $0,1,2,3$; conjugation by
$\tau^{-1}$ then supplies $(3\,4)$ and $(4\,5)$; conjugation by $t$ supplies $(3\,6)$
and $(6\,7)$. Transpositions along a connected graph generate the full
symmetric group. Thus the Type~2 image is $S_8$.

\emph{Step 5: Type~2 does not occur.} The alternating-factor lemma produces the
three-cycle we need. Form the finite diagonal marked group
$J_2$ generated on one copy of each of Types~0, 1, 2 and the dummy. Its
projections are $C_4,S_6,S_8,C_2$, so $J_2\le S_8\times G$, where $S_8$ is the projection to Type~2 and
$G=C_4\times S_6\times C_2$, whose order is not divisible by seven. The alternating-factor lemma
(Lemma~\ref{lem:altfactor}), with $\Omega=\{0,\dots,7\}$ and the prime $7$, gives
$\Alt(\{0,\dots,7\})\times1\le J_2$, and with it every three-cycle on Type~2 that is the identity on the
other types.

In
particular there is an element of $J_2$ which is $(0\,1\,2)$ on Type~2 and
the identity on the other types. Choose a word $W$ representing it. It fixes all
new vertices and is trivial on the dummy. By Step~3, if even one Type~2 old
orbit occurred, $W$ would give a nonidentity permutation of order three in
$\pst{C^\complement}$, which is cyclic of order $|H|=2^\nu$, so this
cyclic group has no element of order three. Thus Type~2 does not occur.

\emph{Step 6: Type~1 does not occur.} The same argument, with the alternating
group of degree six, excludes Type~1. Form the finite diagonal marked group $J_1$
on one copy of each of Types~0, 1 and the dummy. Its projections are $C_4$, $S_6$ (Step~4) and $C_2$, so
$J_1\le S_6\times G$ with $G=C_4\times C_2$ of order eight, and the alternating-factor lemma, with
$\Omega=\{0,\dots,5\}$ and the prime $3$, gives $\Alt(\{0,\dots,5\})\times1\le J_1$. Choose a word giving $(0\,1\,2)$ on Type~1
and the identity on the other models; it fixes the new vertices $4,5$ of
Type~1. Step~3 applies to this word although it is chosen without Type~2: by
Step~5 no $\zeta$-orbit is of Type~2, and Step~3 uses the condition on a type only at the
$\zeta$-orbits of that type, since for $j\ge j_0(p)$ every letter of the word acts on the
trajectory of $p$ as in the model of the orbit containing $p$ (or in the dummy model).
By Step~3 its pointwise limit gives a nonidentity element of order three
of $\pst{C^\complement}$ if a Type~1 orbit occurs, which is again impossible because
$\pst{C^\complement}$ is a $2$-group.

Hence no $\zeta$-orbit meets $C_\infty$; as these orbits cover the fixed set,
$C_\infty$ is empty.
\end{proof}

\subsection{Odd-part elimination, two masks, and the fourth-power bound}\label{ss:two-masks}

\begin{theorem}[Odd-part elimination]\label{thm:oddprimary}
Assume \textup{(S)} and $|H|\ge4$. For every half-involution $t$, with
$C=\Fix(t)$, and every involution $z\in P$, every orbit of $r=tz$ that meets $C$
has finite $2$-power size.
\end{theorem}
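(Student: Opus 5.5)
By Theorem~\ref{thm:no-rays}, every orbit of $r=tz$ meeting $C$ is finite. It remains to show that each such orbit has $2$-power length. The idea is to isolate the odd part of the lengths and kill it by a pointwise-limit argument, in the style of Theorem~\ref{thm:no-rays} but with powers of $r$ in place of shifts. Since $trt=r^{-1}$, an $r$-cycle through $\alpha\in C$ is $t$-invariant and $t$ acts on it as a reflection. Write its length as $2^e d$ with $d$ odd, and suppose, for a contradiction, that some such cycle has $d>1$.

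\emph{Step 1: a limit replacing $r$.} Put $m_j=2^{j}\cdot\prod_{\text{odd }q\le j}q^{\,j}$, or more simply $n_j=(j!)^2$. Let $k_j$ be the $2$-part of $n_j$ and $r_j=r^{n_j/k_j}$. On a fixed finite cycle of length $2^ed$, the permutation $r^{n_j/k_j}$ eventually acts as a power of $r$ prime to $2$ and divisible by $d$ once $j$ is large. Instead of taking a limit of $r_j$ itself, which does not stabilise, I would use the $2$-adic idempotent trick. Choose exponents $N_j$ with $N_j\equiv0\pmod{2^j}$ and $N_j\equiv1\pmod{\text{odd }q\le j}$ (by CRT). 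Then $r^{N_j}$ converges pointwise on every finite $r$-orbit to the ``odd part'' $r_{\mathrm{odd}}$ of $r$ on that orbit. On points with infinite orbit it does not converge, but no such points lie in $C$ by Theorem~\ref{thm:no-rays}.

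To get a genuine automorphism, I would not use $r^{N_j}$ directly. Instead I would use conjugates $v_j=r^{-N_j}vr^{N_j}$ of the complementary half $v$, and words $g_j$ in $v,v_j,t,\zeta$ (with $\zeta$ a supported square root of $v$). On infinite orbits these eventually act trivially, as in Step~1 of Theorem~\ref{thm:no-rays}: a point outside $C$ with infinite orbit meets $C$ at most once, so $v_j$ fixes it for all but one $j$.

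\emph{Step 2: finite models.} On each $\zeta$-orbit $O\subseteq C$, the limit of $v_j$ is $v$ conjugated by the odd part $r_{\mathrm{odd}}$ restricted to the finitely many cycles through $O$. This yields finitely many local models up to marked type, because $r_{\mathrm{odd}}$ restricted to $\langle r\rangle O$ moves each point of $O$ to a point $r_{\mathrm{odd}}(\alpha)$. Moreover $r_{\mathrm{odd}}(\alpha)\ne\alpha$ exactly when the cycle through $\alpha$ has $d>1$. I would then show that $\tau=\lim r_{\mathrm{odd}}^{-1}\zeta r_{\mathrm{odd}}$ (a limit of $\tau_j=r^{-N_j}\zeta r^{N_j}$), together with $\zeta$ and $t$, generates on the union of $O$ and its images a model containing a $3$-cycle. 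Concretely, if $r_{\mathrm{odd}}$ moves a point of $O$ off $O$, one obtains the Type~1 or Type~2 shapes of Theorem~\ref{thm:no-rays}: the new vertices are $r_{\mathrm{odd}}^{\pm1}\alpha$, which lie outside $C$ because an odd rotation of a reflected cycle never maps the unique $t$-fixed point back into $C$. The point to check is that a $t$-reflection on an odd cycle has exactly one fixed point, while on a cycle of length $2^ed$ the fixed points of $t$ on the cycle are at most two and are exchanged or fixed by $r^{2^e}$ only trivially. One then reruns Steps~3--6 of Theorem~\ref{thm:no-rays} verbatim: the word-limit principle gives an element of order $3$ in $\pst{C^\complement}$, contradicting Corollary~\ref{cor:H}(1).

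\emph{Main obstacle.} The hard step is verifying that the limiting configurations really coincide with the Types~0--2 (in particular that $r_{\mathrm{odd}}(\alpha)\notin C$ and that distinct points of $O$ give disjoint shifted copies). Cycles of even length carry two $t$-fixed points, and it is not obvious that $r_{\mathrm{odd}}$ avoids sending one to the other. If this fails, I would first reduce to the odd case as in Theorem~\ref{thm:odd-order}: replace $z$ by the involution $z'=r^{2^{E}}t$ for a large $2$-power $2^E$, adapted per orbit through the limit. Then $tz'$ has odd-length cycles through $C$, and Lemma~\ref{lem:oddrot}'s argument forces those lengths to be $1$ or $3$, with Lemma~\ref{lem:order3}'s finite-model exclusion removing $3$.
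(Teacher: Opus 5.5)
There is a genuine gap in Step~2, where you assert that the configuration around each $\zeta$-orbit $O\subseteq C$ has one of the shapes of Types~0--2 of Theorem~\ref{thm:no-rays}.

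In that theorem these are the only types because its Step~1 first proves $C_\infty\cap v(C_\infty)=\varnothing$. That step works only because the shifted transposition $(r^{-n_j}\alpha\ \,r^{-n_j}\beta)$ escapes to infinity as $j$ grows, so $(vv_j)^3$ has a limit in $P_{(C^\complement)}$. Your $r_{\mathrm{odd}}$ is one fixed permutation, so nothing escapes: $(v\,v^{r_{\mathrm{odd}}})^3$ still contains $(r_{\mathrm{odd}}^{-1}\alpha\ \,r_{\mathrm{odd}}^{-1}\beta)$ and moves points of $C^\complement$.

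Without an analogue of Step~1 you must also treat $\zeta$-orbits on which $r_{\mathrm{odd}}$ moves an opposite pair, three points, or all four. The last case defeats the word method outright. Since $r_{\mathrm{odd}}^{\pm1}(O)\cap C=\varnothing$, the elements $\tau$, $t$ and $v^{r_{\mathrm{odd}}}$ all fix $O$ pointwise. Hence every word in your letters acts on $O$ as a power of $\zeta$, and no element of order three can be produced there.

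The fallback does not repair this. Theorem~\ref{thm:no-rays} says nothing about $r$-orbits outside $C$, and on infinite ones $r^{N_j}$ does not converge. So $r_{\mathrm{odd}}$, and the involution $z'$ you want to build from it, are not known to lie in $P$. Even if they did, $tz'$ need not have finite order, and Lemmas~\ref{lem:oddrot} and~\ref{lem:order3} require that.

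What you list as the main obstacle is in fact routine. On the cycle through $\alpha\in C$, $r_{\mathrm{odd}}$ is a power of $r$, so $t\,r_{\mathrm{odd}}^{j}\alpha=r_{\mathrm{odd}}^{-j}\alpha$. As $r_{\mathrm{odd}}$-orbits are odd, $r_{\mathrm{odd}}^j\alpha\in C$ forces $r_{\mathrm{odd}}^{2j}\alpha=\alpha$ and hence $r_{\mathrm{odd}}^j\alpha=\alpha$.

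The missing idea is what to do with the limit you already have, $b=\lim v_j=v^{r_{\mathrm{odd}}}\in P$. The paper studies $\tilde t=tb$ directly, with no supported root and no finite models. For a $v$-pair $\Psi=\{\alpha,\beta\}$, write $\alpha^{\pm}=r_{\mathrm{odd}}^{\pm1}\alpha$ and $\beta^{\pm}=r_{\mathrm{odd}}^{\pm1}\beta$. On $\Psi\cup r_{\mathrm{odd}}^{\pm1}\Psi$:
\begin{itemize}
\item if $r_{\mathrm{odd}}$ fixes $\alpha$ and $\beta$, then $\tilde t$ is the transposition $(\alpha\ \,\beta)$;
\item if it moves exactly one of them, then $\tilde t$ is a $3$-cycle;
\item if it moves both, then $\tilde t$ is the $4$-cycle $(\alpha^+\ \alpha^-\ \beta^+\ \beta^-)$.
\end{itemize}
Elsewhere $\tilde t=t$.

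Hence $\tilde t^{12}=1$, and Theorem~\ref{thm:odd-order}, applied to $t$ and $b$, removes the $3$-cycles. Then $\tilde t^4=1$ and $\tilde t^2$ fixes $C$ pointwise. So $\tilde t^2$ is an element of order at most two in the cyclic group $P_{(C)}$, hence equal to $1$ or $t$. A $4$-cycle rules out both, since there $\tilde t^2(\alpha^+)=\beta^+\ne\alpha^-=t(\alpha^+)$.

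Thus $r_{\mathrm{odd}}$ fixes $C$ pointwise, and every $r$-cycle meeting $C$ has $d=1$. The case your route cannot reach, both endpoints of a $v$-pair moved, is exactly this $4$-cycle case.
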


The proof takes the odd part of $r$ on its finite cycles as a pointwise limit of powers $r^{k_n}$, conjugates the complementary half $v$ of $t$ by it to get an involution $b\in P$, and reads off the cycles of $tb$ one $v$-pair at a time, so that Theorem~\ref{thm:odd-order} forces the odd part to fix $C$ pointwise.

\begin{proof}
Finiteness is Theorem~\ref{thm:no-rays}, applied to $t$ and $z$; it remains to exclude
an odd factor. Let $\Delta$ be the union of the finite $r$-cycles; then $C\subseteq\Delta$.
Let $v$ be the complementary half of $t$. $v\in P$ is an involution with
$\Fix(v)=C^\complement$, so $C$ is a union of $v$-pairs, and $v$ preserves $\Delta$ and fixes
$\Delta^\complement$ pointwise. As $t,z$ are involutions, $trt=zt=r^{-1}$, so $tr^mt=r^{-m}$ for all $m$.

\emph{The odd part.} Enumerate the finite $r$-cycles, with lengths $\ell_i=2^{\eta_i}d_i$,
$d_i$ odd. Let $\lambda_i\in\{0,\dots,\ell_i-1\}$ satisfy $\lambda_i\equiv0\pmod{2^{\eta_i}}$
and $\lambda_i\equiv1\pmod{d_i}$, and choose $k_n$ with $k_n\equiv0\pmod{2^{E_n}}$ and
$k_n\equiv1\pmod{O_n}$, where $E_n=\max_{i\le n}\eta_i$ and $O_n=\operatorname{lcm}_{i\le n}d_i$.
For $n\ge i$ we have $k_n\equiv\lambda_i\pmod{\ell_i}$, so $r^{k_n}=r^{\lambda_i}$ on the $i$-th cycle.
Let $r_{2'}$ be $r^{\lambda_i}$ on the $i$-th cycle, for every $i$, and the identity on $\Delta^\complement$.
Since $\gcd(\lambda_i,\ell_i)=2^{\eta_i}$, every $r_{2'}$-orbit is finite of odd length.

(i) \emph{For $\alpha\in C$ and all $j$, $t(r_{2'}^j\alpha)=r_{2'}^{-j}\alpha$; hence an
$r_{2'}$-orbit contains at most one point of $C$.} On the $r$-cycle of $\alpha$,
$r_{2'}=r^{\lambda}$ for its $\lambda=\lambda_i$, so $tr_{2'}^j\alpha=tr^{\lambda j}t\alpha=r^{-\lambda j}\alpha
=r_{2'}^{-j}\alpha$. If also $r_{2'}^j\alpha\in C$, then $r_{2'}^j\alpha=r_{2'}^{-j}\alpha$,
so $r_{2'}^{2j}\alpha=\alpha$, and $r_{2'}^j\alpha=\alpha$ because the orbit is odd.

(ii) \emph{$b=v^{r_{2'}}=r_{2'}^{-1}vr_{2'}$ lies in $P$.} Each $v^{r^{k_n}}$ lies in $P$.
For $\gamma\in\Delta$ take $n$ so large that $r^{k_n}=r_{2'}$ on the cycles of $\gamma$ and of
$vr_{2'}\gamma\in\Delta$; then $v^{r^{k_n}}(\gamma)=r^{-k_n}vr_{2'}\gamma=b(\gamma)$.
For $\gamma\in\Delta^\complement$, $r^{k_n}\gamma\in\Delta^\complement$ is fixed by $v$, so
$v^{r^{k_n}}(\gamma)=\gamma=b(\gamma)$. As $b$ is a bijection, $b\in P$ by
Lemma~\ref{lem:closed}. It is an involution that fixes every point outside
$r_{2'}^{-1}(C)$ and exchanges $r_{2'}^{-1}\alpha$, $r_{2'}^{-1}\beta$ for each $v$-pair $\{\alpha,\beta\}$.

(iii) \emph{The cycles of $\tilde t=tb\in P$.} For a $v$-pair $\Psi=\{\alpha,\beta\}$ put
$\alpha^\pm=r_{2'}^{\pm1}\alpha$, $\beta^\pm=r_{2'}^{\pm1}\beta$ and
$K_\Psi=\Psi\cup r_{2'}\Psi\cup r_{2'}^{-1}\Psi$. By (i) the sets $K_\Psi$ are pairwise disjoint
(a common point puts two $C$-points of distinct $v$-pairs in one $r_{2'}$-orbit), and
$t$ fixes $\alpha$ and exchanges $\alpha^+$, $\alpha^-$; so $t$ and $b$ preserve each $K_\Psi$,
and off their union $b=1$ and $\tilde t=t$ has order at most $2$. If $r_{2'}\alpha\ne\alpha$, then
$\alpha^+\ne\alpha^-$ (the orbit is odd), and $b$ fixes $\alpha$ and $\alpha^+$, since by (i)
neither $r_{2'}\alpha$ nor $r_{2'}^2\alpha$ is in $C$. Hence:
(a) if $r_{2'}$ fixes $\alpha$ and $\beta$, then $K_\Psi=\Psi$ and $\tilde t=(\alpha\ \beta)$ there;
(b) if $r_{2'}$ moves $\alpha$ and fixes $\beta$, then $\tilde t$ fixes $\alpha$ and has the $3$-cycle
$\beta\mapsto\alpha^+\mapsto\alpha^-\mapsto\beta$;
(c) if $r_{2'}$ moves both, then $\tilde t$ fixes $\alpha,\beta$ and has the $4$-cycle
$\alpha^+\mapsto\alpha^-\mapsto\beta^+\mapsto\beta^-\mapsto\alpha^+$
(the points from $\alpha$ and from $\beta$ lie in distinct $r_{2'}$-orbits, by (i)).

\begin{center}
\begin{tabular}{lll}
Case & Assumed for the $v$-pair $\Psi=\{\alpha,\beta\}$ & Where it is closed\\
\hline
(a) & $r_{2'}$ fixes $\alpha$ and $\beta$ & Conclusion: the only case left; it gives $d_i=1$\\
(b) & $r_{2'}$ moves $\alpha$ and fixes $\beta$ & Conclusion: $|\tilde t|$ is a power of two\\
(c) & $r_{2'}$ moves $\alpha$ and $\beta$ & Conclusion: $\tilde t^2=t$ fails at $\alpha^+$\\
\end{tabular}
\end{center}

\emph{Conclusion.} So $\tilde t^{12}=1$, and Theorem~\ref{thm:odd-order}, applied to the
half-involution $t$ and the involution $b$, makes $|\tilde t|$ a power of two: case (b) never
occurs, and $\tilde t^4=1$. Then $\tilde t^2$ fixes $C$ pointwise, so $\tilde t^2\in\pst{C}$. If case (c)
occurs, $\tilde t^2$ is a nonidentity involution, hence $\tilde t^2=t$; but
$\tilde t^2(\alpha^+)=\beta^+\ne\alpha^-=t(\alpha^+)$, since $\beta^+$ and $\alpha^-$ lie in distinct
$r_{2'}$-orbits. So only case (a) occurs: $r_{2'}$ fixes $C$ pointwise. If the $i$-th
$r$-cycle meets $C$ in $\alpha$, then $r^{\lambda_i}\alpha=\alpha$, so $\ell_i\mid\lambda_i$,
hence $\lambda_i=0$, and $0\equiv1\pmod{d_i}$ gives $d_i=1$.
\end{proof}

\begin{lemma}[Two-mask lemma]\label{lem:two-mask}
Assume \textup{(S)} and $|H|\ge4$. Let $x,y\in\Ii$ with complementary pairs
$(t,v)$ for $x$ and $(s,w)$ for $y$. On every connected component $\Theta$ of
the two-matchings graph of $x$ and $y$,
\[
\{t|_\Theta,v|_\Theta\}=\{1,x|_\Theta\}
\quad\text{or}\quad
\{s|_\Theta,w|_\Theta\}=\{1,y|_\Theta\}.
\]
In words, at least one of the two complementary pairs is constant on the
component.
The component can be finite or bi-infinite.
\end{lemma}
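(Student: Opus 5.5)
The plan is to argue by contradiction on one component $\Theta$. Suppose that neither complementary pair is constant on $\Theta$, that is, neither the red mask $M$ nor the blue mask $N$ is constant there. The aim is to produce a half-involution $t'$ and an involution $z'\in P$ such that the product $t'z'$ has an orbit meeting $\Fix(t')$ that is either infinite or finite of size not a power of two. Theorem~\ref{thm:no-rays} excludes the first and Theorem~\ref{thm:oddprimary} the second. The available freedom is the following. By Lemma~\ref{lem:halves-props}(6), $t,v,s,w$ are all half-involutions and all involutions of $P$. Replacing $t$ by $v$ complements $M$ on $\Theta$, and replacing $s$ by $w$ complements $N$, because each half moves exactly the matching edges the other fixes (Lemma~\ref{lem:halves-props}(5)). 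So I may use any of the four products $t's'$ with $t'\in\{t,v\}$ and $s'\in\{s,w\}$, whose masks are $M$ or $\bar M$ and $N$ or $\bar N$.

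First I will describe $\langle t',s'\rangle$ on $\Theta$. The edges of $\Theta$ occur in the cyclic or linear order $\dots,R_i,B_i,R_{i+1},B_{i+1},\dots$. Call an edge \emph{selected} if it is a red edge moved by $t'$ or a blue edge moved by $s'$. Then $t'$ and $s'$ act on $\Theta$ as the partial matchings formed by the selected red and the selected blue edges. Hence the $\langle t',s'\rangle$-orbits in $\Theta$ are the vertex sets of the maximal runs of consecutive selected edges, together with isolated vertices. On a finite run of $k$ edges, $t's'$ acts as a $(k+1)$-cycle, and on an infinite run it has an infinite orbit. Next I will locate the fixed points. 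An endpoint of a run whose unselected boundary edge is red is fixed by $t'$. A run with an even number $k\ge2$ of edges begins with an edge of one colour and ends with one of the other. Its two boundary edges therefore also have different colours, so one endpoint lies in $\Fix(t')$. Such a run gives an orbit of odd size $k+1\ge3$ meeting $\Fix(t')$, contradicting Theorem~\ref{thm:oddprimary}. Similarly, a one-sided infinite run bounded by an unselected red edge gives an infinite $t's'$-orbit through $\Fix(t')$, contradicting Theorem~\ref{thm:no-rays}. Both this and the finite case also work with the roles of the two pairs exchanged, using $s'$ as the half-involution.

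The remaining combinatorial step, which I expect to be the main obstacle, is to show that if $M$ and $N$ are both nonconstant on $\Theta$, then one of the four choices produces such a bad run. I would first try a closest-pair argument. Choose a red change, say $M_i\ne M_{i+1}$, and a blue change $N_{j-1}\ne N_j$ at minimal distance along $\Theta$. Between them both masks are constant, so after complementing I can arrange that the edges strictly between the two changes are all selected, that $R_{i+1}$ is unselected, and that $B_{j-1}$ is unselected. The result is a finite run bounded by the two changes. Its length has a fixed parity determined by the relative position of $i$ and $j$. If the length is odd, I would instead take as boundaries the first edges beyond the change points, complementing both masks so that the interior between the changes is unselected; this shifts the resulting run length by one. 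Alternatively, I would exchange the roles of $(t,v)$ and $(s,w)$ and use the symmetric configuration. The short cases must then be checked by hand: adjacent changes, run length $1$ (a single selected edge, which is harmless), and a finite cycle on which a run wraps around. I would rule out the cycle case by noting that a nonconstant mask always leaves some unselected edge of that colour. Where the closest-pair argument gives only an odd run, I would look for a second red or blue change. If none exists on a bi-infinite path, there is a one-sided infinite run bounded by an unselected red edge, and Theorem~\ref{thm:no-rays} applies.
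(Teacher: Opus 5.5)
Your strategy is the paper's own argument: take a red change and a blue change at minimal distance, choose halves so that the edges between them are selected, and apply Theorem~\ref{thm:oddprimary} to the resulting odd path. As written, however, you leave the decisive step open because of a parity worry that is unfounded. The fallback you sketch for the odd case (complementing both masks so that the interior is unselected) is not an argument: near the two changes it produces only isolated selected edges, and farther away nothing is controlled. The missing observation is that the run is \emph{always} even. In your own arrangement it is bounded by one unselected red edge, $R_{i+1}$, and one unselected blue edge, $B_{j-1}$. Since colours alternate, the selected edge next to $R_{i+1}$ is blue and the one next to $B_{j-1}$ is red, so the run has an even number of edges. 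This is just the converse of your own remark about runs with $k$ even, and it shows the parity does not depend on $i$ and $j$.

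Concretely, in the orientation you chose the blue change precedes the red one, so $j\le i$. Since no change lies strictly between them, $M_j=\dots=M_i$ and $N_j=\dots=N_i$. Take $t'\in\{t,v\}$ to be the half moving $R_j$ and $s'\in\{s,w\}$ the half moving $B_j$. The maximal run is then exactly $R_j,B_j,\dots,R_i,B_i$, with $2(i-j+1)\ge2$ edges. Its endpoint $e_{i+1}$ lies on the unselected red edge $R_{i+1}$, so it is a point of $\Fix(t')$ on a $t's'$-orbit of odd size $2(i-j+1)+1\ge3$, contradicting Theorem~\ref{thm:oddprimary}. On a cycle the run cannot wrap around, because $R_{i+1}$ and $B_{j-1}$ carry bits different from those of the selected edges of their colours.

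Hence there is no odd case, no need to look for a second change, and no need for Theorem~\ref{thm:no-rays}. With this observation your proof is exactly the paper's, which uses the mirror-image run $B_i,R_{i+1},\dots,B_j,R_{j+1}$, running from the location of the red change to that of the blue change.
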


If both masks change along an alternating component, a red change followed by a blue change with none between selects a half of $x$ and a half of $y$ whose product has a finite odd orbit through a point fixed by the half of $x$, which Theorem~\ref{thm:oddprimary} excludes.

\begin{proof}
We treat one component at a time; by Lemma~\ref{lem:components}(1) it is a doubled edge, a finite
alternating even cycle or a doubly infinite alternating path. On a doubled edge $\Theta=\{p,x(p)\}$, $x$ and $y$ agree, and each half is the identity or the
transposition there, and exactly one of $t,v$ fixes $p$; so
$\{t|_\Theta,v|_\Theta\}=\{1,x|_\Theta\}$, and likewise
$\{s|_\Theta,w|_\Theta\}=\{1,y|_\Theta\}$: both complementary pairs are constant. For an alternating cycle
or bi-infinite path, take the labelling, the red edges $R_i$, of $x$, the blue edges $B_i$, of $y$, and
the masks $M$ and $N$ of Definition~\ref{def:twomatch}, with $(t,v)$ and $(s,w)$ in place of
$(t_x,v_x)$ and $(t_y,v_y)$; so $M$ and $N$ are the \emph{masks} of $t$ and $s$. On each $R_i$ exactly
one of $t,v$ exchanges $e_i,o_i$ and the other fixes both, so
$M_i=0$ exactly when $v$ does; likewise for $N_i$ and $s,w$ on $B_i$. The half selected by a label is
the one that exchanges the edge.

We argue by contradiction, and first locate two nearby changes of different
colours. Suppose both masks change. Along the alternating component, place a
red-change
event between $R_i$ and $R_{i+1}$ whenever $M_i\ne M_{i+1}$; place a blue-change
event between $B_j$ and $B_{j+1}$ whenever $N_j\ne N_{j+1}$. There are
consecutive change events of different kinds. This follows by choosing two
opposite-kind events at minimum positive distance; an intervening event would
yield a closer opposite-kind pair. The argument works on a cycle after a cyclic
choice of origin, and on a bi-infinite path.

Between these two changes we can choose halves that cut out a short path. Reverse
the orientation or exchange the colours if necessary. Let the first
event be a red change between $R_i$ and $R_{i+1}$, and the next a blue change
between $B_j$ and $B_{j+1}$, with $j\ge i$. Since there are no intervening
changes, the labels are constant on
\[
R_{i+1},\dots,R_{j+1}\quad\text{and}\quad B_i,\dots,B_j .
\]
Choose the red half $t_0$ matching the new red label and the blue half $s_0$
matching the old blue label: $t_0$ is the half of $x$ that exchanges the ends of $R_{i+1}$ (so of
$R_{i+1},\dots,R_{j+1}$, and not of $R_i$, as $M_i\ne M_{i+1}$), and $s_0$ the half of $y$ that
exchanges the ends of $B_j$ (so of $B_i,\dots,B_j$, and not of $B_{j+1}$, as $N_{j+1}\ne N_j$). In
the graph of the two partial matchings
$t_0,s_0$, the consecutive edges
\[
B_i,R_{i+1},B_{i+1},\dots,B_j,R_{j+1}
\]
form a complete finite component: $R_i$ and $B_{j+1}$ are unselected (Figure~\ref{fig:oddpath}). On a cycle of length $2m$
take $i\le j\le i+m-1$. The arc $B_i,R_{i+1},\dots,R_{j+1}$ cannot contain $R_i$, because
$M_{i+1}=\dots=M_{j+1}$ while $M_i\ne M_{i+1}$; so $j+1\le i+m-1$, the arc has $2(j-i+1)\le2m-2$
edges, it is a simple path, and $R_i$ and $B_{j+1}$ ($N_{j+1}\ne N_j=N_i$) are edges of the cycle
outside it. This path
has $2(j-i+1)$ edges and therefore $2(j-i+1)+1$ vertices, an odd number at
least $3$. One endpoint is fixed by $t_0$.

\begin{figure}[!htb]
\centering
\providecolor{figaccent}{RGB}{0,70,160}\begin{tikzpicture}[x=1cm,y=1cm,font=\normalsize,
  every node/.style={inner sep=0pt,outer sep=0pt},
  vopen/.style={circle,draw=black,fill=white,line width=0.6pt,minimum size=0.44cm,inner sep=0pt},
  vfill/.style={circle,draw=figaccent,fill=figaccent,line width=0.6pt,minimum size=0.44cm,inner sep=0pt},
  redheavy/.style={line width=2.2pt,black},
  redlight/.style={line width=0.5pt,black!55},
  blueheavy/.style={line width=1.6pt,black,dash pattern=on 5pt off 3pt},
  bluelight/.style={line width=0.5pt,black!55,dash pattern=on 3pt off 2pt},
  bar/.style={line width=0.8pt,black}]
\coordinate (v0) at (0.000,0);
\coordinate (v1) at (1.250,0);
\coordinate (v2) at (2.500,0);
\coordinate (v3) at (3.750,0);
\coordinate (v4) at (5.000,0);
\coordinate (v5) at (6.250,0);
\coordinate (v6) at (7.500,0);
\coordinate (v7) at (8.750,0);
\coordinate (v8) at (10.000,0);
\draw[bar] (1.250,-0.340) -- (1.250,0.340);
\draw[bar] (8.750,-0.340) -- (8.750,0.340);
\draw[redlight] (v0) -- (v1);
\draw[blueheavy] (v1) -- (v2);
\draw[redheavy] (v2) -- (v3);
\draw[blueheavy] (v3) -- (v4);
\draw[redheavy] (v4) -- (v5);
\draw[blueheavy] (v5) -- (v6);
\draw[redheavy] (v6) -- (v7);
\draw[bluelight] (v7) -- (v8);
\draw[dotted,line width=0.5pt] (v0) -- (-0.650,0);
\draw[dotted,line width=0.5pt] (v8) -- (10.650,0);
\node[vopen] at (v0) {};
\node[vfill] at (v1) {};
\node[vopen] at (v2) {};
\node[vopen] at (v3) {};
\node[vopen] at (v4) {};
\node[vopen] at (v5) {};
\node[vopen] at (v6) {};
\node[vopen] at (v7) {};
\node[vopen] at (v8) {};
\node at (0.000,-1.080) {$e_{i}$};
\node at (1.250,-1.080) {$o_{i}$};
\node at (1.250,-1.580) {$p_{0}$};
\node at (2.500,-1.080) {$e_{i+1}$};
\node at (2.500,-1.580) {$p_{1}$};
\node at (3.750,-1.080) {$o_{i+1}$};
\node at (3.750,-1.580) {$p_{2}$};
\node at (5.000,-1.080) {$e_{j}$};
\node at (5.000,-1.580) {$p_{3}$};
\node at (6.250,-1.080) {$o_{j}$};
\node at (6.250,-1.580) {$p_{4}$};
\node at (7.500,-1.080) {$e_{j+1}$};
\node at (7.500,-1.580) {$p_{5}$};
\node at (8.750,-1.080) {$o_{j+1}$};
\node at (8.750,-1.580) {$p_{6}$};
\node at (10.000,-1.080) {$e_{j+2}$};
\node at (0.625,0.440) {$R_{i}$};
\node at (0.625,0.940) {$M_{i}\!=\!0$};
\node at (1.875,0.440) {$B_{i}$};
\node at (1.875,-0.600) {$N_{i}\!=\!1$};
\node at (3.125,0.440) {$R_{i+1}$};
\node at (3.125,0.940) {$M_{i+1}\!=\!1$};
\node at (4.375,0.440) {$B_{i+1}$};
\node at (4.375,-0.600) {$N_{i+1}\!=\!1$};
\node at (5.625,0.440) {$R_{j}$};
\node at (5.625,0.940) {$M_{j}\!=\!1$};
\node at (6.875,0.440) {$B_{j}$};
\node at (6.875,-0.600) {$N_{j}\!=\!1$};
\node at (8.125,0.440) {$R_{j+1}$};
\node at (8.125,0.940) {$M_{j+1}\!=\!1$};
\node at (9.375,0.440) {$B_{j+1}$};
\node at (9.375,-0.600) {$N_{j+1}\!=\!0$};
\node at (-1.000,0.000) {$\cdots$};
\node at (11.000,0.000) {$\cdots$};
\end{tikzpicture}
\caption{The odd path in the proof of Lemma~\ref{lem:two-mask}, drawn for $j=i+2$, with the labels
  and edges of Definition~\ref{def:twomatch}: red edges $R_l$ (solid) carry the bit $M_l$, blue edges $B_l$
  (dashed) the bit $N_l$. The red change $M_i\ne M_{i+1}$ is followed by the blue change $N_j\ne N_{j+1}$ with
  no change between. The half $t_0$ of $x$ exchanges the ends of $R_{i+1},\dots,R_{j+1}$ and the half $s_0$
  of $y$ those of $B_i,\dots,B_j$ (heavy); $R_i$ and $B_{j+1}$ are unselected. In the graph of $t_0$ and
  $s_0$ the heavy edges form a component, a path with $2(j-i+1)+1=7$ vertices $p_0,\dots,p_6$, and its
  endpoint $p_0=o_i$ (filled) is fixed by $t_0$. The lemma shows that the two masks never both change.}
\label{fig:oddpath}
\end{figure}

This path yields an odd cycle through the fixed set, which the previous theorem
forbids. The product $t_0s_0$ is a single cycle on the vertices of this
alternating finite path. With vertices $p_0,\dots,p_{2k}$ in path order, its cycle can be
written
\[
(p_0,p_2,\dots,p_{2k},p_{2k-1},p_{2k-3},\dots,p_1),
\]
up to inversion. It is a finite odd orbit through a point fixed by the
half-involution $t_0$, contrary to Theorem~\ref{thm:oddprimary} applied to the
half $t_0$ and the involution $s_0$. Thus the two masks cannot both change. By the first paragraph, $M\equiv1$ gives $t|_\Theta=x|_\Theta$ and $v|_\Theta=1$,
and $M\equiv0$ gives the reverse. Hence $M$ is constant if and only if
$\{t|_\Theta,v|_\Theta\}=\{1,x|_\Theta\}$, likewise for $N$, $(s,w)$ and $y$, and since one of $M,N$ is
constant, the displayed alternative holds.
\end{proof}

\begin{proposition}[Trivial fixed-side commutator]\label{prop:fixed-side}
Assume \textup{(S)} and $|H|\ge4$. Let $x=tv$ and $y=sw$ be full matchings in
$\Ii$ with their complementary halves, $C=\Fix(t)$ and $k=[s,w^t]$. Then $k=1$.
\end{proposition}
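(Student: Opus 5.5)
The plan is to compute $k=[s,w^t]$ component by component on a two-matchings graph where the two-mask lemma (Lemma~\ref{lem:two-mask}) applies. Then I will show that $k$ lies in a half-stabilizer while fixing too many points, which Corollary~\ref{cor:H}(1) forbids unless $k=1$.

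\emph{Reduction to components.} Since $s,w,t$ are involutions, $k=s\,w^t\,s\,w^t=(sw^t)^2$. Put $y'=y^t=s^tw^t\in\Ii$. By Lemma~\ref{lem:halves-props}(6), conjugated by $t$, the pair $(s^t,w^t)$ is a complementary pair for $y'$. Consider the two-matchings graph of $y$ and $y'$. By Lemma~\ref{lem:components}(2), all four halves $s,w,s^t,w^t$ preserve each component $\Theta$, so $k$ does too, and it suffices to compute $k|_\Theta$. By Lemma~\ref{lem:two-mask}, on each $\Theta$ either $\{s|_\Theta,w|_\Theta\}=\{1,y|_\Theta\}$ or $\{s^t|_\Theta,w^t|_\Theta\}=\{1,y'|_\Theta\}$.

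\emph{Local cases.} In the second case, if $w^t|_\Theta=1$ then $k|_\Theta=1$ at once. If $w^t|_\Theta=y'|_\Theta$ and $s^t|_\Theta=1$, then $k|_\Theta=[s,y']|_\Theta$, and I will use the labelling of Definition~\ref{def:twomatch} to read it off from the mask of $s$. In the first case, with $s|_\Theta\in\{1,y|_\Theta\}$, the subcase $s|_\Theta=1$ is trivial, and the subcase $s|_\Theta=y|_\Theta$ gives $k|_\Theta=(y\,w^t)^2|_\Theta$, again a mask computation. In the nontrivial subcases I expect the answer to be a rotation $(yy')^{\pm2}$ restricted to the sites where a mask changes. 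I will then show that every point of $C$ is fixed by $k$, using $t(C)=C$ and the relation $k^t=[s^t,w]$, which interchanges the roles of $y$ and $y'$ and so pairs the components symmetrically. This would give $k\in\pst{C}$.

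\emph{Conclusion and main obstacle.} Once $k\in\pst{C}$, Corollary~\ref{cor:H}(1) says that $k\ne1$ forces $\Fix(k)=C$ exactly. So it suffices to find one point outside $C$ fixed by $k$; every component on which $k$ is trivial and which meets $C^\complement$ supplies one. If instead every component meeting $C^\complement$ were nontrivial, I would use a second constraint. First, $t$ centralizes the cyclic group $\pst{C}$, so $k^t=k$, which combined with $k^t=[s^t,w]$ yields $[s,w^t]=[s^t,w]$. Second, comparing this with the explicit local forms should give either $k^2=1$ with $k\ne t$, contradicting uniqueness of the involution, or a point of $C^\complement$ fixed by $k$. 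The hardest step is the bi-infinite components, where $k$ could in principle have infinite order. There I would first try the limit technique of Theorem~\ref{thm:no-rays}, exhibiting $(sw^t)$ as a product of a half-involution and an involution. Theorem~\ref{thm:no-rays} and Theorem~\ref{thm:oddprimary} then give finite $2$-power orbits through the fixed side, which rules out the infinite components meeting $\Fix(s)$.
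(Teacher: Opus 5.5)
The decisive step is missing: nothing in your proposal gets from $k\in\pst{C}$ to $k=1$.

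Your search for a point of $C^\complement$ fixed by $k$ can only use components on which you know $k$ is trivial, namely those meeting $C$. So it needs a component that meets both $C$ and $C^\complement$. But if $\alpha\in C$ and $y\alpha\in C$, then $y'\alpha=ty\alpha=y\alpha$, and the component of $\alpha$ in the graph of $y,y'$ is the doubled edge $\{\alpha,y\alpha\}\subseteq C$. So when $y(C)=C$ (which happens whenever $y$ commutes with $t$, e.g.\ $y=x$), you get no such point.

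None of your fallbacks closes this case:
\begin{enumerate}
\item The identity $k^t=k$ only restates $[s,w^t]=[s^t,w]$.
\item Theorems~\ref{thm:no-rays} and~\ref{thm:oddprimary}, applied to $s$ and $w^t$, describe orbits of $sw^t$ through $\Fix(s)$, not fixed points of $k$ outside $C$.
\item Once $k\in\pst{C}$, a cyclic group of order $|H|$, there is no infinite-order issue left to handle.
\end{enumerate}

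The idea you are missing is that $s$ inverts $k$. Since $k=(sw^t)^2$ and $s(sw^t)s=w^ts=(sw^t)^{-1}$, we have $sks=k^{-1}$, so $s$ maps $\Fix(k)$ onto $\Fix(k^{-1})=\Fix(k)$. If $k\ne1$, then $\Fix(k)=C$ by Corollary~\ref{cor:H}(1), so $s\in\sst{C}=C_P(t)$ by Corollary~\ref{cor:H}(2). Since $s$ also commutes with $w$, it commutes with $w^t$, giving $k=1$, a contradiction.

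The first half also needs repair, and your prediction of rotations on the relevant components is wrong. Appealing to $t(C)=C$ and $k^t=[s^t,w]$ does not show that $k$ fixes $C$. What works is this:
\begin{enumerate}
\item $t$ interchanges $y$ and $y'$, so it permutes the components of their two-matchings graph.
\item $t$ preserves each component $\Theta$ that meets $C$, since it fixes a point of $\Theta$.
\item On such a $\Theta$, $s^t|_\Theta$ and $w^t|_\Theta$ are the $t|_\Theta$-conjugates of $s|_\Theta$ and $w|_\Theta$. Hence $s|_\Theta=1$ if and only if $s^t|_\Theta=1$, and likewise for $w$ and $w^t$.
\item Combined with Lemma~\ref{lem:two-mask}, this gives $k|_\Theta=1$ in every subcase.
\end{enumerate}
This includes the two subcases you set aside as mask computations: $s^t|_\Theta=1$ with $w^t|_\Theta=y'|_\Theta$ forces $s|_\Theta=1$, and $s|_\Theta=y|_\Theta$ forces $w|_\Theta=w^t|_\Theta=1$.

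The paper does this more directly in the two-matchings graph of $x$ and $y$. On a component through a point of $C$, either the pair $(t,v)$ is constant, so $t$ is the identity there and $k=[s,w]=1$ there. Or one of $s,w$ is the identity there by Lemma~\ref{lem:two-mask}, and then so is $k$.
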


\begin{proof}
For $\alpha\in C$, inspect its two-matchings
component $\Theta$. If $(t,v)$ is
constant on $\Theta$, then $t$ must be the identity there (it fixes $\alpha$), so
$k=[s,w]=1$ on that component. Otherwise, Lemma~\ref{lem:two-mask}, applied to
$x$, $y$ and the pairs $(t,v)$, $(s,w)$ on $\Theta$, makes one of $s,w$ the
identity on $\Theta$, and again $k$ fixes $\alpha$. Hence
\[
k\in\pst{C}.
\]

Suppose
$k\ne1$. $\Fix(k)=C$. The commutator of the
involutions $s$ and $w^t$ is inverted by $s$, so $sks=k^{-1}$. Thus $s$
preserves $C$ setwise and, by Corollary~\ref{cor:H}(2), commutes with $t$. It
already commutes with $w$, so it commutes with $w^t$. Therefore $k=1$, a
contradiction.
\end{proof}

\begin{theorem}[Fourth-power bound]\label{thm:fourth-power}
Assume \textup{(S)} and $|H|\ge4$. For all $x,y\in\Ii$, $(xy)^4=1$.
\end{theorem}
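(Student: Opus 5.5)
The plan is to deduce the bound component by component from Proposition~\ref{prop:fixed-side} (trivial fixed-side commutator), using Lemma~\ref{lem:two-mask} only to choose the halves. The key identity is this: for involutions $x,y$,
\[
y\cdot xyx=(yx)^2,\qquad xyx\cdot y=(xy)^2 .
\]
Hence $y$ commutes with $xyx$ on an $\langle x,y\rangle$-invariant set $\Theta$ exactly when $(xy)^4=1$ on $\Theta$. I would fix a two-matching component $\Theta$ of $x$ and $y$ and show $(xy)^4|_\Theta=1$. By Lemma~\ref{lem:components}(3), this also excludes doubly infinite paths, since $xy$ has infinite order on those.

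\emph{Step 1: both masks constant.} By Lemma~\ref{lem:halves-props}(6), either half of $x$ may be taken as the chosen half, and likewise for $y$. Suppose both pairs are constant on $\Theta$. I choose the complementary pair $(t,v)$ of $x$ with $t|_\Theta=x|_\Theta$, $v|_\Theta=1$, and the pair $(s,w)$ of $y$ with $w|_\Theta=y|_\Theta$, $s|_\Theta=1$. Then $w^t|_\Theta=xyx|_\Theta$. Proposition~\ref{prop:fixed-side} gives $[s,w^t]=1$, but on $\Theta$ this is only $[1,xyx]=1$, which is vacuous. I would therefore apply Proposition~\ref{prop:fixed-side} to the pair $(y,x)$ instead, with roles exchanged: chosen half $w$ of $y$ (equal to $y$ on $\Theta$), and for $x$ the pair with $s'|_\Theta=1$, $w'|_\Theta=x|_\Theta$. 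The conclusion is $[s',w'^{\,w}]=1$, which on $\Theta$ is again vacuous.

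So the trivial choices are useless, and I have to check which choices give $[y,xyx]$ or $[x,yxy]$ on $\Theta$. Taking $t|_\Theta=x$ and $(s,w)$ with $s|_\Theta=y$, $w|_\Theta=1$ makes $k=[y,1]$ on $\Theta$, also vacuous. The productive form needs the half of $y$ that moves $\Theta$ in both the $s$ and the $w$ slot. I would handle this by exploiting the $C$-part of the proof of Proposition~\ref{prop:fixed-side}. There, $k$ fixes $C$ pointwise and lies in $\pst{C}$, so it suffices that $k$ be a nonidentity element with a fixed point off $C$. Concretely, I would take the component $\Theta\subseteq C^\complement$ (so $t=x$ on $\Theta$) and replace $s$ by its conjugates, e.g.\ $k=[s,(w^{s})^{t}]$ or words in the supported roots of Corollary~\ref{cor:H}(3). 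The aim is an element of $\pst{C}$ equal to $[y,xyx]$ on $\Theta$; the cyclic structure of $\pst{C}$ (fixed set exactly $C$ for nonidentity elements) then forces it to be trivial.

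\emph{Step 2: one mask nonconstant.} Say the $y$-mask $N$ is nonconstant on $\Theta$. By Lemma~\ref{lem:two-mask}, the $x$-pair is then constant, and I choose $t$ with $t|_\Theta=x|_\Theta$. Proposition~\ref{prop:fixed-side} gives that $s$ commutes with $w^t=xwx$ on $\Theta$. Here $s$ moves blue edges $B_i=\{o_i,e_{i+1}\}$, while $xwx$ moves the edges $\{e_i,o_{i+1}\}$ with $N_i=0$. Two commuting involutions cannot move distinct edges sharing a vertex, and $\{e_i,o_{i+1}\}$ meets $B_{i-1}$ and $B_{i+1}$. So $N_{i-1}=1$ forces $N_i=1$. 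Exchanging $(s,w)$ by Lemma~\ref{lem:halves-props}(6) gives the same implication with $0$ in place of $1$. Thus $N$ is constant along $\Theta$, a contradiction, and Step~2 reduces to Step~1. The same argument with colours exchanged handles a nonconstant $M$.

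The main obstacle is Step~1. Proposition~\ref{prop:fixed-side} with literal halves is vacuous when both masks are constant, so I must manufacture an element of $\pst{C}$ (or of $\pst{C^\complement}$) that restricts to $[y,xyx]$ on $\Theta$. My first attempt would use the local computation of Step~2 with one half replaced by a conjugate by the supported square root $\zeta$ of Corollary~\ref{cor:H}(3). If that fails, I would fall back on Proposition~\ref{prop:two-core}(3) and Theorem~\ref{thm:oddprimary}: apply them to the half-involution equal to the identity on $\Theta$ and to the involution $xyx$, giving that every $m$ is a $2$-power and $F$ is a finite $2$-group. I would then exclude $m\ge8$ by an explicit finite marked-type computation in $F$, in the style of Lemma~\ref{lem:order3}.
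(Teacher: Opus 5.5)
Your Step~1 is a genuine gap, as you acknowledge yourself. Restricted to a component on which both pairs are constant, Proposition~\ref{prop:fixed-side} says nothing, and none of the remedies you sketch is carried out. The fallback does not work either:
\begin{itemize}
\item Proposition~\ref{prop:two-core}(3) assumes that $xy$ has finite order, which is part of what is to be proved.
\item Take a half that is the identity on $\Theta$ and the involution $xyx$. Their product restricts on $\Theta$ to $xyx$ itself, so Theorem~\ref{thm:oddprimary} gives nothing about $xy$ there.
\end{itemize}

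The missing idea is to stop localizing and use the centralizer identity $C_P(s)=C_P(w)$ of Corollary~\ref{cor:H}(2) globally. It is exactly what lets you put the same half of $y$ in both slots. An automorphism commuting with $s$ preserves $\Fix(s)$, hence also $\Fix(w)$, so it normalizes $\pst{\Fix(w)}$ and fixes that cyclic group's unique involution $w$. This is the global rigidity of Lemma~\ref{lem:H}, which is invisible on a single component. The argument then runs:
\begin{enumerate}
\item From $[s,w^t]=1$ you get $w^t\in C_P(s)=C_P(w)$, so $[w,w^t]=1$.
\item Conjugating by $t$ gives $[s^t,w]=1$.
\item Since $s\in C_P(w^t)=C_P(s^t)$, you get $[s,s^t]=1$.
\item Hence $[y,y^t]=1$, that is $(yt)^4=1$, that is $[t,t^y]=1$.
\item The same transfer with $C_P(t)=C_P(v)$ and $C_P(t^y)=C_P(v^y)$ yields $[x,x^y]=1$.
\end{enumerate}
Since $xx^y=(xy)^2$, this gives $(xy)^4=1$ everywhere at once, bi-infinite components included. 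This is the paper's proof. In your setup the first stage already suffices on a component with constant $x$-pair: choose the half $t$ equal to $x$ on $\Theta$, and $[y,y^t]=1$ restricts to $[y,xyx]=1$ there.

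Step~2 is also incorrect as written. Two commuting involutions can move distinct edges through a common vertex: $(a\,b)(c\,d)$ and $(b\,c)(a\,d)$ commute. This is exactly what happens on a mixed eight-point component. Take $t=x=(0\,1)(2\,3)(4\,5)(6\,7)$, $s=(1\,2)(5\,6)$ and $w=(3\,4)(7\,0)$ on $\Theta$. Then $xwx=(1\,6)(2\,5)$, which commutes with $s$. On a four-point component the two edges you compare coincide. So you cannot conclude that $N$ is constant. Indeed, mixed four-point components do occur (Corollary~\ref{cor:normal-form}(1)). The mixed eight-point component is excluded only later, by Theorem~\ref{thm:mixed8}, which itself depends on the present theorem.

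A correct version of your computation shows that a change in $N$ forces $m\mid4$. There $(xy)^4=1$ holds on $\Theta$ trivially, so this case is repairable. But without Step~1 the proposal does not prove the theorem.
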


\begin{proof}
Let $(t,v)$ and $(s,w)$ be complementary
pairs for $x$ and $y$. Since $t\in P$ and conjugation by an element
of $P$ carries a complementary pair to a complementary pair (Definition~\ref{def:halves}),
$(s^t,w^t)$ is a complementary pair for $y^t\in\Ii$. The complementary centralizer identities
(Corollary~\ref{cor:H}(2)) give
\[
C_P(s)=C_P(w),\qquad C_P(s^t)=C_P(w^t).
\]
From $[s,w^t]=1$ (Proposition~\ref{prop:fixed-side}), all four commutations between
$\{s,w\}$ and $\{s^t,w^t\}$ follow: conjugating by the involution $t$ gives $[s^t,w]=1$; since
$w^t\in C_P(s)=C_P(w)$, $[w,w^t]=1$; and since $s\in C_P(w^t)=C_P(s^t)$, $[s,s^t]=1$.
Consequently $[y,y^t]=1$. Since
$yy^t=(yt)^2$, this gives $(yt)^4=1$, or $[t,t^y]=1$.

$(t^y,v^y)$ is a complementary pair for $x^y\in\Ii$
(Definition~\ref{def:halves}). By Corollary~\ref{cor:H}(2), $C_P(t)=C_P(v)$ and $C_P(t^y)=C_P(v^y)$.
From $[t,t^y]=1$, $t^y\in C_P(t)=C_P(v)$, so $[v,t^y]=1$. Also $t,v\in C_P(t^y)=C_P(v^y)$, so
$[t,v^y]=[v,v^y]=1$. Hence $x=tv$ commutes with $x^y=t^yv^y$, and since $xx^y=(xy)^2$, $(xy)^4=1$.
\end{proof}

What remains to be determined is how the halves act on a single component, which has $2$, $4$ or $8$
points; Section~\ref{ss:types} determines which masks occur on it.

\subsection{Component types}\label{ss:types}

This subsection determines the possible restrictions of two full matchings and
their chosen halves to a single component. Let $x,y\in\Ii$. By Theorem~\ref{thm:fourth-power}, $|xy|$ divides $4$, so Proposition~\ref{prop:two-core}(3) makes $F=\langle t_x,v_x,t_y,v_y\rangle$ a finite $2$-group. By Lemma~\ref{lem:components}(1),(3), every
component is a doubled edge or an alternating cycle of $2m$ vertices with $m\mid4$, and $xy$ has order $m$ on it.

As throughout this section, the red half is $t=t_x$ and the blue half is $s=t_y$; the labelling and the
red and blue masks $M$, $N$ are those of Definition~\ref{def:twomatch}.
The complementary halves are $v=xt$ and $w=ys$. Products are compositions, the rightmost factor acting first
(Section~\ref{ss:conv}). The words used below are
\[
Y=s\,t\,s\,y,\qquad \mathcal{W}=Y\,t\,Y .
\]
The word $\mathcal{W}$ is not a conjugate of $t$: its second factor is $Y$, not $Y^{-1}$.

\begin{lemma}[The word $\mathcal{W}$ on the fixed set]\label{lem:word-fixed-set}
Assume \textup{(S)} and $|H|\ge4$, and let $x,y\in\Ii$, in the notation of
Section~\ref{ss:types}. On every component where $t\ne x$, the word $\mathcal{W}=YtY$ equals $t$.
Therefore $\mathcal{W}$ fixes $\Fix(t)$ pointwise in the full action.
\end{lemma}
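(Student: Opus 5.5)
The plan is to work one two-matching component at a time and compute $\mathcal{W}$ directly, using the two-mask lemma to limit the possibilities. By Lemma~\ref{lem:components}(2), the letters $t,s,x,y$ of $Y=s\,t\,s\,y$ and $\mathcal{W}=YtY$ all preserve each component $\Theta$. So $\mathcal{W}|_\Theta$ is the same word evaluated in the restrictions $t|_\Theta,s|_\Theta,x|_\Theta,y|_\Theta$. Lemma~\ref{lem:two-mask} says that on $\Theta$ at least one of $\{t|_\Theta,v|_\Theta\}=\{1,x|_\Theta\}$ and $\{s|_\Theta,w|_\Theta\}=\{1,y|_\Theta\}$ holds. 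This holds on doubled edges as well, where both pairs are constant.

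Assume $t|_\Theta\ne x|_\Theta$. Three cases remain.
\begin{enumerate}
\item The red mask is constant. Since $t\ne x$ on $\Theta$, we must have $t|_\Theta=1$. Then $Y=s\,s\,y=y$ on $\Theta$, because $s$ is an involution, and so $\mathcal{W}=y\cdot1\cdot y=y^2=1=t$ there.
\item The red mask is not constant. By the two-mask lemma the blue mask is then constant, so $s|_\Theta\in\{1,y|_\Theta\}$. If $s|_\Theta=1$, then $Y=ty$ and
\[
\mathcal{W}=ty\,t\,ty=t\,y^2=t .
\]
\item The red mask is not constant and $s|_\Theta=y|_\Theta$. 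Then $Y=y\,t\,y\,y=yt$ and
\[
\mathcal{W}=yt\,t\,yt=y^2t=t .
\]
\end{enumerate}
In all three cases we use only that $t$ and $y$ are involutions. Hence $\mathcal{W}|_\Theta=t|_\Theta$ whenever $t|_\Theta\ne x|_\Theta$.

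For the global statement, take $\alpha\in\Fix(t)$ and let $\Theta$ be its component. Since $x$ is fixed-point-free (Lemma~\ref{lem:halves-props}(2)), $t(\alpha)=\alpha\ne x(\alpha)$, so $t|_\Theta\ne x|_\Theta$. By the computation above, $\mathcal{W}(\alpha)=t(\alpha)=\alpha$. Therefore $\mathcal{W}$ fixes $\Fix(t)$ pointwise.

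I expect no real obstacle here. The only points needing care are reading "$t\ne x$ on $\Theta$" together with a constant red mask as forcing $t|_\Theta=1$, and checking that doubled edges fall under the same case analysis.
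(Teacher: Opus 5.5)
Your proof is correct and follows the paper's argument essentially step for step. You restrict to a component, use the two-mask lemma to reduce to $t=1$, $s=1$ or $s=y$ there, compute $\mathcal{W}=t$ in each case, and use that $x$ is fixed-point-free, so every point of $\Fix(t)$ lies on a component where $t\ne x$. One trivial slip: case~1 also uses $s^2=1$, so the remark that only $t$ and $y$ need to be involutions is not quite accurate.
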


\begin{proof}
Each vertex is joined to its $x$-partner and to its $y$-partner, so $x$ and $y$, and by Lemma~\ref{lem:components}(2) their halves, preserve every two-matching component. So $\mathcal{W}$ preserves each
component, and its restriction is the same word in the restrictions. On each component, the two-mask
lemma (Lemma~\ref{lem:two-mask}), applied to $x$, $y$ and the complementary pairs $(t,v)$ and $(s,w)$,
makes $t$ equal to $1$ or $x$ there, or $s$ equal to $1$ or $y$ there; so it leaves three possibilities
when $t\ne x$.
\begin{enumerate}
\item If $t=1$, then $Y=s^2y=y$, and $\mathcal{W}=y^2=1=t$.
\item If $s=1$, then $Y=ty$ and $\mathcal{W}=(ty)t(ty)=t$.
\item If $s=y$, then $Y=yt$ and $\mathcal{W}=(yt)t(yt)=t$.
\end{enumerate}
On a component with $t=x$, the full matching $x$ has no fixed vertex, so there
is no fixed-set condition to check there.
\end{proof}

\begin{lemma}[Commutation of $\mathcal{W}$ with the matching]\label{lem:word-commutation}
In the model of Section~\ref{ss:types}, on an alternating $2m$-cycle with $m\in\{1,2,4\}$
\textup{(}a doubled edge when $m=1$\textup{)} and $t=x$, $[x,\mathcal{W}]=1$ if
and only if the blue
mask is constant, or $m$ is $2$ or $4$ and the blue mask is alternating. This
lemma is purely combinatorial: it uses neither \textup{(S)} nor that
$\langle x,y,s\rangle$ is a $2$-group.
\end{lemma}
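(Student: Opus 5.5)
The plan is a direct computation in the finite model, with no appeal to (S). Label the component as in Definition~\ref{def:twomatch}: $x(e_i)=o_i$, $y(o_i)=e_{i+1}$ with $i\in\Z/m\Z$, and $s=t_y$ exchanges the ends of $B_i$ exactly when $N_i=1$. Since $t=x$ on the component, the words become
\[
Y=s\,x\,s\,y,\qquad \mathcal{W}=Y\,x\,Y,
\]
so only the blue mask $N$ enters. The condition $[x,\mathcal{W}]=1$ is equivalent to $x\mathcal{W}x=\mathcal{W}$. I would decide it for each possible mask. The dihedral symmetries of the labelled cycle (rotating the indices, and reversing orientation while exchanging the roles of the $e_i$ and $o_i$) preserve $x$ and $y$ and act on $N$ by rotation and reflection. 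This reduces the work to one mask per dihedral class.

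\emph{Constant masks.} If $N\equiv0$, then $s=1$, so $Y=xy$ and
\[
\mathcal{W}=xy\,x\,xy=xy^2=x,
\]
which commutes with $x$. If $N\equiv1$, then $s=y$, so $Y=yxy^2=yx$ and
\[
\mathcal{W}=yx\,x\,yx=y^2x=x,
\]
which again commutes. For $m=1$ the mask has a single entry and is therefore constant, so the lemma holds on a doubled edge.

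\emph{Alternating masks and the remaining masks.} For $m=2$ every mask is constant or alternating. The alternating case, $N=(1,0)$ up to rotation, is checked by tracing the four vertices through $Y$ and then through $\mathcal{W}$; I expect $\mathcal{W}$ to come out as a product of $x$-pairs or to be $x$-invariant, so that it commutes with $x$. For $m=4$ there are $16$ masks. Up to rotation, reflection and the two constant cases, the masks left are: $(1,0,1,0)$, which is alternating; $(1,0,0,0)$ and its complement $(0,1,1,1)$; and $(1,1,0,0)$. For each of these I would compute $Y$ explicitly as a permutation of the eight points $0,\dots,7$, with $e_i=2i$ and $o_i=2i+1$, and then compute $\mathcal{W}=YxY$. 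In the alternating case the check should give $x\mathcal{W}x=\mathcal{W}$. In each non-alternating, non-constant case the check should exhibit one vertex $p$ with $x\mathcal{W}(p)\ne\mathcal{W}x(p)$.

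\emph{Main obstacle.} The real work is the bookkeeping of the non-alternating $m=4$ masks. To make the failure visible, I would first derive a local formula for $sxs$: on $R_i$ it is determined by the pair $(N_{i-1},N_i)$, since it joins $e_i$ or its $y$-partner to $o_i$ or its $y$-partner. This gives $Y$ edge by edge in terms of the neighbouring blue bits, and a window of three consecutive bits $1,1,0$ or $0,0,1$ (which every non-constant, non-alternating mask contains) then gives the noncommuting vertex. All of this is finite and combinatorial, which is why the statement needs neither (S) nor the $2$-group property.
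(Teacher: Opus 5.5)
Your plan is the paper's proof: relabel to one mask per symmetry class (the paper uses rotations only, and your reflections give no further reduction for $m=4$), get $\mathcal{W}=x$ for constant masks exactly as you do, and settle the remaining $m=2$ and $m=4$ classes by explicit computation. The outcomes you anticipate are the ones in the paper's table: $\mathcal{W}=x$ for the alternating masks, and $\mathcal{W}(1)=0$ with $\mathcal{W}(0)\in\{2,6\}$ for $(1,0,0,0)$, $(1,1,0,0)$, $(1,1,1,0)$; what remains is to carry out these eight-point computations, or your window argument, explicitly.
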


\begin{proof}
Label the component as in Definition~\ref{def:twomatch}, with $e_i=2i$ and $o_i=2i+1$, indices modulo $2m$;
on it $t=x$ exchanges $2i$ and $2i+1$, $y$ exchanges $2i+1$ and $2i+2$, and $s$ is the product of the
transpositions $(2i+1\ \,2i+2)$ with $N_i=1$. Replacing the labels by $e'_i=e_{i+k}$, $o'_i=o_{i+k}$ keeps
these descriptions of $x$ and $y$ and replaces $N$ by its rotation $N'_i=N_{i+k}$, while $\mathcal{W}$ and the condition
$[x,\mathcal{W}]=1$ do not depend on the labels; so it suffices to treat one mask in each rotation class. If $N$ is
constant, then $s=1$ or $s=y$ on the component, so $Y=ty$ or $Y=yt$, and $\mathcal{W}=(ty)t(ty)=t$ or
$\mathcal{W}=(yt)t(yt)=t$; in both cases $\mathcal{W}=t=x$, and $[x,\mathcal{W}]=1$. This settles $m=1$, where the mask is a single bit.

For $m=2$ the nonconstant masks form the rotation class of $(N_0,N_1)=(1,0)$. For $m=4$ there are six
rotation classes of masks, those of $(0,0,0,0)$, $(1,1,1,1)$, $(1,0,1,0)$, $(1,0,0,0)$, $(1,1,0,0)$ and
$(1,1,1,0)$; the first two are constant, the third is alternating, and the last three are neither constant
nor alternating. Computing $Y=sxsy$ and then $\mathcal{W}(p)=Y(x(Y(p)))$, that is, applying the nine letters
$y,s,x,s,x,y,s,x,s$ in this order, gives on the component
\[
\begin{array}{c|c|c|c|c}
m & (N_0,\dots,N_{m-1}) & s & Y & \mathcal{W}\\\hline
2 & (1,0) & (1\,2) & (0\,1)(2\,3) & (0\,1)(2\,3)\\
4 & (1,0,1,0) & (1\,2)(5\,6) & (0\,5\,4\,1)(2\,3\,6\,7) & (0\,1)(2\,3)(4\,5)(6\,7)\\
4 & (1,0,0,0) & (1\,2) & (0\,6\,4\,1)(2\,3\,5\,7) & (0\,2\,3\,1)(4\,6\,7\,5)\\
4 & (1,1,0,0) & (1\,2)(3\,4) & (0\,6\,3\,1)(2\,4\,5\,7) & (0\,2\,7\,1)(3\,6\,4\,5)\\
4 & (1,1,1,0) & (1\,2)(3\,4)(5\,6) & (0\,5\,3\,1)(2\,4\,6\,7) & (0\,6\,7\,1)(2\,3\,5\,4)
\end{array}
\]
In the two alternating rows $\mathcal{W}=x$, so $[x,\mathcal{W}]=1$. In the last three rows $\mathcal{W}(1)=0$ and $\mathcal{W}(0)\in\{2,6\}$, so
$\mathcal{W}(x(1))=\mathcal{W}(0)\ne1=x(\mathcal{W}(1))$ and $[x,\mathcal{W}]\ne1$. Hence, for $m\in\{1,2,4\}$, $x$ commutes with $\mathcal{W}$ exactly when
$N$ is constant or alternating, and an alternating mask occurs only for $m\in\{2,4\}$.
\end{proof}

\subsubsection*{The substitutions}
The proof of Theorem~\ref{thm:types} applies Lemmas~\ref{lem:word-fixed-set}
and~\ref{lem:word-commutation} also after the following substitutions: replacing $t$ by $v$; replacing
$s$ by $w$; and exchanging the named pairs $(x,t)$ and $(y,s)$, with the vertex rotation
$\kappa(j)=j+1$ modulo $2m$.

\begin{theorem}[Component types]\label{thm:types}
Assume \textup{(S)} and $|H|\ge4$, and let $x,y\in\Ii$.
\begin{enumerate}
\item $|xy|\in\{1,2,4\}$.
\item The components have $m\in\{1,2,4\}$, and on each component
  either both masks are constant or exactly one is alternating and the other
  constant. In particular, on an eight-point component the masks are either
  both constant or exactly one is alternating and the other constant; a pair
  cannot have both alternating masks.
\item $F=\langle t_x,v_x,t_y,v_y\rangle$ is a finite $2$-group of nilpotency
  class at most two.
\end{enumerate}
\end{theorem}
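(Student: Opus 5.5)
The plan is to prove the three parts in order, using the finite models of Section~\ref{ss:types} together with the cyclic structure of the half-stabilizers.

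For~(1), Theorem~\ref{thm:fourth-power} gives $(xy)^4=1$, so $|xy|$ divides $4$. Lemma~\ref{lem:components}(3) then forces every component to be a doubled edge or an alternating $2m$-cycle with $m\mid 4$, that is, $m\in\{1,2,4\}$. This also gives the first assertion of~(2).

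For the mask statement in~(2), fix a component $\Theta$. By the two-mask lemma (Lemma~\ref{lem:two-mask}) at least one mask is constant there; the task is to show that the other mask is constant or alternating. Suppose first that the red mask is constant. After the substitution $t\leftrightarrow v$, which replaces $M$ by its complement, we may assume $t=x$ on $\Theta$. The word $\mathcal{W}=YtY$ lies in $P$, being a word in elements of $P$, and by Lemma~\ref{lem:word-fixed-set} it fixes $C=\Fix(t)$ pointwise. Hence $\mathcal{W}\in\pst{C}$.

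The key claim is that $x$ centralizes $\pst{C}$. Since $x=tv$ fixes neither side pointwise but preserves $C$, it normalizes $\pst{C}$ by Corollary~\ref{cor:H}(2). To get centralization, we conjugate by the $g\in P$ with $(t,v,x)=(t,v,\rho)^g$ and reduce to the model case. There $\pst{A}=\{\beta_h:h\in H\}$, and $\rho=R(c)\in R(A)$ centralizes $\pst{A}$, as shown in the proof of Lemma~\ref{lem:H}. Hence $x$ centralizes $\pst{C}$, and in particular $[x,\mathcal{W}]=1$.

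Lemma~\ref{lem:word-commutation} now shows that on $\Theta$ the blue mask is constant, or alternating with $m\in\{2,4\}$. The case in which the blue mask is constant is handled by the symmetric substitution: exchange the pairs $(x,t)$ and $(y,s)$, apply the vertex rotation $\kappa$, and use $s\leftrightarrow w$ if needed. This shows that the red mask is constant or alternating. Because one mask is always constant, both masks cannot be alternating; on an eight-point component the alternating mask, if there is one, sits alone. I expect the main obstacle to be the centralization claim. It is worth checking carefully that the conjugating element carries $\rho$ to $x$ and $\pst{A}$ to $\pst{C}$ simultaneously, which is exactly why complementary pairs were defined by conjugating the triple $(t,v,\rho)$ together.

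For~(3), Proposition~\ref{prop:two-core}(3) together with~(1) already makes $F$ a finite $2$-group. For the class bound, we use Lemma~\ref{lem:finite-marked-actions}: restriction to one representative of each marked type of $F$-orbit embeds $F$ in a finite product. Each orbit is a two-matching component by Lemma~\ref{lem:components}(2), and by~(2) only finitely many marked types occur. These are the doubled edge, the $4$-cycle with constant masks or one alternating mask, and the $8$-cycle with constant masks or one alternating mask, each up to complementing the masks. It then suffices to check that the image of $F$ on each type has class at most two, since a subgroup of a product of class-two groups has class at most two. On the constant types the image is the dihedral group $\langle x,y\rangle$ of order $2m\le 8$. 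On a mixed type with $m=4$, I would compute the group generated by $x$, $y$ and the two halves with alternating mask, which I expect to be of order $16$. Then I would verify directly that its commutators $[t,s]$ and so on are central, for instance by checking that the derived subgroup lies in $\langle (xy)^2\rangle$ or in a Klein four-group generated by central involutions. This finite verification is routine but is the second place where care is needed.
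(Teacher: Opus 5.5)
Parts (1) and (2) are correct and follow the paper's route. You reduce by the substitutions to $t=x$ on $\Theta$, place $\mathcal{W}$ in $\pst{C}$ by Lemma~\ref{lem:word-fixed-set}, get $[x,\mathcal{W}]=1$ and read off the blue mask from Lemma~\ref{lem:word-commutation}. Your centralization claim is true, but the detour through the model case is unnecessary. The paper only uses that $t$ and $\mathcal{W}$ both lie in the cyclic group $\pst{C}$ and that $t=x$ on $\Theta$. Alternatively, $v$ is supported in $C$ and every element of $\pst{C}$ in $C^\complement$, so $x=tv$ centralizes $\pst{C}$ at once.

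The gap is in (3). Reducing to the restriction $\bar F$ of $F$ to each component is also what the paper does. Lemma~\ref{lem:finite-marked-actions} is not needed for this: $F$ embeds in the unrestricted product of these restrictions, and class at most two passes to subgroups of such products. However, the one case with content, a mixed component, is only announced, and your prediction is wrong. By symmetry take $M$ alternating and $N$ constant with $m=4$. Then $s,w\in\{1,y\}$ on $\Theta$, and $\bar F=\langle x,y,t\rangle=\langle t,t^y\rangle\rtimes\langle x,y\rangle$ has order $32$, not $16$. You also skip the mixed case $m=2$. That case is harmless, since there $\bar F$ is a $2$-subgroup of $\Sym(4)$, of order at most $8$.

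What is missing for $m=4$ is the check that the commutators of $x,y,t$ are central:
\begin{itemize}
\item $[x,t]=1$.
\item $[x,y]=(xy)^2$ maps $e_i\mapsto e_{i-2}$ and $o_i\mapsto o_{i+2}$. So it conjugates $t$ to the half with mask $(M_{i+2})_i$, which is $t$ because an alternating mask has period two. It is also central in the dihedral group $\langle x,y\rangle$.
\item $[y,t]=t^yt$. For $M_i=1$, the involution $t^y$ exchanges $o_{i-1}$ and $e_{i+1}$, which lie in red pairs with $M_{i\pm1}=0$. So $t^y$ and $t$ have disjoint supports and commute, and $y$ swaps them. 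Using $xtx=t$ and $xyx=[x,y]y$, we get $xt^yx=([x,y]y)\,t\,([x,y]y)=t^y$. Hence $x$, $y$ and $t$ all fix $t^yt$.
\end{itemize}
Thus $\bar F'=\langle (xy)^2,\,t^yt\rangle$ is central, and $\bar F$ has class at most two.
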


Parts (2) and (3) are proved one component at a time: two symmetries of the pairs reduce first to $t=x$ on the component, where Lemma~\ref{lem:word-commutation} makes the blue mask constant or alternating, and then to an alternating red mask with a constant blue mask, where the commutators of $x$, $y$, $t$ are checked to be central.

\begin{proof}
Write $(t,v)=(t_x,v_x)$ and $(s,w)=(t_y,v_y)$.
(1) By Theorem~\ref{thm:fourth-power}, $(xy)^4=1$, so $|xy|\in\{1,2,4\}$.

\emph{Two symmetries.} (a) $(v,t)$ is also a complementary pair for $x$; choosing it
replaces $M$ by its complement and leaves $N$ and $F$ unchanged. (b) Exchanging $(x,t,v)$ with
$(y,s,w)$ leaves the components and $F$ unchanged; relabelling a component by $e'_i=o_{i-1}$,
$o'_i=e_i$ satisfies Definition~\ref{def:twomatch} for the pair $(y,x)$, and the new red and
blue masks are $(N_{i-1})_i$ and $M$. Both operations again give two full matchings with
complementary pairs, so every statement used below applies after them, and both preserve the
properties ``constant'' and ``alternating'' of each mask. Since (2) and the bound in (3) are
proved one component at a time, we may apply (a) and (b) separately on each component.

(2) By (1), $|xy|$ divides $4$, so by Lemma~\ref{lem:components}(1),(3) every component is finite, a cycle of $2m$ points with $m$ dividing $|xy|$ (a doubled edge when $m=1$); hence $m\in\{1,2,4\}$. Fix a component
$\Theta$. If $m=1$, both masks are single bits, hence constant. Let $m\in\{2,4\}$. By
Lemma~\ref{lem:two-mask}, $\{t,v\}=\{1,x\}$ or $\{s,w\}=\{1,y\}$ on $\Theta$; by (b) and then (a)
we may assume $t=x$ on $\Theta$, so $M$ is constant. The word $\mathcal{W}=YtY$ lies in $P$. By Lemma~\ref{lem:word-fixed-set}, $\mathcal{W}$ fixes $\Fix(t)$
pointwise, so $\mathcal{W}$ and $t$ lie in $\pst{\Fix t}$, which is cyclic by Corollary~\ref{cor:H}(1) with $t'=t$, since $t$ is a
half-involution by Lemma~\ref{lem:halves-props}(6); hence $[t,\mathcal{W}]=1$. As $s$, $t$ and $y$
preserve $\Theta$, so does $\mathcal{W}$, and restricting to $\Theta$ gives $[x,\mathcal{W}]=1$ there. By
Lemma~\ref{lem:word-commutation}, $N$ is constant or alternating. Since $m\ge2$, a constant mask
is not alternating, so either both masks are constant or exactly one is alternating and the
other constant; since (a) and (b) preserve this, it holds for the original pairs.

(3) By (1), $xy$ has finite order, so $F$ is a finite $2$-group by Proposition~\ref{prop:two-core}(3). $x=tv$ and $y=sw$, where $t,v$ and $s,w$ are
commuting pairs of involutions; so $F=\langle x,y,t,s\rangle$ and $[x,t]=[y,s]=1$. Every element
of $F$ preserves each component, and a permutation that is the identity on every
component is the identity; so it suffices that the restriction $\bar F$ of $F$ to each component
$\Theta$ has class at most two. By (2) and (b), either both masks are constant on $\Theta$, or
$m\in\{2,4\}$, $M$ alternates and $N$ is constant. In the first case $t,v\in\{1,x\}$ and
$s,w\in\{1,y\}$ on $\Theta$, so $\bar F=\langle x,y\rangle$ is dihedral of order $2m$ with $m\mid4$,
of class at most two. In the second, $s,w\in\{1,y\}$, so $\bar F=\langle x,y,t\rangle$. We show that
the commutators of these three generators are central in $\bar F$; then $\bar F/Z(\bar F)$ is generated by
commuting elements, hence is abelian, and $\bar F$ has class at most two.

First, $[x,t]=1$ and $[x,y]=(xy)^2$. For $m=2$ this is $1$. For $m=4$ it is the central involution
of the dihedral group $\langle x,y\rangle$ of order $8$; it maps $e_i\mapsto e_{i-2}$ and
$o_i\mapsto o_{i+2}$ (Lemma~\ref{lem:components}(3)), so it conjugates $t$ to the involution
exchanging $e_j,o_j$ exactly when $M_{j+2}=1$, which is $t$ because $M_{j+2}=M_j$. So $[x,y]$ is
central, and $[x,y]^2=1$.

Next, $[y,t]=yty\,t=t^yt$. For each $i$ with $M_i=1$, $t$ exchanges $e_i,o_i$, so $t^y$ exchanges
$y(e_i)=o_{i-1}$ and $y(o_i)=e_{i+1}$; as $M$ alternates, $M_{i\pm1}=0$. So the supports of $t^y$
and $t$ are the unions of the red pairs with $M_j=0$ and with $M_j=1$, respectively, and are
disjoint. Hence $t^y$ and $t$ commute, and $t^yt$ commutes with $t$ and with $t^y$. Conjugation
by $y$ exchanges $t^y$ and $t$, so fixes $t^yt$. Since $xtx=t$ and $xyx=[x,y]y$,
\[
xt^yx=(xyx)\,t\,(xyx)=([x,y]y)\,t\,([x,y]y)=yty=t^y ,
\]
so $x$ fixes $t^yt$ as well. Thus $[y,t]$ is central in $\bar F$.
\end{proof}

Theorem~\ref{thm:types}(2) still allows mixed components, one mask alternating and the other constant.
On eight points such a component is excluded next, with a supported root (Theorem~\ref{thm:mixed8}); on
four points it remains, and the normal form allows it (Corollary~\ref{cor:normal-form}(1)).

\subsection{Supported-root enclosure and the mixed eight-point component}\label{ss:root}

\begin{lemma}[Dietzmann's lemma {\cite[14.5.7]{Robinson1996}}]\label{lem:dietzmann}
A finite conjugacy-invariant set of torsion elements of a group generates a
finite group.
\end{lemma}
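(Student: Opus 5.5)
The plan is the standard proof of Dietzmann's lemma, by a minimal-counterexample argument on the length of words. Let $S$ be the given finite conjugacy-invariant set of torsion elements of a group $G$, and put $N=\langle S\rangle$. Replacing $S$ by $S\cup S^{-1}$ keeps it finite, conjugacy-invariant and made of torsion elements, so I may assume $S=S^{-1}$. Then every element of $N$ is a product $s_1s_2\cdots s_k$ with $s_i\in S$. Let $n=|S|$ and let $e$ be a common multiple of the orders of the elements of $S$, which exists because $S$ is finite and each element has finite order. The aim is to show that every element of $N$ is a product of at most $n(e-1)$ elements of $S$. That gives at most $\sum_{k\le n(e-1)}n^k$ elements, so $N$ is finite.

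The key step is a rewriting move. Take an element $g\in N$ and write it as a product $s_1\cdots s_k$ of elements of $S$ with $k$ minimal. Fix some $s\in S$ that occurs in this product. I move every occurrence of $s$ to the left end. The identity $ys=s\,y^{s}$, with $y^{s}=s^{-1}ys\in S$ by conjugacy invariance, lets me move each occurrence of $s$ past one letter at a time. The length of the word is unchanged, the number of occurrences of $s$ is unchanged, and the other letters stay in $S$. After this the word reads $s^{j}\,s'_{j+1}\cdots s'_k$, where $j$ is the number of occurrences of $s$. If $j\ge e$, then $s^{e}=1$ removes $e\ge1$ letters, contradicting minimality. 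So each of the $n$ elements of $S$ occurs at most $e-1$ times in a minimal word.

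The only point needing care is to make the counting rigorous, since the rewriting changes which letters occur. The clean way is to note that the rewriting preserves length and keeps the word minimal. The bound "at most $e-1$ occurrences of $s$" then holds for every $s$ in every minimal word, simply by applying the move to that $s$ directly in the original minimal word. So a minimal word has length at most $n(e-1)$, and the count of the previous paragraph finishes the proof. No step is a serious obstacle. The mild subtlety is ensuring the conjugates $y^{s}$ stay in $S$, which is exactly where conjugacy invariance of $S$, rather than just of $N$, is used.
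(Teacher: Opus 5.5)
Your proof is correct. It is the standard minimal-word rewriting argument for Dietzmann's lemma: push each occurrence of a fixed $s$ to the front using $ys=s\,y^{s}$, then use $s^{e}=1$ and minimality to bound the number of occurrences of $s$ by $e-1$. That is essentially the proof in the cited source (Robinson, 14.5.7); the paper gives no argument of its own beyond that citation.
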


\begin{remark}
Conjugacy invariance cannot be dropped. Two involutions $s,t$ whose product $st$
has infinite order generate the infinite dihedral group, although $\{s,t\}$ is a
finite set of torsion elements; in that group the conjugacy class of $s$ is infinite.
\end{remark}

The enclosure theorem controls the group generated by the four halves together
with a supported root of one of them.

\begin{theorem}[Supported-root enclosure]\label{thm:enclosure}
Assume \textup{(S)} and $|H|\ge4$, and let $x,y\in\Ii$. Let $F=\langle t_x,v_x,t_y,v_y\rangle$, the
finite group of Theorem~\ref{thm:types}(3), and let $t$ be one of its four named half-involutions
(each is one by Lemma~\ref{lem:halves-props}(6)). Let
$C=\Fix(t)$ and $K_C=\pst{C}$, and choose $\zeta\in K_C$ of order four with $\zeta^2=t$, a supported square
root of $t$ (Corollary~\ref{cor:H}(3); it exists since $|H|\ge4$). Set
\[
W_t=[t,F]=\{[t,f]:f\in F\},\qquad V_t=\langle t,W_t\rangle,\qquad L=\langle F,\zeta\rangle .
\]
Then $V_t=\langle t\rangle\times W_t$ is elementary abelian, normal in $F$,
and of order at most $8$; $L$ is a finite $2$-group; $[\zeta,V_t]=1$; and the normal closure
$\langle \zeta^L\rangle$ of $\zeta$ in $L$ is abelian of exponent $4$.
\end{theorem}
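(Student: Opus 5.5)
The plan is to prove the four assertions in the order stated, working inside the finite $2$-group $F$ of class at most two (Theorem~\ref{thm:types}(3)) and then adjoining $\zeta$.

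\emph{Structure of $V_t$.} Since $F$ has class at most two, $[F,F]\le Z(F)$, and the map $f\mapsto[t,f]$ satisfies $[t,fg]=[t,g][t,f]^g=[t,f][t,g]$. So $W_t$ is a subgroup, central in $F$, and a homomorphic image of $F$. Because $t$ is an involution, $[t,f]^2=[t^2,f]=1$ by the class-two identity $[a^2,f]=[a,f]^2$. Hence $W_t$ is elementary abelian and central. Since $W_t$ is central, $t$ commutes with it, and $t\notin W_t$ except in degenerate cases; so $V_t=\langle t\rangle W_t$ is elementary abelian. Normality in $F$ follows from $t^f=t[t,f]\in V_t$. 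For the bound $|V_t|\le 8$, $W_t$ is generated by the images $[t,x]$, $[t,y]$, $[t,s]$ for the generators $x,y,s$ of $F$. One of these is trivial, since $t$ commutes with the matching it halves; this leaves at most two generators, so $|W_t|\le4$. To see that $t\notin W_t$, note that every element of $W_t$ restricts on each component to an element of the commutator subgroup of $\bar F$, which has fixed points there, whereas $t$ is nontrivial somewhere. Alternatively, if $t\in W_t$ the bound $8$ holds trivially anyway.

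\emph{Finiteness of $L$.} Each generator of $L$ preserves the two-matching components. This holds for $\zeta$ as well: $\zeta\in\pst{C}$ commutes with $t$, so it preserves $C$ and the $t$-pairs, but it need not preserve the $x$-components. So I would instead argue with Dietzmann's lemma (Lemma~\ref{lem:dietzmann}) and orbit sizes. The $L$-orbits are unions of $F$-orbits, which have at most $8$ points, glued by $\zeta$-orbits of length at most $4$. To bound them, I would show that $[\zeta,V_t]=1$ first on the level of the $F$-normal subgroup $V_t$. Each element $u\in V_t$ is an involution of $P$ commuting with $t$, so $u$ lies in $C_P(t)=\sst{C}$ (Corollary~\ref{cor:H}(2)) and normalizes the cyclic group $\pst{C}$. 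Thus $\zeta^u\in\pst{C}$ has order four with square $t$, so $\zeta^u=\zeta^{\pm1}$.

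The main obstacle is to exclude $\zeta^u=\zeta^{-1}$. I would try to use Theorem~\ref{thm:odd-order} and the two-core (Proposition~\ref{prop:two-core}) on the finite group $\langle V_t,\zeta\rangle$, of order at most $32$. If $u$ inverts $\zeta$, then $u\zeta$ is an involution and $\langle u,\zeta\rangle$ is dihedral of order $8$. On a component where $u$ and $t$ act as in Theorem~\ref{thm:types}, I would compare $u\zeta u\zeta$ with $t$ on a $t$-pair moved by $u$, aiming to show that $u\zeta$ would have to move a point of $C^\complement$ inconsistently with $\Fix(\zeta)=C$, which gives a contradiction. If that fails, the fallback is to show that the elements $u\in W_t$ are products of commutators that fix $C$ pointwise off finitely many configurations, and then to apply the rigidity of $\pst{C}$ directly.

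\emph{Remaining assertions.} Once $[\zeta,V_t]=1$, the $F$-conjugates satisfy $\zeta^f\in\pst{\Fix(t^f)}$ with $t^f\in V_t$. So every $\zeta^f$ squares into $V_t$ and commutes with $V_t$ by the same argument applied to $t^f$. I would then show that distinct conjugates $\zeta^f,\zeta^g$ commute. Their commutator lies in $\pst{C}\cap\pst{\Fix t^{f^{-1}g}}$, a subgroup of a cyclic $2$-group with a smaller fixed set, hence trivial unless the fixed sets coincide, in which case the two lie in the same cyclic group. The conjugacy class $\zeta^L$ is then contained in the finite set $\{\zeta^f,\zeta^{-f}:f\in F\}$, which is $L$-invariant because $\zeta$ normalizes it. By Dietzmann's lemma it generates a finite group, abelian with exponent $4$ since its generators pairwise commute and have order four. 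It is normal in $L$ with quotient a quotient of $F$, so $L$ is a finite $2$-group.
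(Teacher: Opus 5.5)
\textbf{The gap: the centralization $[\zeta,V_t]=1$ is never proved.} This is exactly the step you flag, and the routes you sketch cannot supply it. From $\zeta^u\in\{\zeta,\zeta^{-1}\}=\{\zeta,\zeta t\}$ for $u\in V_t$ you already get $u^\zeta\in\{u,ut\}$. So $\zeta$ normalizes $V_t$, and $\langle V_t,\zeta\rangle=V_t\langle\zeta\rangle$ is a $2$-group of order at most $16$.

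If $u$ inverts $\zeta$, then $u\zeta u\zeta=1$, so there is nothing to compare with $t$. Inside a $2$-group, neither Theorem~\ref{thm:odd-order} nor a $2$-core argument can produce a contradiction.

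Your fallback, that elements of $W_t$ fix $C$ pointwise, is not supported by anything available at this point. That $W_t$ is in fact trivial only follows from Section~\ref{s:sync}, whose normal form rests on this very theorem.

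The contradiction must use two things: that $u\in W_t$ is a commutator $[t,f]$ with $f\in F$, and that $L$ is already known to be a finite $2$-group. Your plan derives the finiteness of $L$ from the centralization, which is the wrong order.

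\textbf{The missing steps, in the right order.}

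\emph{First, $L$ is a finite $2$-group.} The normalization you already have makes $V_t$ normal in $L=\langle F,\zeta\rangle$. So for $g\in L$ the conjugate $\zeta^g$ lies in $\pst{\Fix(t^g)}$, with $t^g\in V_t$. Hence $\zeta^L$ lies in at most $|V_t|\le 8$ groups of order $|H|$, and is finite. Then:
\begin{itemize}
\item Lemma~\ref{lem:dietzmann} makes $\langle\zeta^L\rangle$ finite, so $L=\langle\zeta^L\rangle F$ is finite.
\item Proposition~\ref{prop:two-core}(1) puts the four half-involutions generating $F$ into $\Opt(L)$.
\item So $L/\Opt(L)$ is cyclic of order dividing $4$, and $L$ is a $2$-group.
\end{itemize}

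\emph{Second, the centralization.} Suppose some $z\in W_t$ had $z^\zeta=zt$, and choose $f\in F$ with $[t,f]=z$. Then $f$ and $\zeta$ normalize the four-group $\langle t,z\rangle$. They act by $t\mapsto tz$, $z\mapsto z$ and by $t\mapsto t$, $z\mapsto zt$. These two maps generate $\GL_2(\F_2)$, of order six, which is impossible in the $2$-group $L$.

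\emph{What survives of your proposal.} With $[\zeta,V_t]=1$ in hand, your last paragraph goes through. You need to add why each of two conjugates $\zeta^f,\zeta^g$ normalizes the other's cyclic group: each commutes with the other's square, which lies in $V_t$. That justification is what puts their commutator in $\langle\zeta^f\rangle\cap\langle\zeta^g\rangle$.

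\textbf{A smaller point: $t\notin W_t$.} Your argument does not work as written. $t$ itself has fixed points, so ``having fixed points'' cannot separate elements of $W_t$ from $t$. The fallback ``the bound $8$ holds anyway'' does not give the asserted decomposition $V_t=\langle t\rangle\times W_t$. The one-line reason is that $[t,f]=t$ would force $t^f=t[t,f]=t^2=1$.
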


The proof of the theorem occupies the two lemmas that follow. The first describes the elementary
abelian group generated by the half and its commutators, and shows that the enlarged group is
finite, of order a power of two. The second shows that the root centralizes that group, and that
the normal closure of the root is abelian of exponent four. The proof of the theorem, placed after
the two lemmas, assembles them.

\begin{lemma}[Finiteness and the $2$-group enclosure]\label{lem:enc-finite}
Assume \textup{(S)} and $|H|\ge4$, let $x,y\in\Ii$, and take $F$, $t$, $C$, $K_C$, $\zeta$, $W_t$, $V_t$
and $L$ in the notation of Theorem~\ref{thm:enclosure}. Put $n=|W_t|$.
\begin{enumerate}
\item $W_t$ is a central subgroup of $F$ of exponent at most two, $n\in\{1,2,4\}$, and
  $t\notin W_t$; $V_t=\langle t\rangle\times W_t$ is elementary abelian of order $2n\le8$
  and normal in $F$.
\item \textup{(a)} $\omega^\zeta\in\{\omega,\omega t\}$ for every $\omega\in V_t$, so $\zeta$ normalizes $V_t$
  and $V_t$ is normal in $L$; and \textup{(b)} $\langle \zeta^L\rangle$ is finite,
  $L=\langle \zeta^L\rangle F$, and $L$ is a finite $2$-group.
\end{enumerate}
\end{lemma}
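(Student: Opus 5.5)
The proof follows the two parts of the statement. Part~(1) should be pure group theory inside the class-two group $F$. Part~(2a) comes from the cyclicity of supported stabilizers (Corollary~\ref{cor:H}). Part~(2b) needs a finiteness argument (Dietzmann's lemma, Lemma~\ref{lem:dietzmann}), followed by the $2$-core argument of Proposition~\ref{prop:two-core}(1).

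For (1), I would use that $F$ is a finite $2$-group of class at most two (Theorem~\ref{thm:types}(3)), so all commutators are central. Then $[t,fg]=[t,g][t,f]^g=[t,f][t,g]$, and $f\mapsto[t,f]$ is a homomorphism $F\to Z(F)$. Its image $W_t$ is therefore a central subgroup, and $[t,f]^2=[t^2,f]=1$ gives exponent at most two. Write $F=\langle t,v,s,w\rangle$, where $v$ is the complementary half of $t$ and $(s,w)$ is the other named pair. The generators $t$ and $v$ commute with $t$, so $W_t=\langle[t,s],[t,w]\rangle$ and $n\in\{1,2,4\}$. Every element of $W_t$ has the form $[t,f]=t\,t^f$. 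Hence $t\in W_t$ would force $t^f=1$, which is absurd since $t^f$ is an involution; so $t\notin W_t$. Since $t$ is an involution commuting with the central elementary abelian group $W_t$, the group $V_t=\langle t\rangle\times W_t$ is elementary abelian of order $2n\le8$. It is normal in $F$ because $t^f=t[t,f]\in V_t$.

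For (2a), $t=\zeta^2$ commutes with $\zeta$, so it suffices to treat $\omega=[t,f]=t\,t^f$. Since $\omega$ is central in $F$, $t^f=t\omega$ commutes with $t$. By Corollary~\ref{cor:H}(2), $t^f$ then preserves $C$ setwise and normalizes every subgroup of the cyclic group $K_C$. In particular it normalizes the unique subgroup $\langle\zeta\rangle$ of order four, so $\zeta^{t^f}=\zeta^{\pm1}$. A short computation then gives $(t^f)^\zeta=t^f\cdot t^f\zeta^{-1}t^f\zeta\in\{t^f,\,t^f\zeta^2\}=\{t^f,t^ft\}$. Hence $\omega^\zeta\in\{\omega,\omega t\}$. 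Thus $\zeta$ normalizes $V_t$, and because $F$ does too, $V_t$ is normal in $L$.

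For (2b), the identity $L=\langle\zeta^L\rangle F$ is formal, since $L=\langle F,\zeta\rangle$ and $\langle\zeta^L\rangle$ is normal. The real issue is finiteness. A bounded-orbit argument via Lemma~\ref{lem:finite-marked-actions} looks unavailable: $\zeta$ chains $t$-pairs in $C^\complement$ into four-cycles, and these can link unboundedly many $F$-orbits. I would therefore apply Dietzmann's lemma to the finite set $\Sigma=\{\zeta^f\omega: f\in F,\ \omega\in V_t\}$ of torsion elements. This set is invariant under conjugation by $F$, because $V_t$ is normal in $F$. The main obstacle is invariance under $\zeta$, which amounts to showing $[\zeta,\zeta^f]\in V_t$ for every $f\in F$. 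I would attack it as in (2a): $\zeta^f$ is a supported square root of $t^f$, and $t^f$ preserves $C$. So conjugation by $\zeta^f$, or by suitable products with elements of $F$, should act on $\langle\zeta\rangle$ as $\zeta\mapsto\zeta^{\pm1}$, with the discrepancy absorbed into $V_t$ by (2a). Once $\Sigma$ is shown to be $L$-invariant, Dietzmann gives that $\langle\Sigma\rangle\supseteq\langle\zeta^L\rangle$ is finite, and hence $L=\langle\zeta^L\rangle F$ is finite. Finally, $L$ is a finite subgroup of $P$ whose generators in $F$ are half-involutions, so Proposition~\ref{prop:two-core}(1) puts $F$ inside $\Opt(L)$. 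Then $L/\Opt(L)$ is generated by the image of $\zeta$, a cyclic group of order dividing four, and $L$ is a $2$-group.
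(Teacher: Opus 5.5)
Your parts (1) and (2a) are correct. In (2a) you apply Corollary~\ref{cor:H}(2) to $t^f$, whereas the paper applies it to $\omega$ itself: every $\omega\in V_t$ centralizes $t$ because $V_t$ is abelian, so $\zeta^\omega\in\langle\zeta\rangle$ with square $t$, giving $\zeta^\omega\in\{\zeta,\zeta t\}$. Either route works.

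The gap is in (2b). You never prove that $\Sigma$ is invariant under $\zeta$, and the route you sketch is circular. For $\zeta^f$ to normalize $\langle\zeta\rangle$ it must centralize $t=\zeta^2$, i.e.\ preserve $C$. That is equivalent to $\zeta$ centralizing $t^{f^{-1}}=t[t,f^{-1}]\in V_t$. By (2a) you only know $(t^{f^{-1}})^\zeta\in\{t^{f^{-1}},t^{f^{-1}}t\}$, and the second case is not excluded at this stage. That $\zeta$ centralizes $V_t$ is Lemma~\ref{lem:enc-central}(1). Its proof, the $\GL_2(\F_2)$ argument, already needs $L$ to be a finite $2$-group, which is the conclusion you are trying to reach. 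You also assert without proof that the elements $\zeta^f\omega$ are torsion. This is true, since $(\zeta\omega)^2=t\,\omega^\zeta\omega\in\{1,t\}$ and $\zeta^f\omega=(\zeta\omega^{f^{-1}})^f$, but it has to be said.

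The missing idea is to bound the conjugacy class $\zeta^L$ directly, instead of a set built from $F$-conjugates. For $g\in L$, the conjugate $u=\zeta^g$ fixes $g^{-1}(C)=\Fix(t^g)$ pointwise, so $u\in\pst{\Fix(t^g)}$. This group has order $|H|$ by Corollary~\ref{cor:H}(1), since $t^g$ is a half-involution. Moreover $t^g\in V_t$, because $V_t$ is normal in $L$ by (2a). Hence $\zeta^L$ lies in a union of at most $|V_t|\le 8$ groups of order $|H|$. It is therefore a finite conjugacy-invariant set of elements of order four, and Dietzmann's lemma gives that $\langle\zeta^L\rangle$ is finite. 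Equivalently, the finite set $\bigcup_\tau\pst{\Fix\tau}$, over the half-involutions $\tau\in V_t$, is $L$-invariant and can replace your $\Sigma$. With this, your final step goes through as in the paper: $F\le\Opt(L)$ by Proposition~\ref{prop:two-core}(1), so $L/\Opt(L)$ is cyclic of order dividing four.
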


\begin{proof}
(1) By Theorem~\ref{thm:types}(3), $F$ has class at most two, so its commutators are central, and for
$f,g\in F$ we have $[t,fg]=[t,g][t,f]^g=[t,f][t,g]$ and $[t,f]^2=[t,f]^t[t,f]=[t^2,f]=1$. Hence
$f\mapsto[t,f]$ is a homomorphism $F\to Z(F)$, and its image $W_t$ is a central subgroup of $F$ of
exponent at most two, generated by the commutators of $t$ with the four generators of $F$. Two of
these are trivial: $[t,t]=1$, and $[t,v']=1$ for the complementary half $v'$ of $t$, which is also one
of the four generators. So $W_t$ is generated by two commuting elements of order at most two,
and $n\in\{1,2,4\}$. If $[t,f]=t$, then $t^f=t[t,f]=t^2=1$, which is impossible; so $t\notin W_t$.
As $t$ is an involution commuting with $W_t$, $V_t=\langle t\rangle\times W_t$ is elementary abelian of
order $2n\le8$; it is normal in $F$, since $W_t$ is central and $t^f=t[t,f]\in V_t$ for $f\in F$.

(2)(a) Let $\omega\in V_t$. As $V_t$ is abelian and contains $t$, $\omega$ centralizes $t$, so $\omega$
preserves $C$ and normalizes the subgroup $\langle\zeta\rangle$ of $K_C$ (Corollary~\ref{cor:H}(2)).
Thus $\zeta^\omega\in\langle\zeta\rangle$ and $(\zeta^\omega)^2=t^\omega=t$. The elements of
$\langle\zeta\rangle$ with square $t$ are $\zeta$ and $\zeta^3=\zeta t$, so $\zeta^\omega=\zeta t^\varepsilon$
with $\varepsilon\in\{0,1\}$, and then
\[
\omega^\zeta=\omega\,(\zeta^\omega)^{-1}\zeta=\omega\,t^\varepsilon\zeta^{-1}\zeta=\omega t^\varepsilon .
\]
Since $t\in V_t$, $\zeta$ normalizes $V_t$; as $F$ does too, $V_t$ is normal in $L=\langle F,\zeta\rangle$.

(b) Let $g\in L$ and $u=\zeta^g$. Then $u^2=t^g\in V_t$ by~(a), and $t^g$ is a half-involution
(Definition~\ref{def:halves}), as $g\in L\le P$. Since $\zeta\in K_C$ fixes $C$ pointwise, $u$ fixes
$g^{-1}(C)=\Fix(t^g)$ pointwise, so $u\in\pst{\Fix(t^g)}$, a group of order $|H|$. As $|V_t|\le8$, the class $\zeta^L$ lies in a union of at most eight finite
groups, so it is finite; its elements have order four, so $\langle\zeta^L\rangle$ is finite by
Lemma~\ref{lem:dietzmann}. As $\langle\zeta^L\rangle$ is normal in $L$, $\langle\zeta^L\rangle F$ is a
subgroup containing $F$ and $\zeta$; so $L=\langle\zeta^L\rangle F$, and $L$ is finite. The four
generators of $F$ are half-involutions, so Proposition~\ref{prop:two-core}(1), applied to $L$
and each of them, gives $F\le\Opt(L)$. Then $L/\Opt(L)$ is generated by the image of $\zeta$, so its
order divides four; as $\Opt(L)$ is a $2$-group, so is $L$.
\end{proof}

\begin{lemma}[Root centralization and abelianness]\label{lem:enc-central}
Assume \textup{(S)} and $|H|\ge4$, let $x,y\in\Ii$, and take $F$, $t$, $C$, $K_C$, $\zeta$, $W_t$, $V_t$
and $L$ in the notation of Theorem~\ref{thm:enclosure}. Put $n=|W_t|$.
\begin{enumerate}
\item The root $\zeta$ centralizes $V_t$.
\item \textup{(a)} $t^{L}=t^F=tW_t$, a set of $n$ elements; \textup{(b)} every
  $L$-conjugate $u$ of $\zeta$ centralizes $V_t$, its involutory power
  $\hat{u}=u^2$ lies in $V_t$, and $u$ lies in the cyclic group $\pst{\Fix(\hat{u})}$; and
  \textup{(c)} any two $L$-conjugates of $\zeta$ commute, and $\langle \zeta^L\rangle$ is abelian of
  exponent $4$.
\end{enumerate}
\end{lemma}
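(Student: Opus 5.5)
The plan is to exploit the rigidity already obtained in Lemma~\ref{lem:enc-finite}(2)(a). For $\omega\in V_t$ we have $\omega^\zeta=\omega t^{\varepsilon(\omega)}$. Since $t$ is central in the abelian group $V_t$ and $\zeta$ acts on $V_t$ by an automorphism, the map $\varepsilon\colon V_t\to\Z/2\Z$ is a homomorphism with $t\in\ker\varepsilon$. Part~(1) then amounts to showing $\varepsilon([t,f])=0$ for $f$ running over the generators of $F$. By the proof of Lemma~\ref{lem:enc-finite}(1), it suffices to take $f$ to be one of the two named halves of the other matching. Write $t'=t^f$, so that $[t,f]=t\,t'$. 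Then $t'$ is a half-involution commuting with $t$, and on $C=\Fix(t)$ the element $[t,f]$ acts as $t'$. The claim is therefore that $t'$ does not invert $\zeta$. Equivalently, since $t'$ preserves $C$ and normalizes $\langle\zeta\rangle\le K_C$ (Corollary~\ref{cor:H}(2)), $t'$ must commute with $\zeta$ rather than act as a reflection on the $\zeta$-four-cycles in $C$.

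I expect this step to be the main obstacle. I would first attack it one two-matching component at a time. By Lemma~\ref{lem:two-mask} and Theorem~\ref{thm:types}(2), on each component either $t\in\{1,x\}$ or the other pair is constant, so $t'=t^f$ is explicitly computable there. It agrees with $t$ on most components, and where it differs it moves points of $C$ only inside four-point mixed components. If $\zeta^{t'}=\zeta^{-1}$, then $t'\zeta$ is an involution of $P$. I would try to combine it with $t$, or with the complementary half $v$, to produce a finite product with an odd orbit through a fixed set, contradicting Theorem~\ref{thm:odd-order} or Theorem~\ref{thm:oddprimary}. Failing that, I would try to produce an element of $\pst{C}$ whose fixed set is strictly larger than $C$, contradicting Corollary~\ref{cor:H}(1). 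The finiteness of $L$ and the fact that it is a $2$-group (Lemma~\ref{lem:enc-finite}(2)(b)) are available to keep every word finite.

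Given~(1), part~(2)(a) is formal. Since $f\mapsto[t,f]$ is a surjective homomorphism $F\to W_t$ and $t^f=t[t,f]$, we get $t^F=tW_t$, a set with $n$ elements. The generators of $L$ preserve this set: $F$ does by definition, and $\zeta$ centralizes $V_t\supseteq tW_t$ by~(1). Hence $t^L=t^F$. For~(b), take $u=\zeta^g$ with $g\in L$. Then $u^2=t^g\in tW_t\subseteq V_t$. Because $V_t$ is normal in $L$ and is centralized by $\zeta$, it is also centralized by $u$. Finally $u$ fixes $\Fix(t^g)=\Fix(\hat u)$ pointwise, so $u\in\pst{\Fix(\hat u)}$, which is cyclic by Corollary~\ref{cor:H}(1).

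For~(c), let $u,u'$ be $L$-conjugates of $\zeta$. By~(b), $u'$ centralizes $\hat u\in V_t$, so $u'$ preserves $\Fix(\hat u)$ and normalizes the cyclic group $\langle u\rangle$ (Corollary~\ref{cor:H}(2)). Since conjugation preserves the square $\hat u$, we get $u^{u'}\in\{u,u^{-1}\}$. To exclude inversion, I would rerun the argument of~(1) with the half-involution $\hat u=t^g$ and its supported root $u$ in place of $t$ and $\zeta$, conjugating the whole configuration by $g$ so that $u'$ plays the role that $t'$ played there. Then $\langle\zeta^L\rangle$ is generated by pairwise commuting elements of order four, hence is abelian of exponent $4$.
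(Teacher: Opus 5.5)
Your parts (2)(a) and (2)(b) match the paper. Your reduction in (1) is also correct: $\omega\mapsto\omega^{-1}\omega^{\zeta}$ is a homomorphism $V_t\to\langle t\rangle$ with $t$ in its kernel, so it suffices that $\zeta$ commutes with each $[t,f]$, that is, that $t^f$ does not invert $\zeta$.

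\textbf{Gap in (1).} That step is exactly what you leave open, and it is the substance of the lemma. The component-by-component plan rests on an unproved description of where $t^f$ differs from $t$. Neither contradiction you hope for (an odd orbit through a fixed set, or an element of $\pst{C}$ with too large a fixed set) is ever produced.

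The missing idea is a short group-theoretic argument. It uses the fact you invoke only ``to keep every word finite''. Suppose $z=[t,f]$ satisfies $z^{\zeta}=zt$. Then $z\ne1$, and $z\ne t$ because $t\notin W_t$ (Lemma~\ref{lem:enc-finite}(1)), so $\langle t,z\rangle$ is a four-group. Conjugation by $f$ acts on it by $t\mapsto tz$, $z\mapsto z$, since $t^f=t[t,f]$ and $W_t$ is central in $F$. Conjugation by $\zeta$ acts by $t\mapsto t$, $z\mapsto zt$. These two transvections generate $\GL_2(\F_2)$, of order six. So the automorphism group of $\langle t,z\rangle$ induced by $\langle f,\zeta\rangle$ has order divisible by $3$. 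But it is a quotient of the subgroup $\langle f,\zeta\rangle$ of the finite $2$-group $L$ (Lemma~\ref{lem:enc-finite}(2)(b)), a contradiction.

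\textbf{Gap in (2)(c).} To exclude $u^{u'}=u^{-1}$ you propose to rerun (1) with $u'$ in the role of $t^f$. That substitution does not make sense: $u'$ has order four and is not an $F$-conjugate of $\hat u$. In any case (1) was not established.

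What is needed is to use the normalization in both directions. By (b), $u$ centralizes $\hat u'\in V_t$, so it normalizes $\langle u'\rangle$, just as $u'$ normalizes $\langle u\rangle$. Hence $[u,u']=u^{-1}u^{u'}=(u'^{-1})^{u}u'\in\langle u\rangle\cap\langle u'\rangle$. If $u^{u'}=u^{-1}$, this commutator is $u^{-2}=\hat u$. It then lies in the cyclic group $\langle u'\rangle$ of order four, so it equals its unique involution $\hat u'$. But then $u$ and $u'$ both lie in the cyclic group $\pst{\Fix(\hat u)}$ by (b), so they commute, contradicting $u^{u'}=u^{-1}\ne u$.
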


\begin{proof}
(1) By Lemma~\ref{lem:enc-finite}, $L$ is a finite $2$-group, $W_t$ is central in $F$, $t\notin W_t$,
$V_t=\langle t\rangle\times W_t$ is elementary abelian, and $z^{\zeta}\in\{z,zt\}$ for every $z\in W_t$.
As $\zeta$ centralizes $t=\zeta^2$, it suffices to show that $\zeta$ centralizes $W_t$. Otherwise choose
$z\in W_t$ with $z^{\zeta}=zt$, and $f\in F$ with $[t,f]=z$. Then $z\ne1$ (as $z^{\zeta}\ne z$) and
$z\ne t$ (as $t\notin W_t$), so $\langle t,z\rangle=\{1,t,z,tz\}$ is a four-group. Since
$[g,h]=g^{-1}g^h$ (Section~\ref{ss:conv}), $t^f=t[t,f]=tz$; and $z^f=z$ as $W_t$ is central in $F$.
So $f$ and $\zeta$ normalize $\langle t,z\rangle$, acting on it by
\[
t^f=tz,\quad z^f=z;\qquad t^{\zeta}=t,\quad z^{\zeta}=zt .
\]
These transformations generate $\GL_2(\F_2)$, of order six, contradicting that
$\langle f,\zeta\rangle\le L$ is a finite $2$-group.

(2)(a) For $f\in F$, $t^f=t[t,f]$, so $t^F=tW_t$, a set of $|W_t|=n$ elements contained in $V_t$.
This set is invariant under conjugation by $F$, and by $\zeta$, which fixes each element of $V_t$
by (1); as $L=\langle F,\zeta\rangle$, it is invariant under $L$, so $t^L\subseteq t^F\subseteq t^L$.

(b) Let $u=\zeta^g$ with $g\in L$. Here $g\in P$, as $F\le P$ and $\zeta\in K_C\le P$. Since
$V_t$ is normal in $L$ (Lemma~\ref{lem:enc-finite}(2)(a)), conjugating $[\zeta,V_t]=1$ by $g$ gives
$[u,V_t]=1$. Next, $\hat{u}=u^2=(\zeta^2)^g=t^g$ lies in $t^L\subseteq V_t$ by (a), and is a
half-involution since $g\in P$. Conjugation by $g$ carries $t$ to $\hat{u}$, hence
$\zeta\in K_C=\pst{\Fix(t)}$ to $u\in\pst{\Fix(\hat{u})}$, a cyclic group.

(c) Let $u,u'$ be $L$-conjugates of $\zeta$; both have order four. By (b), $u$ commutes with
$\hat{u}'\in V_t$, so $u\in C_P(\hat{u}')=\sst{\Fix(\hat{u}')}$, and $u$ normalizes the subgroup
$\langle u'\rangle$ of $\pst{\Fix(\hat{u}')}$ (Corollary~\ref{cor:H}(2), applied to the half-involution
$\hat{u}'$); symmetrically $u'$ normalizes $\langle u\rangle$. Hence
$[u,u']=u^{-1}u^{u'}=(u'^{-1})^{u}u'$ lies in $\langle u\rangle\cap\langle u'\rangle$. If
$\hat{u}\ne\hat{u}'$, this intersection is trivial: a nontrivial subgroup of a cyclic $2$-group
contains its unique involution, so a nontrivial intersection would contain both $\hat{u}$ and
$\hat{u}'$, two distinct involutions of the cyclic group $\langle u\rangle$. If $\hat{u}=\hat{u}'$, then
$u,u'$ lie in the cyclic group $\pst{\Fix(\hat{u})}$ of (b). Either way $u$ and $u'$ commute, so
$\langle\zeta^L\rangle$ is abelian; it is generated by elements of order four and contains $\zeta$,
so its exponent is four.
\end{proof}

\begin{proof}[Proof of Theorem~\ref{thm:enclosure}]
By Lemma~\ref{lem:enc-finite}(1), $V_t=\langle t\rangle\times W_t$ is elementary abelian, normal in
$F$, and of order $2n\le8$, where $n=|W_t|$. By Lemma~\ref{lem:enc-finite}(2)(b), $L$ is a finite
$2$-group. By Lemma~\ref{lem:enc-central}(1), $\zeta$ centralizes $V_t$, and by
Lemma~\ref{lem:enc-central}(2)(c), $\langle \zeta^L\rangle$ is abelian of exponent $4$.
\end{proof}

Plan: apply Theorem~\ref{thm:enclosure} to a half-involution $t$ acting on a mixed component $\Theta$ as $x$
does; $V_t$ is transitive on $\Theta$, so every permutation of $\Theta$ commuting with $V_t$ has square $1$, while
$d=\zeta^{-1}\zeta^s$, for the half $s$ of $y$, preserves $\Theta$, commutes with $V_t$ and has square $[t,s]\ne1$
on $\Theta$.

\begin{theorem}[No mixed eight-cycle: supported root]\label{thm:mixed8}
Assume \textup{(S)} and $|H|\ge4$. For
$x,y\in\Ii$, no eight-point component has one mask alternating and the
other constant. Consequently every eight-point component has both masks
constant.
\end{theorem}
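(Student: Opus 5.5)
We argue by contradiction. Suppose $\Theta$ is an eight-point component ($m=4$) of the two-matchings graph of $x,y$ with one mask alternating and the other constant. Using the two symmetries (a) and (b) from the proof of Theorem~\ref{thm:types}, we arrange that the red mask is constant with $t=t_x$ equal to $x$ on $\Theta$, and that the blue mask $N$ alternates. By Lemma~\ref{lem:halves-props}(6), $t$ is one of the four named half-involutions of $F=\langle t_x,v_x,t_y,v_y\rangle$. We apply Theorem~\ref{thm:enclosure} to it: we get a supported square root $\zeta\in\pst{\Fix t}$, the elementary abelian group $V_t=\langle t\rangle\times W_t$ of order at most $8$, and the finite $2$-group $L=\langle F,\zeta\rangle$.

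\emph{Step 1: $V_t$ is regular on $\Theta$.} We compute the restrictions to $\Theta$ of $t=x$, of $[t,y]=[x,y]=(xy)^2$, and of $[t,s]=s^x s$, where $s=t_y$. By Lemma~\ref{lem:components}(3), $(xy)^2$ is the fixed-point-free involution $e_i\mapsto e_{i+2}$, $o_i\mapsto o_{i+2}$. Since $N$ alternates, $s$ exchanges $o_i,e_{i+1}$ for $i$ even only (say). Then $s^x$ exchanges $e_i,o_{i+1}$ for those $i$, so $[t,s]$ is a fixed-point-free involution on $\Theta$ that is not in $\langle x,(xy)^2\rangle|_\Theta$. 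Hence these three elements generate an elementary abelian group of order $8$ acting regularly on $\Theta$. Since $|V_t|\le 8$, this is all of $V_t$, and $\Theta$ is a single $V_t$-orbit. The centralizer in $\Sym(\Theta)$ of a regular abelian group is that group itself. So every permutation of $\Theta$ commuting with $V_t|_\Theta$ lies in $V_t|_\Theta$ and has square $1$.

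\emph{Step 2: the element $d$.} Put $d=\zeta^{-1}\zeta^s$. Both $\zeta$ and $\zeta^s$ are $L$-conjugates of $\zeta$. By Lemma~\ref{lem:enc-central}(2), they commute and each centralizes $V_t$, so $d$ centralizes $V_t$. Moreover
\[
d^2=\zeta^{-2}(\zeta^s)^2=t\,t^s=[t,s],
\]
which by Step~1 is a nontrivial involution on $\Theta$.

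\emph{Step 3: $d$ preserves $\Theta$.} This is the step I expect to be the main obstacle. Since $d$ centralizes $V_t$, it permutes the $V_t$-orbits, and it maps $\Theta$ onto a $V_t$-orbit $\Theta'$ carrying an isomorphic action. I would first try to show $\Theta'=\Theta$ by locating a point of $\Theta$ that $d$ visibly keeps in $\Theta$. Write $d=[\zeta,s]=(s^{\zeta})s$, using that $s$ is an involution. The factor $s$ preserves $\Theta$. The factor $s^\zeta$ is the half-involution $\zeta^{-1}s\zeta$, and it agrees with $s$ wherever $\zeta$ and $\zeta^{-1}$ act trivially, i.e.\ on $\Fix(t)$ and its $s$-images. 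Since $\Theta$ has no point of $\Fix(t)$, this needs care: I would instead use $\zeta^2=t$ together with the fact that $\zeta$ normalizes the components through $\Theta$ of the two-matchings graph of $x$ and $y^\zeta$. If that fails, a fallback is to replace $\Theta$ by the $\langle V_t,d\rangle$-orbit $\Theta\cup d\Theta$. On it $\langle V_t,d\rangle$ is abelian, $d^2$ is a nontrivial element of $V_t$, and $d^2$ must act compatibly with the regular action on both copies. Then compare with the action on $\Theta$ alone.

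\emph{Conclusion.} Once $d$ preserves $\Theta$, Step~1 forces $d^2|_\Theta=1$. This contradicts Step~2, where $d^2|_\Theta=[t,s]|_\Theta\ne1$. Hence no mixed eight-point component exists. Combined with Theorem~\ref{thm:types}(2), every eight-point component has both masks constant.
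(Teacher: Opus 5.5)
\textbf{Gap: Step~3 is left open.} Your Steps~1 and~2 are sound and match the paper's argument. You use $[t,y]=(xy)^2$ where the paper uses $[t,w]$, which is equivalent. One small point in Step~1: you should check transitivity explicitly, since three commuting fixed-point-free involutions generating a group of order $8$ need not be transitive. Here it holds, because $[t,s]$ joins the two orbits $\{e_0,o_0,e_2,o_2\}$ and $\{e_1,o_1,e_3,o_3\}$ of $\langle x,(xy)^2\rangle$ on $\Theta$.

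The genuine gap is Step~3, which you do not prove, and neither of your suggestions closes it. Agreement of $s^\zeta$ with $s$ near $\Fix(t)$ says nothing on $\Theta$, which misses $\Fix(t)$. The fallback on $\Theta\cup d\Theta$ cannot give a contradiction, because the facts you would use there are consistent. An abelian group $\langle V_t,d\rangle\cong C_4\times C_2\times C_2$ with $d^2=[t,s]$ can act regularly on two copies of $\Theta$, with $d$ exchanging them. Abstract properties of $d$ (centralizing $V_t$, $d^2\in V_t$) only show that $d(\Theta)$ is some $V_t$-orbit. To get $d(\Theta)=\Theta$ you must use the factorization $d=\zeta^{-1}s\zeta s$.

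\textbf{The missing idea:} run your orbit argument on $\zeta$ rather than on $d$.
\begin{itemize}
\item Since $\zeta$ centralizes $V_t$ (Theorem~\ref{thm:enclosure}) and $\Theta=V_t\gamma$, we have $\zeta(\Theta)=V_t\,\zeta(\gamma)$. This is an eight-point subset of the $F$-orbit of $\zeta(\gamma)$.
\item The $F$-orbits are exactly the two-matching components, because $x,y\in F$ and every half preserves each component.
\item Components have at most eight points, since $|xy|$ divides $4$. Hence $\zeta(\Theta)$ is itself a component.
\item Now $s\in F$ preserves both $\Theta$ and $\zeta(\Theta)$. So $d(p)=\zeta^{-1}(s(\zeta(s(p))))$ carries $\Theta\to\Theta\to\zeta(\Theta)\to\zeta(\Theta)\to\Theta$.
\end{itemize}
With this, your Step~1 gives $(d|_\Theta)^2=1$, contradicting $d^2|_\Theta=[t,s]|_\Theta\ne1$. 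That completed argument is exactly the paper's proof.
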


\begin{proof}
Suppose that an eight-point component $\Theta$ is mixed. Interchanging $x$ and $y$, which leaves
$F$ unchanged, we may assume that the red mask is constant and the blue mask alternating. Each half
sends every vertex to itself or to its partner, so a constant red mask makes $t_x$ equal to
$x$ or to $1$ on $\Theta$; as $x=t_xv_x$, one of $t_x,v_x$ equals $x$ on $\Theta$, and we call
it $t$. Put $s=t_y$, $w=v_y$, and label $\Theta$ as in Definition~\ref{def:twomatch}, with $e_i=2i$,
$o_i=2i+1$ $(i\in\Z/4\Z)$. The alternating blue mask makes $s$ exchange exactly the blue edges $B_i$
with $i$ of one parity and fix the other points of $\Theta$; replacing each $e_i,o_i$ by
$e_{i+1},o_{i+1}$ if necessary (again a labelling), these are $B_0,B_2$. As $y=sw$ and $[s,w]=1$, on $\Theta$
\[
t=x=(0\,1)(2\,3)(4\,5)(6\,7),\qquad s=(1\,2)(5\,6),\qquad w=sy=(3\,4)(7\,0),
\]
\[
[t,s]=(0\,3)(1\,2)(4\,7)(5\,6),\qquad [t,w]=(0\,7)(1\,6)(2\,5)(3\,4).
\]
Let $\zeta$, $W_t$, $V_t$ and $L$ be as in Theorem~\ref{thm:enclosure} for this $t$, one of the four
named half-involutions of $F$.

\emph{$V_t$ is transitive on $\Theta$.} $F$ preserves $\Theta$, and so does $V_t\le F$.
Since $s,w\in F$, we have $[t,s],[t,w]\in W_t\le V_t$. The $V_t$-orbit of $0$ contains $1=t(0)$,
$3=[t,s](0)$, $7=[t,w](0)$, $6=[t,w](1)$, $4=[t,w](3)$, $2=t(3)$ and $5=t(4)$, so it is $\Theta$.
Hence $\Theta=V_t\gamma$ for every $\gamma\in\Theta$.

\emph{Every $\xi\in\Sym(\Theta)$ commuting with $V_t|_\Theta$ satisfies $\xi^2=1$.} Fix
$\gamma_0\in\Theta$ and $g_0\in V_t$ with $\xi\gamma_0=g_0\gamma_0$. Every $\gamma\in\Theta$ is
$g\gamma_0$ with $g\in V_t$, and $\xi\gamma=g\xi\gamma_0=gg_0\gamma_0=g_0g\gamma_0=g_0\gamma$, since
$V_t$ is abelian. So $\xi=g_0|_\Theta$, and $g_0^2=1$ as $V_t$ is elementary abelian
(Theorem~\ref{thm:enclosure}).

\emph{$\zeta(\Theta)$ is an $F$-orbit.} As $x,y\in F$ move along the edges, the $F$-orbits are the components, which have at most
eight points. Since $[\zeta,V_t]=1$ (Theorem~\ref{thm:enclosure}), for $\gamma\in\Theta$ we
have $\zeta(\Theta)=\zeta(V_t\gamma)=V_t\,\zeta\gamma$, which lies in the $F$-orbit of $\zeta\gamma$;
having eight points, it is that orbit.

\emph{Contradiction.} Put $d=\zeta^{-1}\zeta^s=\zeta^{-1}s\zeta s$. As $s\in F$ preserves $\Theta$
and $\zeta(\Theta)$, $d(\Theta)=\Theta$. As $s$ normalizes $V_t$ and $\zeta$ centralizes it
(Theorem~\ref{thm:enclosure}), $\zeta^s$ centralizes $V_t^s=V_t$, and so does $d$; hence
$d|_\Theta$ commutes with $V_t|_\Theta$, and $(d|_\Theta)^2=1$ by the previous claims. But
$\zeta^s\in\langle\zeta^L\rangle$ since $s\in F\le L$, and $\langle\zeta^L\rangle$ is abelian
(Theorem~\ref{thm:enclosure}), so $\zeta$ and $\zeta^s$ commute and
$d^2=\zeta^{-2}(\zeta^2)^s=t\,t^s=[t,s]$, which is nontrivial on $\Theta$. Finally,
Theorem~\ref{thm:types}(2) leaves, on an eight-point component, only both masks constant or the
mixed case just excluded.
\end{proof}

\begin{corollary}[Normal form]\label{cor:normal-form}
Assume \textup{(S)} and $|H|\ge4$. For $x,y\in\Ii$, every
component of their two-matchings graph has size $2$, $4$ or $8$. For any chosen complementary
pairs:
\begin{enumerate}
\item on a four-point component, at least one pair is constant (its
  restrictions are the identity and the full matching);
\item on an eight-point component, both pairs are constant.
\end{enumerate}
\end{corollary}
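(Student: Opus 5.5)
The corollary is obtained by assembling results already proved; no new construction is needed. The first step bounds the component sizes. By Theorem~\ref{thm:fourth-power}, $(xy)^4=1$, so $|xy|$ divides $4$ (Theorem~\ref{thm:types}(1)). By Lemma~\ref{lem:components}(1),(3), every two-matching component of $x$ and $y$ is then finite. Each is a doubled edge or an alternating cycle of $2m$ vertices with $m\mid|xy|$. Hence $m\in\{1,2,4\}$, and the component has $2$, $4$ or $8$ points.

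The second step is item (1). Let $\Theta$ be a four-point component, so $m=2$, and fix complementary pairs $(t_x,v_x)$ and $(t_y,v_y)$. Lemma~\ref{lem:two-mask} gives $\{t_x|_\Theta,v_x|_\Theta\}=\{1,x|_\Theta\}$ or $\{t_y|_\Theta,v_y|_\Theta\}=\{1,y|_\Theta\}$. That is, at least one pair is constant on $\Theta$, with restrictions the identity and the full matching. Here I would check that this reading matches the phrase in the statement. A constant mask $M$ means that $t_x$ exchanges every red edge or none. Since $x=t_xv_x$ with each half moving a vertex only to its $x$-partner, the restrictions are then exactly $\{1,x|_\Theta\}$. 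The same argument applies to the blue pair.

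The third step is item (2). Let $\Theta$ be an eight-point component. By Theorem~\ref{thm:types}(2), either both masks are constant on $\Theta$, or exactly one is alternating and the other constant. Theorem~\ref{thm:mixed8} excludes the mixed case on eight points, so both masks are constant. By the same translation as in the second step, both complementary pairs restrict to $\{1,x|_\Theta\}$ and $\{1,y|_\Theta\}$. A component of size $2$ is a doubled edge, and there both pairs are automatically constant (first paragraph of the proof of Lemma~\ref{lem:two-mask}). This is not required by the statement but completes the picture.

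The one point that needs care is the phrase ``for any chosen complementary pairs''. The complementary pair of a full matching need not be unique: different $g\in P$ with $\rho^g=x$ may give different halves. So the argument must not depend on the choice. It does not, because Lemma~\ref{lem:two-mask}, Theorem~\ref{thm:types} and Theorem~\ref{thm:mixed8} are each stated for arbitrary complementary pairs of $x$ and $y$. Beyond that I expect no real obstacle, since the corollary restates those theorems component by component.
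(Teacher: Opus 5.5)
Your proposal is correct and follows the paper's proof. The component sizes come from Theorem~\ref{thm:fourth-power} and Lemma~\ref{lem:components}, item~(1) is Lemma~\ref{lem:two-mask}, and item~(2) is Theorem~\ref{thm:types}(2) together with Theorem~\ref{thm:mixed8}; you make the same two observations as the paper, that these results hold for arbitrary complementary pairs and that a constant mask means the pair restricts to $\{1,x|_\Theta\}$.
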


\begin{proof}
Every component has $2$,
$4$ or $8$ points (Theorem~\ref{thm:fourth-power} and Lemma~\ref{lem:components}(1),(3)). Part (1) is Lemma~\ref{lem:two-mask}. Part (2) is Theorem~\ref{thm:mixed8}
together with Theorem~\ref{thm:types}(2), which excludes both masks alternating. The masks of these
two theorems are those of the chosen halves $t_x$, $t_y$ of complementary pairs for $x$ and $y$ that
are not otherwise restricted, so both apply to any chosen complementary pairs; and on a component
$\Theta$, $t_x$ swaps $e_i,o_i$ exactly when $M_i=1$ and fixes them otherwise, and $v_x=xt_x$, so $M$
is constant exactly when $\{t_x|_\Theta,v_x|_\Theta\}=\{1,x|_\Theta\}$, that is, when the pair of $x$
is constant on $\Theta$. Likewise $N$ is constant exactly when the pair of $y$ is constant on
$\Theta$.
\end{proof}

The normal form still allows an eight-point component on which both masks are constant, and there $x$
and $y$ do not commute: $xy$ has order four on it (Lemma~\ref{lem:components}(3)), so $[x,y]=(xy)^2$ is
not the identity there. This is the obstruction that remains, and Section~\ref{s:sync} removes it by
showing that all halves commute (Theorem~\ref{thm:pairclosure}).

\section{The branch \texorpdfstring{$|H|\ge4$}{|H|>=4}: synchronization and the block}\label{s:sync}

This section turns the normal form of the previous section into the
commutation of all conjugates of the half-flip, and then into a block. The first
two results (Lemmas~\ref{lem:sync} and \ref{lem:commutator-criterion}) concern a single side
and assume only (S); they hold for every $H$.

\begin{lemma}[One-sided synchronization]\label{lem:sync}
Assume \textup{(S)}. Define
\[
\Ea=\{g\in P:g(a)\in\{a,ca\}\text{ for every }a\in A\}.
\]
Then
\begin{equation}\label{eq:EA}
\Ea=\langle v\rangle\times\pst{A}\cong C_2\times H .
\end{equation}
Every $g\in\Ea$ has one of the two restrictions
\begin{equation}\label{eq:EArestr}
g|_A=1_A\qquad\text{or}\qquad g|_A=\rho|_A .
\end{equation}
\end{lemma}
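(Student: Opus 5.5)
The plan is to show first that every $g\in\Ea$ restricts to $A$ as $1_A$ or as $\rho|_A$; the group decomposition then follows from Lemma~\ref{lem:H}. The inclusion $\langle v\rangle\times\pst{A}\subseteq\Ea$ is immediate: $v$ multiplies $A$ by $c$, and $\pst{A}$ fixes $A$. These commute, since $\pst{A}=\{\beta_h\}$ acts on $B$ by multiplication by $h$, $v$ is trivial on $B$, and both act on $A$ by central multiplications. Their intersection is trivial, because $v$ moves every point of $A$. So the real content is \eqref{eq:EArestr}.

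Take $g\in\Ea$ and write $g(a)=c^{\epsilon(a)}a$ with $\epsilon:A\to\{0,1\}$. Since $g$ is a bijection preserving each pair $\{a,ca\}$, it maps $A$ onto $A$, hence $B$ onto $B$. I will use the set $U=X\symd cX$ and the fact recalled in the introduction: for $u\in U$ and $d\in A$, the vertex $d$ is adjacent to exactly one of $ud$ and $cud$. By \eqref{eq:adj} this holds because $\one_X(u)\ne\one_X(cu)$.

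Now compare $d$ and $ud$ with $u\in U$. The adjacency $E(g(ud),g(d))=\one_X(c^{\epsilon(ud)+\epsilon(d)}u)$ must equal $\one_X(u)$. Because $u\in U$, this forces $\epsilon(ud)=\epsilon(d)$. Hence $\epsilon$ is invariant under translation by every element of $U$, so it is constant on cosets of $\langle U\rangle$. By Lemma~\ref{lem:prufer}(1), $U$ is infinite under (S), so $\langle U\rangle=A$. Thus $\epsilon$ is constant, and \eqref{eq:EArestr} follows.

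If $\epsilon\equiv0$, then $g\in\pst{A}$. If $\epsilon\equiv1$, then $vg$ fixes $A$ pointwise, so $vg\in\pst{A}$ and $g\in v\pst{A}$. Combined with Lemma~\ref{lem:H} this gives $\Ea=\langle v\rangle\times\pst{A}\cong C_2\times H$.

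The only delicate point is confirming that $g$ preserves $A$ setwise, so that adjacency within $A$ can be compared; this is automatic from the pair-preservation. Otherwise the argument is short, and the main ingredient, as the section's remark anticipates, is the infinitude of $U$.
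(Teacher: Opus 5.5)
Your proof is correct and follows the paper's argument essentially step for step. Comparing the adjacency of $ud$ and $d$ for $u\in U$ gives $U\subseteq\Pi(\varepsilon)$, the infinitude of $U$ makes $\varepsilon$ constant, and Lemma~\ref{lem:H} applied to $v^\varepsilon g$ gives the decomposition. One small slip in wording: the infinitude of $U$ is part of (S), and Lemma~\ref{lem:prufer}(1) is what turns it into $\langle U\rangle=A$.
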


\begin{proof}
Let $g\in\Ea$ and write $g(a)=c^{\varepsilon(a)}a$ with $\varepsilon(a)\in\F_2$ $(a\in A)$.
Let $a\in A$ and $u\in U$. Since $g\in P$ and $g(A)\subseteq A$, Lemma~\ref{lem:adj} gives
\[
\one_X(u)=E(ua,a)=E(g(ua),g(a))=\one_X\bigl(c^{\varepsilon(ua)+\varepsilon(a)}u\bigr).
\]
As $u\in X\symd cX$ and $c^2=1$, exactly one of $u$ and $cu$ lies in $X$, so this forces
$\varepsilon(ua)=\varepsilon(a)$. Thus $U$ lies in the period subgroup $\Pi(\varepsilon)$, which
is a subgroup of $A$: for $h,h'\in\Pi(\varepsilon)$ and $z\in A$,
$\varepsilon(hh'^{-1}z)=\varepsilon(h'^{-1}z)=\varepsilon(h'h'^{-1}z)=\varepsilon(z)$. Since $U$ is
infinite, Lemma~\ref{lem:prufer}(1) gives $\Pi(\varepsilon)=A$, so $\varepsilon$ is constant, with
value $\varepsilon$ say, and $g|_A=\rho^\varepsilon|_A$; this is \eqref{eq:EArestr}.

By \eqref{eq:tvprops}, $v(a)\in\Fix(t)=A$ for $a\in A$ (as $[t,v]=1$), so $v(a)=tv(a)=\rho(a)$,
that is, $v|_A=\rho|_A$. Hence $v^\varepsilon g$ fixes $A$ pointwise, and Lemma~\ref{lem:H} gives
$g=v^\varepsilon\beta_h$ for some $h\in H$. Conversely, every such element belongs to $\Ea$. The two
factors commute and meet trivially, because they have disjoint supports ($v$ fixes $B$, $\beta_h$
fixes $A$) and $v|_A=\rho|_A\ne1$; with $v^2=1$ and \eqref{eq:H} this proves \eqref{eq:EA}.
\end{proof}

\begin{lemma}[Half-commutator criterion]\label{lem:commutator-criterion}
Assume \textup{(S)}. Let $s\in P$ be an involution with at least one fixed
point. If
\begin{equation}\label{eq:kAc}
[t,s](a)\in\{a,ca\}\qquad(a\in A),
\end{equation}
then $[t,s]=1$.
\end{lemma}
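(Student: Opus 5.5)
The plan is to place $k=[t,s]$ inside the group $\Ea$ of Lemma~\ref{lem:sync}, so that $k$ takes one of only a few explicit forms, and then test each form against the facts that $t^s$ is a conjugate of $t$ and that $s$ has a fixed point.

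By the hypothesis \eqref{eq:kAc}, $k\in\Ea$, since $k\in P$ (as $t,s\in P$). Lemma~\ref{lem:sync} then gives
\[
k=v^\varepsilon\beta_h\qquad\text{with }\varepsilon\in\{0,1\},\ h\in H .
\]
Since $t$ and $s$ are involutions, $k=tsts=t\,t^s$, and so
\[
t^s=t\,k=t\,v^\varepsilon\beta_h .
\]
The right-hand side can be computed explicitly from \eqref{eq:tv} and the definition of $\beta_h$. The conjugate $t^s$ is an involution of $P$ with fixed set $s^{-1}(A)=s(A)$, which is nonempty. This rigidity is what should force $k=1$.

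\emph{Case $\varepsilon=0$.} Here $t^s=t\beta_h=\beta_{ch}$. It is a nonidentity element of $\pst{A}$, since $t\neq1$, and it has order two. By Lemma~\ref{lem:H}, the unique involution of the cyclic group $\pst{A}$ is $\beta_c=t$. Hence $\beta_{ch}=\beta_c$, which gives $h=1$ and $k=\beta_1=1$.

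\emph{Case $\varepsilon=1$.} I compute $t^s=tv\beta_h=\rho\beta_h$ side by side.
\begin{itemize}
\item On $A$: $\beta_h$ and $t$ act trivially and $v=\rho$, so $t^s(a)=ca$.
\item On $B$: $\beta_h$ multiplies by $h$, $v$ is trivial and $t$ multiplies by $c$, so $t^s(ab)=chab$.
\end{itemize}
Thus $t^s$ has no fixed point in $A$. Since $\Fix(t^s)=s(A)\neq\varnothing$, it must fix some point of $B$, which forces $ch=1$, that is, $h=c$. Then $t^s$ is the identity on $B$ and equals $\rho$ on $A$, so $t^s=v$ and $s(A)=\Fix(t^s)=B$. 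Because $s$ is an involution, also $s(B)=A$. So $s$ interchanges the two sides and has no fixed point, contradicting the hypothesis on $s$. This case therefore does not occur.

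I expect no real obstacle here. The only points needing care are the side-by-side computation of $tv\beta_h$, and the convention $g^h=h^{-1}gh$ that identifies $\Fix(t^s)$ with $s^{-1}(A)=s(A)$. The substantive input, that $\Ea$ is as small as $\langle v\rangle\times\pst{A}$, is already supplied by Lemma~\ref{lem:sync} and Lemma~\ref{lem:H}.
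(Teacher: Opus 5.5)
Your proof is correct and follows essentially the paper's route: put $k$ in $\Ea=\langle v\rangle\times\pst{A}$ by Lemma~\ref{lem:sync}, use $t^s=tk$, and exclude the nontrivial cases, the last one by showing that $t^s=v$ forces $s$ to interchange the sides. The difference is only bookkeeping. The paper first obtains $k^2=1$ from $tkt=k$ and $tkt=k^{-1}$, which reduces to $k\in\{1,t,v,\rho\}$. You instead work with $k=v^\varepsilon\beta_h$ directly and determine $h$ from two facts: $t^s$ is an involution of the cyclic group $\pst{A}$ when $\varepsilon=0$, and $\Fix(t^s)=s(A)\neq\varnothing$ when $\varepsilon=1$.
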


\begin{proof}
Put $k=[t,s]$. By Lemma~\ref{lem:sync},
$k\in\Ea=\langle v\rangle\times\pst{A}$. This group is abelian and contains $t$,
so $tkt=k$. On the other hand, for two involutions, $k=tsts$ and
$tkt=stst=k^{-1}$. Combining the two, $k^2=1$.

An element of order at most two in that group has only four possible values,
because the cyclic factor has a single involution. The finite cyclic $2$-group
$H$ has the unique nonidentity involution $c$. Equation~\eqref{eq:EA} leaves only
\[
k\in\{1,t,v,tv\}=\{1,t,v,\rho\}.
\]
Each nonidentity value is excluded by the identity $t^s=tk$. If $k=t$, then $t^s=1$, impossible because
$t\ne1$. If $k=v$, then $t^s=\rho$; this is impossible because $t$ fixes all of
$A$, whereas $\rho$ is fixed-point-free. If $k=\rho$, then $t^s=v$; taking fixed
sets gives $s(A)=B$, and since $s^2=1$, it also gives $s(B)=A$, making $s$
fixed-point-free. That contradicts the hypothesis on $s$, so $k=1$.
\end{proof}

\begin{lemma}[The half-commutator is one-sided]\label{lem:onesided}
Assume \textup{(S)} and $|H|\ge4$. Suppose $x,y\in\Ii$. Choose one half of $x$ and
one half of $y$. After conjugation in $P$, arrange that $x=\rho$ and the first
half is $t$; call the second half $s$. Then $k=[t,s]$ satisfies $k(a)\in\{a,ca\}$
for every $a\in A$.
\end{lemma}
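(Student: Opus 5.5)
The plan is to compute $k=[t,s]=t\,s\,t\,s=t\,t^s$ one component at a time on the two-matchings graph of $x=\rho$ and $y$, using the normal form of Corollary~\ref{cor:normal-form}. The complementary pairs are $(t,v)$ for $x=\rho$ (conjugated so that $t$ is the half-flip with $\Fix(t)=A$) and $(s,w)$ for $y$. Two facts make this work. First, $t$ and $s$ preserve every component (Lemma~\ref{lem:components}(2)), so $k$ does too. Second, $\rho(a)=ac=ca$ for every $a$. It therefore suffices to show that on each component $\Theta$ meeting $A$, the restriction $k|_\Theta$ is either the identity or agrees with $x|_\Theta$ at every point of $\Theta\cap A$.

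By Corollary~\ref{cor:normal-form} every component has $2$, $4$ or $8$ points. On components of $2$ or $8$ points both pairs are constant, and on a $4$-point component at least one pair is constant. This gives three cases.

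First, suppose the pair of $x$ is constant on $\Theta$ and $t|_\Theta=1$. Then $k|_\Theta=s^2|_\Theta=1$.

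Second, suppose the pair of $x$ is constant on $\Theta$ and $t|_\Theta=x|_\Theta$. Since $x$ is fixed-point-free, $t$ has no fixed point in $\Theta$, so $\Theta\subseteq B$ and $\Theta\cap A=\varnothing$. There is nothing to check.

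Third, suppose the pair of $x$ is not constant on $\Theta$. Then $\Theta$ is a $4$-point component, the red mask is alternating, and the pair of $y$ is constant there, so $s|_\Theta\in\{1,y|_\Theta\}$. If $s|_\Theta=1$, then $k|_\Theta=t^2|_\Theta=1$. If $s|_\Theta=y|_\Theta$, label $\Theta$ as in Definition~\ref{def:twomatch} with $m=2$. After possibly rotating the labels, $t=(0\,1)$, $v=(2\,3)$ and $y=(1\,2)(3\,0)$ on $\Theta$. A direct computation then gives $t^y=(2\,3)=v$, so $k|_\Theta=t\,t^y=tv=x|_\Theta$. Hence $k(a)=x(a)=ca$ for $a\in\Theta\cap A$.

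Combining the cases, $k(a)\in\{a,ca\}$ for every $a\in A$. The only step needing care is the bookkeeping in the third case, namely checking that "$s$ exchanges exactly the blue edges" on a $4$-cycle gives $t^s=v$ there. The substance of the argument, the exclusion of a mixed or doubly alternating $8$-point component, is already supplied by Corollary~\ref{cor:normal-form}, which rests on Theorems~\ref{thm:types} and~\ref{thm:mixed8}.
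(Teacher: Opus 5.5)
Your proof is correct and follows the paper's argument. Both check the statement one component at a time using the normal form of Corollary~\ref{cor:normal-form}: $t\in\{1,x\}$ wherever the pair of $x$ is constant, and on a four-point component with $s=y$ the conjugate $t^s$ is the complementary transposition, so $k=\rho$ there. You organize the cases by whether the pair of $x$ is constant rather than by component size, which changes nothing of substance.
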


The conjugation is possible even when the selected half of $x$ is the
complementary one.

\begin{proof}
We work component by component in the normal form of
Corollary~\ref{cor:normal-form}, by which components have $2$, $4$ or $8$ vertices.

\emph{Two vertices.} Every restriction is the identity or the sole
transposition; these commute, so $k=1$ there.

\emph{Four vertices.} If $t=1$ there, then $k=1$. If $t=\rho$ there, that component has
no point in $A=\Fix(t)$. Otherwise $t$ is exactly one of the two transpositions
of $\rho$, and by Corollary~\ref{cor:normal-form}(1), $s$ is the identity or its full
matching $y$. In the first case $k=1$. In the second, $y$ interchanges the two
$\rho$-edges, so $t^s$ is the complementary transposition and
\[
k=t\,t^s=\rho
\]
on that component. In particular its action on the points of $A$ is
multiplication by $c$.

\emph{Eight vertices.} Corollary~\ref{cor:normal-form}(2) makes $t$ constant. If the
component meets $A$, then $t$ is the identity on the entire component, so $k=1$
there. Otherwise there is no point of $A$ to check.

In every case $k$ sends a point of
$A$ either to itself or to its $\rho$-partner, as claimed.
\end{proof}

\begin{theorem}[Commuting halves]\label{thm:pairclosure}
Assume \textup{(S)} and $|H|\ge4$. For any $x,y\in\Ii$, all four complementary halves commute. In particular,
\[
[x,y]=1,\qquad \langle t_x,v_x,t_y,v_y\rangle\cong C_2^e,\ e\le4 .
\]
\end{theorem}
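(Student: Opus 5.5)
The plan is to reduce the theorem to a single commutation: every half of $x$ commutes with every half of $y$. Once that holds, the four halves are pairwise commuting involutions, since each pair of halves of one matching already commutes by Lemma~\ref{lem:halves-props}(5). So $\langle t_x,v_x,t_y,v_y\rangle$ is elementary abelian of rank at most four, and $[x,y]=1$ because $x=t_xv_x$ and $y=t_yv_y$.

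To prove that a chosen half $t'$ of $x$ commutes with a chosen half $s'$ of $y$, take $g\in P$ with $(t',x)=(t,\rho)^{g^{-1}}$. Such a $g$ exists by Definition~\ref{def:halves}: if $t'$ is the complementary half, use the pair $(v_x,t_x)$ of Lemma~\ref{lem:halves-props}(6). Conjugating by $g$ replaces $x$ by $\rho$, $t'$ by the half-flip $t$, and $s'$ by a half $s$ of the full matching $y^g\in\Ii$. Commutation is preserved under this conjugation, so it suffices to show $[t,s]=1$. We may also assume $t$ is the half-flip fixing $A$ pointwise.

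Now apply Lemma~\ref{lem:onesided}, which gives $[t,s](a)\in\{a,ca\}$ for every $a\in A$; this is exactly hypothesis~\eqref{eq:kAc}. To use Lemma~\ref{lem:commutator-criterion} we still need $s$ to be an involution of $P$ with a fixed point. The first part is Lemma~\ref{lem:halves-props}(4). The second holds because $\Fix(s)$ is one of the two sides of a complementary pair, namely $h^{-1}(A)$ or $h^{-1}(B)$ for some $h\in P$, and both sides are nonempty by Lemma~\ref{lem:halfflips}. Lemma~\ref{lem:commutator-criterion} then yields $[t,s]=1$. Running over the four choices of a half of $x$ and a half of $y$ gives all the commutations.

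The substantive difficulty, controlling the commutator on every component, is already absorbed into Lemma~\ref{lem:onesided} and the normal form (Corollary~\ref{cor:normal-form}). What remains to check carefully is bookkeeping. First, the conjugation has to send the selected half to the particular half-flip $t$ with $\Fix(t)=A$, which is the situation Lemma~\ref{lem:onesided} is stated for; this is where part~(6) of Lemma~\ref{lem:halves-props} is needed. Second, Lemma~\ref{lem:onesided} must apply to an arbitrary half of $y^g$ and not only to its chosen half; this also follows from part~(6), since either half can be taken as the chosen one.
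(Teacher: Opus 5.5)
Your proposal is correct and is essentially the paper's proof. Both conjugate so that $x=\rho$ with the selected half equal to $t$, obtain \eqref{eq:kAc} from Lemma~\ref{lem:onesided}, and conclude $[t,s]=1$ from Lemma~\ref{lem:commutator-criterion}, using that $s$ is an involution with nonempty fixed set; both then note that four pairwise commuting involutions generate $C_2^e$ with $e\le4$ and that $[x,y]=1$.
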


\begin{proof}
For any chosen cross-pair of halves normalize as in
Lemma~\ref{lem:onesided}, and put $k=[t,s]$. By Lemma~\ref{lem:onesided}, $k$
satisfies \eqref{eq:kAc}; the element $s$ is an involution in
$P$ and
has a nonempty fixed set, so Lemma~\ref{lem:commutator-criterion} applies and gives $k=1$.
The argument applies to every chosen cross-pair of halves, and complementary
halves already commute; so all four half-generators commute. Four commuting
involutions generate an elementary abelian group of order at most $2^4$. In
particular $x=t_xv_x$ and $y=t_yv_y$ commute.
\end{proof}

With $[x,y]=1$ for all $x,y\in\Ii$, what remains in this branch is a nontrivial block; the corollary below
finds it among the orbits of the group generated by the half-involutions.

\begin{corollary}[The block]\label{cor:block}
Assume \textup{(S)} and $|H|\ge4$, so that the fourth-power bound holds
(Theorem~\ref{thm:fourth-power}). Then every pair of conjugate
half-involutions commutes (Theorem~\ref{thm:pairclosure}). The subgroup $\langle t^P\rangle$ is elementary abelian and normal in $P$, and its vertex orbit $\langle t^P\rangle(1)$
satisfies
\begin{equation}\label{eq:block}
\{1,c\}\subseteq\langle t^P\rangle(1)\subseteq A .
\end{equation}
In particular $P$ is imprimitive and $c^{P_1}\subseteq A$; here $P_1$ is the stabilizer of the
vertex $1$ and $c^{P_1}$ the orbit of $c$ under it.
\end{corollary}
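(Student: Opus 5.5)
We first show that the half-involutions commute pairwise, and then read off the block from the orbit of $1$.

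\emph{Commutation.} Let $t_1=t^{g_1}$ and $t_2=t^{g_2}$ be half-involutions. Then $t_1$ is a half of $x_1=\rho^{g_1}$ in the complementary pair $(t^{g_1},v^{g_1})$, and similarly $t_2$ is a half of $x_2=\rho^{g_2}$. Theorem~\ref{thm:pairclosure} makes all four halves of $x_1$ and $x_2$ commute, so $[t_1,t_2]=1$. Hence $N=\langle t^P\rangle$ is generated by commuting involutions, and is therefore elementary abelian. It is normal in $P$ because its generating set $t^P$ is $P$-invariant.

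\emph{The orbit.} The half-involutions $t$ and $v=t^{R(b)}$ both lie in $N$ (Lemma~\ref{lem:halves-props}(3)), so $v(1)=c$ gives $\{1,c\}\subseteq N(1)$. For the inclusion $N(1)\subseteq A$, it suffices to show that every generator $t'\in t^P$ maps $A$ into $A$. Then every element of $N$ preserves $A$, and $N(1)\subseteq A$. Write $t'$ as a half of $x'=\rho^g$. Theorem~\ref{thm:pairclosure}, applied to $x=\rho$ with its pair $(t,v)$ and to $x'$, gives $[t',t]=1$. So $t'$ centralizes $t$, and Corollary~\ref{cor:H}(2) shows that it preserves $\Fix(t)=A$ setwise.

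\emph{Block and imprimitivity.} The orbits of the normal subgroup $N$ form a $P$-invariant partition, so $\Delta=N(1)$ is a block. It has at least two points because it contains $c\ne1$. It is not all of $\Qg$ because it lies in $A\ne\Qg$. Hence $P$ is imprimitive.

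\emph{The orbit of $c$.} For $p\in P_1$, we have $p(c)\in pN(1)=N(p(1))=N(1)\subseteq A$, since $pN=Np$. Thus $c^{P_1}\subseteq A$.

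The only delicate step is the inclusion in $A$. There we must check that every generator of $N$, and not only $t$ and $v$, preserves $A$. This comes from commuting with $t$ together with the centralizer identity of Corollary~\ref{cor:H}(2).
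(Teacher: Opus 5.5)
Your proposal is correct and follows the paper's proof essentially step by step: commutation of all half-involutions from Theorem~\ref{thm:pairclosure}, then $v=t^{R(b)}\in\langle t^P\rangle$ to get $c$ in the orbit, the normal-subgroup block inside $A$, and the $P_1$-invariance of that block. The only cosmetic difference is in proving $\langle t^P\rangle(1)\subseteq A$. The paper argues directly that every $n$ in the abelian group commutes with $t$, so $t(n(1))=n(t(1))=n(1)$. You instead check that each generator preserves $\Fix(t)=A$ via Corollary~\ref{cor:H}(2). This is the same idea (a permutation commuting with $t$ permutes $\Fix(t)$), and your second appeal to Theorem~\ref{thm:pairclosure} is redundant given your first step.
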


\begin{proof}
Every two conjugates of $t$ belong to
complementary pairs for some full matchings $x,y\in\Ii$. Theorem~\ref{thm:fourth-power}
makes $xy$ of finite order, and Theorem~\ref{thm:pairclosure} then makes those
halves commute. All generators of $\langle t^P\rangle$ therefore commute; they are involutions,
so $\langle t^P\rangle$ is elementary abelian. It is normal by definition.

For $n\in\langle t^P\rangle$, using $tn=nt$ and $t(1)=1$,
\[
t(n(1))=n(t(1))=n(1).
\]
Hence $\langle t^P\rangle(1)\subseteq\Fix(t)=A$. By Lemma~\ref{lem:halves-props}(3), $v=t^{R(b)}$ with $R(b)\in P$, so $v\in\langle t^P\rangle$ and $c=v(1)\in\langle t^P\rangle(1)$, which proves
\eqref{eq:block}.

The orbits of a normal
subgroup form a $P$-invariant partition. Its class of $1$ contains two distinct
points and is contained in the proper subset $A$, so the partition is
nontrivial. Any element of $P_1$ preserves that class, yielding
$c^{P_1}\subseteq\langle t^P\rangle(1)\subseteq A$.
\end{proof}

\section{The branch \texorpdfstring{$|H|=2$}{|H|=2}}\label{s:two}

Throughout this section (S) holds and $H=\Pi(T)=\langle c\rangle$. For
$x\in\Ii$ we write $(t_x,v_x)$ for a complementary pair of $x$ in
the notation of Definition~\ref{def:halves}, with
$C_x=\Fix(t_x)$ and $C_x^\complement=\Fix(v_x)$; for $x=\rho$ the pair is $(t,v)$ with
$C_\rho=A$. Likewise $(t_y,v_y)$ and $C_y=\Fix(t_y)$ for $y\in\Ii$.

\emph{Standing facts.} The following facts are proved before they are used, and proofs in this section
cite them by their tags. Here $x,y\in\Ii$ have complementary pairs $(t_x,v_x)$ and $(t_y,v_y)$.
\begin{itemize}
\item $t_x$ and $v_x$ are involutions in $P$ and half-involutions, and $(v_x,t_x)$ is
  also a complementary pair for $x$; $x=t_xv_x$ and $[t_x,v_x]=1$; $C_x=\Fix(t_x)$ and $C_x^\complement=\Fix(v_x)$
  are complementary; and each half sends every vertex to itself or to its $x$-partner
  (Lemma~\ref{lem:halves-props}(4)--(6)).
\item $x$ is a fixed-point-free involution; every right translation of $\Qg$ lies in
  $P$, in particular $\rho$, so that the $P$-conjugacy class $\Ii=\rho^P$ lies in $P$
  (Definition~\ref{def:standing}, Lemma~\ref{lem:halves-props}(1),(2)).
\item $x$, $y$ and every half of $x$ or of $y$ preserve each two-matching component of
  $x$ and $y$; each component is a doubled edge, a finite alternating even cycle or a doubly infinite
  alternating path; and, labelled as in Definition~\ref{def:twomatch}, $xy(e_i)=e_{i-1}$ and
  $xy(o_i)=o_{i+1}$, so that $xy$ has order $m$ on a cycle of $2m$ vertices and infinite order on a doubly
  infinite path (Definition~\ref{def:twomatch}, Lemma~\ref{lem:components}).
\end{itemize}

This branch runs parallel to the branch $|H|\ge4$ of Sections~\ref{s:big} and~\ref{s:sync},
with a translation square root in place of a supported one: supported square roots exist only
when $|H|\ge4$ (Corollary~\ref{cor:H}(3)). The bound $(xy)^4=1$ (Theorem~\ref{thm:rs4})
corresponds to the fourth-power bound (Theorem~\ref{thm:fourth-power}); the mixed eight-point
component is excluded by a translation root (Proposition~\ref{prop:mixed8two}) as it was by a
supported root (Theorem~\ref{thm:mixed8}); the full matchings commute
(Theorem~\ref{thm:commute2}) as they did there (Theorem~\ref{thm:pairclosure}); and the affine
lemma (Lemma~\ref{lem:affine}) ends this branch where the block (Corollary~\ref{cor:block})
ended the other.

The robust-pair theorem (Theorem~\ref{thm:twelve-words}) concerns two interleaved $0$--$1$ sequences.
The two sequences are the masks of $x$ and $y$. Along a component of the graph formed by their
pairs, they record which pairs the corresponding conjugates of $t$ swap. The masks have period two
(Corollary~\ref{cor:period2}). In every interleaved sequence without period two, the robust-pair
theorem finds two vertices which, where they stay distinct, lie in distinct pairs of a conjugate
of $\rho$ and have the same neighbourhood across the corresponding conjugates of the sides
(Lemma~\ref{lem:robust-pair}(3)). Distinct cross-neighbourhoods forbid this
(Lemma~\ref{lem:cross-nbhd}(3)). On eight-point components a parity count is also needed, and
one component is left open: eight points, one mask constant and the other alternating. The right
translation $R(d)$ by an element $d\in A$ of order four, a square root of $\rho$, excludes it.
Once $x$ is conjugated to $\rho$, the translation preserves that component by the diagonal
alignment lemma (Lemma~\ref{lem:diag}). It conjugates $y$ to a matching whose product with $y$
has order three there, against $(xy)^4=1$ (Proposition~\ref{prop:mixed8two}).

\subsection{Distinct cross-neighbourhoods}\label{ss:cross-nbhd}

\begin{lemma}[Distinct cross-neighbourhoods]\label{lem:cross-nbhd}
Assume \textup{(S)} and $H=\langle c\rangle$.
\begin{enumerate}
\item For $a,z\in A$, $E(a,zb)=\one_T(az)$. Consequently distinct $\rho$-pairs
  on either side have distinct cross-neighbourhoods: equality would give a
  period of $T$ outside $\langle c\rangle$.
\item $\pst{A}=\langle t\rangle$ and $\pst{B}=\langle v\rangle$.
\item For every $x\in\Ii$ with complementary pair $(t_x,v_x)$,
  $\pst{C_x}=\langle t_x\rangle$ and $\pst{C_x^\complement}=\langle v_x\rangle$. In
  particular, every element of $P$ preserving $C_x$ setwise centralizes $t_x$, $v_x$
  and $x$. Distinct $x$-pairs inside $C_x$ have distinct neighbourhoods in
  $C_x^\complement$, distinct $x$-pairs inside $C_x^\complement$ have distinct neighbourhoods in
  $C_x$, and the two points of an $x$-pair on either side have the same
  neighbourhood on the other side.
\end{enumerate}
\end{lemma}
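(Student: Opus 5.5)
Parts (1) and (2) are direct specializations of earlier material, and part (3) is obtained from them by conjugation. So the plan is short, and I would carry it out in the order of the statement.

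For (1), the identity $E(a,zb)=\one_T(az)$ is \eqref{eq:adj} of Lemma~\ref{lem:adj}. The cross-neighbourhood of $zb$ in $A$ is therefore $z^{-1}T$, and that of $a\in A$ in $B$ is $\{zb: z\in a^{-1}T\}$. Since $cT=T$, the two points of a $\rho$-pair, $\{zb,czb\}$ or $\{a,ca\}$, have the same cross-neighbourhood. Suppose $z^{-1}T=z'^{-1}T$. Then $z'z^{-1}\in\Pi(T)=H=\langle c\rangle$, so $z'\in\{z,cz\}$ and the two $\rho$-pairs coincide. The same argument works on the side $A$. This is just the fibre description \eqref{eq:fibres} with $|H|=2$.

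For (2), Lemma~\ref{lem:H} gives $\pst{A}=\{\beta_h:h\in H\}=\{1,\beta_c\}=\langle t\rangle$. Conjugating by $R(b)$ (Lemma~\ref{lem:halfflips}) gives $\pst{B}=\langle v\rangle$. Alternatively, Corollary~\ref{cor:H}(1) with $|H|=2$ gives both at once.

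For (3), write $(t_x,v_x,x)=(t^g,v^g,\rho^g)$, so that $C_x=g^{-1}(A)$ and $C_x^\complement=g^{-1}(B)$. Conjugating (2) by $g$ gives $\pst{C_x}=\langle t_x\rangle$ and $\pst{C_x^\complement}=\langle v_x\rangle$. This is also Corollary~\ref{cor:H}(1) applied to $t'=t_x$.

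Next I would take $p\in P$ preserving $C_x$ setwise. It normalizes the order-two group $\pst{C_x}$, hence centralizes $t_x$; this is Corollary~\ref{cor:H}(2). That corollary also gives $C_P(t_x)=C_P(v_x)$, so $p$ centralizes $v_x$ as well, and therefore centralizes $x=t_xv_x$.

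For the neighbourhood statements, the automorphism $g^{-1}$ of $\Gamma$ carries $A$, $B$ and $\rho$-pairs to $C_x$, $C_x^\complement$ and $x$-pairs, because $x=g^{-1}\rho g$. Graph automorphisms carry neighbourhoods within a set $Y$ to neighbourhoods within the image of $Y$. Hence the statements in (1) transfer verbatim: distinct $x$-pairs inside $C_x$ have distinct neighbourhoods in $C_x^\complement$, and vice versa, while the two points of an $x$-pair have equal neighbourhoods on the other side.

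No step is a real obstacle. The only point needing care is the bookkeeping in the conjugation for (3): the fixed set of $t^g$ is $g^{-1}(A)$ under the paper's left-action convention, and the matching transported to $C_x$ is $x$ rather than some other conjugate of $\rho$. Getting the direction right settles both.
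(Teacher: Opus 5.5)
Your proposal is correct and follows the paper's proof essentially step for step. You read (1) off \eqref{eq:adj} using $\Pi(T)=\langle c\rangle$, take (2) from Lemma~\ref{lem:H} and Corollary~\ref{cor:H}, and get (3) by transporting both through the automorphism $g^{-1}$, with Corollary~\ref{cor:H}(2) for the centralizer claim; your bookkeeping that $C_x=g^{-1}(A)$ and that $g^{-1}$ carries $\rho$-pairs to $x$-pairs is also right.
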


\begin{proof}
Part (1) is \eqref{eq:adj}: the cross-neighbourhood of $zb$ in $A$ is $z^{-1}T$
and that of $a$ in $B$ is $(a^{-1}T)b$; $z^{-1}T=z'^{-1}T$ means $z'z^{-1}\in\Pi(T)=\langle c\rangle$,
and $a^{-1}T=a'^{-1}T$ means $a'a^{-1}\in\Pi(T)$. So two vertices on the same side have equal
cross-neighbourhoods exactly when they differ by $\Pi(T)=\langle c\rangle$, that is, lie in one
$\rho$-pair (as in \eqref{eq:fibres} with $H=\langle c\rangle$): $\rho(zb)=zbc=czb$, since $c=b^2$
is central, and $\rho(a)=ac$. (2) is
Lemma~\ref{lem:H} and Corollary~\ref{cor:H} with $H=\langle c\rangle$: Lemma~\ref{lem:H} gives
$\pst{A}=\{\beta_1,\beta_c\}=\{1,t\}$, and Corollary~\ref{cor:H}(1), applied to the half-involution $t$
with complementary half $v$, $C=\Fix(t)=A$ and
$C^\complement=B$, makes $\pst{B}$ cyclic of order $|H|=2$ with involution $v$, so $\pst{B}=\langle v\rangle$.
(3) follows by conjugating (1) and (2) by $g$, where $(t_x,v_x,x)=(t^g,v^g,\rho^g)$ for some
$g\in P$: the graph automorphism $g^{-1}\in P$ maps $A$, $B$ and the $\rho$-pairs onto
$C_x=g^{-1}(A)$, $C_x^\complement=g^{-1}(B)$ and the $x$-pairs. So, by (1), distinct $x$-pairs in $C_x$
(respectively $C_x^\complement$) have distinct neighbourhoods in $C_x^\complement$ (respectively $C_x$); and by (2),
$\pst{C_x}=g^{-1}\pst{A}g=\langle t_x\rangle$ and $\pst{C_x^\complement}=g^{-1}\pst{B}g=\langle v_x\rangle$.
The two points of a $\rho$-pair on either side have the same cross-neighbourhood, since
$E(ac,zb)=E(a,czb)=\one_T(acz)=\one_T(az)$ ($cT=T$); transported by $g^{-1}$, the two points of an
$x$-pair on either side have the same neighbourhood on the other side.
Finally, by Corollary~\ref{cor:H}(2), applied to the half-involution $t_x$ with complementary half
$v_x$, an element of $P$ preserving $C_x$ setwise centralizes $t_x$ and $v_x$, hence also
$x=t_xv_x$ (Lemma~\ref{lem:halves-props}(5)).
\end{proof}

\subsection{Period two}\label{ss:period-two}

This subsection shows that the masks of two full matchings have period two on
every component. For $x,y\in\Ii$, the components of the two-matchings graph of $x$ and $y$
(Definition~\ref{def:twomatch}) are described by the standing facts. Label a component as in
Definition~\ref{def:twomatch}. For chosen halves $t_x$ of $x$ and $t_y$ of $y$, the masks of that definition
are written $M(i)$ and $N(i)$. The conjugates of the halves by powers of $q=xy$ are described in
Lemma~\ref{lem:robust-pair}(1).

Two periodicity conditions occur in this subsection, one for the interleaved sequence $\mu$ of
Lemma~\ref{lem:robust-pair}(1), given by $\mu(2i)=M(i)$ and $\mu(2i+1)=N(i)$, and one for the masks:
\[
\mu(p+2)=\mu(p)\quad\text{for all }p,\qquad\qquad M(i+2)=M(i)\ \text{and}\ N(i+2)=N(i)\quad\text{for all }i.
\]
The first holds exactly when both masks are constant; the second also allows alternating masks.
Theorem~\ref{thm:twelve-words} concerns the first, written ``$\mu$ is $2$-periodic''.
Corollary~\ref{cor:period2} asserts the second, \eqref{eq:period2}, on every component, and the first on
bi-infinite components and on cycles with $m\ge3$, $m\ne4$.

A robust pair, defined in Definition~\ref{def:grid}, is what links the grid to the masks. Interleaving the red and blue masks of a
component gives one binary sequence, and the grid joins an even vertex to an odd vertex exactly when
the value of the sequence at their cell equals the bit of the cell's row. A robust pair is a pair of
distinct cells in one row, both carrying the bit of that row, that stay joined by a rook path
whatever bits the other rows carry. Figure~\ref{fig:grid} shows the grid of Definition~\ref{def:grid}
for the word $W_A$ of Table~\ref{tab:words}, with its robust pair and the rook paths of
Appendix~\ref{app:paths}. On a component the row bits are not free: for a vertex on one
side, the bit of a row records whether the chosen half on that diagonal fixes the vertex, and every
edge of the resulting grid joins two vertices equally adjacent to it. A robust pair whose two
targets stay distinct on the component would therefore give two vertices outside that side, in
distinct pairs of one full matching, with the same neighbourhood in the side; distinct
cross-neighbourhoods forbid this. The theorem below shows that every sequence without period two has
a robust pair, and the corollary after it concludes that the masks have period two on every
component.

\begin{definition}[The grid graph]\label{def:grid}
Let $\mu,\sigma:\Z\to\{0,1\}$. The graph $G(\mu,\sigma)$ is the bipartite graph
with vertices $e_j$ and $o_i$, for all integer indices, in which
\begin{equation}\label{eq:grid}
e_j\sim o_i\quad\Longleftrightarrow\quad \mu(i+j)=\sigma(i-j).
\end{equation}
We call the edge $e_jo_i$ the \emph{cell} $(p,d)=(i+j,\,i-j)$; its \emph{row} is
$d$, and $p\equiv d\pmod2$. Two cells are \emph{rook-connected} if the
corresponding edges lie in the same component of $G(\mu,\sigma)$. For $d_0\in\Z$ and a bit
$\varepsilon$, two cells $(p,d_0)$ and $(p',d_0)$ with $p\ne p'$ and $\mu(p)=\mu(p')=\varepsilon$ form a
\emph{robust pair} (in row $d_0$, with bit $\varepsilon$) if they are rook-connected in $G(\mu,\sigma)$
for every $\sigma:\Z\to\{0,1\}$ with $\sigma(d_0)=\varepsilon$.
\end{definition}

The indices of the edge in cell $(p,d)$ are recovered as $i=(p+d)/2$ and $j=(p-d)/2$. Two edges of
$G(\mu,\sigma)$ share a vertex exactly when they have the same $j$ (the vertex $e_j$) or the same $i$ (the
vertex $o_i$). This incidence is a rook move in the coordinates $(i,j)$ and a diagonal move in the
coordinates $(p,d)$, where it keeps $p-d$ or $p+d$ fixed.

\begin{lemma}[Mask-to-grid lemma]\label{lem:robust-pair}
Assume \textup{(S)} and $H=\langle c\rangle$. Let $x,y\in\Ii$, with chosen halves
$t_x,t_y$ and masks $M,N$ on a component $\Theta$ of their two-matchings graph,
labelled as in Definition~\ref{def:twomatch}, with masks $M,N$ as in Section~\ref{ss:period-two}. If $\Theta$ is doubly infinite, indices lie in $\Z$ and $\pi$
is the identity; if $\Theta$ is a cycle of $2m$ vertices, the masks are lifted
periodically to $\Z$ and $\pi$ reduces indices modulo $m$; a doubled edge is the case $m=1$.
\begin{enumerate}
\item Interleave $\mu(2i)=M(i)$, $\mu(2i+1)=N(i)$. For every $d\in\Z$ the
  matching $\pi e_j\leftrightarrow\pi o_{j+d}$, the \emph{diagonal} $d$, is the
  restriction to $\Theta$ of a
  conjugate of $x$ ($d$ even) or of $y$ ($d$ odd) in $\Ii$, and its chosen half,
  $q^{d/2}t_xq^{-d/2}$ for $d$ even and $q^{(d+1)/2}t_yq^{-(d+1)/2}$ for $d$ odd, where $q=xy$,
  swaps the pair $(\pi e_j,\pi o_{j+d})$ exactly when $\mu(2j+d)=1$.
\item Let $h$ be the chosen half on diagonal $d_0$ or its complement, with fixed
  set $C$, and let $\varepsilon=1$ or $0$ accordingly. For $\alpha\in C$ define
  $\sigma_\alpha(d)=1$ if the chosen half on diagonal $d$ fixes $\alpha$ and
  $\sigma_\alpha(d)=0$ otherwise. Then $\sigma_\alpha(d_0)=\varepsilon$, and the
  adjacency column $f_\alpha(\beta)=E(\alpha,\beta)$ satisfies
  $f_\alpha(\pi e_j)=f_\alpha(\pi o_i)$ whenever $\mu(i+j)=\sigma_\alpha(i-j)$.
\item Let $p\ne p'$ with $p\equiv p'\equiv d_0\pmod2$ and
  $\mu(p)=\mu(p')=\varepsilon$, put $j=(p-d_0)/2$ and $j'=(p'-d_0)/2$, and suppose
  $\pi e_j\ne\pi e_{j'}$. If the cells $(p,d_0)$ and $(p',d_0)$ are
  rook-connected in $G(\mu,\sigma)$ for every $\sigma$ with $\sigma(d_0)=\varepsilon$,
  then $\pi e_j$ and $\pi e_{j'}$ lie in $C^\complement$, in distinct pairs of the full
  matching of $h$, and have the same neighbourhood in $C$. This contradicts
  Lemma~\ref{lem:cross-nbhd}(3). Hence no such pair of cells exists.
\end{enumerate}
\end{lemma}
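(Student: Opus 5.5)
Part~(1) is a direct computation from the labelling of Lemma~\ref{lem:components}(3). Since $q=xy$ sends $e_i\mapsto e_{i-1}$ and $o_i\mapsto o_{i+1}$, conjugating a matching by $q^k$ shifts the $e$-index by $k$ and the $o$-index by $-k$ (with the appropriate sign convention). Hence $q^{d/2}xq^{-d/2}$ pairs $\pi e_j$ with $\pi o_{j+d}$ for $d$ even. For $d$ odd, $y$ pairs $o_i$ with $e_{i+1}$, that is $e_j$ with $o_{j-1}$, and conjugating by $q^{(d+1)/2}$ moves this to diagonal $d$. Conjugation carries complementary pairs to complementary pairs (Definition~\ref{def:halves}), so the conjugated chosen half swaps $(\pi e_j,\pi o_{j+d})$ exactly when the original half swapped the preimage edge. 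Tracing indices shows that this edge is $R_{j+d/2}$, giving $M(j+d/2)=\mu(2j+d)$, respectively $B_{j+(d-1)/2}$, giving $N=\mu(2j+d)$. I will fix the sign convention once and check only these two index identities.

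For part~(2), the equality $\sigma_\alpha(d_0)=\varepsilon$ is immediate from the definition: if $h$ is the chosen half, then $\alpha\in C=\Fix(h)$ and $\varepsilon=1$; if $h$ is the complement, then $\alpha$ is moved by the chosen half and $\varepsilon=0$. For the adjacency claim, let $g_d$ be the chosen half on diagonal $d$. If $g_d$ fixes $\alpha$, it is an automorphism fixing $\alpha$, so $f_\alpha(\beta)=f_\alpha(g_d\beta)$ for all $\beta$. Its complementary half $x_dg_d$ moves $\alpha$ to $x_d\alpha$. Here I use Lemma~\ref{lem:cross-nbhd}(3): the two points of an $x_d$-pair have the same neighbourhood on the other side. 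This gives $E(\alpha,\beta)=E(\alpha,\beta')$ for an $x_d$-pair $\{\beta,\beta'\}$ lying in the fixed set of the half that fixes $\alpha$, with the sides read through that half. Now take the cell edge $\{\pi e_j,\pi o_i\}$ with $d=i-j$. If $\mu(i+j)=\sigma_\alpha(d)=1$, then $g_d$ fixes $\alpha$ and swaps the pair, so the pair has equal $f_\alpha$-values. If the common value is $0$, then the complementary half fixes $\alpha$ and swaps the pair, with the same conclusion.

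For part~(3), take $\alpha\in C$ arbitrary and apply part~(2) with $\sigma=\sigma_\alpha$, which satisfies $\sigma(d_0)=\varepsilon$. Robustness gives a rook path from cell $(p,d_0)$ to $(p',d_0)$ in $G(\mu,\sigma_\alpha)$. Each edge of that grid joins vertices on which $f_\alpha$ agrees, so $f_\alpha$ is constant along the path, and in particular $f_\alpha(\pi e_j)=f_\alpha(\pi e_{j'})$. Since $\alpha\in C$ is arbitrary, $\pi e_j$ and $\pi e_{j'}$ have the same neighbourhood in $C$.

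It remains to place $\pi e_j$ and $\pi e_{j'}$ in $C^\complement$ and in distinct pairs. Both cells have $\mu=\varepsilon$ on diagonal $d_0$, so $h$ swaps $\pi e_j$ with $\pi o_{j+d_0}$ and likewise for $j'$. Thus both lie outside $\Fix(h)=C$, in $h$-pairs of the matching on diagonal $d_0$. These pairs are distinct: they could coincide only if $\pi e_{j'}=\pi o_{j+d_0}$, which is impossible because $e$- and $o$-vertices are distinct, and we have assumed $\pi e_j\ne\pi e_{j'}$. The conjugate of Lemma~\ref{lem:cross-nbhd}(3) for the half-involution $h$ with fixed set $C$ then says that distinct pairs in $C^\complement$ have distinct neighbourhoods in $C$, which is a contradiction.

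The main obstacle is the index bookkeeping in part~(1): choosing the power of $q$ and its sign consistently with $q(e_i)=e_{i-1}$, and verifying the odd-diagonal offset. A second delicate point is that part~(2) must apply the pair-equality to the correct half, namely the one fixing $\alpha$.
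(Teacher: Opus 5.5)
Your proposal is correct and follows the paper's proof. Part~(1) conjugates the chosen halves by powers of $q=xy$ and tracks indices; part~(2) uses that the half on diagonal $i-j$ fixing $\alpha$ is an automorphism swapping $\pi e_j$ and $\pi o_i$; part~(3) propagates $f_\alpha$ along a rook path in $G(\mu,\sigma_\alpha)$ and contradicts Lemma~\ref{lem:cross-nbhd}(3) for the complementary pair of $h$ on diagonal $d_0$. The appeal to Lemma~\ref{lem:cross-nbhd}(3) inside part~(2) is superfluous: the automorphism argument you apply in both cases already gives the equality. As worded, that appeal also puts the pair on the wrong side, since the pair swapped by the half fixing $\alpha$ lies in the fixed set of the other half.
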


The proof conjugates the chosen halves of $x$ and $y$ by powers of $q=xy$ to obtain every diagonal, and uses the halves that fix $\alpha$ to carry adjacency to $\alpha$ across each edge of $G(\mu,\sigma_\alpha)$; part (3) then follows along a rook path.

\begin{proof}
Let $(t_x,v_x)$ and $(t_y,v_y)$ be the complementary pairs of the chosen halves, and put $q=xy\in P$. By Lemma~\ref{lem:components}(3), $q$ preserves $\Theta$, with $q^ke_i=e_{i-k}$ and
$q^ko_i=o_{i+k}$ for $k\in\Z$ (indices read through $\pi$).

(1) For $d=2k$ put $(g_d,g_d',z_d)=(q^kt_xq^{-k},q^kv_xq^{-k},q^kxq^{-k})$; for $d=2k-1$ put the same
with $t_y,v_y,y$. If $(t_x,v_x,x)=(t^g,v^g,\rho^g)$ with $g\in P$, then
$(g_d,g_d',z_d)=(t^{gq^{-k}},v^{gq^{-k}},\rho^{gq^{-k}})$ with $gq^{-k}\in P$; so $z_d\in\Ii=\rho^P$ and
$(g_d,g_d')$ is a complementary pair for $z_d$ (Definition~\ref{def:halves}), whose chosen half $g_d$ is
the half named in (1); likewise for $y$. For $d=2k$ and $j=i-k$, $z_d$ sends $e_j=q^ke_i$ to
$q^kx(e_i)=q^ko_i=o_{j+d}$, and $g_d$ swaps these two points exactly when $t_x$ swaps $e_i,o_i$, that is,
when $M(i)=\mu(2i)=\mu(2j+d)$ is $1$. For $d=2k-1$ and $j=i+1-k$, $z_d$ sends $e_j=q^ke_{i+1}$ to
$q^ky(e_{i+1})=q^ko_i=o_{j+d}$, and $g_d$ swaps these two points exactly when $t_y$ swaps $o_i,e_{i+1}$,
that is, when $N(i)=\mu(2i+1)=\mu(2j+d)$ is $1$. $z_d$ exchanges $\pi e_j$ and
$\pi o_{j+d}$ for every $j$; so it preserves $\Theta$, and its restriction to $\Theta$ is the diagonal $d$.

For a bit $\beta$ let $h_{d,\beta}=g_d$ if $\beta=1$ and $h_{d,\beta}=g_d'$ if $\beta=0$. We claim:
($\ast$) $h_{d,\beta}$ swaps $\pi e_j$ and $\pi o_{j+d}$ if and only if $\mu(2j+d)=\beta$.
For $\beta=1$ this is (1). The $z_d$-partner of $\pi e_j$ is $\pi o_{j+d}$; exactly one of $g_d,g_d'$ moves $\pi e_j$, and that one swaps it with
$\pi o_{j+d}$. So $g_d'$ swaps the pair exactly when $g_d$ does not, that is, when $\mu(2j+d)=0$.

(2) $h_{d,\beta}$ fixes $\alpha$ exactly when
$\beta=\sigma_\alpha(d)$. As $h=h_{d_0,\varepsilon}$ fixes $\alpha\in C$, $\sigma_\alpha(d_0)=\varepsilon$.
Now let $\mu(i+j)=\sigma_\alpha(i-j)$, and put $d=i-j$, so that $i+j=2j+d$. By ($\ast$),
$h'=h_{d,\sigma_\alpha(d)}$ swaps $\pi e_j$ and $\pi o_i$, and $h'$ fixes $\alpha$. As $h'\in P=\Aut\Gamma$,
$f_\alpha(\pi o_i)=E(h'\alpha,h'\pi e_j)=E(\alpha,\pi e_j)=f_\alpha(\pi e_j)$.

(3) Suppose that such cells exist, and let $z=z_{d_0}$; then $h=h_{d_0,\varepsilon}$ is a half of $z$.
As $p=2j+d_0$ and $\mu(p)=\varepsilon$, by ($\ast$) $h$ swaps $\pi e_j$ and $\pi o_{j+d_0}$, and likewise
$\pi e_{j'}$ and $\pi o_{j'+d_0}$. These two $z$-pairs are moved by $h$, so they lie in $C^\complement$; they are
distinct, since $\pi e_{j'}\ne\pi e_j$ and no $e$-vertex is an $o$-vertex. Let $\alpha\in C$. By (2),
$\sigma_\alpha(d_0)=\varepsilon$, so by hypothesis the cells $(p,d_0)$ and $(p',d_0)$, which are the edges
$e_jo_{j+d_0}$ and $e_{j'}o_{j'+d_0}$ of $G(\mu,\sigma_\alpha)$ (as $\mu(p)=\mu(p')=\varepsilon=\sigma_\alpha(d_0)$),
are rook-connected in $G(\mu,\sigma_\alpha)$. So a path in $G(\mu,\sigma_\alpha)$ joins $e_j$ to $e_{j'}$;
each of its edges $e_ao_b$ has $\mu(a+b)=\sigma_\alpha(b-a)$, so $f_\alpha(\pi e_a)=f_\alpha(\pi o_b)$ by (2).
Hence $E(\alpha,\pi e_j)=E(\alpha,\pi e_{j'})$: the points $\pi e_j,\pi e_{j'}$ have the same neighbourhood
in $C$. $(h,h_{d_0,1-\varepsilon})$ is a complementary pair for $z$, with $C_z=\Fix(h)=C$. By
Lemma~\ref{lem:cross-nbhd}(3), the two points of a $z$-pair inside $C^\complement$ have the same neighbourhood in $C$,
and distinct $z$-pairs inside $C^\complement$ have distinct neighbourhoods in $C$. Applied to the $z$-pairs of
$\pi e_j$ and $\pi e_{j'}$, this contradicts the equality just proved.
\end{proof}

The proof of the theorem reads the sequence through its change sequence, defined next, which marks
the positions where a value differs from the value two places later; it vanishes exactly when the
sequence has period two.
In outline, local forbidden patterns force changes far apart, and an isolated-change argument removes
the last one.
Each of the twelve words in the table below is a short pattern of changes
whose presence alone forces a robust pair: these are the local obstructions. When none of them
occurs, distinct changes are at least eight apart, and a single isolated change still forces a
robust pair. Where no robust pair exists, the change sequence therefore vanishes.

For $\mu:\Z\to\{0,1\}$, put $\delta_i=\mu(i)+\mu(i+2)\pmod2$, the \emph{change sequence} of $\mu$.
Table~\ref{tab:words} lists twelve finite $\delta$-words, labelled $W_A$--$W_L$. In the row of a word, the
$\mu$-column is its normalized reconstruction: the $\mu$-values at consecutive positions starting at
zero, with $\mu(0)=\mu(1)=0$ and $\mu(i+2)=\mu(i)+\delta_i$. The column $(d_0,\varepsilon)$ gives the
base row $d_0$ with $\sigma(d_0)=\varepsilon$; the base cells are $(p,d_0),(p',d_0)$, and the targets
are $e_j,e_{j'}$, where $j=(p-d_0)/2$ and $j'=(p'-d_0)/2$; the last column gives $|j-j'|$.
In the language of symbolic dynamics, the binary sequences on $\Z$ that contain none of the twelve words
form a shift of finite type, the one with $W_A$--$W_L$ as its forbidden words; the proof of
Theorem~\ref{thm:twelve-words} shows that, when no robust pair exists, the change sequence is a point of
this shift and must vanish. 

\begin{table}[!htbp]
\caption{The twelve change words $W_A$--$W_L$: normalized $\mu$-word, base row and bit
$(d_0,\varepsilon)$, base cells, and $|j-j'|$.}\label{tab:words}
\centering
{\small
\begin{tabular}{clllll}
label & $\delta$-word & normalized $\mu$-word & $(d_0,\varepsilon)$ & base cells & $|j-j'|$\\ \hline
$W_A$ & \texttt{1101} & \texttt{001110} & $(0,1)$ & $(2,0),(4,0)$ & 1\\
$W_B$ & \texttt{11001} & \texttt{0011110} & $(-1,1)$ & $(3,-1),(5,-1)$ & 1\\
$W_C$ & \texttt{110000} & \texttt{00111111} & $(0,1)$ & $(2,0),(4,0)$ & 1\\
$W_D$ & \texttt{110001} & \texttt{00111110} & $(0,1)$ & $(2,0),(6,0)$ & 2\\
$W_E$ & \texttt{11111} & \texttt{0011001} & $(-1,0)$ & $(1,-1),(5,-1)$ & 2\\
$W_F$ & \texttt{10100} & \texttt{0010000} & $(-1,0)$ & $(1,-1),(3,-1)$ & 1\\
$W_G$ & \texttt{101010101} & \texttt{00100010001} & $(-1,0)$ & $(1,-1),(9,-1)$ & 4\\
$W_H$ & \texttt{100100} & \texttt{00101111} & $(0,1)$ & $(2,0),(4,0)$ & 1\\
$W_I$ & \texttt{1000100} & \texttt{001010000} & $(1,0)$ & $(1,1),(5,1)$ & 2\\
$W_J$ & \texttt{10000100} & \texttt{0010101111} & $(0,1)$ & $(2,0),(4,0)$ & 1\\
$W_K$ & \texttt{100000100} & \texttt{00101010000} & $(-1,0)$ & $(5,-1),(7,-1)$ & 1\\
$W_L$ & \texttt{1000000100} & \texttt{001010101111} & $(0,1)$ & $(4,0),(8,0)$ & 2\\
\end{tabular}}

\end{table}

\FloatBarrier

\begin{lemma}[Spacing of ones]\label{lem:spacing}
Let $\delta$ be a binary sequence on $\Z$, and let $W_A$--$W_L$ be the words of Table~\ref{tab:words}.
\begin{enumerate}
\item If $\delta$ contains none of the words $W_A$--$W_E$, no two adjacent entries of $\delta$ equal one.
\item If $\delta$ contains none of the words $W_A$--$W_G$, the word $101$ does not occur in $\delta$.
\item If $\delta$ contains none of the words $W_A$--$W_E$ and $W_H$--$W_L$, and $101$ does not occur in
  $\delta$, then distinct ones in $\delta$ have distance at least eight.
\end{enumerate}
This lemma is purely combinatorial.
\end{lemma}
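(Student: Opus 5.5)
The three parts are proved in order, and each is a forward case analysis. Starting from the offending pattern at some position $i$, we read the next few entries of $\delta$ one at a time. Each possible continuation either produces one of the listed words or a pattern already excluded. Only the words themselves are used, read off Table~\ref{tab:words}.

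\emph{Part (1).} Suppose $\delta$ has two adjacent ones.
\begin{itemize}
\item If the ones continue forever to the right, then $11111=W_E$ occurs.
\item Otherwise some maximal run of ones of length at least two ends. So there is an $i$ with $\delta_i=\delta_{i+1}=1$ and $\delta_{i+2}=0$.
\item If $\delta_{i+3}=1$, this gives $1101=W_A$. Otherwise $\delta_{i+3}=0$.
\item If $\delta_{i+4}=1$, this gives $11001=W_B$. Otherwise $\delta_{i+4}=0$.
\item Finally $\delta_{i+5}=1$ gives $110001=W_D$, and $\delta_{i+5}=0$ gives $110000=W_C$.
\end{itemize}
So avoiding $W_A$--$W_E$ excludes adjacent ones. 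This is the one place where a finite run must be distinguished from an infinite one, so it is where I expect the only care to be needed.

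\emph{Part (2).} Suppose $101$ occurs at position $i$. By (1) no two ones are adjacent, so the entry after each $1$ is $0$. At each step:
\begin{itemize}
\item The pattern $10100=W_F$ is forbidden. Hence after every $\dots1010$ the next entry must be $1$.
\item Starting from $101$ at $i$, this forces $\delta_{i+3}=0$ and then $\delta_{i+4}=1$.
\item Repeating the argument from position $i+2$ forces $\delta_{i+5}=0$ and $\delta_{i+6}=1$. Repeating it once more from $i+4$ forces $\delta_{i+7}=0$ and $\delta_{i+8}=1$.
\item The result is $101010101=W_G$, a contradiction.
\end{itemize}

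\emph{Part (3).} Two distinct ones are never at distance $1$, by (1), nor at distance $2$, since $101$ does not occur. Take two consecutive ones at distance $d\in\{3,\dots,7\}$, and extend by two entries. The entry after the second one is $0$, by (1). The entry after that cannot be $1$, because it would create $101$. So it is $0$ as well. The resulting words are:
\begin{itemize}
\item $d=3$: $100100=W_H$;
\item $d=4$: $1000100=W_I$;
\item $d=5$: $10000100=W_J$;
\item $d=6$: $100000100=W_K$;
\item $d=7$: $1000000100=W_L$.
\end{itemize}
Each is excluded, so distinct ones are at distance at least eight.
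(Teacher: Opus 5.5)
Your proof is correct and follows the paper's argument essentially step for step. It uses the same run-ending case split $110\to1101,\,11001,\,110001,\,110000$ (with $11111$ for a run unbounded on the right) in (1), the same forced propagation through $W_F$ to $W_G$ in (2), and the same extension $1\,0^{d-1}\,1\,0\,0$ giving $W_H$--$W_L$ in (3). Two details are left implicit, and the paper states both explicitly: a distance-two pair with middle entry $1$ is excluded by (1) rather than by the absence of $101$, and two ones at distance less than eight give a consecutive pair at no greater distance.
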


\begin{proof}
(1) Suppose two adjacent entries equal one. Because $W_E=11111$ does not occur, a finite run of at least
two ones must end. Its last two ones and the following zero give $110$. The next bit either gives
$1101$, or gives $1100$. In the latter case the next bit gives $11001$ or $11000$; in the last case one
more bit gives $110000$ or $110001$. These four outcomes are $W_A$, $W_B$, $W_C$ and $W_D$, none of which
occurs. This also covers a run unbounded on the left; a run unbounded on the right would itself contain
$11111$.

(2) After $101$, part~(1) forces a zero. The next bit must be one, because $W_F=10100$ does not occur.
Iterating gives $101010101=W_G$, which does not occur. Thus two ones cannot be at distance two.

(3) If two consecutive ones have distance $k\in\{3,4,5,6,7\}$, part~(1) and the absence of $101$ force
two zeroes after the second one. This gives the word $1\,0^{k-1}\,1\,0\,0$, which is respectively
$W_H$, $W_I$, $W_J$, $W_K$, $W_L$, none of which occurs. If two distinct ones are at distance less than
eight, the first of them and the next one after it are consecutive ones at distance at most that, hence
at distance $1$, $2$ or $k\in\{3,\dots,7\}$, excluded respectively by part~(1), the absence of $101$
and the words $W_H$--$W_L$.
\end{proof}

\begin{theorem}[Robust-pair theorem]\label{thm:twelve-words}
Let $\mu:\Z\to\{0,1\}$ not be $2$-periodic (some $p$ with $\mu(p)\ne\mu(p+2)$).
Then there are integers $d_0,p,p'$ and a bit $\varepsilon$ such that
\[
p\ne p',\qquad p\equiv p'\equiv d_0\pmod2,\qquad \mu(p)=\mu(p')=\varepsilon,
\]
and, for every $\sigma$ satisfying $\sigma(d_0)=\varepsilon$, the base cells
$(p,d_0),(p',d_0)$ are rook-connected. More precisely, put $\delta_i=\mu(i)+\mu(i+2)\pmod2$, and call
$|j-j'|=|p-p'|/2$, where $j=(p-d_0)/2$ and $j'=(p'-d_0)/2$, the \emph{target difference} of the
pair. If $\delta$ contains one of the twelve words $W_A$--$W_L$ of Table~\ref{tab:words}, then $d_0,p,p'$ and
$\varepsilon$ can be chosen with the target difference given for that word in the last column of
Table~\ref{tab:words}, which is $1$ or $2$,
and $4$ for $W_G=101010101$; if $\delta$ contains none of them, they can be chosen with
target difference $1$.
This theorem is purely combinatorial.
\end{theorem}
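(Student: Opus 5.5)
The proof splits according to whether the change sequence $\delta$ contains one of the twelve words $W_A$--$W_L$. Since $\mu$ is not $2$-periodic, $\delta$ is not identically zero.

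\emph{Local words.} Suppose $\delta$ contains one of the words, occurring at position $p_0$. Translating by $p_0$ shifts every cell index and every row index by the same amount, and it preserves rook-connectivity. So we may assume the word starts at position $0$. The bits of $\mu$ at positions $0,1$ can be normalized to $0$, up to the following moves:
\begin{itemize}
\item complementing $\mu$ together with $\sigma$ and $\varepsilon$, which leaves the graph $G(\mu,\sigma)$ unchanged;
\item shifting one parity class by one, for the odd positions, which amounts to relabelling $o_i$.
\end{itemize}
After this normalization $\mu$ agrees, on a window of length at most twelve, with the normalized word in the table.

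For each word we then exhibit explicit rook paths from $(p,d_0)$ to $(p',d_0)$. The point is that these paths use only cells in that window. The row bits $\sigma(d)$ for the finitely many rows $d\ne d_0$ that the window meets are free. So for each assignment of those bits (at most $2^{k}$ of them, with $k$ small) we write down a path. Membership of a cell in $G(\mu,\sigma)$ depends only on $\mu(p)$ and $\sigma(d)$, so a path confined to the window stays valid whatever $\mu$ and $\sigma$ do outside it.

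This is a finite verification. The paper places it in Appendix~\ref{app:paths} and re-checks it by machine. I would present it as one table of paths for each word, organized as a case tree on the free row bits. The target differences $|p-p'|/2$ can be read off from the base cells.

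\emph{No local word.} Suppose $\delta$ contains none of $W_A$--$W_L$. Lemma~\ref{lem:spacing}(1)--(3) then gives that any two ones of $\delta$ are at distance at least eight. Pick a one $\delta_{p_0}=1$. Then $\delta_i=0$ for $0<|i-p_0|<8$, so on each parity class $\mu$ is constant on both sides of the change at $p_0$ within this window. Normalizing as before, $\mu$ locally looks like $00100000\cdots$ with a single flip, with $0$'s and $1$'s constant on each parity class away from $p_0$.

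We choose $d_0$ adjacent to $p_0$ and base cells $(p,d_0),(p',d_0)$ with $|p-p'|=2$ and the same bit, on the side of the change where that is possible. Then we build rook paths case by case on the bits $\sigma(d_0\pm1)$ and $\sigma(d_0\pm2)$, using the long constant stretches. These stretches let a path detour through rows whose bit agrees with the constant value, and at least one of rows $d_0\pm1$ provides such a detour.

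This isolated-change case is the main obstacle. The window must be chosen so that the detour never has to cross a second change, and the spacing bound eight is exactly what is needed for that. I would first try base cells straddling $p_0$ in row $d_0=p_0\pm1$, and check all sixteen assignments of the four nearby row bits by hand.
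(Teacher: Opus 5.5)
Your outline (path tables for the twelve words, then Lemma~\ref{lem:spacing}, then an isolated change) is the paper's. The gap is in the isolated-change case, the one part not delegated to Appendix~\ref{app:paths}. You leave it as a plan, and the plan you describe cannot succeed. Normalize to $\delta_0=1$, $\mu(0)=\mu(1)=0$, so that $\mu=0$ at the odd positions $-7,\dots,9$, with $\mu(2j)=0$ for $-3\le j\le0$ and $\mu(2j)=1$ for $1\le j\le4$. Take $d_0=-1$, $\varepsilon=0$ and the straddling base cells $(-1,-1),(1,-1)$, i.e.\ the edges $e_0o_{-1}$ and $e_1o_0$. Now put $\sigma(-3)=\sigma(-2)=\sigma(0)=\sigma(1)=1$.
\begin{itemize}
\item In rows $-3,\dots,1$, $e_0$ is adjacent to $o_d$ only when $\mu(d)=\sigma(d)$, which holds only for $d=-1$.
\item $o_{-1}$ is adjacent to $e_j$ (row $-1-j$) only when $\mu(j-1)=\sigma(-1-j)$, which holds only for $j=0$.
\end{itemize}
So the base edge $e_0o_{-1}$ is a component by itself of the subgraph on rows $d_0-2,\dots,d_0+2$. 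No check of the sixteen assignments of $\sigma(d_0\pm1),\sigma(d_0\pm2)$ can produce a path, and the mirror choice $d_0=p_0+1$ fails the same way.

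The reason is structural, and it also refutes your heuristic that one of rows $d_0\pm1$ always supplies a detour.
\begin{itemize}
\item Rows $d_0\pm2$ lie in the constant parity class, so near the change each contributes all or none of its edges; in this assignment, none.
\item Rows $d_0-1$ and $d_0+1$ are redundant with each other. The edges $e_jo_j$ and $e_{j+1}o_{j-1}$ both sit at position $p=2j$ and join the same two base edges $e_jo_{j-1}$ and $e_{j+1}o_j$.
\end{itemize}

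The missing idea is to use even rows farther out, whose chords jump across the change. The paper collapses the nine disjoint base edges $e_jo_{j-1}$ ($-3\le j\le5$) to vertices $k=j+3\in\{0,\dots,8\}$. Row $2t$ then gives a chord from $k$ to $k+2t+1$ exactly when $M(k-3+t)=\sigma(2t)$, where $M(j)=\mu(2j)$ reads $00001111$ for $-3\le j\le4$. Rows $0,2,4$ (chord lengths $1,3,5$) suffice:
\begin{itemize}
\item $3$--$4$ if $\sigma(0)=0$;
\item $3$--$6$--$5$--$4$ if $\sigma(0)=\sigma(2)=1$;
\item $3$--$0$--$5$--$4$ if $\sigma(0)=1$, $\sigma(2)=0$, $\sigma(4)=0$;
\item $3$--$8$--$7$--$2$--$5$--$4$ if $\sigma(0)=1$, $\sigma(2)=0$, $\sigma(4)=1$.
\end{itemize}
This is exactly where the spacing bound eight enters: the base edges need $\mu$ constant on the odd positions $-7,\dots,9$, and the chords read $M$ on $-2\le j\le4$.

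A smaller point concerns normalization. Your second move, as described (a shift of the odd positions amounting to a relabelling of the $o_i$), is not a symmetry of the grid, because odd and even cells share the vertices $e_j$ and $o_i$. What you need is to complement $\mu$ and $\sigma$ on one parity class only. This preserves $G(\mu,\sigma)$ because $p\equiv d\pmod2$ in every cell, and it changes $\varepsilon$ by the bit used on the class of $d_0$. The global complement alone cannot set $\mu(0)$ and $\mu(1)$ independently.
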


The proof is by contraposition, through the change sequence $\delta$: each word of Table~\ref{tab:words} forces a robust pair by a local computation; if none occurs, Lemma~\ref{lem:spacing} puts the changes of $\delta$ at least eight apart, and an isolated change forces a robust pair with target difference $1$.

\begin{proof}
\emph{Symmetries.} We first record two symmetries of the grid, used to normalize
local words. Independent complementation on the two parities is a symmetry. For bits $\eta_0,\eta_1$, define
\[
\widetilde\mu(p)=\mu(p)+\eta_{p\bmod2},\qquad
\widetilde\sigma(d)=\sigma(d)+\eta_{d\bmod2}.
\]
Since $p$ and $d$ have the same parity in a cell, both sides of the edge rule of the grid change by
the same bit, and its status as an edge is
unchanged. At the beginning of a finite word, we may therefore normalize its
first two $\mu$-values to $0$ independently. Translation of $p$ by an integer
$\ell$, together with the same translation of $d$, is also an isomorphism: it
only relabels the $o$-indices. Thus a local word may start at any integer, not
only at zero.
Both symmetries carry robust pairs back to $\mu$. If $(p,d_0),(p',d_0)$ is a robust pair for
$\widetilde\mu$ with bit $\widetilde\varepsilon$, then it is a robust pair for $\mu$ with bit
$\varepsilon=\widetilde\varepsilon+\eta_{d_0\bmod2}$: indeed $\mu(p)=\mu(p')=\varepsilon$ since
$p\equiv p'\equiv d_0\pmod2$, and $\sigma\mapsto\widetilde\sigma$ is a bijection from
$\{\sigma:\sigma(d_0)=\varepsilon\}$ onto $\{\widetilde\sigma:\widetilde\sigma(d_0)=\widetilde\varepsilon\}$
with $G(\mu,\sigma)=G(\widetilde\mu,\widetilde\sigma)$. \emph{The change sequence.} The argument is phrased in terms of the changes of
the sequence, which the parity symmetries do not see. Recall the change sequence of $\mu$, defined before
Table~\ref{tab:words}:
\begin{equation}\label{eq:delta}
\delta_i=\mu(i)+\mu(i+2)\pmod2 .
\end{equation}
Once
$\mu(0)=\mu(1)=0$ is fixed, a word in $\delta$ determines a unique $\mu$-word by
$\mu(i+2)=\mu(i)+\delta_i$.

Suppose, towards a contradiction, that no robust pair as in the theorem exists.
We will show that $\delta$ must vanish.

\emph{Local forbidden-word lemma.} The first ingredient is a list of local
configurations that force robust pairs. Each of the twelve $\delta$-words of
Table~\ref{tab:words} forces a robust pair, with the base row, base cells and targets given in its
row (the columns are described before the table). In every row
$p\ne p'$ and the base cells have the displayed common $\mu$-value $\varepsilon$, so both targets lie on
base edges whenever $\sigma(d_0)=\varepsilon$; thus connecting the targets connects the two base cells.
Precisely: for each word, with $\mu$ given at the displayed consecutive
positions, for every assignment of the rows listed in its path table (the base
row being fixed), the two targets are connected using only cells of the
displayed $\mu$-word. Appendix~\ref{app:paths} displays the path tables of
the local lemmas for all twelve words. Each path table is read as follows.
\begin{enumerate}
\item Fix the base row bit $\sigma(d_0)=\varepsilon$, and assume that the two
  target $e$-vertices are disconnected.
\item A row of the table that assumes a value $\sigma(d)=b$ displays a path
  joining the targets, valid under that value and the values forced in the
  rows above it. By the assumption, $\sigma(d)=1-b$ is forced.
\item At the terminal row, both values of the remaining row bit give a joining
  path. This is a contradiction.
\item Two lemmas require one explicit binary case split; each branch is treated
  by steps (2) and (3), and both branches are displayed.
\end{enumerate}  Thus unspecified $\mu$-positions and unspecified row
bits remain arbitrary, which is what makes the lemmas translation-invariant. If
$\mu'=\mu(\cdot+\ell)$, then $\mu'(i)+\mu'(i+2)=\delta_{i+\ell}$; so if a word of
Table~\ref{tab:words} occurs in $\delta$ starting at index $\ell$, it occurs in the change sequence of
$\mu'$ at index $0$; a parity complement (which leaves that change sequence unchanged) makes
$\mu'(0)=\mu'(1)=0$, and then, by $\mu'(i+2)=\mu'(i)+\delta_{i+\ell}$, the sequence $\mu'$ on the
displayed positions is the normalized $\mu$-word of that word; the path table gives a robust pair
for $\mu'$, which carries back to $\mu$ with the same target difference, by the symmetries above.

\emph{The forbidden words force changes to be far apart.} By the local forbidden-word lemma and
the symmetries of the first paragraph, an occurrence in $\delta$ of any of the twelve words would
give a robust pair for $\mu$; under our assumption, therefore, none of the twelve $\delta$-words of
Table~\ref{tab:words} occurs.

The spacing lemma (Lemma~\ref{lem:spacing}), applied to $\delta$, now gives three facts: no two adjacent
entries of $\delta$ equal one, by part~(1); the word $101$ does not occur, by part~(2); and so, by
part~(3), distinct ones in $\delta$ have distance at least eight.

\emph{An isolated change also forces a robust pair.} It remains to treat a change
with no other change nearby. If $\delta$ is nonzero,
translate a nonzero entry to index $0$. Lemma~\ref{lem:spacing}(3) gives
\begin{equation}\label{eq:isolated}
\delta_0=1,\qquad \delta_i=0\quad(1\le|i|\le7).
\end{equation}
Use independent parity complements to arrange $\mu(0)=\mu(1)=0$. Put
$M(j)=\mu(2j)$. Then
\begin{equation}\label{eq:isoM}
M(-3),\dots,M(4)=00001111,
\end{equation}
and
\begin{equation}\label{eq:isoodd}
\mu(-7)=\mu(-5)=\dots=\mu(9)=0 .
\end{equation}
Take base row $d_0=-1$ with $\sigma(-1)=0$. Equation~\eqref{eq:isoodd} identifies
\[
e_j\sim o_{j-1}\qquad(-3\le j\le5).
\]The nine base edges $e_jo_{j-1}$ ($-3\le j\le5$) are pairwise disjoint; collapse each of them to a
single vertex, and label these vertices $k=j+3$, so that $k$
ranges from $0$ to $8$. For an even row $d=2t$, the edge from $e_j$ to $o_{j+2t}$
becomes the edge from $k$ to $k+(2t+1)$, present exactly when $M(k-3+t)=\sigma(2t)$, the cell
being $(2j+2t,2t)$ with $\mu(2j+2t)=M(j+t)$. We use only $t=0,1,2$, giving
lengths $1,3,5$.

Three row bits, $\sigma(0)$, $\sigma(2)$ and $\sigma(4)$, decide which chords are present. The
following paths join the
collapsed vertices $3$ and $4$:
\[
\begin{array}{l|l}
\text{row choices} & \text{path}\\\hline
\sigma(0)=0,\ \sigma(2),\sigma(4)\ \text{arbitrary} & 3\text{--}4\\
\sigma(0)=1,\ \sigma(2)=0,\ \sigma(4)=0 & 3\text{--}0\text{--}5\text{--}4\\
\sigma(0)=1,\ \sigma(2)=0,\ \sigma(4)=1 & 3\text{--}8\text{--}7\text{--}2\text{--}5\text{--}4\\
\sigma(0)=1,\ \sigma(2)=1,\ \sigma(4)\ \text{arbitrary} & 3\text{--}6\text{--}5\text{--}4
\end{array}
\]
For example, a collapsed edge from $k$ to $k+(2t+1)$ is valid when the center bit
$M(k-3+t)$ equals $\sigma(2t)$. All vertices lie among the nine collapsed base edges, so every
collapsed path lifts to a path of cells in the original grid.

Lifting back gives the robust pair. Vertices $3$ and $4$ correspond to $e_0$ and
$e_1$. Their base cells are
$(-1,-1)$ and $(1,-1)$, with the same $\mu$-colour $0$. They are connected for
every choice of $\sigma$ with $\sigma(-1)=0$. This is the required robust pair,
contradicting the assumption that none exists.

Consequently $\delta$ vanishes. By \eqref{eq:delta}, $\mu$ is $2$-periodic, which
proves Theorem~\ref{thm:twelve-words} by contraposition. The only robust pairs this argument exhibits are those of
the local lemmas for the twelve words, with the target differences of Table~\ref{tab:words}, and the isolated-change pair
$e_0,e_1$, with target difference $1$; the symmetries of the first paragraph do not change $p-p'$.
This proves the assertion on target differences.
\end{proof}

\begin{figure}[!htbp]
\centering
\providecolor{figaccent}{RGB}{0,70,160}\begin{tikzpicture}[x=1cm,y=1cm,font=\normalsize,
  every node/.style={inner sep=0pt,outer sep=0pt},
  present/.style={draw=figaccent,fill=figaccent!30,line width=0.6pt},
  absent/.style={draw=black!60,fill=white,line width=0.3pt},
  undet/.style={draw=black!70,fill=black!10,densely dotted,line width=0.6pt},
  basecell/.style={draw=black,line width=0.9pt},
  rook/.style={figaccent,line width=1.3pt,line cap=round,line join=round},
  frame/.style={draw=black!40,line width=0.3pt}]
\draw[frame] (0.000,-2.413) rectangle (4.450,1.696);
\node[font=\normalsize] at (2.225,1.456) {(a)\ $\sigma(1)=1$};
\node at (1.200,0.956) {$0$};
\node at (1.200,0.506) {$0$};
\node at (1.775,0.956) {$1$};
\node at (1.775,0.506) {$0$};
\node at (2.350,0.956) {$2$};
\node at (2.350,0.506) {$1$};
\node at (2.925,0.956) {$3$};
\node at (2.925,0.506) {$1$};
\node at (3.500,0.956) {$4$};
\node at (3.500,0.506) {$1$};
\node at (4.075,0.956) {$5$};
\node at (4.075,0.506) {$0$};
\node at (0.688,0.956) {$p$};
\node at (0.688,0.506) {$\mu$};
\node at (0.275,-2.203) {$d$};
\node at (0.688,-2.203) {$\sigma$};
\node at (0.275,0.000) {$0$};
\node at (0.688,0.000) {$1$};
\node at (0.275,-0.575) {$1$};
\node at (0.688,-0.575) {$1$};
\node at (0.275,-1.150) {$2$};
\node at (0.688,-1.150) {$\ast$};
\node at (0.275,-1.725) {$3$};
\node at (0.688,-1.725) {$\ast$};
\draw[line width=0.5pt] (0.125,-0.163) rectangle (0.425,0.163);
\draw[absent] (1.006,-0.194) rectangle (1.394,0.194);
\draw[present] (2.156,-0.194) rectangle (2.544,0.194);
\draw[present] (3.306,-0.194) rectangle (3.694,0.194);
\draw[absent] (1.581,-0.769) rectangle (1.969,-0.381);
\draw[present] (2.731,-0.769) rectangle (3.119,-0.381);
\draw[absent] (3.881,-0.769) rectangle (4.269,-0.381);
\draw[undet] (1.006,-1.344) rectangle (1.394,-0.956);
\draw[undet] (2.156,-1.344) rectangle (2.544,-0.956);
\draw[undet] (3.306,-1.344) rectangle (3.694,-0.956);
\draw[undet] (1.581,-1.919) rectangle (1.969,-1.531);
\draw[undet] (2.731,-1.919) rectangle (3.119,-1.531);
\draw[undet] (3.881,-1.919) rectangle (4.269,-1.531);
\draw[basecell] (2.125,-0.225) rectangle (2.575,0.225);
\draw[basecell] (3.275,-0.225) rectangle (3.725,0.225);
\draw[rook] (2.350,0.000) -- (2.925,-0.575);
\draw[rook] (2.925,-0.575) -- (3.500,0.000);
\fill[figaccent] (2.350,0.000) circle (0.0688);
\fill[figaccent] (2.925,-0.575) circle (0.0688);
\fill[figaccent] (3.500,0.000) circle (0.0688);
\node at (2.925,0.000) {$e_{1}$};
\node at (3.500,-0.575) {$o_{2}$};
\node at (1.837,0.000) {\color{figaccent}$e_{1}$};
\node at (4.013,0.000) {\color{figaccent}$e_{2}$};
\draw[frame] (6.200,-2.413) rectangle (10.650,1.696);
\node[font=\normalsize] at (8.425,1.456) {(b)\ $\sigma(2)=1$};
\node at (7.400,0.956) {$0$};
\node at (7.400,0.506) {$0$};
\node at (7.975,0.956) {$1$};
\node at (7.975,0.506) {$0$};
\node at (8.550,0.956) {$2$};
\node at (8.550,0.506) {$1$};
\node at (9.125,0.956) {$3$};
\node at (9.125,0.506) {$1$};
\node at (9.700,0.956) {$4$};
\node at (9.700,0.506) {$1$};
\node at (10.275,0.956) {$5$};
\node at (10.275,0.506) {$0$};
\node at (6.888,0.956) {$p$};
\node at (6.888,0.506) {$\mu$};
\node at (6.475,-2.203) {$d$};
\node at (6.888,-2.203) {$\sigma$};
\node at (6.475,0.000) {$0$};
\node at (6.888,0.000) {$1$};
\node at (6.475,-0.575) {$1$};
\node at (6.888,-0.575) {$0$};
\node at (6.475,-1.150) {$2$};
\node at (6.888,-1.150) {$1$};
\node at (6.475,-1.725) {$3$};
\node at (6.888,-1.725) {$\ast$};
\draw[line width=0.5pt] (6.325,-0.163) rectangle (6.625,0.163);
\draw[absent] (7.206,-0.194) rectangle (7.594,0.194);
\draw[present] (8.356,-0.194) rectangle (8.744,0.194);
\draw[present] (9.506,-0.194) rectangle (9.894,0.194);
\draw[present] (7.781,-0.769) rectangle (8.169,-0.381);
\draw[absent] (8.931,-0.769) rectangle (9.319,-0.381);
\draw[present] (10.081,-0.769) rectangle (10.469,-0.381);
\draw[absent] (7.206,-1.344) rectangle (7.594,-0.956);
\draw[present] (8.356,-1.344) rectangle (8.744,-0.956);
\draw[present] (9.506,-1.344) rectangle (9.894,-0.956);
\draw[undet] (7.781,-1.919) rectangle (8.169,-1.531);
\draw[undet] (8.931,-1.919) rectangle (9.319,-1.531);
\draw[undet] (10.081,-1.919) rectangle (10.469,-1.531);
\draw[basecell] (8.325,-0.225) rectangle (8.775,0.225);
\draw[basecell] (9.475,-0.225) rectangle (9.925,0.225);
\draw[rook] (8.550,0.000) -- (8.550,-0.575) -- (9.125,-1.150) -- (9.700,-1.150);
\draw[rook] (9.700,-1.150) -- (10.275,-0.575);
\draw[rook] (10.275,-0.575) -- (9.700,-0.575) -- (9.700,0.000);
\fill[figaccent] (8.550,0.000) circle (0.0688);
\fill[figaccent] (9.700,-1.150) circle (0.0688);
\fill[figaccent] (10.275,-0.575) circle (0.0688);
\fill[figaccent] (9.700,0.000) circle (0.0688);
\node[fill=white,inner sep=0.5pt] at (8.550,-0.575) {$e_{1}$};
\node at (10.275,-1.150) {$o_{3}$};
\node[fill=white,inner sep=0.5pt] at (9.700,-0.575) {$e_{2}$};
\node at (8.037,0.000) {\color{figaccent}$e_{1}$};
\node at (10.213,0.000) {\color{figaccent}$e_{2}$};
\draw[frame] (0.000,-6.696) rectangle (4.450,-2.588);
\node[font=\normalsize] at (2.225,-2.828) {(c)\ $\sigma(3)=0$};
\node at (1.200,-3.328) {$0$};
\node at (1.200,-3.778) {$0$};
\node at (1.775,-3.328) {$1$};
\node at (1.775,-3.778) {$0$};
\node at (2.350,-3.328) {$2$};
\node at (2.350,-3.778) {$1$};
\node at (2.925,-3.328) {$3$};
\node at (2.925,-3.778) {$1$};
\node at (3.500,-3.328) {$4$};
\node at (3.500,-3.778) {$1$};
\node at (4.075,-3.328) {$5$};
\node at (4.075,-3.778) {$0$};
\node at (0.688,-3.328) {$p$};
\node at (0.688,-3.778) {$\mu$};
\node at (0.275,-6.486) {$d$};
\node at (0.688,-6.486) {$\sigma$};
\node at (0.275,-4.284) {$0$};
\node at (0.688,-4.284) {$1$};
\node at (0.275,-4.859) {$1$};
\node at (0.688,-4.859) {$0$};
\node at (0.275,-5.434) {$2$};
\node at (0.688,-5.434) {$0$};
\node at (0.275,-6.009) {$3$};
\node at (0.688,-6.009) {$0$};
\draw[line width=0.5pt] (0.125,-4.446) rectangle (0.425,-4.121);
\draw[absent] (1.006,-4.478) rectangle (1.394,-4.090);
\draw[present] (2.156,-4.478) rectangle (2.544,-4.090);
\draw[present] (3.306,-4.478) rectangle (3.694,-4.090);
\draw[present] (1.581,-5.053) rectangle (1.969,-4.665);
\draw[absent] (2.731,-5.053) rectangle (3.119,-4.665);
\draw[present] (3.881,-5.053) rectangle (4.269,-4.665);
\draw[present] (1.006,-5.628) rectangle (1.394,-5.240);
\draw[absent] (2.156,-5.628) rectangle (2.544,-5.240);
\draw[absent] (3.306,-5.628) rectangle (3.694,-5.240);
\draw[present] (1.581,-6.203) rectangle (1.969,-5.815);
\draw[absent] (2.731,-6.203) rectangle (3.119,-5.815);
\draw[present] (3.881,-6.203) rectangle (4.269,-5.815);
\draw[basecell] (2.125,-4.509) rectangle (2.575,-4.059);
\draw[basecell] (3.275,-4.509) rectangle (3.725,-4.059);
\draw[rook] (2.350,-4.284) -- (2.350,-4.859) -- (1.775,-5.434) -- (1.200,-5.434);
\draw[rook] (1.200,-5.434) -- (1.775,-6.009);
\draw[rook] (1.775,-6.009) -- (2.350,-6.009) -- (2.925,-5.434) -- (3.500,-4.859) -- (3.500,-4.284);
\fill[figaccent] (2.350,-4.284) circle (0.0688);
\fill[figaccent] (1.200,-5.434) circle (0.0688);
\fill[figaccent] (1.775,-6.009) circle (0.0688);
\fill[figaccent] (3.500,-4.284) circle (0.0688);
\node[fill=white,inner sep=0.5pt] at (2.350,-4.859) {$o_{1}$};
\node at (1.200,-6.009) {$e_{-1}$};
\node[fill=white,inner sep=0.5pt] at (2.925,-5.434) {$o_{2}$};
\node at (1.837,-4.284) {\color{figaccent}$e_{1}$};
\node at (4.013,-4.284) {\color{figaccent}$e_{2}$};
\draw[frame] (6.200,-6.696) rectangle (10.650,-2.588);
\node[font=\normalsize] at (8.425,-2.828) {(d)\ $\sigma(3)=1$};
\node at (7.400,-3.328) {$0$};
\node at (7.400,-3.778) {$0$};
\node at (7.975,-3.328) {$1$};
\node at (7.975,-3.778) {$0$};
\node at (8.550,-3.328) {$2$};
\node at (8.550,-3.778) {$1$};
\node at (9.125,-3.328) {$3$};
\node at (9.125,-3.778) {$1$};
\node at (9.700,-3.328) {$4$};
\node at (9.700,-3.778) {$1$};
\node at (10.275,-3.328) {$5$};
\node at (10.275,-3.778) {$0$};
\node at (6.888,-3.328) {$p$};
\node at (6.888,-3.778) {$\mu$};
\node at (6.475,-6.486) {$d$};
\node at (6.888,-6.486) {$\sigma$};
\node at (6.475,-4.284) {$0$};
\node at (6.888,-4.284) {$1$};
\node at (6.475,-4.859) {$1$};
\node at (6.888,-4.859) {$0$};
\node at (6.475,-5.434) {$2$};
\node at (6.888,-5.434) {$0$};
\node at (6.475,-6.009) {$3$};
\node at (6.888,-6.009) {$1$};
\draw[line width=0.5pt] (6.325,-4.446) rectangle (6.625,-4.121);
\draw[absent] (7.206,-4.478) rectangle (7.594,-4.090);
\draw[present] (8.356,-4.478) rectangle (8.744,-4.090);
\draw[present] (9.506,-4.478) rectangle (9.894,-4.090);
\draw[present] (7.781,-5.053) rectangle (8.169,-4.665);
\draw[absent] (8.931,-5.053) rectangle (9.319,-4.665);
\draw[present] (10.081,-5.053) rectangle (10.469,-4.665);
\draw[present] (7.206,-5.628) rectangle (7.594,-5.240);
\draw[absent] (8.356,-5.628) rectangle (8.744,-5.240);
\draw[absent] (9.506,-5.628) rectangle (9.894,-5.240);
\draw[absent] (7.781,-6.203) rectangle (8.169,-5.815);
\draw[present] (8.931,-6.203) rectangle (9.319,-5.815);
\draw[absent] (10.081,-6.203) rectangle (10.469,-5.815);
\draw[basecell] (8.325,-4.509) rectangle (8.775,-4.059);
\draw[basecell] (9.475,-4.509) rectangle (9.925,-4.059);
\draw[rook] (8.550,-4.284) -- (7.975,-4.859);
\draw[rook] (7.975,-4.859) -- (7.975,-5.434) -- (8.550,-6.009) -- (9.125,-6.009);
\draw[rook] (9.125,-6.009) -- (9.700,-6.009) -- (10.275,-5.434) -- (10.275,-4.859);
\draw[rook] (10.275,-4.859) -- (9.700,-4.284);
\fill[figaccent] (8.550,-4.284) circle (0.0688);
\fill[figaccent] (7.975,-4.859) circle (0.0688);
\fill[figaccent] (9.125,-6.009) circle (0.0688);
\fill[figaccent] (10.275,-4.859) circle (0.0688);
\fill[figaccent] (9.700,-4.284) circle (0.0688);
\node at (8.550,-4.859) {$o_{1}$};
\node[fill=white,inner sep=0.5pt] at (7.975,-5.434) {$e_{0}$};
\node[fill=white,inner sep=0.5pt] at (9.700,-6.009) {$o_{3}$};
\node at (9.700,-4.859) {$e_{2}$};
\node at (8.037,-4.284) {\color{figaccent}$e_{1}$};
\node at (10.213,-4.284) {\color{figaccent}$e_{2}$};
\end{tikzpicture}
\caption{The grid $G(\mu,\sigma)$ of Definition~\ref{def:grid} for the word $W_A=1101$ of Table~\ref{tab:words}. A cell
is shaded if $\mu(p)=\sigma(d)$, open if not, dotted ($\ast$) if $\sigma(d)$ is not fixed; outside the window $\mu$,
$\sigma$ are arbitrary. For $\sigma(0)=1$ the framed cells of the boxed base row form a robust pair: whatever the other
row bits, a path joins $e_1,e_2$. Panels (a)--(d) are the rows of the path table of $W_A$ (Appendix~\ref{app:paths}),
each assuming its title's bit and the bits forced before; the heavy line joins that row's path cells ($\bullet$), each
joint labelled by the shared vertex.}
\label{fig:grid}
\end{figure}

\begin{corollary}[Period two on every component]\label{cor:period2}
Assume \textup{(S)} and $H=\langle c\rangle$, and let $x,y\in\Ii$ with chosen
halves. On every component of their two-matchings graph, finite or bi-infinite,
\begin{equation}\label{eq:period2}
M(i+2)=M(i),\qquad N(i+2)=N(i).
\end{equation}
On a bi-infinite component and on every cycle with $m\ge3$, $m\ne4$, both masks
are constant. On an eight-vertex cycle, both are constant or exactly one is
alternating.
\end{corollary}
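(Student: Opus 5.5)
The plan is to combine Lemma~\ref{lem:robust-pair}(3) with Theorem~\ref{thm:twelve-words}, and then handle cycles by a parity/lifting count.

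Fix a component $\Theta$, lift the masks periodically to $\Z$ if $\Theta$ is a cycle, and form the interleaved sequence $\mu$ of Lemma~\ref{lem:robust-pair}(1). If $\mu$ is $2$-periodic then $M$ and $N$ are both constant, and \eqref{eq:period2} holds trivially. Otherwise Theorem~\ref{thm:twelve-words} yields a robust pair $(p,d_0),(p',d_0)$ with bit $\varepsilon$ and target difference $|j-j'|\in\{1,2,4\}$. Lemma~\ref{lem:robust-pair}(3) then gives a contradiction as soon as $\pi e_j\ne\pi e_{j'}$, that is, as soon as $m\nmid (j-j')$.

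\emph{Bi-infinite components.} Here $\pi$ is the identity, so $\pi e_j\ne\pi e_{j'}$ always holds. Hence $\mu$ is $2$-periodic and both masks are constant.

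\emph{Cycles.} The difficulty is that the target difference may be divisible by $m$. Since target differences lie in $\{1,2,4\}$, this can happen only when $m\in\{1,2,4\}$. For $m\ge3$ with $m\ne4$, the contradiction therefore applies, and both masks are constant. For $m=1$ the masks are single bits, so \eqref{eq:period2} is immediate. For $m=2$, \eqref{eq:period2} holds trivially, since indices are taken mod $2$.

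\emph{Eight-point cycles ($m=4$).} Only the word $W_G=101010101$ gives target difference $4$. So if $\mu$ is not $2$-periodic, the change sequence $\delta$ (which is $8$-periodic, because $\mu$ is) must contain $W_G$. Otherwise a word of difference $1$ or $2$, or the isolated-change pair of difference $1$, would be available. Now suppose $W_G$ occurs. Then $101$ occurs in $\delta$, and I would check directly that the $8$-periodic $\delta$ then forces one of the other words, unless $\delta$ is $(10)^\infty$ up to shift, or else argue that $\delta$ contains no $11$ by Lemma~\ref{lem:spacing}(1) applied to the remaining words. If $\delta$ is supported on one parity class with alternating pattern $1010\cdots$, then $\mu(i+2)\ne\mu(i)$ for all $i$ of that parity class and $\mu(i+2)=\mu(i)$ for the other. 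Thus one mask alternates and the other is constant, which gives \eqref{eq:period2} and the final claim. Any other $8$-periodic $\delta$ containing $W_G$ must contain a $0$ at a position of the active parity, which creates $10100$, that is, $W_F$, or a $W_H$--$W_L$ pattern of target difference $\le2<m$, a contradiction.

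The main obstacle is this finite case analysis for $m=4$: one must verify that every $8$-periodic change sequence other than the alternating one on a single parity class contains a word with target difference $1$ or $2$. I would do this by enumerating the $8$-periodic $\delta$ up to rotation and parity complement, which is a small check.
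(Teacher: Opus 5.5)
Your proof is correct. For bi-infinite components and for cycles with $m\ne4$ it is the paper's argument: Theorem~\ref{thm:twelve-words} supplies a robust pair, Lemma~\ref{lem:robust-pair}(3) forces $m$ to divide its target difference, and target differences lie in $\{1,2,4\}$.

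For $m=4$ you finish differently from the paper. You use the last clause of Theorem~\ref{thm:twelve-words}: if none of the twelve words occurs, there is a pair with target difference $1$. Together with the fact that only $W_G$ has target difference $4$, this shows that $\delta$ must contain $W_G$.

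The step you flag as the main obstacle is then immediate, and no enumeration is needed. The word $W_G=\texttt{101010101}$ has length $9$, which exceeds the period $8$ of $\delta$. So one occurrence already fixes a whole period as $\texttt{10101010}$: $\delta$ is $1$ on one parity class and $0$ on the other. Your remarks about ``other'' $8$-periodic $\delta$ containing $W_G$ are therefore vacuous.

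The paper does not invoke the isolated-change clause here. It excludes only the words of target difference $1$ or $2$, and gets ``no $\texttt{11}$'' from Lemma~\ref{lem:spacing}(1). It then propagates an occurrence of $\texttt{101}$ along $\delta$ using the absence of $W_F=\texttt{10100}$. The case without $\texttt{101}$ is killed by Lemma~\ref{lem:spacing}(3) together with the parity identity $\delta_k+\delta_{k+2}+\delta_{k+4}+\delta_{k+6}=\mu(k)+\mu(k+8)=0$.

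Your version is shorter, given the full target-difference statement of the theorem. The paper's version uses the $8$-periodicity of $\mu$ directly, and it shows that the isolated-change configuration cannot arise on an eight-vertex cycle at all.
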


\begin{proof}
Lift the masks to $\Z$ and interleave them to $\mu$ as in Lemma~\ref{lem:robust-pair}(1), and put
$\delta_k=\mu(k)+\mu(k+2)$ in $\F_2$. Then $\delta_{2i}=M(i)+M(i+1)$ and $\delta_{2i+1}=N(i)+N(i+1)$;
so \eqref{eq:period2} says $\delta_k=\delta_{k+2}$ for all $k$, and both masks are constant exactly when
$\delta=0$. For $m\in\{1,2\}$ the masks are indexed by $\Z/m\Z$ with $m$ dividing $2$, so
\eqref{eq:period2} holds.

\emph{Claim: if $\delta\ne0$, then $\Theta$ is a cycle and $m$ divides the target difference of every pair
given by Theorem~\ref{thm:twelve-words}.} Indeed $\mu$ is not $2$-periodic, and the cells
$(p,d_0),(p',d_0)$ and the bit $\varepsilon$ of that theorem satisfy the hypotheses of
Lemma~\ref{lem:robust-pair}(3), with $h$ the chosen half on diagonal $d_0$ if $\varepsilon=1$ and its
complement if $\varepsilon=0$. That lemma excludes $\pi e_j\ne\pi e_{j'}$. Since $j\ne j'$ and $\pi$ is the
identity or reduction modulo $m$, $\Theta$ is a cycle and $m$ divides $|j-j'|$.

\emph{Bi-infinite components and cycles with $m\ge3$, $m\ne4$.} If $\delta\ne0$,
Theorem~\ref{thm:twelve-words} gives a pair with target difference $1$, $2$ or $4$, contradicting the
claim. So $\delta=0$: both masks are constant, and \eqref{eq:period2} holds.

\emph{Eight-vertex cycles, $m=4$.} Here $\mu$ and $\delta$ have period $8$. If $\delta=0$, both masks are
constant. Otherwise, by the claim, $\delta$ contains no word of Table~\ref{tab:words} with target difference
$1$ or $2$, that is, none of $W_A$--$W_F$ and $W_H$--$W_L$; by Lemma~\ref{lem:spacing}(1) it contains no $11$.
If $101$ occurs in $\delta$ at positions $k,k+1,k+2$, then $\delta_{k+3}=0$ (no $11$) and $\delta_{k+4}=1$
(else $W_F=10100$ occurs at $k$), so $101$ occurs at $k+2$; by induction and periodicity
$\delta_{k+2r}=1$ and $\delta_{k+2r+1}=0$ for all $r\in\Z$. Thus $\delta_k=\delta_{k+2}$ for all $k$, which is
\eqref{eq:period2}; the mask whose differences are the ones of $\delta$ alternates and the other is constant.
If $101$ does not occur, Lemma~\ref{lem:spacing}(3) puts distinct ones of $\delta$ at distance at least $8$, so
some $k$ has $\delta_k=1$ and $\delta_{k+2}=\delta_{k+4}=\delta_{k+6}=0$; but these four terms sum to
$\mu(k)+\mu(k+8)=0$, a contradiction.
\end{proof}

\begin{remark}\label{rem:eightcycle}
The mixed eight-cycle left open by Corollary~\ref{cor:period2} is excluded
in Section~\ref{ss:align} (Proposition~\ref{prop:mixed8two}), using the translation
square root.
\end{remark}

\begin{theorem}[Fourth powers from period two]\label{thm:rs4}
Assume \textup{(S)} and $H=\langle c\rangle$. For every $x,y\in\Ii$,
$(xy)^4=1$. Every component of the two-matchings graph of $x$ and $y$ has $2$,
$4$ or $8$ vertices.
\end{theorem}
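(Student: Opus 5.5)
The plan is to deduce Theorem~\ref{thm:rs4} from Corollary~\ref{cor:period2} by showing that every two-matching component of $x$ and $y$ is a cycle with $m\in\{1,2,4\}$. By Lemma~\ref{lem:components}(3), $xy$ acts on a component with $2m$ vertices with order $m$. Since every component is preserved by $xy$, it will then suffice to show that $m$ divides $4$ on every component, uniformly. I would work one component $\Theta$ at a time. For each $\Theta$ I fix chosen halves $t_x,t_y$ and the interleaved mask $\mu$ of Lemma~\ref{lem:robust-pair}(1).

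\emph{Step 1: bi-infinite components and long cycles.} Suppose $\Theta$ is bi-infinite, or a cycle with $m\ge3$ and $m\ne4$. Corollary~\ref{cor:period2} then makes both masks constant. Using the symmetry (a) of replacing a half by its complement, I may take $M\equiv1$ and $N\equiv1$ on $\Theta$, so that $t_x=x$ and $t_y=y$ there. Then the interleaved sequence is identically $1$. Every diagonal $d$ of Lemma~\ref{lem:robust-pair}(1) is then swapped entirely by its chosen half, and the complementary halves are trivial on $\Theta$.

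For $\alpha$ in the fixed set of the complementary half on diagonal $0$, the function $\sigma_\alpha$ is identically $0$. The grid $G(\mu,\sigma_\alpha)$ then has no edges, so Lemma~\ref{lem:robust-pair}(2) gives nothing directly. Instead I choose the halves differently. I take the mask of $x$ to be $1$ and the mask of $y$ to be $0$; both are still constant, which is allowed, since any choice of complementary pair yields constant masks here. Then $\mu$ alternates $1,0,1,0,\dots$, which is $2$-periodic.

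This choice still does not immediately yield a robust pair, so the real argument uses adjacency directly. Take $\alpha\in C$, the fixed set of the complementary half of $x$; it lies off $\Theta$. For every $d$ the half on diagonal $d$ fixing $\alpha$ is the whole matching or the identity on $\Theta$. Hence the adjacency column $f_\alpha$ is invariant under every diagonal matching that fixes $\alpha$. I expect that the $\sigma_\alpha$-pattern forced by fixing $\alpha$ makes $f_\alpha$ constant on $\{e_j\}$ along two diagonals, i.e.\ invariant under $q$. That would give $\pi e_0$ and $\pi e_1$ (or $\pi e_2$) the same neighbourhood in $C$ while lying in distinct pairs, contradicting Lemma~\ref{lem:cross-nbhd}(3). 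This needs $\pi e_0\ne\pi e_1,\pi e_2$, which holds exactly when $m\ge3$.

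\emph{Step 2: the remaining cycles.} With Step~1 excluding bi-infinite components and cycles with $m\ge3$, $m\ne4$, every component has $m\in\{1,2,4\}$. The components therefore have $2$, $4$ or $8$ vertices, and $(xy)^4$ is the identity on each of them, hence on $\Qg$.

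\emph{Main obstacle.} The hard point is Step~1. Corollary~\ref{cor:period2} only yields constant masks, not directly a bound on $m$, so the crux is turning constant masks on a long component into a contradiction. My first attempt would be to apply Theorem~\ref{thm:twelve-words} and Lemma~\ref{lem:robust-pair}(3) to a modified pair, for example $x$ and $y^{t_x'}$ for a half $t_x'$ of a third diagonal, so that the new $\mu$ is not $2$-periodic. If this fails, I would fall back on a $q$-invariance argument for $f_\alpha$ together with Lemma~\ref{lem:cross-nbhd}(3), as sketched in Step~1.
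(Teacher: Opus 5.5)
Your reduction to showing $m\mid4$ on every component is fine. But Step~1, which carries the whole theorem, is not proved: it ends in ``I expect'' and a fallback plan, and the mechanism you sketch cannot be completed as stated.

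The values $\sigma_\alpha(d)$ for $d\ne0$ are not determined by anything on $\Theta$. For $d=2k$, $\sigma_\alpha(d)=1$ exactly when $t_x$ fixes $q^{-k}\alpha$, which is a point on the component of $\alpha$. For odd $d$ the same question is asked of $t_y$, whose fixed set is unrelated to $\Fix(t_x)$. So your assertion that $\sigma_\alpha\equiv0$ is unjustified. Moreover, no shift by $1$ or $2$ is forced uniformly in $\alpha\in C$: on a component through $\alpha\in\Fix(t_x)$ where the red mask alternates, $q^{-1}$ carries $\alpha$ out of $\Fix(t_x)$, so $\sigma_\alpha(2)=0\ne\sigma_\alpha(0)$. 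Without one shift valid for every $\alpha\in C$, you get no pair of vertices with equal neighbourhoods in $C$, and hence no contradiction with Lemma~\ref{lem:cross-nbhd}(3).

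The missing idea is to apply Corollary~\ref{cor:period2} on every component at once, not only on $\Theta$. Period two of the red mask everywhere means that $q^2=(xy)^2$, which shifts red indices by $\pm2$, maps $C_x=\Fix(t_x)$ onto itself globally. Lemma~\ref{lem:cross-nbhd}(3) then makes $q^2$ centralize $x$. Since also $xqx=q^{-1}$, we get $xq^2x=q^{-2}$, so $q^4=1$. This removes bi-infinite components and cycles with $m\nmid4$ together, with no grid or robust-pair argument; this is the paper's proof.

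In your adjacency-column language, the same input gives $\sigma_\alpha(4)=\sigma_\alpha(0)$ for every $\alpha$. So on a component with constant red mask, $f_\alpha(\pi e_j)=f_\alpha(\pi e_{j+4})$ for all $\alpha$ on the opposite side. Lemma~\ref{lem:cross-nbhd}(3) then forces $\pi e_0=\pi e_4$, that is, $m\mid4$. The uniform shift available is $4$, not $1$ or $2$.
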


\begin{proof}
By Corollary~\ref{cor:period2}, $(xy)^2$ preserves $C_x$ on every component:
it shifts the red indices by $\pm2$, while the mask has period two; on
components of size two or four it is the identity. The element $(xy)^2$ of $P$ preserves $C_x$, so, by Lemma~\ref{lem:cross-nbhd}(3),
$[(xy)^2,x]=1$. Since $x(xy)x=(xy)^{-1}$, conjugation by the matching both fixes the square of the
product and inverts it, so that square is its own inverse; this proves $(xy)^4=1$. Every component is then finite with
$m\mid4$ (Lemma~\ref{lem:components}(1),(3)): only component sizes $2,4,8$ remain.
\end{proof}

\subsection{Diagonal alignment, the translation square root, and pair preservation}\label{ss:align}

\begin{lemma}[Group algebras of finite cyclic $2$-groups]\label{rem:truncated}
Let $e\ge1$ be an integer.
\begin{enumerate}
\item Let $e$ be a power of two, let $g$ generate a cyclic group of order $e$, and put $\theta=g-1$ in its
  group algebra $\F_2[\langle g\rangle]=\F_2[g]/(g^e-1)$. Then $\F_2[\langle g\rangle]=\F_2[\theta]/(\theta^e)$, under which
  the augmentation (the sum of the coefficients) is $\theta\mapsto0$; if $e\ge2$, then
  $\theta^{e/2}=1+\iota$, where $\iota=g^{e/2}$ is the involution of $\langle g\rangle$.
\item In $\F_2[\theta]/(\theta^e)$ the units are exactly the elements $1+\theta f$; in a group algebra as
  in~(1) they are the elements of augmentation one.
\item The ideals of $\F_2[\theta]/(\theta^e)$ form the chain $(1)\supset(\theta)\supset\dots\supset(\theta^e)=0$,
  and every quotient ring is $\F_2[\theta]/(\theta^j)$ for some $j\le e$.
\end{enumerate}
\end{lemma}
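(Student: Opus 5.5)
The whole statement rests on the Frobenius identity $(u+w)^2=u^2+w^2$ in commutative $\F_2$-algebras, and I will prove the three parts in order.

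For (1), since $e$ is a power of two, iterating Frobenius gives $g^e-1=(g-1)^e=\theta^e$. The substitution $g=1+\theta$ is an isomorphism $\F_2[g]\to\F_2[\theta]$ of polynomial rings, and it carries the ideal $(g^e-1)$ onto $(\theta^e)$. This yields $\F_2[\langle g\rangle]\cong\F_2[\theta]/(\theta^e)$. The augmentation is the ring map sending $g\mapsto1$, so it sends $\theta\mapsto0$. For $e\ge2$ the exponent $e/2$ is again a power of two, so Frobenius gives $\theta^{e/2}=(g-1)^{e/2}=g^{e/2}-1=1+\iota$ in characteristic two.

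For (2), write an element as $a_0+\theta f$ with $a_0\in\F_2$. If $a_0=0$ the element is nilpotent, since $(\theta f)^e=0$, so it is not a unit. If $a_0=1$, the element $\theta f$ is nilpotent, and the finite geometric series $\sum_{k<e}(\theta f)^k$ is an inverse of $1+\theta f$. Under the identification of (1), the augmentation of $a_0+\theta f$ is $a_0$. So the units are exactly the elements of augmentation one.

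For (3), I will use that $\F_2[\theta]$ is a principal ideal domain. Ideals of the quotient correspond to ideals $(p)\supseteq(\theta^e)$ of $\F_2[\theta]$, which means $p$ divides $\theta^e$. Unique factorization then forces $p=\theta^j$ with $0\le j\le e$ (up to the unit $1$). The quotient by $(\theta^j)$ is $\F_2[\theta]/(\theta^j)$, and the ideals form the stated chain.

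No step is a genuine obstacle. The only point to check is the power-of-two hypothesis, which is needed for both Frobenius identities $g^e-1=\theta^e$ and $\theta^{e/2}=1+\iota$. This is why (1) assumes $e$ is a power of two, while (2) and (3) hold for every $e\ge1$.
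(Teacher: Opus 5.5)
Your proof is correct, and parts (1) and (2) follow the paper's argument essentially verbatim: Frobenius gives $g^e-1=\theta^e$ and $\theta^{e/2}=1+\iota$, then nilpotence of $\theta f$ and the finite geometric series inverse settle the units. The only divergence is in (3), where you pull back to the principal ideal domain $\F_2[\theta]$ and use the correspondence theorem with unique factorization of the divisors of $\theta^e$; the paper instead stays inside the quotient and uses (2) to write each nonzero element as $\theta^k u$ with $u$ a unit, so that an ideal is $(\theta^j)$ for the least such $k$ in it. Both arguments are complete: the paper's reuses (2), while yours obtains the quotient $\F_2[\theta]/(\theta^j)$ directly from the third isomorphism theorem.
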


\begin{proof}
(1) In characteristic two $(g-1)^k=g^k-1$ whenever $k$ is a power of two. For $k=e$ this gives
$\theta^e=g^e-1$, so $\F_2[g]/(g^e-1)=\F_2[\theta]/(\theta^e)$; the augmentation sends $g$ to $1$, hence
$\theta$ to $0$. For $e\ge2$ and $k=e/2$ it gives $\theta^{e/2}=g^{e/2}-1=1+\iota$.

(2) In $\F_2[\theta]/(\theta^e)$ every $1+\theta f$ is a unit, with inverse $\sum_{i=0}^{e-1}(\theta f)^i$,
and every $\theta f$ is nilpotent. So the units are exactly the elements $1+\theta f$; in a group algebra as
in~(1) they are the elements of augmentation one, since the augmentation is $\theta\mapsto0$.

(3) Every nonzero element of $\F_2[\theta]/(\theta^e)$ is $\theta^k u$ with $0\le k<e$ and $u$ a unit,
by~(2). Hence every ideal is $(\theta^j)$, where $j$ is the least such $k$ in it ($j=e$ for the zero
ideal), and the quotient by $(\theta^j)$ is $\F_2[\theta]/(\theta^j)$.
\end{proof}

\begin{lemma}[Diagonal alignment]\label{lem:diag}
Let $\Lambda$ be a Pr\"ufer $2$-group, $\iota$ its unique involution, and
$\mathcal{T}:\Lambda\to\F_2$ with $\Pi(\mathcal{T})=\{1\}$. Suppose four distinct indices
$a_0,a_1,a_2,a_3$ have the property that, for every $z\in\Lambda$, the four
values $f_i=\mathcal{T}(za_i)$ satisfy
\begin{equation}\label{eq:star}
(f_0=f_1\ \text{and}\ f_2=f_3)\quad\text{or}\quad(f_1=f_2\ \text{and}\ f_3=f_0).
\end{equation}
Then
\begin{equation}\label{eq:diag}
a_2=\iota a_0,\qquad a_3=\iota a_1 .
\end{equation}
\end{lemma}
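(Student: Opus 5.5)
The plan is to translate \eqref{eq:star} into annihilation conditions on $\mathcal{T}$ in the group algebra $\F_2[\Lambda]$ and to use the chain of ideals of Lemma~\ref{rem:truncated}. For $\beta=\sum_g\beta_g g\in\F_2[\Lambda]$ (finite support) put $(\beta\ast\mathcal{T})(z)=\sum_g\beta_g\mathcal{T}(zg)$. Then $\Pi(\mathcal{T})=\{1\}$ says exactly that $1+h$ does not annihilate $\mathcal{T}$ for any $h\ne1$.

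Choose $n$ with $a_0,\dots,a_3\in\Lambda_n$, the subgroup of order $2^n$; note $n\ge2$ since the $a_i$ are four distinct elements. The annihilator $I=\{\beta\in\F_2[\Lambda_n]:\beta\ast\mathcal{T}=0\}$ is an ideal. By Lemma~\ref{rem:truncated}(3), $I=(\theta^j)$ for some $j$. Since $1+\iota=\theta^{2^{n-1}}\notin I$, we get $j>2^{n-1}$.

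\emph{Step 1: the sum condition.} Listing the sixteen patterns $(f_0,f_1,f_2,f_3)$, \eqref{eq:star} holds exactly when $f_0+f_1+f_2+f_3=0$ and the pattern is not alternating, that is, not $(0,1,0,1)$ or $(1,0,1,0)$. So $\alpha=a_0+a_1+a_2+a_3$ lies in $I$. This $\alpha$ has augmentation zero, so it is nonzero and $\alpha=\theta^k u$ with $u$ a unit and $k\ge j>2^{n-1}$. Hence $\alpha\in(\theta^{2^{n-1}})=(1+\iota)\F_2[\Lambda_n]$, so $\iota\alpha=\alpha$. Thus the four-element set $\{a_0,\dots,a_3\}$ is $\iota$-invariant, and $\iota$, being fixed-point-free, splits it into two pairs. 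The possible pairings are:
\begin{enumerate}
\item $\{a_0,a_2\},\{a_1,a_3\}$, which is \eqref{eq:diag};
\item $\{a_0,a_1\},\{a_2,a_3\}$;
\item $\{a_0,a_3\},\{a_1,a_2\}$.
\end{enumerate}
Condition \eqref{eq:star} is invariant under the cyclic relabelling $i\mapsto i+1$, which carries pairing~(3) to pairing~(2). So it suffices to exclude pairing~(2).

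\emph{Step 2: excluding $a_1=\iota a_0$, $a_3=\iota a_2$.} Put $D(w)=\mathcal{T}(w)+\mathcal{T}(\iota w)$, so that $f_0+f_1=D(za_0)$ and $f_2+f_3=D(za_2)$. Fix $z$ and look at which disjunct of \eqref{eq:star} can hold.
\begin{itemize}
\item If $D(za_0)=1$, then $f_0\ne f_1$, so the first disjunct fails and the second must hold. In particular $f_1=f_2$, that is, $\mathcal{T}(za_2)=\mathcal{T}(za_0)+1$.
\item If $D(za_0)=0$, then $f_0=f_1$. If $f_1\ne f_2$, the second disjunct fails, so the first gives $f_2=f_3$. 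If $f_1=f_2$, the second disjunct requires $f_3=f_0=f_2$. Either way $f_2=f_3$, and the pattern is constant or $(0,0,1,1)$-type. The sum condition then forces $f_2=f_0$, since the alternating pattern is excluded and, with $f_0=f_1$ and $f_2=f_3$, both disjuncts reduce to $f_0=f_2$ being compatible only when the pattern is constant or of the allowed type. This leaves the subcase $f_2\ne f_0$ to be checked carefully against the list of allowed patterns; this is the step I would verify first.
\end{itemize}
The target identity is
\[
\mathcal{T}(za_0)+\mathcal{T}(za_2)=D(za_0)\qquad\text{for all }z,
\]
that is, $\mathcal{T}(za_2)=\mathcal{T}(z\iota a_0)$ for all $z$. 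This says $a_2+\iota a_0\in I$, so $h=(\iota a_0)^{-1}a_2$ is a period of $\mathcal{T}$. Since $a_2\ne a_1=\iota a_0$, we have $h\ne1$, contradicting $\Pi(\mathcal{T})=\{1\}$.

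\emph{Main obstacle.} The delicate part is Step~2: pinning down, from \eqref{eq:star} alone, that $\mathcal{T}(za_2)$ is determined by $\mathcal{T}(za_0)$ and $D(za_0)$ in both cases $D(za_0)=0,1$. When $D(za_0)=0$, both the constant pattern and the pattern $(f_0,f_0,1+f_0,1+f_0)$ satisfy \eqref{eq:star} and the sum condition, so \eqref{eq:star} alone may not force $\mathcal{T}(za_2)=\mathcal{T}(za_0)$ there. If it does not, I would also exploit the symmetric identity with $a_0$ and $a_2$ exchanged, and the ideal structure, to derive a contradiction:
\begin{itemize}
\item $D=(1+\iota)\ast\mathcal{T}$ has period $h'=a_0^{-1}a_2$ on the support of $D$;
\item so $(1+h')(1+\iota)\in I$;
\item comparing $\theta$-adic valuations, this should give $j\le 2^{n-1}+2^{n-2}$;
\item this would have to be played off against the explicit linear relations above.
\end{itemize}
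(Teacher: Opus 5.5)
Your Step~1 and the rotation that reduces to the pairing $a_1=\iota a_0$, $a_3=\iota a_2$ are correct and coincide with the paper's argument. One slip there: $\alpha\ne0$ because the $a_i$ are distinct, not because its augmentation is zero.

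The gap is Step~2, which you leave open. Your target identity $\mathcal{T}(za_2)=\mathcal{T}(z\iota a_0)$ for all $z$ cannot be derived pointwise. Where $D(za_0)=0$, \eqref{eq:star} allows the pattern $(f_0,f_0,1+f_0,1+f_0)$, so the claim in your second bullet that the sum condition forces $f_2=f_0$ is false. The valuation fallback cannot close the gap either. It uses only linear information, namely $\alpha=a_0(1+\iota)(1+h)\in I$ with $h=a_0^{-1}a_2$, and $1+\iota\notin I$, and that information is consistent. Take $g$ of order four, so $\iota=g^2$, and let $\mathcal{T}$ take the values $1,1,0,0$ at $\lambda,\lambda g,\lambda g^2,\lambda g^3$ on every coset $\lambda\langle g\rangle$. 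Then $D\equiv1$, so $(1+g)(1+\iota)$ annihilates $\mathcal{T}$ while $\Pi(\mathcal{T})=\{1\}$. Yet \eqref{eq:star} fails for $(a_0,a_1,a_2,a_3)=(1,g^2,g,g^3)$, for instance at $z=1$ with the alternating pattern $(1,0,1,0)$. So the contradiction has to come from the nonlinear part of \eqref{eq:star}, the exclusion of alternating patterns, which you use only in your first bullet.

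The missing idea is to work only on the set where $D=1$ and never treat $D=0$.
\begin{itemize}
\item Write $w=za_0$. Then $(f_0,f_1,f_2,f_3)=(\mathcal{T}(w),\mathcal{T}(w\iota),\mathcal{T}(wh),\mathcal{T}(wh\iota))$.
\item The sum condition says $D(wh)=D(w)$ for all $w$. So the set $\{D=1\}$, which is nonempty because $\iota\notin\Pi(\mathcal{T})$, is invariant under multiplication by $h$.
\item On that set your first bullet gives the flip $\mathcal{T}(wh)=\mathcal{T}(w)+1$. Iterating along $w,wh,wh^2,\dots$ gives $\mathcal{T}(wh^k)\equiv\mathcal{T}(w)+k \pmod 2$.
\item Since $h\ne1$ (as $a_2\ne a_0$) and $h\ne\iota$ (as $a_2\ne a_1=\iota a_0$), $h$ has order $2^r$ with $r\ge2$. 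Then $h^{2^{r-1}}$ is an involution of $\Lambda$, hence equals $\iota$.
\item Taking $k=2^{r-1}$, which is even, gives $\mathcal{T}(w\iota)=\mathcal{T}(w)$. This contradicts $D(w)=1$.
\end{itemize}
This is how the paper excludes the non-diagonal pairing. With it, neither the case $D=0$ nor the comparison of $\theta$-adic valuations is needed.
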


The four indices first split into two $\iota$-pairs, because $a_0+a_1+a_2+a_3$ annihilates $\mathcal{T}$ and, as $\iota$ is not a period, every element of the group algebra of a finite cyclic subgroup of $\Lambda$ that annihilates $\mathcal{T}$ is a multiple of $1+\iota$; a parity count of the values of $\mathcal{T}$ along the powers of $a_2a_0^{-1}$ then excludes the two non-diagonal pairings.

\begin{proof}
We first show that the four indices form two pairs differing by the involution.
Condition \eqref{eq:star} implies $f_0+f_1+f_2+f_3=0$, since either alternative splits the four
values into two equal pairs. Let $\F_2[\Lambda]$ act on functions $\phi:\Lambda\to\F_2$ by
$(\lambda\phi)(z)=\phi(z\lambda)$ for $\lambda\in\Lambda$, extended linearly; this is a module over the
commutative ring $\F_2[\Lambda]$. Then $((a_0+a_1+a_2+a_3)\mathcal{T})(z)=\sum_i\mathcal{T}(za_i)=f_0+f_1+f_2+f_3=0$ for every
$z$: the group-ring element $a_0+a_1+a_2+a_3$ annihilates $\mathcal{T}$, and it is nonzero because the
$a_i$ are distinct. Like $A$ (Section~\ref{s:setup}), $\Lambda$ is the union of a chain of finite cyclic
subgroups, so the four indices lie in one of them, $\Lambda_e=\langle g\rangle$, where $e$ is a
power of two; since they are distinct, $e\ge4$, so $\iota=g^{e/2}$ lies in $\Lambda_e$. By
Lemma~\ref{rem:truncated}(1) its group algebra is $\F_2[\theta]/(\theta^e)$, $\theta=g-1$, with
$\theta^{e/2}=1+\iota$, and by Lemma~\ref{rem:truncated}(3) the annihilator of $\mathcal{T}$ in this ring is an
ideal $(\theta^j)$. Since $\iota$ is not a period, $\theta^{e/2}\mathcal{T}=(1+\iota)\mathcal{T}$ is
nonzero. Consequently $j>e/2$, so every element of the annihilator is a multiple of the power of the
variable in the last formula, which is one plus the involution in characteristic two. It follows
that this element is divisible by $(1+\iota)$.
Equivalently, its four-element support is the union of two $\iota$-pairs: a multiple of one plus
the involution is unchanged by multiplication with the involution, so its support is a union of
such pairs.
It remains to exclude the other pairings. The non-diagonal
pairings are $\{a_0,a_1\},\{a_2,a_3\}$ and $\{a_1,a_2\},\{a_3,a_0\}$. The rotation $a'_i=a_{i+1}$
carries the second pairing to the first and exchanges the two alternatives of \eqref{eq:star}; so we
may assume $a_1=\iota a_0$ and $a_3=\iota a_2$. Put
\[
h=a_2a_0^{-1},\qquad \psi(z)=\mathcal{T}(za_0),\qquad \chi(z)=\psi(z)+\psi(z\iota).
\]
The relation $f_0+f_1+f_2+f_3=0$ says $\chi(zh)=\chi(z)$. Also $\chi$ is
nonzero, because the involution is not a period. At a point where $\chi(z)=1$, the first two values
differ, so the first alternative in \eqref{eq:star}
is impossible; the second, in which the middle two values agree, requires $\psi(zh)=\psi(z)+1$.
The same is true at
every $zh^k$, because the invariance of that function noted above keeps it equal to one along these
translates. The element $h$ is neither $1$ nor $\iota$: $h=1$ would give $a_2=a_0$, and $h=\iota$
would give $a_2=\iota a_0=a_1$, both against distinctness. So $h$ has order $2^n$ with $n\ge2$ and
$h^{2^{n-1}}=\iota$, by Lemma~\ref{lem:prufer}(1) for $\Lambda\cong A$. Iterating the bit flip
$\psi(zh^{k+1})=\psi(zh^k)+1$ an even
number $2^{n-1}$ of times gives $\psi(z\iota)=\psi(z)$, contradicting
$\chi(z)=1$.
Thus only the diagonal pairing \eqref{eq:diag} is possible.
\end{proof}

\begin{proposition}[No mixed eight-cycle: translation root]\label{prop:mixed8two}
Assume \textup{(S)} and $H=\langle c\rangle$. Let $x,y\in\Ii$. No eight-point
component of their two-matchings graph has one mask constant and the other
alternating. Both halves are constant on all eight-point components.
\end{proposition}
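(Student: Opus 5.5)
We follow the outline announced at the opening of Section~\ref{s:two}. Suppose some eight-point component $\Theta$ of $x,y$ is mixed. First we normalize. Interchanging $x$ and $y$ if necessary, we may assume the red mask is constant and the blue mask alternating; this is the symmetry~(b) of the proof of Theorem~\ref{thm:types}, which uses nothing about $H$. Then one half of $x$ equals $x$ on $\Theta$. Conjugating the whole configuration by some $g\in P$, we may take $x=\rho$ and that half to be $v$. As $g$ is an automorphism, this preserves $\Ii$, complementary pairs and masks. So $\Theta\cap A=\varnothing$, $\Theta\subseteq B$, and on $\Theta$ the half $t=\beta_c$ acts as $\rho$.

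Label $\Theta$ as in the proof of Theorem~\ref{thm:mixed8}. Then $x=(0\,1)(2\,3)(4\,5)(6\,7)$, $s=t_y=(1\,2)(5\,6)$ and $w=(3\,4)(7\,0)$. Writing the points as $a_ib\in B$, the $\rho$-pairs are $\{a_{2k}b,ca_{2k}b\}$.

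\textbf{Step 1: alignment.} We apply Lemma~\ref{lem:diag} to locate $\Theta$. Take $\Lambda=A$ and $\mathcal{T}=\one_T$, with the sign conventions of \eqref{eq:adj}. Then $\Pi(\mathcal{T})=H$, which is not trivial, so we pass to $\Lambda=A/\langle c\rangle\cong A$ by Lemma~\ref{lem:prufer}(3). There $\Pi$ is trivial, since $\Pi(T)=\langle c\rangle$ exactly. The four indices $a_0,\dots,a_3$ are the images of the four $\rho$-pairs in cyclic order around $\Theta$.

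Condition \eqref{eq:star} comes from $s$ and $w$. For $\alpha\in A$, fixed by both $t$ and $w$ or both $t$ and $s$, each of $s,w$ fixes $\alpha$ or not. The half of $y$ that fixes $\alpha$ swaps its $y$-pairs within $\Theta$, and so equalizes the cross-adjacency $f_\alpha$ on those pairs. Since the points of one $\rho$-pair have equal cross-neighbourhoods (Lemma~\ref{lem:cross-nbhd}(1)), this gives exactly one of the two alternatives of \eqref{eq:star}, the alternative depending on whether $s$ or $w$ fixes $\alpha$. Letting $\alpha=z^{-1}$ range over $A$ gives \eqref{eq:star} for every $z$. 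Lemma~\ref{lem:diag} then yields that opposite pairs on $\Theta$ differ by an element $d_0$ of order four in $A$, with $d_0^2=c$.

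\textbf{Step 2: the translation root.} Let $d\in A$ satisfy $d^2=c$. The right translation $R(d)\in P$ commutes with $\rho$ and preserves $A$ and $B$. On $B$ it acts by $zb\mapsto zbd=zd^{-1}b$. By Step~1 it therefore maps each $\rho$-pair of $\Theta$ to the opposite pair, so it preserves $\Theta$, acting as a rotation by two pairs or its reflection variant; we fix the labelling to decide which. Put $y'=y^{R(d)}\in\Ii$. Then $y'$ preserves $\Theta$, and we compute its restriction. The goal is that $yy'$ restricted to $\Theta$ contains a $3$-cycle, or more generally has order not dividing $4$. That contradicts Theorem~\ref{thm:rs4} applied to $y,y'$.

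If the direct computation instead gives an order dividing $4$, we have a fallback. We use $x'=\rho$ together with $y$ and $y'$, and compare the masks of the halves $s^{R(d)}$ against the constraint of Corollary~\ref{cor:period2}. We expect the rotation to shift the alternating blue mask by one step relative to $y$, producing a component forbidden by the normal form.

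\textbf{Conclusion and obstacle.} The final statement follows from Corollary~\ref{cor:period2}: on eight-point components both masks are constant or exactly one alternates, and the latter is now excluded. The main obstacle is Step~1, namely extracting \eqref{eq:star} exactly and handling the quotient by $\langle c\rangle$ correctly, so that the conclusion $a_2=\iota a_0$ yields genuinely a translation by an order-four $d$ preserving $\Theta$. The $3$-cycle computation in Step~2 is a finite check.
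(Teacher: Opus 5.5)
Your route is the paper's. You conjugate so that $x=\rho$ and $\Theta$ lies in one side, obtain \eqref{eq:star} from the $y$-half fixing an opposite vertex, and apply Lemma~\ref{lem:diag} on $A/\langle c\rangle$, so that $\Theta$ is a union of two cosets of $\langle d\rangle$ with $d^2=c$. You then contradict Theorem~\ref{thm:rs4} using $y$ and its conjugate by $R(d)$.

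\textbf{The gap.} The step that actually produces the contradiction is only announced. You state as a goal that $yy'$ has order not dividing $4$ on $\Theta$. You leave the restriction of $R(d)$ to $\Theta$ undetermined, and you add a fallback in case the computation fails. The fallback is not available. In the branch $H=\langle c\rangle$, nothing proved before this proposition forbids a mixed eight-cycle: Corollary~\ref{cor:period2} explicitly allows exactly one alternating mask there, and Corollary~\ref{cor:normal-form} belongs to the branch $|H|\ge4$. So a component ``forbidden by the normal form'' would be the very statement you are proving. Nor can you ``fix the labelling to decide'' the orientation of $R(d)$. The labelling is already pinned down by $x$, $y$ and the position of the alternating half, so every orientation must be handled.

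\textbf{How to close it.} The finite check you postponed works. Take $x=(0\,1)(2\,3)(4\,5)(6\,7)$ and $y=(0\,7)(1\,2)(3\,4)(5\,6)$. The cosets are $\{0,1,4,5\}$ and $\{2,3,6,7\}$, and $D=R(d)$ restricts on each to a four-cycle with square $x$. So $D|_\Theta$ is one of $(0\,4\,1\,5)$, $(0\,5\,1\,4)$ times one of $(2\,6\,3\,7)$, $(2\,7\,3\,6)$. For each of these four choices, $y\,DyD^{-1}$ is a product of two disjoint three-cycles on $\Theta$. For instance, $D=(0\,4\,1\,5)(2\,6\,3\,7)$ gives $DyD^{-1}=(0\,3)(1\,7)(2\,4)(5\,6)$ and $y\,DyD^{-1}=(0\,4\,1)(2\,3\,7)$. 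Since $R(d)^{-1}$ restricts to another of the four, your $y'=y^{R(d)}$ works equally well. Because $yy'$ preserves $\Theta$, Theorem~\ref{thm:rs4} forces its order there to divide $4$, which is the contradiction.

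\textbf{Two smaller slips.} First, if the half equal to $x$ on $\Theta$ is $v$, then $\Theta\subseteq\Fix(t)=A$, not $B$. If you want $\Theta\subseteq B$, use Lemma~\ref{lem:halves-props}(6) to make that half $t$. Second, in Step~1 your condition ``fixed by both $t$ and $w$ or both $t$ and $s$'' is garbled. The fact you need is that exactly one of $s,w$ fixes a given opposite vertex $\alpha$. That half preserves adjacency to $\alpha$, so $f_\alpha$ is equal at the two points of each of its two-cycles on $\Theta$.
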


A mixed component lies in one side, Lemma~\ref{lem:diag} makes it the union of two cosets of $\langle d\rangle$ with
$d$ of order four, and conjugating $y$ by the square root $R(d)$ of $x$ gives an element of $\Ii$ whose product with
$y$ has order three on the component, against Theorem~\ref{thm:rs4}.

\begin{proof}
Suppose an eight-point $x,y$ component has the $x$-half constant and the
$y$-half alternating. We show that a translation square root of the first matching
then produces a product of two full matchings of order three on the component. By
Definition~\ref{def:halves}, $(t_x,v_x,x)=(t^g,v^g,\rho^g)$ for some $g\in P$; conjugate by $g$, that is,
apply $\omega\mapsto g\omega g^{-1}$ together with the vertex map $p\mapsto g(p)$, so that $x=\rho$; its chosen half is one
of $t,v$, with fixed set $A$ or $B$. The entire component $\Theta$ is in one
side: the $x$-mask is constant, so the chosen half of $x$ either fixes every vertex of $\Theta$ or swaps
every $x$-pair of $\Theta$, and in the second case its complementary half fixes every vertex of $\Theta$; so $\Theta$ lies in $A$ or in $B$. Label its vertices $0,\dots,7$:
\[
x=(0\,1)(2\,3)(4\,5)(6\,7),\qquad y=(0\,7)(1\,2)(3\,4)(5\,6).
\]
The nonconstant $y$-half and its complement restrict, in one order, to
\[
(1\,2)(5\,6),\qquad(3\,4)(7\,0).
\]
Figure~\ref{fig:cosets} shows the component. For an arbitrary vertex $p'$ in the opposite side, its column $p\mapsto E(p,p')$ on $\Theta$ is constant on
the four $x$-pairs for $x=\rho$. Write the four
pair-values as
$f_0,\dots,f_3$. Exactly one of the two $y$-halves fixes $p'$. That half lies in $P$ and preserves $\Theta$, so the column of $p'$ takes equal values at the two points of each of
its $2$-cycles on $\Theta$: if it restricts to $(1\,2)(5\,6)$, then $f_0=f_1$ and $f_2=f_3$; if it
restricts to $(3\,4)(7\,0)$, then $f_1=f_2$ and $f_3=f_0$. So the column of $p'$
satisfies \eqref{eq:star}.

The alignment lemma now identifies the pairing of the four pairs. The four
$x$-pairs are labelled by distinct elements of $A/\langle c\rangle$: the pair
$\{a,ac\}$, or $\{ab,acb\}$, by $a\langle c\rangle$. The \emph{exterior indicator} is
$\mathcal{T}:A/\langle c\rangle\to\F_2$, $\mathcal{T}(a\langle c\rangle)=\one_T(a)$, well defined because
$cT=T$; it has no period: if $\tilde h\langle c\rangle\in\Pi(\mathcal{T})$, then $\one_T(\tilde ha)=\one_T(a)$ for all
$a\in A$, so $\tilde hT=T$, $\tilde h\in\Pi(T)=\langle c\rangle$ and $\tilde h\langle c\rangle=1$. By
Lemma~\ref{lem:cross-nbhd}(1), $E(a,zb)=\one_T(az)$, so the value of the column of an opposite
vertex on the $x$-pair labelled $a_i$ is $f_i=\mathcal{T}(\lambda a_i)$, where $\lambda=z\langle c\rangle$ for
the vertex $zb$ (if $\Theta\subseteq A$) and $\lambda=a\langle c\rangle$ for the vertex $a$ (if
$\Theta\subseteq B$); as the vertex runs over the opposite side, $\lambda$ runs over all of
$A/\langle c\rangle$, so \eqref{eq:star} holds for every $\lambda$. The group
$\Lambda=A/\langle c\rangle$ is isomorphic to $A$ (Lemma~\ref{lem:prufer}(3)), a Pr\"ufer
$2$-group. By Lemma~\ref{lem:diag} (with
$\Lambda=A/\langle c\rangle$) their $\iota$-pairing is diagonal:
\[
\{x\text{-pair }0,\ x\text{-pair }2\},\qquad\{x\text{-pair }1,\ x\text{-pair }3\}.
\]
Let $d\in A$ have order four, so $d^2=c$. These paired pairs are exactly two
right cosets of $\langle d\rangle=A_2$ in the given side. In particular $\Theta$
is their union. The permutation $D=R(d)$ preserves $\Theta$, preserves each of
the two four-element cosets, and satisfies $D^2=x$.

\begin{figure}[!htb]
\centering
\begin{tikzpicture}[x=1cm,y=1cm,font=\normalsize,
  every node/.style={inner sep=0pt,outer sep=0pt},
  vtx/.style={circle,draw=black,fill=white,line width=0.6pt,minimum size=5.4mm,inner sep=0pt},
  boxS/.style={rounded corners=2.5mm,draw=black,line width=0.6pt,fill=black!22},boxO/.style={rounded corners=2.5mm,draw=black,line width=0.6pt,fill=white},xedge/.style={line width=0.6pt,black},
  yheavy/.style={line width=1.8pt,black,dash pattern=on 4pt off 2.5pt},ylight/.style={line width=0.6pt,black,dash pattern=on 4pt off 2.5pt}]\draw[boxS] (-0.45,-0.45) rectangle (1.65,0.45);
\draw[boxO] (2.45,-0.45) rectangle (4.55,0.45);
\draw[boxS] (2.45,-2.75) rectangle (4.55,-1.85);
\draw[boxO] (-0.45,-2.75) rectangle (1.65,-1.85);
\draw[xedge] (0.00,0.00) -- (1.20,0.00);
\draw[xedge] (2.90,0.00) -- (4.10,0.00);
\draw[xedge] (4.10,-2.30) -- (2.90,-2.30);
\draw[xedge] (1.20,-2.30) -- (0.00,-2.30);
\draw[yheavy] (1.20,0.00) -- (2.90,0.00);
\draw[yheavy] (2.90,-2.30) -- (1.20,-2.30);
\draw[ylight] (0.00,0.00) -- (0.00,-2.30);
\draw[ylight] (4.10,0.00) -- (4.10,-2.30);
\node[vtx] at (0.00,0.00) {$0$};
\node[vtx] at (1.20,0.00) {$1$};
\node[vtx] at (2.90,0.00) {$2$};
\node[vtx] at (4.10,0.00) {$3$};
\node[vtx] at (4.10,-2.30) {$4$};
\node[vtx] at (2.90,-2.30) {$5$};
\node[vtx] at (1.20,-2.30) {$6$};
\node[vtx] at (0.00,-2.30) {$7$};
\node at (0.60,0.77) {$a_{0}$};
\node at (3.50,0.77) {$a_{1}$};
\node at (3.50,-3.07) {$a_{2}$};
\node at (0.60,-3.07) {$a_{3}$};
\end{tikzpicture}
\caption{The eight-point component of Proposition~\ref{prop:mixed8two}, with
  $x=(0\,1)(2\,3)(4\,5)(6\,7)$ (solid, inside the boxes) and $y=(0\,7)(1\,2)(3\,4)(5\,6)$ (dashed); the boxes
  are the four $x$-pairs, labelled by $a_0,\dots,a_3\in A/\langle c\rangle$. The two $y$-halves restrict to
  $(1\,2)(5\,6)$ (heavy) and $(3\,4)(7\,0)$ (light), and each joins adjacent boxes: the column of an
  opposite vertex fixed by the first has $f_0=f_1$, $f_2=f_3$, and one fixed by the second $f_1=f_2$,
  $f_3=f_0$, the two alternatives of \eqref{eq:star}. Lemma~\ref{lem:diag} pairs opposite boxes,
  $a_2=\iota a_0$, $a_3=\iota a_1$: the shaded boxes $\{0,1,4,5\}$ and the open boxes $\{2,3,6,7\}$ are the
  two cosets of $\langle d\rangle$, each preserved by $R(d)$. The proposition shows that no such component
  occurs.}
\label{fig:cosets}
\end{figure}

The root is determined up to orientation on each coset. Those two cosets are
$\{0,1,4,5\}$ and $\{2,3,6,7\}$. There are exactly four possible restrictions of
$D$:
\[
\begin{array}{ll}
D & yDyD^{-1}\\\hline
(0\,4\,1\,5)(2\,6\,3\,7) & (0\,4\,1)(2\,3\,7)\\
(0\,4\,1\,5)(2\,7\,3\,6) & (0\,1\,5)(2\,7\,6)\\
(0\,5\,1\,4)(2\,6\,3\,7) & (1\,4\,5)(2\,6\,3)\\
(0\,5\,1\,4)(2\,7\,3\,6) & (0\,5\,4)(3\,6\,7)
\end{array}
\]
All omitted points are fixed. Each product has order three on $\Theta$. The
list is complete: on each block the restriction is a four-cycle whose square is
the restriction of $x$, and each block has exactly two such four-cycles; so no
orientation of the actual translation square root is omitted.

But $y$ and $DyD^{-1}$ both belong to $\Ii$. Theorem~\ref{thm:rs4} says that
their product has fourth power equal to the identity globally. Its invariant
restriction to $\Theta$ therefore has order dividing four, so it cannot have order three.
Contradiction.

If instead the $y$-half is constant and the $x$-half alternating, interchange
$x,y$ and use the conjugate translation square root for $y$. Therefore every mixed
eight-point component is impossible. Since
Corollary~\ref{cor:period2} leaves,
on an eight-vertex cycle, only both masks constant or exactly one alternating, and exactly one
alternating is the mixed case just excluded,
both halves are constant on all eight-point components.
\end{proof}

\begin{lemma}[Pair-preserving criterion]\label{lem:transfer}
Assume \textup{(S)} and $H=\langle c\rangle$, let $y\in\Ii$ have complementary
halves $t_y,v_y$ with $C_y=\Fix(t_y)$, and let $\Sigma_y$ be the group of permutations of $\Qg$ mapping
each $y$-pair onto itself. If $g\in P$ preserves $C_y$ and acts
within individual $y$-pairs on $C_y$, then $g\in \Sigma_y$.
\end{lemma}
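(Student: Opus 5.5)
We need $g$ to map each $y$-pair onto itself, on both sides. On $C_y$ this is the hypothesis. The work is on $C_y^\complement$, which $g$ also preserves since it preserves $C_y$.

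The plan is to transport to the model by conjugation. Write $(t_y,v_y,y)=(t^h,v^h,\rho^h)$ with $h\in P$. Replacing $g$ by $hgh^{-1}$ carries $C_y$ to $A$ and the $y$-pairs to the $\rho$-pairs. So we may assume $y=\rho$ and $C_y=A$. Then $g$ preserves $A$ and satisfies $g(a)\in\{a,ca\}$ for every $a\in A$, which means $g\in\Ea$. (If $C_y$ were carried to $B$ instead, we would use the pair $(v_y,t_y)$ from Lemma~\ref{lem:halves-props}(6), or conjugate further by $R(b)$, to arrange $C_y=A$.)

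Lemma~\ref{lem:sync} then gives $g\in\langle v\rangle\times\pst{A}$. With $H=\langle c\rangle$, Lemma~\ref{lem:cross-nbhd}(2) gives $\pst{A}=\langle t\rangle$, so $g\in\{1,t,v,\rho\}$. Each of these four elements maps every $\rho$-pair onto itself, so $g\in\Sigma_\rho$. Conjugating back gives $g\in\Sigma_y$.

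A direct alternative avoids the synchronization lemma and uses distinct cross-neighbourhoods. By Lemma~\ref{lem:cross-nbhd}(3), the $y$-pairs inside $C_y^\complement$ are determined by their neighbourhoods in $C_y$. Since $g$ is an automorphism, it carries the neighbourhood $N$ of a pair $\{p,y(p)\}\subseteq C_y^\complement$ to $g(N)$. The key observation is that $g(N)=N$. For $q\in C_y$ we have $g(q)\in\{q,y(q)\}$, and the two points of a $y$-pair in $C_y$ have equal neighbourhoods in $C_y^\complement$. So $N$ is a union of $y$-pairs of $C_y$, and $g$, which preserves each such pair, fixes $N$ setwise. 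Hence $g(p)$ has neighbourhood $N$ in $C_y$, and by distinctness $g(p)$ lies in the same $y$-pair as $p$.

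The main point needing care is exactly this step: checking that the neighbourhood of $p$ in $C_y$ is a union of $y$-pairs, which is the last clause of Lemma~\ref{lem:cross-nbhd}(3) with the sides exchanged. I would present the second, direct argument as the proof, since it needs only Lemma~\ref{lem:cross-nbhd}(3).
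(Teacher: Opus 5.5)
Your direct argument (the one you choose to present) is essentially the paper's proof. The paper shows $E(\alpha,g\beta)=E(\alpha,\beta)$ for $\alpha\in C_y$ and $\beta\notin C_y$, using that $g^{-1}\alpha\in\{\alpha,y\alpha\}$ and that the two points of a $y$-pair in $C_y$ have equal neighbourhoods in $C_y^\complement$, and then applies distinct cross-neighbourhoods (Lemma~\ref{lem:cross-nbhd}(3)). It also proves explicitly a fact you use tacitly: every $y$-pair lies entirely in $C_y$ or entirely in $C_y^\complement$.

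Your first route, through Lemma~\ref{lem:sync}, is also correct. Since $C_y=\Fix(t_y)=h^{-1}(A)$, the parenthetical about $B$ is never needed. This route even gives the sharper conclusion $g\in\{1,t_y,v_y,y\}$. The cost is that it rests on the infinitude of $U$ and on Lemma~\ref{lem:H}, whereas the direct argument needs only $\Pi(T)=\langle c\rangle$ and $cT=T$, through Lemma~\ref{lem:cross-nbhd}(1).
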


\begin{proof}
Every $y$-pair lies in $C_y$ or in $C_y^\complement$: if $h$ is the half of $y$ fixing a point $\gamma$ and $h'$ the
other, then $y=h'h$ and $h'$ is an involution, so
$h'(y\gamma)=h\gamma=\gamma\ne y\gamma$; so $y\gamma\notin\Fix(h')$,
that is, $y\gamma\in\Fix(h)$. For
$\beta\notin C_y$ and $\alpha\in C_y$,
\[
E(\alpha,g\beta)=E(g^{-1}\alpha,\beta)=E(\alpha,\beta).
\]
The last equality holds because $g$ maps the $y$-pair $\{\alpha,y\alpha\}\subseteq C_y$ into, hence onto,
itself (by hypothesis), so $g^{-1}\alpha\in\{\alpha,y\alpha\}$; and the two points of an $y$-pair in $C_y$
have the same neighbourhood in $C_y^\complement$ (Lemma~\ref{lem:cross-nbhd}(3)). Since $gC_y=C_y$ and $g$ is a
bijection, $g\beta\in C_y^\complement$. Thus $\{\beta,y\beta\}$ and $\{g\beta,yg\beta\}$ are $y$-pairs inside
$C_y^\complement$ with the same neighbourhood in $C_y$ (by the display), so they are the same pair, which puts
$g\beta$ in $\{\beta,y\beta\}$. Hence $g$ maps every $y$-pair into, and so, being injective, onto itself: on each
$y$-pair it is the identity or the swap, that is, $g\in \Sigma_y$.
\end{proof}

\begin{theorem}[Commuting full matchings]\label{thm:commute2}
Assume \textup{(S)} and $H=\langle c\rangle$. For all $x,y\in\Ii$, $[x,y]=1$.
Consequently $V=\langle\Ii\rangle$ is elementary abelian and normal in $P$.
\end{theorem}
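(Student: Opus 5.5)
The plan is to reduce $[x,y]=1$ to the absence of eight-point components, and then exclude those with the pair-preserving criterion (Lemma~\ref{lem:transfer}). By Theorem~\ref{thm:rs4}, every component $\Theta$ of the two-matchings graph of $x,y\in\Ii$ has $2$, $4$ or $8$ vertices, and $xy$ preserves $\Theta$ with order $m\in\{1,2,4\}$ (Lemma~\ref{lem:components}(3)). Since $[x,y]=xyxy=(xy)^2$, this commutator is the identity on every component with $m\le2$. So $[x,y]=1$ holds exactly when no eight-point component occurs, and the whole proof goes into excluding a single eight-point component $\Theta$.

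On such a $\Theta$, Proposition~\ref{prop:mixed8two} makes both complementary pairs constant. Hence $\Theta$ lies inside one side of each pair. Replacing $t_x$ by $v_x$ and $t_y$ by $v_y$ if necessary (Lemma~\ref{lem:halves-props}(6)), I arrange that $\Theta\subseteq C_y=\Fix(t_y)$ and that $t_x=x$ on $\Theta$. After conjugating by an element of $P$, I may also take $y=\rho$ if convenient. The element I intend to feed into Lemma~\ref{lem:transfer} is $g=t_x$. Its restriction to $\Theta$ is $x$, which does not map the $y$-pairs of $\Theta$ to themselves, since $\Theta$ is an alternating cycle of length eight. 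So if $g$ satisfies the hypotheses of Lemma~\ref{lem:transfer}, we get $g\in\Sigma_y$ and a contradiction.

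The hypotheses of Lemma~\ref{lem:transfer} are checked component by component.

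\emph{Two-point components.} These lie in $C_y$ or in $C_y^\complement$, and $t_x$ is the identity or the swap of the single $y$-pair, so both conditions hold.

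\emph{Eight-point components.} Both pairs are constant, so such a component lies in $C_y$ or in $C_y^\complement$, and $t_x\in\{1,x\}$ preserves it.

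\emph{Four-point components.} This is the difficult case. Here $m=2$ and the masks are arbitrary length-two words. The condition $g(C_y)=C_y$ could fail when $t_y$ swaps exactly one $y$-pair while $t_x$ swaps exactly one $x$-pair. Excluding this configuration is the step I expect to be the main obstacle. I would try the argument of Lemma~\ref{lem:robust-pair}(3) in the cyclic case $m=2$. The chosen halves fixing a vertex $\alpha$ on the other side equate the adjacency columns along the edges of the four-cycle. In the doubly mixed configuration this should force two distinct $x$-pairs, or two distinct $y$-pairs, inside one side to have the same neighbourhood across, which Lemma~\ref{lem:cross-nbhd}(3) forbids.

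Once that configuration is excluded, on each four-point component meeting $C_y$ either $t_x$ is $1$ or $x$ and the component lies in one side of $y$, or $t_y$ is $1$ or $y$. In each case $t_x$ preserves $C_y$. On the part of the component inside $C_y$, $t_x$ then either acts within $y$-pairs or is $x$ on a component contained in $C_y$. If the second alternative causes trouble, I would replace $g$ by $t_x$ times a suitable element that is the identity off $\Theta$. If no such replacement is available, I would fall back on a sync-type argument in the spirit of Lemmas~\ref{lem:sync} and \ref{lem:commutator-criterion}, applied to $k=[t_y,t_x]$ after conjugating $y$ to $\rho$. In that version, $k$ moves each point of $A$ within its $\rho$-pair on every component, so $k$ lies in $\Ea=\langle v\rangle\times\langle t\rangle$. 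The identity $t^{s}=tk$ (with $t$ now the chosen half of $y=\rho$ and $s=t_x$) then reduces $k$ to $\{1,t,v,\rho\}$. The values $t$ and $v$ are impossible, and $\rho$ would make $s=t_x$ fixed-point-free, which it is not. So $k=1$, and $[t_y,t_x]=1$ contradicts $t_x=x$ on $\Theta$, where $y$ does not commute with $x$.

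Finally, with $[x,y]=1$ for all $x,y\in\Ii$, the group $V=\langle\Ii\rangle$ is generated by pairwise commuting involutions, so it is elementary abelian. It is normal in $P$ because $\Ii=\rho^P$ is a conjugacy class of $P$.
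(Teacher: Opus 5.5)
Your reduction to excluding an eight-point component is correct, and so is your final deduction about $V$. The main route, however, fails at the first hypothesis check. Lemma~\ref{lem:transfer} only \emph{transfers} the within-pairs property from $C_y$ to $C_y^\complement$: its hypothesis already requires $g$ to act within $y$-pairs on all of $C_y$. You placed $\Theta\subseteq C_y$ with $g|_\Theta=x|_\Theta$, and $x$ moves every vertex of $\Theta$ off its $y$-pair. So the hypothesis fails on $\Theta$ itself; your eight-point check only verified that $t_x$ preserves the component.

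Moving $\Theta$ into $C_y^\complement$ does not rescue a single half. Then $t_x$ still violates the hypothesis on every eight-point component in $C_y$ on which it equals $x$, and on every four-point component in $C_y$ on which it swaps one $x$-pair. The halves are global, so you cannot adjust them component by component. Nor can you correct $t_x$ on $\Theta$: $P$ contains no nonidentity element that is the identity off $\Theta$. Since $\Theta$ lies in one side of $x$, such an element would lie in $\pst{C_x}=\langle t_x\rangle$ or $\pst{C_x^\complement}=\langle v_x\rangle$ (Lemma~\ref{lem:cross-nbhd}(3)), whose nonidentity elements have infinite support.

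The four-point configuration you single out (both masks alternating on a four-cycle) is not excluded anywhere in the paper, and your suggested robust-pair argument cannot exclude it. There $\mu(p+2)=1-\mu(p)$ for all $p$. So two cells of one row with $\mu(p)=\mu(p')$ satisfy $p\equiv p'\pmod 4$, the target difference is even, and $\pi e_j=\pi e_{j'}$ when $m=2$; Lemma~\ref{lem:robust-pair}(3) gives nothing. The sync fallback needs the same exclusion, because on that configuration $[t_y,t_x]$ is a three-cycle carrying a point of $C_y$ into $C_y^\complement$. Its last step is also misstated: $t_y$ is the identity on $\Theta\subseteq C_y$, so $[t_y,t_x]=1$ contradicts nothing there.

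The missing idea is to apply Lemma~\ref{lem:transfer} to the commutators $g=[x,t_y]$ and $g'=[x,v_y]$ rather than to a half.
\begin{itemize}
\item On a component with $2$ or $4$ vertices, $(xy)^2=1$, so $x$ commutes with $y$ there and maps $y$-pairs onto $y$-pairs. Hence $g=(t_y^{-1})^x\,t_y$ and $g'$ act there as products of partial $y$-flips, whatever the masks, and the four-point case costs nothing.
\item On an eight-point component the $y$-halves are constant (Proposition~\ref{prop:mixed8two}). So $g$ is the identity on those contained in $C_y$, and $g'$ on those contained in $C_y^\complement$.
\end{itemize}
Lemma~\ref{lem:transfer}, applied to $g$ with $C_y$ and to $g'$ with $C_y^\complement$, puts both in the abelian group $\Sigma_y$. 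Then $[x,y]=[x,v_y][x,t_y]^{v_y}\in\Sigma_y$. Since $x^{-1}yx$ is fixed-point-free, $[x,y]$ sends no $\alpha$ to $y\alpha$, so it swaps no $y$-pair and is the identity. With this route no eight-point component has to be isolated for a contradiction.
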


The proof places $[x,t_y]$ and $[x,v_y]$ in the abelian group $\Sigma_y$ by Lemma~\ref{lem:transfer}, checking its hypothesis on each component of the two-matchings graph of $x$ and $y$; then $[x,y]\in\Sigma_y$ swaps no $y$-pair, so it is the identity.

\begin{proof}
Let $y$ have halves $t_y,v_y$ as above; then $y=t_yv_y$, and each half sends a point to itself or to its
$y$-partner, so $t_y,v_y$ lie in the group $\Sigma_y$ of Lemma~\ref{lem:transfer}, and each $y$-pair lies in $C_y$ or in
$C_y^\complement=\Fix(v_y)$; and $(v_y,t_y)$ is a complementary pair of $y$. Apply
Lemma~\ref{lem:transfer} to
$g=[x,t_y]$ and $g'=[x,v_y]$. By Theorem~\ref{thm:rs4}, every component of the
two-matchings graph of $x$ and $y$ has $2$, $4$ or $8$ vertices. On a component of
$2m$ vertices with $m\in\{1,2\}$, $q=xy$ has order $m$, so $q^2=1$
there; $x$ commutes with $y$ on components of
size two or four, and conjugation by $x$ takes a partial $y$-flip (a permutation of the component mapping each $y$-pair
onto itself) to another partial $y$-flip. Since $g=[x,t_y]=(t_y^{-1})^x\,t_y$ and $g'=(v_y^{-1})^x\,v_y$, on a component of
size two or four $(t_y^{-1})^x$ is a partial $y$-flip by the preceding sentence, and a product of
partial $y$-flips is one; thus $g,g'$ act within individual $y$-pairs there. On
an eight-point component the $y$-halves are constant
(Proposition~\ref{prop:mixed8two}). A constant mask makes $t_y$ equal to $1$ or to $y$ on the
component; as $y$ has no fixed point, $C_y=\Fix(t_y)$ then contains the component or misses it. If $C_y$
contains the component, $g$ fixes it pointwise, since the half fixing that side is the identity
there and the first matching preserves the component; if $C_y$ misses the component, then
$C_y^\complement=\Fix(v_y)$ contains it, $v_y$ is the identity there, and $g'$ fixes it pointwise. Take
$p\in C_y$. If $p$ lies on a component of size two or four, then $g(p)\in\{p,yp\}\subseteq C_y$. If $p$
lies on an eight-point component, that component meets $C_y$, so it lies in $C_y$ and $g(p)=p$. Hence
$g(C_y)\subseteq C_y$ with $g$ acting within $y$-pairs on $C_y$; since $C_y$ is a union of $y$-pairs,
$g(C_y)=C_y$. The same argument with $v_y$ and $C_y^\complement$ applies to $g'$. Lemma~\ref{lem:transfer} (applied
to $t_y$, and to $v_y$ with the roles of $C_y$ and $C_y^\complement$ exchanged) gives $g,g'\in \Sigma_y$.

Every element of $\Sigma_y$ fixes or swaps each $y$-pair independently, so $\Sigma_y\cong\prod_{\Delta}C_2$ over the $y$-pairs $\Delta$ (unrestricted product); in particular $\Sigma_y$ is
abelian, and it contains $v_y$. Hence
\[
[x,y]=[x,t_yv_y]=[x,v_y][x,t_y]^{v_y}\in \Sigma_y .
\]
A commutator $[x,y]$ cannot send $\alpha$ to $y\alpha$: otherwise $x^{-1}yx$
would fix $y\alpha$, which is impossible. Consequently $[x,y]=1$: as a flip of the pairs that swaps no pair, it is the
identity. The elements of $\Ii$ are commuting
involutions, so $V=\langle\Ii\rangle$ is elementary abelian; it is normal
because $\Ii$ is a conjugacy class.
\end{proof}

With $[x,y]=1$ for all $x,y\in\Ii$, what remains in this branch is to show that $P$ is imprimitive; the
affine lemma (Lemma~\ref{lem:affine}) does this in Theorem~\ref{thm:two}.

\subsection{The affine lemma}\label{ss:affine}

An \emph{affine action} of a group on an $\F_2$-vector space $V$ is an action in which each element
acts as a map $\xi\mapsto \Upsilon(\xi)+\upsilon$, with $\Upsilon$ a linear automorphism of $V$ and $\upsilon\in V$.

\begin{lemma}[Affine lemma]\label{lem:affine}
An affine action of $A=C_{2^\infty}$ on an infinite $\F_2$-vector space cannot
have finitely many orbits.
\end{lemma}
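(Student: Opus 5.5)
The plan is to homogenize the affine action, reduce to a faithful action, and then use the involution of $A$ to produce an equivariant map onto a smaller invariant set, where a descent on the number of orbits gives the contradiction.

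\emph{Step 1: homogenize and read off the stabilizers.} Put $W=V\oplus\F_2$ and let $g\in A$, acting on $V$ as $\xi\mapsto\Upsilon_g(\xi)+\upsilon_g$, act on $W$ by $(\xi,\lambda)\mapsto(\Upsilon_g\xi+\lambda\upsilon_g,\lambda)$. This is a linear action of $A$. The affine hyperplane $\lambda=1$ is invariant and is a copy of the original action, so it suffices to show that this hyperplane meets infinitely many orbits.

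Suppose instead that $V$ has only finitely many orbits $O_1,\dots,O_r$. Since $A$ is abelian, all points of one orbit have the same stabilizer. By Lemma~\ref{lem:prufer}(1) this stabilizer is either $A$ or some finite $A_{k_i}$. Put $N=\min_i k_i$, with $N=\infty$ if every orbit is a fixed point. If $N=\infty$, then $V$ is finite, a contradiction. Otherwise $A_N$ fixes every point of $V$, so the action factors through $A/A_N$, which is again a Pr\"ufer group by Lemma~\ref{lem:prufer}(3). The same holds for the linear parts, since an affine map fixing every point of $V$ is the identity. So we may assume the action is faithful and that some orbit $O_1$ has trivial stabilizer, hence $O_1\cong A$ as an $A$-set.

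\emph{Step 2: the equivariant difference map of the involution.} Let $c$ act as $\xi\mapsto\Upsilon\xi+\upsilon$. Since $c^2=1$, we have $\Upsilon^2=1$, so $\theta=\Upsilon+1$ satisfies $\theta^2=0$. Define
\[
\delta(\xi)=c\xi+\xi=\theta\xi+\upsilon .
\]
Because $c$ is central and each $g$ is affine, $\delta(g\xi)=g c\xi+g\xi=\Upsilon_g\,\delta(\xi)$. Thus $\delta$ is an $A$-equivariant map from the affine action on $V$ to the linear action on the subspace $V_1=\theta V+\F_2\upsilon$. Moreover $\delta(\xi)=0$ exactly when $\xi$ is fixed by $c$, that is, when the stabilizer of $\xi$ is nontrivial (every nontrivial subgroup contains $c$).

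So the image $\delta(V)$ is an invariant affine subspace. It is a union of at most $r$ orbits, and the nonzero image points come from the orbits with trivial stabilizer. The fibres of $\delta$ are cosets of $\ker\theta$. I would then distinguish two cases.
\begin{itemize}
\item If $\delta(V)$ is infinite, the linear action of $A$ on the span of $\delta(V)$, or on the affine subspace $\delta(V)$ itself, is again an affine action on an infinite space with at most $r$ orbits.
\item If $\delta(V)$ is finite, then by Lemma~\ref{lem:prufer}(2) the group $A$ fixes it pointwise, so $\delta(O_1)$ is a single point.
\end{itemize}

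\emph{Step 3: the descent, which is the main obstacle.} In the second case, $\delta(g\xi)=\delta(\xi)$ for $\xi\in O_1$ means $g\xi+\xi\in\ker\theta=\Fix(\Upsilon)$ for all $g$. I would then try to show that $O_1$ lies in a single coset of a proper invariant subspace on which a smaller number of orbits occurs.

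The difficulty is to make some measure strictly decrease. The candidate measures are the number of orbits with trivial stabilizer, or the nilpotency length of the augmentation ideal acting on the $\F_2[A_n]$-submodules generated by orbit representatives. Once something decreases, the proof closes by induction on $r$. What I would try first is this: the map $g\mapsto g\xi_0+\xi_0$ from $A$ into $V$ is injective on $O_1$ and is a $1$-cocycle into the module $\Fix(\Upsilon)$. Lemma~\ref{rem:truncated}, applied to the finite subgroups $A_n$, should show that such a cocycle has image meeting infinitely many $A$-orbits, since its $A_n$-orbits have sizes forced to grow. That would give the required infinitely many orbits directly.
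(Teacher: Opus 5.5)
\paragraph{Verdict: the proposal has a genuine gap.} Your Steps~1 and~2 are sound. The reduction to a faithful action with a regular orbit $O_1$ is correct, and $\delta=1+c$ is indeed an $A$-equivariant map from the affine action to the linear one. But the argument stops exactly where the work is.

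\paragraph{Case 1 gives no decrease.} Nothing in your argument forces the orbit count to drop. The map $\delta$ merges only the orbits with nontrivial stabilizer, all sent to $0$. So if all $r$ orbits are regular, you get no decrease. Moreover $c$ acts trivially on $\delta(V)$, since $\Upsilon\theta=\theta$ and $\Upsilon\upsilon=\upsilon$. Repeating the step therefore just passes to $A/A_1$, $A/A_2,\dots$, producing the images $(1+d_k)^{2^k-1}$ of the hyperplane, and nothing makes this terminate.

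\paragraph{The closing argument of Step~3 fails.} First, growth of $A_n$-orbit sizes cannot by itself give infinitely many $A$-orbits: the single regular orbit $O_1\cong A$ already has $A_n$-orbits of size $2^n$. Second, suppose the cocycle image $\{g\xi_0+\xi_0\}$ did meet infinitely many orbits of the \emph{linear} action on $\Fix(\Upsilon)$. That contradicts nothing. The only equivariant maps you have, $\delta_g\colon\xi\mapsto g\xi+\xi$, bound orbit counts one $g$ at a time, while the cocycle image takes one point from each $\delta_g(V)$. Case~2 is likewise left open.

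\paragraph{The missing idea.} You need elements of $\F_2[A]$ that stay on the affine hyperplane. Your $\delta=1+c$ lies in the augmentation ideal, so it maps $\Phi=V\times\{1\}\subseteq V\oplus\F_2$ into $V\times\{0\}$ and loses contact with the original orbits; only an upper bound on the orbit count survives. The paper instead uses the augmentation-one elements $u$ of $\F_2[A]$. These have three properties:
\begin{enumerate}
\item They are units: each lies in some $\F_2[A_n]\cong\F_2[\theta]/(\theta^{2^n})$ and has the form $1+\theta f$.
\item They commute with $A$ and map $\Phi$ to itself.
\item They form a divisible abelian group, since square roots can be taken termwise in characteristic two.
\end{enumerate}
A divisible group has no nontrivial finite image, so it cannot permute finitely many $A$-orbits nontrivially. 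Hence $u\xi\in A\xi$ for every unit $u$ and every $\xi\in\Phi$.

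\paragraph{How this closes the argument.} The image of the unit group in $\F_2[A]/\Ann(\xi)$ therefore equals the image of $A$, which is locally cyclic and so has at most one involution. Each subring $\F_2[\bar d]$ is some $\F_2[\theta]/(\theta^e)$. If $e\ge4$, it contains the two independent involutions $1+\theta^{e-1}$ and $1+\theta^{e-2}$, which is impossible; so $e\le3$. Then $(\bar d-1)^4=0$, so every $a=d^4\in A$ fixes $\xi$, contradicting finiteness of the number of orbits on the infinite set $\Phi$. What closes the proof is this rigidity of a commuting divisible group of operators preserving $\Phi$, not a descent on the number of orbits.
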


\begin{proof}
We linearize the action and use the units of the group algebra, which commute
with the action of the Pr\"ufer group. Homogenize on $\hat V=V\oplus\F_2$, with affine
hyperplane $\Phi=V\times\{1\}$: an element $a\in A$ acting on $V$ as $\xi\mapsto \Upsilon_a(\xi)+\upsilon_a$ acts on $\hat V$
by the linear map $(\xi,\vartheta)\mapsto(\Upsilon_a(\xi)+\vartheta\upsilon_a,\vartheta)$, which is the given action on
$\Phi$. Let the group algebra $\F_2[A]$ act on $\hat V$ by linear extension, and suppose that $A$ has
finitely many orbits on $\Phi$.

\emph{Claim~1. The unit group is divisible and preserves the affine hyperplane.} Every
augmentation-one element is a unit, since it lies in the group algebra of some finite cyclic
subgroup (Lemma~\ref{rem:truncated}(2)). A unit $u$ of $\F_2[A]$ is a sum of an odd number of elements of $A$ (its augmentation is
$1$), and each element of $A$ preserves the last coordinate of $\hat V$, so $u$ and $u^{-1}$ map $\Phi$ into
itself: the units preserve $\Phi$. They commute with $A$, as $\F_2[A]$ is commutative. The group
$\F_2[A]^*$ is abelian and is a $2$-group. Indeed every unit lies in $\F_2[A_n]\cong\F_2[\theta]/(\theta^{2^n})$ for some $n$ and has the form $1+\theta f$ by Lemma~\ref{rem:truncated}(1),(2), so $(1+\theta f)^{2^n}=1$. It also has surjective squaring: if $u=a_1+\dots+a_k$ with
distinct $a_i\in A$, choose $\tilde a_i\in A$ with $(\tilde a_i)^2=a_i$; in characteristic two
$(\tilde a_1+\dots+\tilde a_k)^2=a_1+\dots+a_k=u$, and $\tilde a_1+\dots+\tilde a_k$ has augmentation one. It is therefore
divisible and has no nontrivial finite image.

\emph{Claim~2. Finitely many $A$-orbits force its image at any point to be locally cyclic.} By
Claim~1 the units permute the finitely many $A$-orbits on $\Phi$. This gives a homomorphism from
$\F_2[A]^*$ to a finite symmetric group, which is trivial because $\F_2[A]^*$ has no nontrivial finite
image; so every unit maps every $A$-orbit to itself. Fix $\xi\in \Phi$ and put
$\mathfrak a=\Ann_{\F_2[A]}(\xi)$. For every unit $u$ there is $a\in A$ with $u\xi=a\xi$, that is,
$u-a\in\mathfrak a$. So the image of $\F_2[A]^*$ in $\F_2[A]/\mathfrak a$ equals the image $\bar A$ of
$A$. This image is locally cyclic, being a homomorphic image of $A$, so it has at most one involution.

\emph{Claim~3. The truncated polynomial subrings then force the original $A$-action to be trivial.}
For $d\in A$ put $\theta=\bar d-1$. The finite subring $\F_2[\bar d]$ of $\F_2[A]/\mathfrak a$ is a
quotient of $\F_2[\langle d\rangle]\cong\F_2[\theta]/(\theta^{|d|})$, so it is
$\F_2[\theta]/(\theta^e)$ for some $e\ge1$ (Lemma~\ref{rem:truncated}(1),~(3)). Its units lift to units of
$\F_2[A]$: each has the form $1+\theta f$ (Lemma~\ref{rem:truncated}(2)), and the same expression in
$\F_2[\langle d\rangle]$ has augmentation one, so it is a
unit of $\F_2[A]$ by Claim~1. By Claim~2 the units of $\F_2[\theta]/(\theta^e)$ therefore lie in
$\bar A$. If $e\ge4$, the two independent involutions
$1+\theta^{e-1}$, $1+\theta^{e-2}$ in its unit group (for instance
$(1+\theta^{e-2})^2=1+\theta^{2e-4}=1$, since $2e-4\ge e$)
contradict local cyclicity. Hence $e\le3$. Given $a\in A$, choose $d$ with
$d^4=a$. Then $\bar a-1=(\bar d-1)^4=\theta^4=0$. Thus every point $\xi$ of $\Phi$ is fixed
by $A$, contradicting finitely many orbits on the infinite set $\Phi$.
\end{proof}

\begin{remark}
For a locally cyclic group that is not a $2$-group the conclusion can fail: the multiplicative group of an algebraic closure $\overline{\F}_2$ of $\F_2$ is locally cyclic, the direct sum of the Pr\"ufer $p$-groups for the odd primes $p$, and it acts linearly on the infinite $\F_2$-vector space $\overline{\F}_2$ with the two orbits $\{0\}$ and $\overline{\F}_2^{*}$. Claim~1 of the proof needs $A$ to have $2$-power order: in $\F_2[\langle a\rangle]$ with $|a|=3$ the augmentation-one element $1+a+a^2$ is not a unit, since $(1+a+a^2)(1+a)=1+a^3=0$.
\end{remark}

\begin{theorem}[The case $|H|=2$]\label{thm:two}
Assume \textup{(S)} and $H=\langle c\rangle$. All members of $\Ii$ commute.
Consequently $P$ is imprimitive.
\end{theorem}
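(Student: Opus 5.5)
The first assertion is Theorem~\ref{thm:commute2}, so it remains to deduce imprimitivity of $P$. I would argue by contradiction: suppose $P$ is primitive. Put $V=\langle\Ii\rangle$. By Theorem~\ref{thm:commute2}, $V$ is an elementary abelian normal subgroup of $P$, and it is nontrivial because $\rho\in V$. The orbits of $V$ form a $P$-invariant partition, and an orbit is not a singleton since $\rho$ is fixed-point-free. Primitivity therefore makes $V$ transitive on $\Qg$. An abelian transitive group is regular, since a point stabilizer fixes every point of the orbit. So $V$ acts regularly, and identifying $\Qg$ with $V$ via $g\mapsto g(1)$ turns $\Qg$ into an infinite $\F_2$-vector space. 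Writing $V$ additively, each $p\in P$ then acts as $\xi\mapsto p\,\xi\,p^{-1}$ applied to $p(1)$. Concretely, $p(g(1))=(pgp^{-1})(p(1))$, so $P$ acts affinely: the linear part is conjugation, an automorphism of $V$, and the translation part is $p(1)$.

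Next I would restrict this action to the subgroup $R(A)\le P$, a copy of $C_{2^\infty}$. Each $R(a)$ acts affinely on $V$, so $A$ has an affine action on the infinite $\F_2$-vector space $V$. Its orbits on $\Qg$ are the right cosets of $A$ in $\Qg$. There are exactly two of them, the sides $A$ and $B$, so the number of orbits is finite. This contradicts Lemma~\ref{lem:affine}. Hence $V$ is not transitive, its orbit partition is a nontrivial block system, and $P$ is imprimitive.

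The only step requiring care is the transfer of the action to affine form. I must check that the bijection $V\to\Qg$, $g\mapsto g(1)$, intertwines $p$ with the map $g\mapsto pgp^{-1}+\phi(p)$, where $\phi(p)$ is the unique element of $V$ with $\phi(p)(1)=p(1)$. The identity $p(g(1))=(pgp^{-1})\phi(p)(1)$ gives exactly this, because $V$ is normal and abelian. I would also make sure of two points: that $V$ infinite follows from regularity on the infinite set $\Qg$, and that the orbit count of $R(A)$ on $\Qg$ is indeed two, both sides being nonempty by Lemma~\ref{lem:halfflips}.
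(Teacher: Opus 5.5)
Your proposal is correct and follows the paper's proof essentially step for step. The steps are: transitivity of the normal subgroup $V=\langle\Ii\rangle$ under primitivity, regularity from commutativity, the affine action of $P$ on $V$ via $n\mapsto n(1)$, and the contradiction with Lemma~\ref{lem:affine} from the two $R(A)$-orbits $A$ and $B$. The only point the paper states that you leave implicit is that $P$ is transitive (it contains $R(\Qg)$), which is what turns ``not primitive'' into ``imprimitive''.
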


\begin{proof}
The first assertion is
Theorem~\ref{thm:commute2}. If $P$
were primitive, its
nontrivial normal subgroup $V=\langle\Ii\rangle$ would be transitive. An abelian
transitive faithful permutation group is regular (an element fixing one point fixes each translate of it, by commutativity, hence every point, by transitivity). Identify $\Qg$ with $V$ by $n\mapsto n(1)$. Since
$V$ is a regular normal subgroup of $P$, $P=V\rtimes P_1$, with $P_1$ the stabilizer of the vertex
$1$ acting on $V$ by conjugation, so $P$ acts on the $\F_2$-vector space $V$ by affine maps. Under
$n\mapsto n(1)$, the element $g\in P$ sends $n(1)$ to $(gng^{-1})(g(1))=((gng^{-1})m)(1)$, where $m\in V$
has $m(1)=g(1)$. So $g$ acts as $n\mapsto gng^{-1}+m$: an automorphism of $V$ followed by a
translation. The group
$V$ is infinite because $\Qg$ is. The subgroup $R(A)$ of $P$ is a copy of $A$:
$a\mapsto R(a)$ is a homomorphism, since $R(a)R(a')=R(a'a)=R(aa')$ as $A$ is abelian, and it is
injective, since $R(a)(1)=a$. So restricting the affine action of $P$ gives an affine action of
$A=C_{2^\infty}$ on the infinite $\F_2$-vector space $V$. For $d,a\in A$, $R(a)(d)=da$ and
$R(a)(db)=dba=da^{-1}b$. So the orbits of $R(A)$ on $\Qg$ are $A$ and $B=Ab$, and under the
identification $n\mapsto n(1)$ the action has exactly two orbits on $V$, contradicting
Lemma~\ref{lem:affine}. Hence $P$ is not primitive. It is transitive, since $R(\Qg)\le P$ and $R(p^{-1}q)$ sends $p$ to $q$. Being transitive and not
primitive, $P$ is imprimitive.
\end{proof}

\section{Proof of Theorem~\ref{thm:main}}\label{s:proof}

\begin{proof}[Proof of Theorem~\ref{thm:main}]
Suppose $G$ is a primitive permutation group containing a regular subgroup
isomorphic to $\Qg$, and that $G$ is not doubly transitive. A regular action of
$\Qg$ is permutation isomorphic to the right regular action (identify the point
set with $\Qg$ via the orbit map of one point; the left regular action is
conjugate to the right one by inversion), so we may assume
$R(\Qg)\le G\le\Sym(\Qg)$. By Proposition~\ref{prop:reduction}, the orbital
graph $\Gamma$ through $(1,c)$ is $\Cay(\Qg,X\cup Tb)$ with
$X=X^{-1}\subseteq A\setminus\{1\}$, $c\in X$, $cT=T$, $\varnothing\ne T\ne A$;
it is nonempty and noncomplete; $P=\Aut\Gamma$ is primitive; and $H=\Pi(T)$ is
a proper, finite, cyclic subgroup of $A$ containing $c$.

If $U=X\symd cX$ were finite, Lemma~\ref{lem:Ufinite} would make $P$
imprimitive. Thus $U$ is infinite, and (S) holds.

If $|H|=2$, Theorem~\ref{thm:two} contradicts primitivity. If $|H|\ge4$,
Corollary~\ref{cor:block} contradicts primitivity. Therefore no such $G$ exists: every primitive permutation group containing a
regular subgroup isomorphic to $\Qg$ is doubly transitive. \end{proof}

The proof thus reduces the theorem to a single orbital graph. By Proposition~\ref{prop:reduction}, a
primitive overgroup of $R(\Qg)$ that is not doubly transitive has the orbital graph $\Cay(\Qg,X\cup Tb)$
through $(1,c)$, and the rest of the argument concerns only the pair $(X,T)$ and the subgroup $H=\Pi(T)$
of $A$. Lemma~\ref{lem:Ufinite} excludes the case in which $X$ and $cX$ differ in finitely many elements.
In the remaining case, Theorem~\ref{thm:two} and Corollary~\ref{cor:block} contradict primitivity
according as $|H|=2$ or $|H|\ge4$; since $H$ is a finite subgroup of the Pr\"ufer $2$-group $A$
containing $c$, its order is a power of $2$ and at least $2$, so these are the only cases.
Section~\ref{s:boundary} turns to the boundary of the example. There the Pr\"ufer group $A$ cannot be
enlarged by an abelian factor: for every countably infinite abelian group $F$, the group
$\Dic(A\times F,c)$ is not a $B$-group (Corollary~\ref{cor:dic-product}), so Theorem~\ref{thm:main} does
not extend from $\Qg$ to these larger dicyclic groups.

\section{The boundary of the example}\label{s:boundary}

\begin{proposition}[Product-action obstruction]\label{prop:product-action}
If $K$ and $M$ are countably infinite groups, no semidirect product $K\rtimes M$ is a $B$-group.
\end{proposition}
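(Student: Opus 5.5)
The plan follows the construction of Cameron and Johnson \cite[Proposition~4.1]{CameronJohnson1987} for direct products and adapts it to the semidirect product $G=K\rtimes M$. Identify $G$ with the set $K\times M$ via $km\mapsto(k,m)$. The target is a product action of wreath type on $\Omega=K\times M$, namely a group $W=\Sym(K)\times\Sym(M)$ acting coordinatewise, or its extension by a swap of coordinates through a fixed bijection $K\to M$ (both are countably infinite). The group $W$ must contain a regular copy of $G$, be primitive, and fail to be doubly transitive.

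\emph{The regular copy.} Let $g=k_0m_0\in G$ act by right multiplication: $(k,m)\cdot k_0m_0=(k\,(mk_0m^{-1}),\,mm_0)$. The second coordinate moves by the right translation $m\mapsto mm_0$. The first coordinate moves by a permutation of $K$ that depends on $m$, so this is not literally coordinatewise. Hence I would use instead the full group $\Sym(K)\wr_{M}$-type structure, or better, the group $Y$ of all permutations of $K\times M$ of the form $(k,m)\mapsto(\phi_m(k),\psi(m))$ with $\psi\in\Sym(M)$ and $\phi_m\in\Sym(K)$ arbitrary. This is $\Sym(K)\wr\Sym(M)$ in imprimitive action, and its fibres $K\times\{m\}$ form blocks, so it is imprimitive.

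\emph{The fix.} Keep the product structure on both sides symmetrically. Using the left multiplication viewpoint, $G$ acts regularly, and the map $(k,m)\mapsto(k,m)$ also admits the description $km=m(m^{-1}km)$. The cleaner route is to take $\Omega=K\times M$ with the primitive product action of $H=\Sym(K)\wr S_2$, after identifying $K$ with $M$ via a bijection, and to check that the regular representation of $G$ lies in $\Sym(K)\times\Sym(M)$ \emph{after a suitable relabelling}. Define $\Phi:G\to K\times M$ by $\Phi(km)=(k,m)$. Then $G$ acting by left multiplication, $k_0m_0\cdot km=k_0(m_0km_0^{-1})m_0m$, sends $(k,m)\mapsto(k_0\alpha_{m_0}(k),m_0m)$. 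Here $\alpha_{m_0}$ is an automorphism of $K$, so $k\mapsto k_0\alpha_{m_0}(k)$ is a single permutation of $K$, independent of $m$. So left multiplication is coordinatewise, and the left regular image of $G$ lies in $\Sym(K)\times\Sym(M)\le H$. It is regular, being the left regular representation.

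\emph{Properties of $H$.} $H=(\Sym(\Delta)\times\Sym(\Delta))\rtimes C_2$ with $\Delta$ countably infinite, in product action on $\Delta^2$. It is not doubly transitive, since it preserves the relation ``agree in exactly one coordinate''. Primitivity is the main step. The standard criterion says that the product action of $L\wr S_2$ is primitive when $L$ is primitive but not regular on $\Delta$ and the top group is transitive. Since $\Sym(\Delta)$ is doubly transitive, I would verify this directly. A block $\Sigma$ containing two points must, by the stabilizer $\Sym(\Delta)_{\delta}\times\Sym(\Delta)_{\delta'}$ and the swap, contain either all points differing from a given point in one coordinate, or all points differing in both coordinates. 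In either case the Hamming-graph connectivity gives $\Sigma=\Delta^2$. The orbitals of the point stabilizer are exactly distance $0,1,2$, and each of the distance-$1$ and distance-$2$ graphs is connected on $\Delta^2$, which is the connectivity check I expect to take the most care. Hence $H$ is a primitive, not doubly transitive group containing a regular copy of $K\rtimes M$.
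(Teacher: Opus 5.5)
Your proposal is correct and follows the same approach as the paper: left (not right) multiplication acts coordinatewise on $K\times M$, so the regular representation lies in the base group of $\Sym(\Omega)\wr C_2$ in its product action, which is primitive of rank~$3$ and hence not doubly transitive. The only difference is that you check primitivity directly, via the suborbits and the connectivity of the Hamming-distance-$1$ and distance-$2$ graphs, where the paper cites Cameron--Johnson and Dixon--Mortimer (Lemma~2.7A); the exploratory detour through right multiplication can simply be dropped.
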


The construction is that of Cameron and Johnson \cite[Proposition~4.1]{CameronJohnson1987},
stated there for direct products of two countably infinite groups. The finite analogue is Wielandt's: a direct product of at least two finite groups of the same
order at least $3$ is not a $B$-group \cite[Theorem~25.7]{WielandtFPG}, see also
\cite[Lemma~1.1]{PonomarenkoRyabov2024}. Cameron noted in 2001 that a direct product of two countable subgroups is not a $B$-group,
since it is contained in the primitive group $S_\omega\wr S_2$ in its product action
\cite[\S3]{Cameron2001RandomGraphRevisited}. We observe that the argument needs only that left multiplication act coordinatewise, which
holds for semidirect products. 

\begin{proof}
Write the elements of $K\rtimes M$ as pairs $(k,m)$, $k\in K$, $m\in M$, and let $m\mapsto\varphi_m$
be the action of $M$ on $K$ by automorphisms. The left multiplications
\[
(k,m)\cdot(p,q)=(k\varphi_m(p),\,mq)
\]
form the regular representation of $K\rtimes M$ on $K\times M$. Each acts on the two coordinates
separately, by the permutation $p\mapsto k\varphi_m(p)$ of $K$ and the permutation $q\mapsto mq$ of
$M$, so it lies in $\Sym(K)\times\Sym(M)$, acting coordinatewise on $K\times M$. Since $K$ and $M$
are both countably infinite, we may identify each of them with one countable set $\Omega$; then
$K\times M$ becomes $\Omega^2$, and every left multiplication lies in the base group of
\[
W=\Sym(\Omega)\wr C_2
\]
in its product action on $\Omega^2$ \cite[Section~2.7]{dixon_mortimer_1996}.

The group $W$ is a primitive permutation group of rank $3$
\cite[proof of Proposition~4.1]{CameronJohnson1987}; primitivity is also the case of
\cite[Lemma~2.7A]{dixon_mortimer_1996} in which $\Sym(\Omega)$ is primitive and not regular on
$\Omega$ and $C_2$ is transitive on the two coordinates. Rank $3$ means that a point stabilizer has
exactly two orbits on the remaining points, so $W$ is not doubly transitive, and $K\rtimes M$ is not a
$B$-group.
\end{proof}

For an abelian group $D$ with an involution $c$, we write
$\Dic(D,c)=\langle D,b\mid b^2=c,\ b^{-1}ab=a^{-1}\ (a\in D)\rangle$, so that $\Qg=\Dic(A,c)$. As for
$\Qg$ (Section~\ref{s:setup}, with $D$ in place of $A$), every element of $\Dic(D,c)$ is $d$ or $db$ for a unique $d\in D$, and
$ba=a^{-1}b$, $(db)^{-1}=cdb$ for $a,d\in D$; this is the standard description of a generalized
dicyclic group. We write $C_{3^\infty}$ for the Pr\"ufer $3$-group, the union of a chain of cyclic
groups of orders $3^n$; for an abelian group $\mathcal{K}$ we write $c$ also for the involution $(c,1)$ of
$A\times \mathcal{K}$.

\begin{corollary}\label{cor:dic-product}
For every countably infinite abelian group $\mathcal{K}$, $\Dic(A\times \mathcal{K},c)\cong \mathcal{K}\rtimes\Qg$ is not a
$B$-group. In particular $\Dic(A\times C_{3^\infty},c)$, which like $\Qg$ is countable, locally finite,
has an abelian subgroup of index two and a unique involution, is not a $B$-group.
\end{corollary}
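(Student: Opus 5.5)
The plan is to reduce Corollary~\ref{cor:dic-product} to Proposition~\ref{prop:product-action}. For that we exhibit $\Dic(A\times\mathcal{K},c)$ as a semidirect product $\mathcal{K}\rtimes\Qg$ with $\mathcal{K}$ normal, and note that $\mathcal{K}$ and $\Qg$ are both countably infinite.

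Write $G=\Dic(A\times\mathcal{K},c)$ and $D=A\times\mathcal{K}$. By the normal form recalled before the corollary, every element of $G$ is $d$ or $db$ for a unique $d\in D$.

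\emph{Normality of $\mathcal{K}$.} Let $\mathcal{K}$ mean $1\times\mathcal{K}\le D$. It is normal in $G$: it is centralized by the abelian group $D$, and $b^{-1}kb=k^{-1}\in\mathcal{K}$.

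\emph{A complement isomorphic to $\Qg$.} Let $Q'=\langle A\times1,\,b\rangle\le G$. The relations $b^2=c=(c,1)\in A\times 1$ and $b^{-1}ab=a^{-1}$ hold in $Q'$, so $Q'$ is a quotient of $\Qg$. By the normal form in $G$, the $2$ elements $a$ and $ab$ of $Q'$, for $a\in A$, are pairwise distinct. Hence the surjection $\Qg\to Q'$ is injective (it is bijective on normal forms), and $Q'\cong\Qg$.

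\emph{The product decomposition.} Next, $\mathcal{K}\cap Q'=1$, since $Q'\cap D=A\times1$ and $Q'$ meets $D$ only there. Also $\mathcal{K}Q'=G$, because $(a,k)=(1,k)(a,1)$ and $(a,k)b=(1,k)(a,1)b$. Therefore $G=\mathcal{K}\rtimes Q'\cong\mathcal{K}\rtimes\Qg$.

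\emph{Conclusion.} Both factors are countably infinite: $\mathcal{K}$ by hypothesis, and $\Qg\supseteq A$. Proposition~\ref{prop:product-action} then says $G$ is not a $B$-group.

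\emph{The particular case.} Take $\mathcal{K}=C_{3^\infty}$, which is countably infinite abelian. The listed properties of $\Dic(A\times C_{3^\infty},c)$ are routine:
\begin{itemize}
\item It is countable.
\item $D$ is an abelian subgroup of index two.
\item It is locally finite, being the union of the finite dicyclic subgroups $\langle D_n,b\rangle$, where $D_n$ runs over the finite subgroups of $D$ containing $c$.
\item Its unique involution is $c$. The only involution of $D$ is $c$, since $C_{3^\infty}$ has odd order. Every $db$ squares to $c\ne1$, so it is not an involution.
\end{itemize}

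No step is a real obstacle. The only point needing care is the injectivity of $\Qg\to Q'$, and the uniqueness of normal forms in $G$ handles it.
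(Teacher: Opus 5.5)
Your proof is correct and follows the paper's argument essentially step for step. Normality of $\mathcal{K}$ comes from centrality in $A\times\mathcal{K}$ and inversion by $b$; the complement is $\langle A,b\rangle=A\cup Ab\cong\Qg$ via the normal form; the intersection is trivial and $\mathcal{K}\langle A,b\rangle$ is all of $G$; then Proposition~\ref{prop:product-action} applies. Your check of the four properties for $\mathcal{K}=C_{3^\infty}$ is more explicit than the paper's, which calls them standard and verifies only the involution.
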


In $\Dic(A\times \mathcal{K},c)$, $(ab)^2=a\cdot(bab^{-1})\cdot b^2=c$ for every $a\in A\times \mathcal{K}$, so
$(A\times \mathcal{K})b\subseteq\sqrt{c}=\{x:x^2=c\}$ and the group is $\sqrt{c}\cup\sqrt{c}\,b^{-1}$.

\begin{proof}
Write $G=\Dic(A\times \mathcal{K},c)$ and identify $A$ and $\mathcal{K}$ with $A\times1$ and $1\times \mathcal{K}$. The subgroup
$\mathcal{K}$ is normal: it is central in $A\times \mathcal{K}$ and inverted by $b$, and $A\times \mathcal{K}$ and $b$ generate $G$.
The relations of $\Qg$ hold in $G$, and by the normal form the elements $a$, $ab$ $(a\in A)$ of $G$ are
distinct, so $\langle A,b\rangle=A\cup Ab\cong\Qg$. This subgroup meets $\mathcal{K}$ trivially, as $\mathcal{K}\le A\times \mathcal{K}$
misses $Ab$ and $A\cap \mathcal{K}=1$, and $\mathcal{K}\langle A,b\rangle\supseteq \mathcal{K}A\cup \mathcal{K}(Ab)=G$. Hence
$G=\mathcal{K}\rtimes\langle A,b\rangle\cong \mathcal{K}\rtimes\Qg$, and Proposition~\ref{prop:product-action}, applied to
the countably infinite groups $\mathcal{K}$ and $\Qg$ (Section~\ref{s:setup}), shows that $G$ is not a $B$-group.

For $\mathcal{K}=C_{3^\infty}$ the four properties of the statement are standard for $\Dic(D,c)$ with $D$ a
countable abelian torsion group whose only involution is $c$, here $D=A\times C_{3^\infty}$ and, for
$\Qg$, $D=A$; for the involution, every element of $Db$ squares to $c\ne1$ (the computation before this
proof).
\end{proof}

We do not know any countable $B$-group other than $\Qg$.

\section*{Acknowledgements}
The author thanks P.~J.~Cameron for recording the problem in the Kourovka
Notebook \cite{Kourovka}.

\medskip\noindent\textbf{Use of AI tools.} The author used the AI systems ChatGPT (OpenAI) and
Claude (Anthropic) in this work, in developing and checking the arguments and in preparing the
manuscript. The author takes full responsibility for the content of this paper. No AI system is an
author.

\appendix

\section{Local path tables for Theorem~\ref{thm:twelve-words}}\label{app:paths}

The reader can check every row by the single rule
$e_j\sim o_i$ if and only if $\mu(i+j)=\sigma(i-j)$ (equation~\eqref{eq:grid}).

Each table is read by the rules (1)--(4) stated in the proof of Theorem~\ref{thm:twelve-words},
under ``Local forbidden-word lemma''.

This appendix displays the tables of the local lemmas for the twelve words $W_A$--$W_L$. In each subsection
below, no values of $\mu$ or $\sigma$ outside that subsection have been used. Programs that re-check these
tables are provided as ancillary files of the arXiv version.

\subsection*{$W_A$. Change word \texttt{1101}}
Normalized $\mu$-word: \texttt{001110}. Fix $\sigma(0)=1$. Base cells $(2,0)$ and $(4,0)$. Targets: $e_{1},e_{2}$.

{\small
\noindent\begin{tabular}{@{}p{2.2cm}p{6.7cm}p{3.0cm}@{}}
Assumption & Connecting path & Consequence\\ \hline
$\sigma(1)=1$ & $e_{1}\,\text{--}\,\allowbreak o_{2}\,\text{--}\,\allowbreak e_{2}$ & Therefore $\sigma(1)=0$.\\
$\sigma(2)=1$ & $e_{1}\,\text{--}\,\allowbreak o_{3}\,\text{--}\,\allowbreak e_{2}$ & Therefore $\sigma(2)=0$.\\
$\sigma(3)=0$ & $e_{1}\,\text{--}\,\allowbreak o_{1}\,\text{--}\,\allowbreak e_{-1}\,\text{--}\,\allowbreak o_{2}\,\text{--}\,\allowbreak e_{2}$ & Contradiction.\\
$\sigma(3)=1$ & $e_{1}\,\text{--}\,\allowbreak o_{1}\,\text{--}\,\allowbreak e_{0}\,\text{--}\,\allowbreak o_{3}\,\text{--}\,\allowbreak e_{2}$ & Contradiction.\\
\end{tabular}}

\subsection*{$W_B$. Change word \texttt{11001}}
Normalized $\mu$-word: \texttt{0011110}. Fix $\sigma(-1)=1$. Base cells $(3,-1)$ and $(5,-1)$. Targets: $e_{2},e_{3}$.

{\small
\noindent\begin{tabular}{@{}p{2.2cm}p{6.7cm}p{3.0cm}@{}}
Assumption & Connecting path & Consequence\\ \hline
$\sigma(0)=1$ & $e_{2}\,\text{--}\,\allowbreak o_{2}\,\text{--}\,\allowbreak e_{3}$ & Therefore $\sigma(0)=0$.\\
$\sigma(1)=1$ & $e_{2}\,\text{--}\,\allowbreak o_{3}\,\text{--}\,\allowbreak e_{3}$ & Therefore $\sigma(1)=0$.\\
$\sigma(2)=1$ & $e_{2}\,\text{--}\,\allowbreak o_{1}\,\text{--}\,\allowbreak e_{0}\,\text{--}\,\allowbreak o_{2}\,\text{--}\,\allowbreak e_{3}$ & Therefore $\sigma(2)=0$.\\
$\sigma(3)=0$ & $e_{2}\,\text{--}\,\allowbreak o_{1}\,\text{--}\,\allowbreak e_{-1}\,\text{--}\,\allowbreak o_{2}\,\text{--}\,\allowbreak e_{3}$ & Contradiction.\\
$\sigma(3)=1$ & $e_{2}\,\text{--}\,\allowbreak o_{1}\,\text{--}\,\allowbreak e_{0}\,\text{--}\,\allowbreak o_{3}\,\text{--}\,\allowbreak e_{3}$ & Contradiction.\\
\end{tabular}}

\subsection*{$W_C$. Change word \texttt{110000}}
Normalized $\mu$-word: \texttt{00111111}. Fix $\sigma(0)=1$. Base cells $(2,0)$ and $(4,0)$. Targets: $e_{1},e_{2}$.

{\small
\noindent\begin{tabular}{@{}p{2.2cm}p{6.7cm}p{3.0cm}@{}}
Assumption & Connecting path & Consequence\\ \hline
$\sigma(1)=1$ & $e_{1}\,\text{--}\,\allowbreak o_{2}\,\text{--}\,\allowbreak e_{2}$ & Therefore $\sigma(1)=0$.\\
$\sigma(2)=1$ & $e_{1}\,\text{--}\,\allowbreak o_{1}\,\text{--}\,\allowbreak e_{0}\,\text{--}\,\allowbreak o_{2}\,\text{--}\,\allowbreak e_{2}$ & Therefore $\sigma(2)=0$.\\
$\sigma(3)=0$ & $e_{1}\,\text{--}\,\allowbreak o_{1}\,\text{--}\,\allowbreak e_{-1}\,\text{--}\,\allowbreak o_{2}\,\text{--}\,\allowbreak e_{2}$ & Therefore $\sigma(3)=1$.\\
$\sigma(4)=1$ & $e_{1}\,\text{--}\,\allowbreak o_{5}\,\text{--}\,\allowbreak e_{2}$ & Therefore $\sigma(4)=0$.\\
$\sigma(5)=0$ & $e_{1}\,\text{--}\,\allowbreak o_{1}\,\text{--}\,\allowbreak e_{0}\,\text{--}\,\allowbreak o_{3}\,\text{--}\,\allowbreak e_{-2}\,\text{--}\,\allowbreak o_{2}\,\text{--}\,\allowbreak e_{2}$ & Contradiction.\\
$\sigma(5)=1$ & $e_{1}\,\text{--}\,\allowbreak o_{1}\,\text{--}\,\allowbreak e_{0}\,\text{--}\,\allowbreak o_{5}\,\text{--}\,\allowbreak e_{2}$ & Contradiction.\\
\end{tabular}}

\subsection*{$W_D$. Change word \texttt{110001}}
Normalized $\mu$-word: \texttt{00111110}. Fix $\sigma(0)=1$. Base cells $(2,0)$ and $(6,0)$. Targets: $e_{1},e_{3}$.

{\small
\noindent\begin{tabular}{@{}p{2.2cm}p{6.7cm}p{3.0cm}@{}}
Assumption & Connecting path & Consequence\\ \hline
$\sigma(1)=1$ & $e_{1}\,\text{--}\,\allowbreak o_{2}\,\text{--}\,\allowbreak e_{2}\,\text{--}\,\allowbreak o_{3}\,\text{--}\,\allowbreak e_{3}$ & Therefore $\sigma(1)=0$.\\
$\sigma(2)=1$ & $e_{1}\,\text{--}\,\allowbreak o_{3}\,\text{--}\,\allowbreak e_{3}$ & Therefore $\sigma(2)=0$.\\
$\sigma(3)=1$ & $e_{1}\,\text{--}\,\allowbreak o_{4}\,\text{--}\,\allowbreak e_{3}$ & Therefore $\sigma(3)=0$.\\
$\sigma(4)=1$ & $e_{1}\,\text{--}\,\allowbreak o_{1}\,\text{--}\,\allowbreak e_{-1}\,\text{--}\,\allowbreak o_{3}\,\text{--}\,\allowbreak e_{3}$ & Therefore $\sigma(4)=0$.\\
$\sigma(5)=0$ & $e_{1}\,\text{--}\,\allowbreak o_{1}\,\text{--}\,\allowbreak e_{-1}\,\text{--}\,\allowbreak o_{2}\,\text{--}\,\allowbreak e_{-2}\,\text{--}\,\allowbreak o_{3}\,\text{--}\,\allowbreak e_{3}$ & Contradiction.\\
$\sigma(5)=1$ & $e_{1}\,\text{--}\,\allowbreak o_{1}\,\text{--}\,\allowbreak e_{-1}\,\text{--}\,\allowbreak o_{4}\,\text{--}\,\allowbreak e_{3}$ & Contradiction.\\
\end{tabular}}

\subsection*{$W_E$. Change word \texttt{11111}}
Normalized $\mu$-word: \texttt{0011001}. Fix $\sigma(-1)=0$. Base cells $(1,-1)$ and $(5,-1)$. Targets: $e_{1},e_{3}$.

{\small
\noindent\begin{tabular}{@{}p{2.2cm}p{6.7cm}p{3.0cm}@{}}
Assumption & Connecting path & Consequence\\ \hline
$\sigma(1)=1$ & $e_{1}\,\text{--}\,\allowbreak o_{2}\,\text{--}\,\allowbreak e_{3}$ & Therefore $\sigma(1)=0$.\\
\end{tabular}}

\emph{Case $\sigma(0)=0$.} All preceding forced values remain in force.

{\small
\noindent\begin{tabular}{@{}p{2.2cm}p{6.7cm}p{3.0cm}@{}}
Assumption & Connecting path & Consequence\\ \hline
$\sigma(2)=0$ & $e_{1}\,\text{--}\,\allowbreak o_{3}\,\text{--}\,\allowbreak e_{2}\,\text{--}\,\allowbreak o_{2}\,\text{--}\,\allowbreak e_{3}$ & Contradiction.\\
$\sigma(2)=1$ & $e_{1}\,\text{--}\,\allowbreak o_{0}\,\text{--}\,\allowbreak e_{0}\,\text{--}\,\allowbreak o_{2}\,\text{--}\,\allowbreak e_{3}$ & Contradiction.\\
\end{tabular}}

\emph{Case $\sigma(0)=1$.} All preceding forced values remain in force.

{\small
\noindent\begin{tabular}{@{}p{2.2cm}p{6.7cm}p{3.0cm}@{}}
Assumption & Connecting path & Consequence\\ \hline
$\sigma(2)=0$ & $e_{1}\,\text{--}\,\allowbreak o_{3}\,\text{--}\,\allowbreak e_{3}$ & Contradiction.\\
$\sigma(2)=1$ & $e_{1}\,\text{--}\,\allowbreak o_{1}\,\text{--}\,\allowbreak e_{0}\,\text{--}\,\allowbreak o_{2}\,\text{--}\,\allowbreak e_{3}$ & Contradiction.\\
\end{tabular}}

\subsection*{$W_F$. Change word \texttt{10100}}
Normalized $\mu$-word: \texttt{0010000}. Fix $\sigma(-1)=0$. Base cells $(1,-1)$ and $(3,-1)$. Targets: $e_{1},e_{2}$.

{\small
\noindent\begin{tabular}{@{}p{2.2cm}p{6.7cm}p{3.0cm}@{}}
Assumption & Connecting path & Consequence\\ \hline
$\sigma(0)=1$ & $e_{1}\,\text{--}\,\allowbreak o_{1}\,\text{--}\,\allowbreak e_{2}$ & Therefore $\sigma(0)=0$.\\
$\sigma(2)=0$ & $e_{1}\,\text{--}\,\allowbreak o_{3}\,\text{--}\,\allowbreak e_{3}\,\text{--}\,\allowbreak o_{2}\,\text{--}\,\allowbreak e_{2}$ & Contradiction.\\
$\sigma(2)=1$ & $e_{1}\,\text{--}\,\allowbreak o_{0}\,\text{--}\,\allowbreak e_{0}\,\text{--}\,\allowbreak o_{2}\,\text{--}\,\allowbreak e_{2}$ & Contradiction.\\
\end{tabular}}

\subsection*{$W_G$. Change word \texttt{101010101}}
Normalized $\mu$-word: \texttt{00100010001}. Fix $\sigma(-1)=0$. Base cells $(1,-1)$ and $(9,-1)$. Targets: $e_{1},e_{5}$.

\emph{Case $\sigma(0)=0$.} All preceding forced values remain in force.

{\small
\noindent\begin{tabular}{@{}p{2.2cm}p{6.7cm}p{3.0cm}@{}}
Assumption & Connecting path & Consequence\\ \hline
$\sigma(2)=0$ & $e_{1}\,\text{--}\,\allowbreak o_{3}\,\text{--}\,\allowbreak e_{4}\,\text{--}\,\allowbreak o_{4}\,\text{--}\,\allowbreak e_{5}$ & Contradiction.\\
$\sigma(2)=1$ & $e_{1}\,\text{--}\,\allowbreak o_{0}\,\text{--}\,\allowbreak e_{0}\,\text{--}\,\allowbreak o_{2}\,\text{--}\,\allowbreak e_{2}\,\text{--}\,\allowbreak o_{4}\,\text{--}\,\allowbreak e_{5}$ & Contradiction.\\
\end{tabular}}

\emph{Case $\sigma(0)=1$.} All preceding forced values remain in force.

{\small
\noindent\begin{tabular}{@{}p{2.2cm}p{6.7cm}p{3.0cm}@{}}
Assumption & Connecting path & Consequence\\ \hline
$\sigma(2)=0$ & $e_{1}\,\text{--}\,\allowbreak o_{3}\,\text{--}\,\allowbreak e_{3}\,\text{--}\,\allowbreak o_{5}\,\text{--}\,\allowbreak e_{5}$ & Contradiction.\\
$\sigma(2)=1$ & $e_{1}\,\text{--}\,\allowbreak o_{1}\,\text{--}\,\allowbreak e_{2}\,\text{--}\,\allowbreak o_{4}\,\text{--}\,\allowbreak e_{5}$ & Contradiction.\\
\end{tabular}}

\subsection*{$W_H$. Change word \texttt{100100}}
Normalized $\mu$-word: \texttt{00101111}. Fix $\sigma(0)=1$. Base cells $(2,0)$ and $(4,0)$. Targets: $e_{1},e_{2}$.

{\small
\noindent\begin{tabular}{@{}p{2.2cm}p{6.7cm}p{3.0cm}@{}}
Assumption & Connecting path & Consequence\\ \hline
$\sigma(1)=0$ & $e_{1}\,\text{--}\,\allowbreak o_{2}\,\text{--}\,\allowbreak e_{2}$ & Therefore $\sigma(1)=1$.\\
$\sigma(2)=1$ & $e_{1}\,\text{--}\,\allowbreak o_{3}\,\text{--}\,\allowbreak e_{2}$ & Therefore $\sigma(2)=0$.\\
$\sigma(3)=0$ & $e_{1}\,\text{--}\,\allowbreak o_{1}\,\text{--}\,\allowbreak e_{-1}\,\text{--}\,\allowbreak o_{2}\,\text{--}\,\allowbreak e_{2}$ & Contradiction.\\
$\sigma(3)=1$ & $e_{1}\,\text{--}\,\allowbreak o_{4}\,\text{--}\,\allowbreak e_{3}\,\text{--}\,\allowbreak o_{3}\,\text{--}\,\allowbreak e_{2}$ & Contradiction.\\
\end{tabular}}

\subsection*{$W_I$. Change word \texttt{1000100}}
Normalized $\mu$-word: \texttt{001010000}. Fix $\sigma(1)=0$. Base cells $(1,1)$ and $(5,1)$. Targets: $e_{0},e_{2}$.

{\small
\noindent\begin{tabular}{@{}p{2.2cm}p{6.7cm}p{3.0cm}@{}}
Assumption & Connecting path & Consequence\\ \hline
$\sigma(0)=1$ & $e_{0}\,\text{--}\,\allowbreak o_{1}\,\text{--}\,\allowbreak e_{1}\,\text{--}\,\allowbreak o_{2}\,\text{--}\,\allowbreak e_{2}$ & Therefore $\sigma(0)=0$.\\
$\sigma(-4)=1$ & $e_{0}\,\text{--}\,\allowbreak o_{0}\,\text{--}\,\allowbreak e_{4}\,\text{--}\,\allowbreak o_{4}\,\text{--}\,\allowbreak e_{3}\,\text{--}\,\allowbreak o_{3}\,\text{--}\,\allowbreak e_{2}$ & Therefore $\sigma(-4)=0$.\\
$\sigma(-2)=0$ & $e_{0}\,\text{--}\,\allowbreak o_{1}\,\text{--}\,\allowbreak e_{5}\,\text{--}\,\allowbreak o_{3}\,\text{--}\,\allowbreak e_{2}$ & Contradiction.\\
$\sigma(-2)=1$ & $e_{0}\,\text{--}\,\allowbreak o_{0}\,\text{--}\,\allowbreak e_{2}$ & Contradiction.\\
\end{tabular}}

\subsection*{$W_J$. Change word \texttt{10000100}}
Normalized $\mu$-word: \texttt{0010101111}. Fix $\sigma(0)=1$. Base cells $(2,0)$ and $(4,0)$. Targets: $e_{1},e_{2}$.

{\small
\noindent\begin{tabular}{@{}p{2.2cm}p{6.7cm}p{3.0cm}@{}}
Assumption & Connecting path & Consequence\\ \hline
$\sigma(1)=0$ & $e_{1}\,\text{--}\,\allowbreak o_{2}\,\text{--}\,\allowbreak e_{2}$ & Therefore $\sigma(1)=1$.\\
$\sigma(2)=1$ & $e_{1}\,\text{--}\,\allowbreak o_{3}\,\text{--}\,\allowbreak e_{3}\,\text{--}\,\allowbreak o_{4}\,\text{--}\,\allowbreak e_{2}$ & Therefore $\sigma(2)=0$.\\
$\sigma(3)=0$ & $e_{1}\,\text{--}\,\allowbreak o_{1}\,\text{--}\,\allowbreak e_{-1}\,\text{--}\,\allowbreak o_{2}\,\text{--}\,\allowbreak e_{2}$ & Therefore $\sigma(3)=1$.\\
$\sigma(5)=0$ & $e_{1}\,\text{--}\,\allowbreak o_{1}\,\text{--}\,\allowbreak e_{-1}\,\text{--}\,\allowbreak o_{4}\,\text{--}\,\allowbreak e_{4}\,\text{--}\,\allowbreak o_{5}\,\text{--}\,\allowbreak e_{2}$ & Contradiction.\\
$\sigma(5)=1$ & $e_{1}\,\text{--}\,\allowbreak o_{6}\,\text{--}\,\allowbreak e_{3}\,\text{--}\,\allowbreak o_{4}\,\text{--}\,\allowbreak e_{4}\,\text{--}\,\allowbreak o_{5}\,\text{--}\,\allowbreak e_{2}$ & Contradiction.\\
\end{tabular}}

\subsection*{$W_K$. Change word \texttt{100000100}}
Normalized $\mu$-word: \texttt{00101010000}. Fix $\sigma(-1)=0$. Base cells $(5,-1)$ and $(7,-1)$. Targets: $e_{3},e_{4}$.

{\small
\noindent\begin{tabular}{@{}p{2.2cm}p{6.7cm}p{3.0cm}@{}}
Assumption & Connecting path & Consequence\\ \hline
$\sigma(0)=1$ & $e_{3}\,\text{--}\,\allowbreak o_{3}\,\text{--}\,\allowbreak e_{4}$ & Therefore $\sigma(0)=0$.\\
$\sigma(2)=0$ & $e_{3}\,\text{--}\,\allowbreak o_{5}\,\text{--}\,\allowbreak e_{5}\,\text{--}\,\allowbreak o_{4}\,\text{--}\,\allowbreak e_{4}$ & Contradiction.\\
$\sigma(2)=1$ & $e_{3}\,\text{--}\,\allowbreak o_{2}\,\text{--}\,\allowbreak e_{0}\,\text{--}\,\allowbreak o_{0}\,\text{--}\,\allowbreak e_{1}\,\text{--}\,\allowbreak o_{3}\,\text{--}\,\allowbreak e_{4}$ & Contradiction.\\
\end{tabular}}

\subsection*{$W_L$. Change word \texttt{1000000100}}
Normalized $\mu$-word: \texttt{001010101111}. Fix $\sigma(0)=1$. Base cells $(4,0)$ and $(8,0)$. Targets: $e_{2},e_{4}$.

{\small
\noindent\begin{tabular}{@{}p{2.2cm}p{6.7cm}p{3.0cm}@{}}
Assumption & Connecting path & Consequence\\ \hline
$\sigma(2)=1$ & $e_{2}\,\text{--}\,\allowbreak o_{4}\,\text{--}\,\allowbreak e_{4}$ & Therefore $\sigma(2)=0$.\\
$\sigma(3)=0$ & $e_{2}\,\text{--}\,\allowbreak o_{2}\,\text{--}\,\allowbreak e_{-1}\,\text{--}\,\allowbreak o_{1}\,\text{--}\,\allowbreak e_{1}\,\text{--}\,\allowbreak o_{4}\,\text{--}\,\allowbreak e_{4}$ & Therefore $\sigma(3)=1$.\\
$\sigma(4)=1$ & $e_{2}\,\text{--}\,\allowbreak o_{6}\,\text{--}\,\allowbreak e_{3}\,\text{--}\,\allowbreak o_{7}\,\text{--}\,\allowbreak e_{4}$ & Therefore $\sigma(4)=0$.\\
$\sigma(5)=0$ & $e_{2}\,\text{--}\,\allowbreak o_{2}\,\text{--}\,\allowbreak e_{-2}\,\text{--}\,\allowbreak o_{3}\,\text{--}\,\allowbreak e_{3}\,\text{--}\,\allowbreak o_{6}\,\text{--}\,\allowbreak e_{1}\,\text{--}\,\allowbreak o_{1}\,\text{--}\,\allowbreak e_{-1}\,\text{--}\,\allowbreak o_{4}\,\text{--}\,\allowbreak e_{4}$ & Contradiction.\\
$\sigma(5)=1$ & $e_{2}\,\text{--}\,\allowbreak o_{7}\,\text{--}\,\allowbreak e_{4}$ & Contradiction.\\
\end{tabular}}

\clearpage
\section{Notation and terminology}\label{app:notation}
\enlargethispage{\baselineskip}
\begingroup
\centering
\footnotesize
\setlength{\tabcolsep}{3pt}
\begin{tabular}{@{}p{0.17\textwidth}p{0.62\textwidth}p{0.17\textwidth}@{}}
\hline
Symbol or term & Meaning & Defined in\\\hline
$\Qg$, $b$ & the locally quaternion group $\Dic(A,c)$; $b^2=c$ & \S\ref{s:setup}\\
$A$, $c$ & $A=C_{2^\infty}$, the Pr\"ufer $2$-group; $c$ its unique involution & \S\ref{s:setup}\\
$A_n$ & the subgroup of $A$ of order $2^n$ ($n\ge0$) & \S\ref{s:setup}\\
$B$; sides & $B=Ab$; the cosets $A$ and $B$ are the two sides of $\Qg$ & \S\ref{s:setup}\\
$R(g)$, $R(\Qg)$ & the right translation $x\mapsto xg$ ($g\in\Qg$); $R(\Qg)=\{R(g):g\in\Qg\}$
  & \S\ref{ss:conv}\\
$\Pi(Y)$ & the period subgroup $\{h\in D:hY=Y\}$ of $Y\subseteq D$, $D$ abelian & \S\ref{ss:conv}\\
$G_{(C)}$, $G_{\{C\}}$ & the pointwise and the setwise stabilizer of $C$ in $G$ & \S\ref{ss:conv}\\
$\Fix(g)$; $g$-pair & the set of fixed points of $g$; for an involution $g$, a two-point orbit
  $\{p,g(p)\}$ of $g$ & \S\ref{ss:conv}\\
$\Alt(\Omega)$ & the group of even finitary permutations of a set $\Omega$
  & before Thm.~\ref{thm:wielandt}\\
$X$, $T$, $S$, $\Gamma$ & $X,T\subseteq A$, $S=X\cup Tb$, and $\Gamma=\Cay(\Qg,S)$
  & {\raggedright\S\ref{s:setup}, Lem.~\ref{lem:adj}\par}\\
$E(p,q)$ & $1$ if $p\sim q$ and $0$ otherwise & Lem.~\ref{lem:adj}\\
$H$, $\nu$ & $H=\Pi(T)$, a proper, finite, cyclic subgroup of $A$ containing $c$; the branch
  $|H|\ge4$ means $H=A_\nu$ with $\nu\ge2$
  & {\raggedright\S\ref{s:setup}, Def.~\ref{def:standing}\par}\\
$U$ & $U=X\symd cX$ & \S\ref{s:setup}\\
(S) & the standing hypothesis: $\Gamma=\Cay(\Qg,X\cup Tb)$ with $X=X^{-1}\subseteq A\setminus\{1\}$,
  $c\in X$, $cT=T$, $\varnothing\ne T\ne A$, and $U$ infinite & Def.~\ref{def:standing}\\
$P$, $\rho$, $\Ii$ & $P=\Aut\Gamma$; $\rho=R(c)$, the central translation by $c$; $\Ii=\rho^P$
  & {\raggedright\S\ref{s:setup}, Def.~\ref{def:standing}\par}\\
$t$, $v$ & the half-flips: $t(a)=a$, $t(ab)=cab$, $v(a)=ca$, $v(ab)=ab$ ($a\in A$); $tv=\rho$,
  $\Fix(t)=A$, $\Fix(v)=B$ & {\raggedright\eqref{eq:tv}, Lem.~\ref{lem:halfflips}\par}\\
(H); $\beta_h$ & the half-stabilizer rigidity: $\pst{A}$ equals $\{\beta_h:h\in H\}\cong H$, where
  $\beta_h(a)=a$ and $\beta_h(ab)=hab$ ($a\in A$) & Lem.~\ref{lem:H}\\
full matching & each $x\in\Ii$ & Def.~\ref{def:halves}\\
$(t_x,v_x)$; halves & a complementary pair for $x\in\Ii$: $(t_x,v_x,x)=(t^g,v^g,\rho^g)$ for some
  $g\in P$; its members are the halves of $x$, $t_x$ the chosen half and $v_x$ its complementary
  half & Def.~\ref{def:halves}\\
$t,v,s,w$ (\S\S\ref{s:big}--\ref{s:sync}) & $(t,v)=(t_x,v_x)$ and $(s,w)=(t_y,v_y)$ for $x,y\in\Ii$
  & \S\ref{s:big}\\
half-involution & a $P$-conjugate of $t$ & Def.~\ref{def:halves}\\
$C$, $C^\complement$ & the fixed set $C=\Fix(t')$ of a half-involution $t'$; $C^\complement$ its complement
  & Cor.~\ref{cor:H}\\
supported root & of a half-involution $t'$ with $C=\Fix(t')$: an element of $\pst{C}$ some power
  of which is $t'$; a supported square root is a $\zeta\in\pst{C}$ of order four with $\zeta^2=t'$
  & Cor.~\ref{cor:H}(3)\\
$\Opt(J)$ & the $2$-core of $J$, its largest normal $2$-subgroup & \S\ref{ss:finite}\\
{\raggedright two-matchings graph\par} & of $x,y\in\Ii$: the union of the two perfect matchings $\{p,x(p)\}$ and
  $\{p,y(p)\}$; its components are the two-matching components & Def.~\ref{def:twomatch}\\
$F$ in \S\ref{s:big} & $F=\langle t_x,v_x,t_y,v_y\rangle$, for $x,y\in\Ii$
  & Prop.~\ref{prop:two-core}(3)\\
$e_i$, $o_i$ & the vertices of an alternating two-matching component, labelled so that one
  full matching pairs $e_i$ with $o_i$ and the other pairs $o_i$ with $e_{i+1}$
  & Def.~\ref{def:twomatch}\\
masks $M$, $N$ & $M_i$ (or $M(i)$) is $1$ exactly when the chosen half of the first matching
  exchanges $e_i,o_i$; $N_i$ (or $N(i)$) is $1$ exactly when the chosen half of the second
  exchanges $o_i,e_{i+1}$ & Def.~\ref{def:twomatch}\\
{\raggedright constant, alternating, mixed\par} & a mask is constant if all its entries are equal and alternating if
  any two cyclically consecutive entries differ; a component is mixed if one of its masks is
  alternating and the other constant & Def.~\ref{def:twomatch}\\
$\langle t^P\rangle$ & the subgroup generated by the half-involutions & Cor.~\ref{cor:block}\\
$\hat V$ & in the proof of Lem.~\ref{lem:affine}, the space $V\oplus\F_2$ & Lem.~\ref{lem:affine}\\
$(t_x,v_x)$, $C_x$ & for $x\in\Ii$: a complementary pair of $x$, $C_x=\Fix(t_x)$,
  $C_x^\complement=\Fix(v_x)$; for $x=\rho$, $(t,v)$ and $C_\rho=A$; likewise $(t_y,v_y)$, $C_y$
  & \S\ref{s:two}\\
$G(\mu,\sigma)$; cell $(p,d)$ & for $\mu,\sigma:\Z\to\{0,1\}$, the bipartite graph with vertices $e_j$ and $o_i$
  in which $e_j\sim o_i$ if and only if $\mu(i+j)=\sigma(i-j)$; cell $(i{+}j,i{-}j)$
  & Def.~\ref{def:grid}\\
\end{tabular}
\par
\begin{tabular}{@{}p{0.17\textwidth}p{0.62\textwidth}p{0.17\textwidth}@{}}
robust pair & two cells $(p,d_0)$, $(p',d_0)$ of one row, $p\ne p'$, with $\mu(p)=\mu(p')=\varepsilon$,
  rook-connected in $G(\mu,\sigma)$ for every $\sigma$ with $\sigma(d_0)=\varepsilon$ & Def.~\ref{def:grid}\\
$\delta$ & the change sequence of $\mu:\Z\to\{0,1\}$: $\delta_i=\mu(i)+\mu(i+2)\pmod2$
  & before Tab.~\ref{tab:words}\\
$W_A$--$W_L$ & the twelve $\delta$-words of Table~\ref{tab:words}; each forces a robust pair
  & Tab.~\ref{tab:words}\\
target difference & of a robust pair $(p,d_0)$, $(p',d_0)$: $|j-j'|=|p-p'|/2$, where $j=(p-d_0)/2$ and
  $j'=(p'-d_0)/2$ & Thm.~\ref{thm:twelve-words}\\
\hline
\end{tabular}
\par
\endgroup

\bibliographystyle{amsplain-names}
\bibliography{refs}

\end{document}